\documentclass[letterpaper, 11pt,  reqno]{amsart}

\usepackage{amsmath,amssymb,amscd,amsthm,amsxtra, esint}

\usepackage{mathrsfs}

\usepackage[implicit=true]{hyperref}

\headheight=8pt
\topmargin=0pt
\textheight=624pt
\textwidth=432pt
\oddsidemargin=18pt
\evensidemargin=18pt

\allowdisplaybreaks[2]

\sloppy

\hfuzz  = 0.5cm 


\usepackage{color}

\setlength{\pdfpagewidth}{8.50in}
\setlength{\pdfpageheight}{11.00in}

\newtheorem{theorem}{Theorem} [section]

\newtheorem{lemma}[theorem]{Lemma}
\newtheorem{proposition}[theorem]{Proposition}
\newtheorem{remark}[theorem]{Remark}

\newtheorem{definition}[theorem]{Definition}
\newtheorem{corollary}[theorem]{Corollary}


\DeclareMathOperator{\med}{med}





\newcommand{\noi}{\noindent}
\newcommand{\Z}{\mathbb{Z}}
\newcommand{\R}{\mathbb{R}}
\newcommand{\C}{\mathbb{C}}
\newcommand{\T}{\mathbb{T}}

\let\P= \undefined
\newcommand{\P}{\mathbf{P}}

\newcommand{\E}{\mathbb{E}}

\newcommand{\F}{\mathcal{F}}

\newcommand{\al}{\alpha}

\newcommand{\dl}{\delta}

\newcommand{\eps}{\varepsilon}
\newcommand{\kk}{\kappa}
\newcommand{\g}{\gamma}
\newcommand{\G}{\Gamma}
\newcommand{\ld}{\lambda}

\newcommand{\ft}{\widehat}

\newcommand{\wt}{\widetilde}
\newcommand{\cj}{\overline}

\newcommand{\dt}{\partial_t}

\renewcommand{\l}{\ell}
\renewcommand{\o}{\omega}
\renewcommand{\O}{\Omega}

\newcommand{\les}{\lesssim}
\newcommand{\ges}{\gtrsim}

\newcommand{\jb}[1]
{\langle #1 \rangle}
\newcommand{\jbb}[1]
{[\hspace{-0.6mm}[ #1 ]\hspace{-0.6mm}]}

\newcommand{\ind}{\mathbf 1}

\newcommand{\N}{\mathbb{N}}

\numberwithin{equation}{section}
\numberwithin{theorem}{section}

\begin{document}
\baselineskip = 13.9pt

\title[Gibbs dynamics for FNLS]{Gibbs Dynamics for Fractional Nonlinear Schr{\"o}dinger Equations with Weak Dispersion}

\author[R. Liang, Y. Wang]
{Rui Liang and Yuzhao Wang}

\address{
Rui Liang\\
School of Mathematical Sciences\\
South China Normal University\\
Guangzhou\\
Guangdong\\
510631\\
P. R. China\\
and 
School of Mathematics\\
Watson Building\\
University of Birmingham\\
Edgbaston\\
Birmingham\\
B15 2TT\\ United Kingdom}
\email{RXL833@alumni.bham.ac.uk}

\address{
Yuzhao Wang\\
School of Mathematics\\
Watson Building\\
University of Birmingham\\
Edgbaston\\
Birmingham\\
B15 2TT\\ United Kingdom}
\email{y.wang.14@bham.ac.uk}

\subjclass[2010]{35Q55, 37K99}

\keywords{Gibbs dynamics; 
weakly dispersive fractional NLS; 
strong solution;
random averaging operator}

\begin{abstract}
  We consider the Cauchy problem for the one-dimensional periodic cubic
  nonlinear fractional Schr{\"o}dinger equation (FNLS) with initial data
  distributed via its associated Gibbs measure. 
  We construct global strong solutions with the flow property for the FNLS on the support of the Gibbs measure in the full dispersive range,  thus resolving a question proposed by Sun-Tzvetkov (2021). As a byproduct, we prove the invariance of the Gibbs measure and almost sure global well-posedness for FNLS. 
\end{abstract}

{\maketitle}

\section{Introduction}

In this paper, we study the problem of invariant Gibbs measure for the fractional cubic nonlinear Schr{\"o}dinger equation (FNLS) on the torus $\mathbb{T}$:
\begin{equation}
  \label{FNLS} 
  \begin{cases}
    i  \partial_t u - {\rm D}_x^{\alpha} u = \pm | u |^2 u,\\
    u (0) = u_0,
  \end{cases} 
  \quad (t, x) \in \mathbb{R} \times \mathbb{T}
\end{equation}

\noi 
where $\mathbb{T}=\mathbb{R}/ (2 \pi \mathbb{Z})$ is the one-dimensional torus
and ${\rm D}_x^{\alpha} = (- \partial_{x x})^{\alpha / 2}$ is the
spatial fractional derivative defined as the Fourier multiplier $\F_x (
{\rm D}_x^{\alpha} f)(k) : = | k |^{\alpha} \ft f (k)$, $k \in \Z$. 
We use the symbol $\pm$ to indicate the type of nonlinearity in FNLS  \eqref{FNLS}, which can be either defocusing ($+$) or focusing ($-$). 
The parameter $\alpha > 0$ measures the dispersion effect in FNLS  \eqref{FNLS}. 
The dispersion becomes stronger as $\alpha$ increases. 
When $\al = 2$, FNLS  \eqref{FNLS} reduces to the well-known cubic nonlinear Schr\"odinger equation (NLS), which models nonlinear phenomena in optics and plasma physics; see \cite{SS99} for more details. 
When $\al \neq 2$, FNLS \eqref{FNLS} arises in fractional quantum mechanics {\cite{Laskin00}}, and also in the water wave models {\cite{IP14}}. 
Quantum mechanics can be seen as the theory of functionals on measure generated by the Brownian motions in the path-integral approach; see \cite{Co65,GS98}. 
Fractional quantum mechanics is a generalization of quantum mechanics by replacing the Brownian motions with L\'evy processes \cite{Laskin00}. 
Moreover, it is shown in {\cite{KLS13}} that FNLS \eqref{FNLS} describes the continuum limit of lattice interactions when $\al \in (1, 2]$. Specifically, the case when $1 < \alpha < 2$ corresponds to the long-range lattice interactions, while the case when $\al = 2$ corresponds to the short-range or quick-decaying interactions. 
We also note that the cubic nonlinear half-wave equation ($\al = 1$) has various physical applications, such as wave turbulence \cite{MMT1997,CMMT01} and gravitational collapse \cite{ES07}. However, when $\al = 1$, the equation \eqref{FNLS} is no longer dispersive. 
In this paper, we focus on FNLS  \eqref{FNLS} for $\al \in (1,2)$, where the dispersion is weaker than that of the cubic nonlinear Schr\"odinger equation (NLS) (for \eqref{FNLS} with $\al = 2$).

The equation \eqref{FNLS} is a Hamiltonian PDE with the conserved Hamiltonian:
\begin{align}
\label{Hamiltonian}
  H (u) = \int_{\mathbb{T}} \bigg( \big| {\rm D}_x^{\alpha / 2} u \big|^2 \pm \frac{1}{2} | u |^4 \bigg) d x. 
\end{align}

\noi 
The dynamics of the equation \eqref{FNLS} also preserves the mass:
\begin{align}
\label{mass}
  M (u) =  \int_{\mathbb{T}} |u|^2d x. 
\end{align}

\noi 
The well-posedness of FNLS  \eqref{FNLS} in the low regularity setting has
been extensively studied. If $\alpha = 2$, \eqref{FNLS} is well-posed in
$L^2 (\mathbb{T})$ in the deterministic sense {\cite{B93a}}. However, when
$\alpha < 2$, where the equation is less dispersive, the deterministic local
well-posedness in $L^2 (\mathbb{T})$ is not available. Nevertheless, it has been proved in {\cite{De15,CHKL15}} that FNLS \eqref{FNLS} with $1 < \alpha < 2$ is
locally well-posed in $H^s (\mathbb{T})$ for $s \ge \frac{2 - \alpha}{4}$. 
The main purpose of this paper is to study FNLS \eqref{FNLS} from a probabilistic point of view. 
In particular, we focus on the invariance of the Gibbs measure \eqref{Gibbs} under the flow of \eqref{FNLS}, where the Gibbs measure is a probability measure on distributions on $\T$. See Subsection \ref{SUB:Gibbs} for a precise definition.

The existence and invariance of Gibbs measures for nonlinear PDEs are fascinating topics that have attracted a lot of attention in recent years \cite{Friedlander85,LRS88,BO94,BO96,B97,Tz06,Tz08,O1,O2,BOB1,BOB2,BOB3,OST22,DNY2,DNY3,DNY4,ORSW21,OOT1,ST20,ST21,LW22,BDNY}. Here we did not aim to cover all the relevant literature on this topic. 
We would like to acknowledge the pioneering works of Bourgain \cite{BO94,BO96} that inspire many subsequent studies on the invariant measure for Hamiltonian PDEs, especially those by Burq and Tzvetkov \cite{Tz06,Tz08,BT2008a,BT2008b}, that popularized this line of research. One of the main motivations for studying such measures is to understand the long-time behaviour of solutions to nonlinear PDEs, especially those that exhibit chaotic or turbulent phenomena. 
One such example is the famous question posed by Zakharov; see \cite{Friedlander85}.
``Numerical experiments demonstrated [that the 1-d periodic cubic NLW] possesses
the `returning' property, i.e. solutions appear to be very close to the initial state $\cdots$, after some time of rather chaotic evolution. The problem is to explain this phenomenon."
The key ingredients to explain this phenomenon are the existence of an invariant Gibbs measure and the flow property for the dynamics, 
which enable the use of Poincar\'e recurrence theorem. However, proving the existence and invariance of such a measure is highly nontrivial, as it requires overcoming several analytical and probabilistic challenges. 
See \cite{Friedlander85,LRS88,BO94,BO96,BDNY} and references therein for further discussion.
We also refer readers to \cite{Oh10} for a nice review of other invariant measures.

For the NLS case, i.e. \eqref{FNLS} with $\al = 2$, Lebowitz-Rose-Speer \cite{LRS88} considered the Gibbs measure of the form 
\begin{align}
\label{Gibbs0}
    d\rho = Z^{-1} e^{-\beta H(u)} du
\end{align}

\noi 
where $Z$ is a normalization constant and $H(u)$ is the Hamiltonian given in \eqref{Hamiltonian}.
In particular, 
they showed that the Gibbs measure \eqref{Gibbs0} is a well-defined probability measure on $H^{\frac12-} (\T)$ for the defocusing case; while for the focusing case, their result only holds with the $L^2$-cutoff $\ind_{\{\|u\|_{L^2(\mathbb{T})} \le K\}}$ for some $K>0$. 
Bourgain \cite{BO94} proved the Gibbs measure \eqref{Gibbs0} is invariant under the flow of NLS and global well-posedness almost surely on the statistical ensemble.

For the fractional NLS \eqref{FNLS} with $\al \in (1,2)$, the invariance of the Gibbs measure and the ``returning" property for the dynamics are not yet fully understood.
When $\al > \frac43$, these problems can be settled by using the deterministic theory \cite{De15,CHKL15} since the local well-posedness holds on the support of the Gibbs measure. 
To go beyond the threshold $\frac43$, Sun-Tzvetkov \cite{ST20} exploited a probabilistic argument, where they managed to handle the case $\al > \frac65$.
When $\al \in (1, \frac65]$, they also proved the convergence of the Galerkin approximation scheme for the FNLS by using the Bourgain-Bulut approach \cite{BOB1,BOB2,BOB3}. 
However, this argument is insufficient to show the flow property for the limiting dynamics, thus preventing us from applying the Poincar\'e recurrence theorem. 
Recently, Sun-Tzvetkov \cite{ST21} further improved their results in \cite{ST20} by using the theory of random averaging operators introduced by Deng-Nahmod-Yue \cite{DNY2}, and extended the range of $\al$ to $\al > \frac{31-\sqrt{233}}{14}$. They also conjectured the existence of global strong solutions to \eqref{FNLS} for all $\al > 1$ (see \cite[Question 1.1]{ST21}).

The main goal of this paper is to affirmatively answer Sun-Tzvetkov's question. 
Specifically, we build strong solutions, satisfying the flow property, to FNLS \eqref{FNLS} in the full range $\alpha > 1$. 
The range $\alpha > 1$ is also expected to be optimal. As a matter of fact, we shall show that there is a dramatic change in the regularity of the second Picard iterate between $\al > 1$ and $\al = 1$. See Theorem \ref{THM:div} for further discussion.

Before proceeding, we recall the notion of Gibbs measure.

\subsection{Gibbs measures}
\label{SUB:Gibbs}

The Gibbs measure, denoted by $d \rho$, can be formally written as
\begin{align}
  \label{Gibbs}
  d \rho = Z^{-1} e^{- H (u) - M(u)} d u,
\end{align}
where $Z$ is a normalization constant. Here, $H (u)$ and $M(u)$ are the Hamiltonian and mass of \eqref{FNLS} given in \eqref{Hamiltonian} and \eqref{mass}, respectively. 
Also, $du$ is formally an infinite-dimensional Lebesgue measure, which does not make sense as it is. We should understand $du$ as part of the Gaussian measure; see \eqref{GFM} in the following.
As both Hamiltonian and mass are conserved along the dynamics of \eqref{FNLS}, and the volume $du$ is also formally invariant, it is expected that Gibbs measure \eqref{Gibbs} is invariant under the dynamics of \eqref{FNLS}.
By adding the mass term to \eqref{Gibbs}, we avoid some technical issues at zero frequency. 
We may define the Gibbs measure $d \rho$ as an absolutely continuous
measure with respect to the following massive Gaussian measure:
\begin{align} 
\label{GFM}
  d \mu = e^{- \int_{\mathbb{T}} 
  (| {\rm D}_x^{\alpha / 2} u |^2 + | u |^2) d x} d u = \prod_{k \in \Z} e^{- \jbb{k}^{\al} |u_k|^2} du_k d\cj u_k,
\end{align}
where $u_k$ is the $k$-th Fourier coefficient of $u$.
The above Gaussian measure given in \eqref{GFM} is the induced probability measure under the map
\begin{equation}
  \label{GFF} 
  \omega \longmapsto u_0^{\omega} (x) = \sum_{k \in
  \mathbb{Z}} \frac{g_k (\omega)}{\jbb{k}^{\frac\alpha 2}} e^{i k x} .
\end{equation}
Here $\jbb{k} := (1 + | k |^{\alpha})^{\frac1\al}$ and $( g_k )_{k \in
\mathbb{Z}}$ is a sequence of independent standard complex-valued Gaussian
random variables on a probability space $(\Omega, \mathcal{F}, \mathbb{P})$.
One notes that $\jbb{k}$ depends on $\al$ in a non-essential way and $\jbb{k} \sim \jb{k}$; see \eqref{Eqn:br}.
Typical elements on the support of $\mu$ can be represented as the random Fourier series given in \eqref{GFF}. 
An easy computation shows that 
\begin{align} 
\label{GFF:reg} 
 u^{\omega}_0 \in H^{\frac{\alpha - 1}{2} - } (\mathbb{T}) \setminus H^{\frac{\al -1}2} (\mathbb T) \textup{ almost surely, }
\end{align}
where
\[
  H^{\frac{\alpha - 1}{2} - } (\mathbb{T})  : = \bigcap_{s < \frac{\alpha - 1}{2} } H^s (\mathbb{T}).
\]

With the above notations, 
the defocusing Gibbs measure $\rho$ can be recast as the following weighted 
Gaussian measure
\begin{align}
    \label{Gibbs2}
  d \rho = Z^{-1} e^{- \frac{1}{2} \int_{\mathbb{T}} | u |^4 
  d x} d \mu.
\end{align} 

\noi 
When $\alpha > 1$, we note that $\| u \|_{L^4 (\mathbb{T})} < \infty$ almost surely with respect to the Gaussian measure $\mu$. Thus, the defocusing Gibbs measure \eqref{Gibbs2} is a well-defined probability measure on $H^s (\mathbb{T})$ for $s < \frac{\alpha - 1}{2}$, absolutely continuous with respect to the Gaussian measure $\mu$. 
When $\alpha \le  1$, one has $\| u \|_{L^4 (\mathbb{T})} = \infty$ almost surely, thus a renormalization is needed. 
We refer the readers to {\cite[Section 1.3]{ST20}} for further discussion.
On the other hand, the focusing Gibbs measure formally given by
\[
d \rho = Z^{-1} e^{  \frac{1}{2} \int_{\mathbb{T}} | u |^4 
  d x} d \mu
\]
cannot be normalized as a probability measure since
\[
\E_{\mu} \Big[ e^{\frac{1}{2} \int_{\mathbb{T}} | u |^4 
  d x} \Big] = \infty.
\]

\noi 
Inspired by Lebowitz-Rose-Speer \cite{LRS88} and Oh-Sosoe-Tolomeo \cite{OST22},
the authors \cite{LW22} considered the following focusing Gibbs measure with a cutoff,
\begin{align}
    \label{fGibbs}
    d \rho = Z^{-1} \ind_{\{\|u\|_{L^2(\T)} \le K\}}e^{\frac{1}{2} \int_{\mathbb{T}} | u |^4 
  d x} d \mu,
\end{align}

\noi 
associated with the focusing FNLS \eqref{FNLS}.
In particular, the authors showed that the Gibbs measure \eqref{fGibbs} is a probability measure for any finite cutoff size $K$, provided $\al > 1$.
See \cite{LW22} for further discussion.

\subsection{Gauge transform}

Given a solution $u \in C ([- T, T] ; L^2 (\mathbb{T}))$ to \eqref{FNLS},
we introduce the following invertible gauge transform
\begin{equation}
  \label{Eqn:gauge1} 
  u (t) \longmapsto \mathcal{G} (u) (t) :=
  e^{ 2 i t  \fint | u |^2 d x } u (t),
\end{equation}

\noi 
where 
\[ 
\fint  f(x) dx := \frac{1}{2\pi} \int_\T f(x) dx
\]
denotes the integration with respect to the normalized Lebesgue measure $(2\pi)^{-1} dx$ on $\T$.
A direct computation  with the mass conservation shows that the gauged function,  which we still denote by $u$,  solves the following renormalized defocusing FNLS:
\begin{equation}
\label{FNLS2}  
i \dt u  - {\rm D}_x^{\alpha} u =  \Big( |u|^2 -2  \fint |u|^2 dx\Big) u . 
\end{equation}

\noi 
Note that the gauge transform $\mathcal{G}$ in \eqref{Eqn:gauge1} is invertible in $C ([- T, T] ;
L^2 (\mathbb{T}))$. In particular, we can freely convert solutions to
\eqref{FNLS} into solutions to \eqref{FNLS2} and vice-versa as long as
they are in $C ([- T, T]; L^2 (\mathbb{T}))$. 
By rewriting \eqref{FNLS2}
in the Duhamel formulation, we have
\begin{equation}
  \label{Duhamel} u (t) = S (t) u_0^{\omega} - i  \int^t_0 S (t - t')
  \mathcal{N} (u) (t') d t',
\end{equation}
where $S (t) = e^{- i  t {\rm D}_x^{\alpha}}$ denotes
the linear evolution and
\[ 
  \mathcal{N} (u) = \Big( |u|^2 -2  \fint |u|^2 dx\Big) u . 
\]
We decompose the nonlinearity $\mathcal{N} (u)$ into non-resonant and resonant parts i.e. 
\[
  \mathcal{N} (u) = \mathcal{Q} (u,u,u) + \mathcal{R} (u,u,u),
\]

\noi 
where the trilinear forms are defined as
\begin{align} 
\begin{split}
    \mathcal{Q} (u, v, w) (t, x) & := \sum_{k \in \mathbb{Z}}
    \sum_{
    \G (k) 
    } u_{k_1} (t) \overline{v_{k_2} (t)} w_{k_3} (t)
    e^{i  (k_1 - k_2 + k_3) x} ;\\
    \mathcal{R} (u, v, w) (t, x) & := \sum_{k \in \mathbb{Z}} u_k (t)
    \overline{v_k (t)} w_k (t) e^{i  k x}.
\end{split}
\label{trilinear}
\end{align}

\noi 
Here $u_k, v_k, w_k$ denote the spatial Fourier transform of $u, v, w$ respectively, and $\G (k)$ denotes the hyperplane of $\Z^3$,
\begin{align} 
\label{hyperplane}
  \G (k)  := \left\{ (k_1, k_2, k_3) \in \mathbb{Z}^3 ; k = k_1 - k_2 + k_3 \text{ and } k_1, k_3 \neq k_2 \right\}. 
\end{align}
When all the arguments are the same, we simply denote by $\mathcal{Q} (u) =
\mathcal{Q} (u, u, u)$ and $\mathcal{R} (u) = \mathcal{R} (u, u, u)$. The term
$\mathcal{Q} (u)$ denotes the non-resonant part of the renormalized
nonlinearity $\mathcal{N} (u)$, and $\mathcal{R} (u)$ denotes the resonant
part. Then, we have
\[ 
  \mathcal{N} (u) = \mathcal{Q} (u) + \mathcal{R} (u). 
\]
\begin{remark}
  {\rm
    When $\alpha > 1$, the local-in-time solution to \eqref{Duhamel} we construct will be in $H^{\frac{\alpha - 1}{2}-} (\mathbb{T})$, almost surely. 
    See Theorem \ref{THM:main} below. 
    Thus the solution to \eqref{Duhamel} lies in $L^2(\T)$, where the gauge transform
    $\mathcal{G}$ is invertible; i.e. the renormalized FNLS \eqref{FNLS2}
    is equivalent to the original FNLS \eqref{FNLS}. 
    The use of gauge transformation \eqref{Eqn:gauge1} removes some troublesome resonances, which improves the
    main counting estimates in Lemma \ref{LEM:counting1} and thus the tensor
    estimates in Lemma \ref{LEM:tensor}.
  }
\end{remark}

\subsection{Main results}

In what follows, we consider the Cauchy problem of defocusing FNLS  \eqref{FNLS2} with
Gaussian random data $u_0^{\omega}$ given in \eqref{GFF}. 
Let $N\in 2^{\Z_{\ge 0}} \cup \{1/2\}$ be a dyadic number,
define projections $\Pi_N$ such that
\begin{align} 
  (\Pi_N f)_k =\ind_{\jb{k} \le  N} \cdot f_k ,
\label{Pi_N}
\end{align}

\noi 
and $\Delta_N := \Pi_N - \Pi_{N / 2}$.
In particular, we note
\begin{align} 
\label{P1/2}
 (\Pi_{1 / 2} f)_k = 0. 
\end{align} 

\noi 
We start with a truncated version of \eqref{FNLS2}, with truncated random
initial data
\begin{equation}
  \label{FNLS2_N} 
  \begin{cases}
    \displaystyle i  \partial_t u_N - {\rm D}_x^{\alpha} u_N = \Pi_N \bigg[ \bigg(|
    u_N |^2  - 2 {{  \fint | u_N |^2 d
    x}} \bigg) u_N \bigg],\\
    \displaystyle  u_N (0) = \Pi_N u^{\omega}_0 = \sum_{\jb{k} \le N} \frac{g_k (\omega)}{\jbb{k}^{\frac\alpha 2}} e^{i  k x},
  \end{cases} \quad (t, x) \in \mathbb{R} \times \mathbb{T}.
\end{equation} 
From \eqref{P1/2} and \eqref{FNLS2_N},
we also note that
\begin{align*}
    u_{1/2} = 0. 
\end{align*}

\noi 
In this paper, we show that \eqref{FNLS2} admits strong solutions, which are the unique limits of solutions to \eqref{FNLS2} and satisfying the flow property. 
It follows from \cite{Th17} (or see \cite[Theorem 4]{ST20}) that \eqref{FNLS2_N} is globally well-posed with truncated (and thus smooth) initial data.

Our first goal
is to construct local-in-time solutions to \eqref{FNLS2} almost surely
with respect to the random initial data \eqref{GFF}, which is addressed in
the following theorem.

\begin{theorem}
\label{THM:main}
Let $\al >1$ and $u_0^\o$ be as in \eqref{GFF}.
Let $u_N$ be the solution to the truncated system \eqref{FNLS2_N}.
Then for $0 < T \ll 1$,
there exists a set $\O_T\subset \O$ with $\P(\O_T^c) \le C_\theta e^{-T^{-\theta}}$, where $\theta > 0$ is a small constant determined by $\al$, such that for $\o \in \O_T$, the sequence $\{u_N\}$ converges to a limit $u$ in $C ([-T,T]; H^{\frac{\al-1}2 -} (\T))$. The limit $u(t)$ solves \eqref{FNLS2}.
\end{theorem}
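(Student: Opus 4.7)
The plan is to adapt the random averaging operator (RAO) framework of Deng--Nahmod--Yue, as deployed in Sun--Tzvetkov for FNLS, but now in a dyadic-induction setting that is effective all the way down to $\alpha>1$. Rather than proving convergence of $u_N$ directly, I will analyse the increments $w_N := \Delta_N u_N = u_N - u_{N/2}$, where $u_{N/2}$ lives at frequencies $\le N/2$ and $w_N$ lives in the shell $\{N/2 < \jb{k}\le N\}$. Writing the Duhamel equation \eqref{Duhamel} for $u_N$ in terms of $u_{N/2}$ and $w_N$, the shell equation for $w_N$ takes the schematic form
\begin{equation*}
w_N(t) = S(t)\Delta_N u_0^\omega - i\int_0^t S(t-t')\Delta_N\bigl[\mathcal{Q}(u_{N/2},u_{N/2},w_N)+\text{(other trilinear terms)}+\mathcal{R}\bigr]\,dt',
\end{equation*}
where the boxed term is linear in the new Gaussian modes $\{g_n\}_{N/2<\jb{n}\le N}$ and is the source of the random averaging operator.

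The key ansatz is to split $w_N = \psi_N + y_N$, where $\psi_N(t) = \mathcal{H}_N(t)\bigl[\Delta_N u_0^\omega\bigr]$ is defined by the linear random operator equation
\begin{equation*}
\mathcal{H}_N(t)[\phi] = S(t)\phi - i\int_0^t S(t-t')\Delta_N\mathcal{Q}(u_{N/2},u_{N/2},\mathcal{H}_N(t')[\phi])\,dt',
\end{equation*}
and $y_N$ absorbs all remaining, more regular, trilinear interactions. The operator $\mathcal{H}_N$ is measurable with respect to $\sigma(\{g_n\}_{\jb{n}\le N/2})$, hence independent of the new Gaussian modes $\Delta_N u_0^\omega$; this independence is the key leverage that gives the RAO component $\psi_N$ the same $H^{\frac{\alpha-1}{2}-}$ regularity as the linear evolution but no worse. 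The remainder $y_N$ is designed to live in $H^{\frac{\alpha-1}{2}+\delta}$ for some $\delta=\delta(\alpha)>0$, i.e.\ strictly smoother, which is what makes the sum $\sum_N \|w_N\|$ converge.

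For the fixed point I would set up a Banach space norm combining: (i) an $\ell^2$-based norm on $\{y_N\}$ in a Bourgain-type $X^{s,b}$ or $U^p/V^p$ space at regularity strictly above $\frac{\alpha-1}{2}$; (ii) a Hilbert--Schmidt-type operator norm for $\{\mathcal{H}_N\}$ measuring its action on high-frequency Gaussian data; (iii) a low-frequency norm on $u_{N/2}$ fed into the induction. The nonlinear estimates reduce to multilinear bounds controlled by the counting Lemma \ref{Lma:counting1} (the gauge transform in \eqref{Eqn:gauge1} having eliminated the worst resonant contribution) and the tensor estimates of Corollary \ref{LEM:tensor}. A small power of $T$ comes out of each Duhamel integration, so for $T\ll 1$ a contraction is obtained with norms bounded uniformly in $N$ on an event of $\omega$ of measure $\ge 1-Ce^{-T^{-\eps}}$; the tail bound follows from Gaussian hypercontractivity applied to the polynomial-in-$\{g_n\}$ quantities that appear in the estimates, combined with a union bound over dyadic $N$ scales after paying $\eps$-losses.

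The main obstacle is squeezing out enough smoothing in the RAO analysis to reach $\alpha$ arbitrarily close to $1$. In that regime the initial data only barely lies in $L^2$, all divisor counts $|\jb{k_1}^\alpha-\jb{k_2}^\alpha+\jb{k_3}^\alpha-\jb{k}^\alpha|$ become nearly degenerate, and the naive trilinear estimate on $\mathcal{Q}$ fails. The resolution is to exploit the fact that, inside the operator $\mathcal{H}_N$, the new high-frequency modes appear only linearly: this converts what looks like a trilinear estimate into a bilinear operator-norm estimate where one can afford to integrate the two low-frequency Gaussians in $\ell^2$. The careful bookkeeping of this gain in the tensor norms, together with showing that the remainder $y_N$ can be placed in a strictly higher regularity space, is where all the work lies; once those bounds are in hand, convergence of $u_N\to u$ in $C_t H_x^{\frac{\alpha-1}{2}-}$ and verification that the limit solves \eqref{FNLS2} (by passing to the limit term-by-term in \eqref{Duhamel} and invoking $L^2$-continuity of the gauge transform) are standard.
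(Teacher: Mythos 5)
Your proposal correctly identifies the random averaging operator framework and the dyadic-induction structure, and you have the right intuition about the independence lever (the RAO kernel is $\mathcal{B}_{\le N/2}$-measurable, hence independent of $\Delta_N u_0^\omega$). But as written it would only reproduce roughly the Sun--Tzvetkov threshold, not $\alpha>1$, for two concrete reasons.

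First, you build the RAO with $u_{N/2}$ in the bilinear slot, whereas the paper uses $u_{L_N}$ with $L_N < N^{1-\delta}$ (see \eqref{LN}--\eqref{Eqn:method}). That logarithmic gap between the RAO cutoff and the shell scale is essential, not cosmetic: it forces the kernel $h^{N,L}_{kk'}$ to be supported in $|k-k'|\lesssim L_N N^{\eps}\ll N$ (the weight bound \eqref{Eqn:2}), which is needed both to localize the Fourier support of $\psi_{N,L_N}$ in the correct annulus and to extract the gain \eqref{Eqn:gain2} from the unitary cancellation; and it produces the lower bound $L_N < N_{\med}$ in the third term of \eqref{Eqn:newzN}, which is exactly what brings the $\Gamma$-condition of Lemma \ref{Lma:gamma} into play. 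Your choice $L=N/2$ surrenders both of these devices.

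Second, you gloss over, as ``where all the work lies,'' the two quantitative innovations that actually push the threshold to $\alpha>1$. One is the refined counting Lemma \ref{Lma:counting1} with the $\jb{k-k_1}^{-1}$ decay, which lets the paper reduce two dimensions in the key counting estimate (Lemma \ref{LEM:S}) rather than one, in sharp contrast to the half-wave case. The other is the unitary cancellation of Lemma \ref{Lma:uni} and Corollary \ref{COR:cancel}, used to kill the over-paired $(C,C,C)$ contributions; without it, the term $\mathscr{X}_k^{(2)}$ in Subsection \ref{SUB:HHCCC} loses too many derivatives and the remainder cannot be placed in $X^{\frac12+\gamma}$. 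The framework you describe is already in Deng--Nahmod--Yue and Sun--Tzvetkov; the content of this theorem is precisely those devices, and the proposal does not supply them. (A minor slip worth flagging: $u_N-u_{N/2}\ne\Delta_N u_N$ in general, since the truncated dynamics couple all modes inside the truncation, so the increment $y_N$ has content throughout $\jb{k}\le N$, not only in the shell.)
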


\begin{remark}
\label{Rmk:known}
\rm
Theorem \ref{THM:main} has been shown for $\alpha > \frac{31 - \sqrt{233}}{14} \approx 1.124$ in \cite{ST21} by using the theory of random averaging operators developed by Deng-Nahmod-Yue \cite{DNY2}, with a precursor \cite{Br21}. 
In what follows, we only focus on the range $\al \in (1,2)$. 
The random averaging operator can be seen as the dispersive version of the well-known paracontrolled distribution method developed by Gubinelli-Imkeller-Perkowski \cite{GIP}.
The key observation is to decompose the solution into a combination of linear, para-linear, and nonlinear parts, such that the para-linear term (or para-controlled term) possesses a randomness structure and the nonlinear part is smoother. 
\end{remark}

\begin{remark}
\rm
\label{RMK:unique}
The solution $u$ constructed in Theorem \ref{THM:main} is the unique limit of canonical smooth approximations; i.e. the limit $u$ obtained in Theorem \ref{THM:main} does not depend on the choice of `good' approximations of \eqref{FNLS2}. For further discussion on this issue, see \cite[Remark 1.6 (3)]{DNY2} and \cite[Remark 1.4]{DNY3}. 
We also refer the readers to \cite{ST20} for some different approximations. 
\end{remark}

\begin{remark}\rm
    The main difficulty in establishing Theorem \ref{THM:main} comes from the weak dispersion when $\al$ is small.
    Besides the random averaging operator, we need to exploit several new ideas and techniques to overcome the difficulty arising from the weaker dispersion. First of all, we employ the random tensor estimates introduced in \cite{DNY3,DNY4}, which allow us to simplify the multilinear estimates and exploit more refined counting results. See Subsection \ref{SUB:TNE}, Subsection \ref{SUB:redu}, and Subsection \ref{Sub:pre} for more details. Secondly, we improve the counting estimates by exploiting some key properties of the random structure. In particular, we show that the worst-case scenario of bad countings does not happen when considering the interaction between high and low frequencies, which is one of the crucial observations. We make crucial use of the decay fact in the basic counting Lemma \ref{LEM:counting1}. This enables us to reduce two dimensions in our key counting estimates; see Lemma \ref{LEM:S}. This contrasts sharply with the half-wave equation, where at most one dimension can be reduced. See Theorem \ref{THM:div}. We also use the $\Gamma$-condition to enhance some of the counting arguments. See Subsection \ref{SUB:count} for further discussion. Last but not least, we take advantage of the crucial cancellation that arises from the conservation structure of the random averaging operator, namely the unitary property. See Subsection \ref{SUB:unitary} for further discussion. It is important to note that the counting arguments become more complicated in higher dimensions. Specifically, the $\Gamma$-condition does not eliminate the impact of high-frequency terms due to the inner product structure. We plan to address these issues in our future work.
\end{remark}

Theorem \ref{THM:main} shows that the FNLS  \eqref{FNLS} is almost surely
locally well-posed with the random initial data \eqref{GFF}. To extend the local solution globally, one may adapt Bourgain's invariant measure argument {\cite{BO94,BO96}}. See also \cite[Section 6]{DNY2} for a detailed argument in the random averaging operator setting. More precisely, we use the invariance of the
fractional Gibbs measure under the finite-dimensional approximation of the
FNLS  flow to obtain a uniform control on the solutions and then apply a PDE
approximation argument to extend the local solutions to \eqref{FNLS}
obtained from Theorem \ref{THM:main} to global ones. The proof of Theorem \ref{THM:main}, using the theory of random averaging operators, also implies the stability of the truncated solution map. See \cite[Section 5]{DNY2}. As a consequence, we obtain the group property for the flow map and the invariance of the fractional Gibbs measure under the resulting global flow of the FNLS  \eqref{FNLS}.

\begin{theorem}
\label{THM:main2}
  Let $\alpha > 1$. Then, the defocusing renormalized FNLS  \eqref{FNLS2} on $\mathbb{T}$ is globally well-posed almost surely with respect to the
  Gibbs measure $\rho$ in \eqref{Gibbs}. Moreover, the flow maps satisfy the usual group (flow) property and keep the Gibbs measure $d\rho$ invariant.
\end{theorem}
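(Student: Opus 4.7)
The plan is to deploy Bourgain's invariant measure argument \cite{BO94,BO96}, adapted to the random averaging operator setting as in \cite[Section 6]{DNY2}, using the local well-posedness and built-in stability of the truncated solution map provided by Theorem \ref{THM:main} as the principal input.

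First, I would establish invariance at the finite-dimensional level. For each dyadic $N$, the truncated equation \eqref{FNLS2_N} is a Hamiltonian ODE on the finite-dimensional phase space $E_N := \Pi_N L^2(\mathbb{T})$, with truncated Hamiltonian $H_N(u) = \int_{\mathbb{T}}\bigl(|{\rm D}_x^{\alpha/2}\Pi_N u|^2 + \frac{1}{2}|\Pi_N u|^4\bigr)dx$ and conserved mass $M_N(u) = \int_{\mathbb{T}}|\Pi_N u|^2 dx$. Combining Liouville's theorem on $E_N$ with the conservation of $H_N + M_N$, and noting that the Gaussian on $E_N^\perp$ is invariant under the linear evolution $S(t)$, the truncated Gibbs measure
\begin{equation*}
d\rho_N = Z_N^{-1} e^{-\frac{1}{2}\int_{\mathbb{T}}|\Pi_N u|^4 dx} d\mu
\end{equation*}
is invariant under the split flow $\Phi_N(t)$ that runs the truncated dynamics \eqref{FNLS2_N} on the low modes and the linear flow on the high modes. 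Since $\alpha > 1$ implies $\|u\|_{L^4(\mathbb{T})} < \infty$ almost surely under $\mu$, dominated convergence gives $\rho_N \to \rho$ in total variation.

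Next I would globalize the local solutions. Fix $T \gg 1$ and a small target probability $\varepsilon > 0$. Using the set $\Omega_{T_0} \subset \Omega$ from Theorem \ref{THM:main} for a time step $T_0 \ll 1$ to be chosen, for each $N$ and each integer $j$ with $|jT_0|\le T$ I would form the preimages $\Phi_N(-jT_0)^{-1}(\Omega_{T_0})$ and intersect them. The invariance of $\rho_N$, together with the bound $\rho_N(\Omega_{T_0}^c) \lesssim C_\varepsilon e^{-T_0^{-\varepsilon}}$ inherited from the $\mu$-estimate of Theorem \ref{THM:main} via the bounded density, controls the total failure probability by $\lesssim (T/T_0) e^{-T_0^{-\varepsilon}}$. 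Choosing $T_0$ as a suitable inverse power of $\log T$ isolates a set of $\rho$-measure at least $1-\varepsilon$ on which the iterated flow $\Phi_N(\cdot)u_0^\omega$ exists on $[-T,T]$ with bounds uniform in $N$. The stability of the truncated solution map furnished by the random averaging operator proof of Theorem \ref{THM:main} (see \cite[Section 5]{DNY2}) then upgrades this to convergence $u_N \to u$ in $C_t([-T,T]; H_x^{(\alpha-1)/2-}(\mathbb{T}))$, and sending $\varepsilon \to 0$ together with $T\to \infty$ along a countable sequence produces a global strong solution on a set of full $\rho$-measure.

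Finally, I would read off the group property and the invariance of $\rho$ from the limit. The identity $\Phi(t+s)=\Phi(t)\circ\Phi(s)$ for the limiting flow follows from the uniqueness of strong solutions established above combined with the analogous identity at each truncated level. For invariance, given any bounded continuous $F$ on $H^{(\alpha-1)/2-}(\mathbb{T})$ and any $t\in\mathbb{R}$,
\begin{equation*}
\int F(\Phi(t)u)\, d\rho \;=\; \lim_{N \to \infty} \int F(\Phi_N(t)u)\, d\rho_N \;=\; \lim_{N \to \infty} \int F(u)\, d\rho_N \;=\; \int F(u)\, d\rho,
\end{equation*}
where the outer equalities use $\rho_N \to \rho$ in total variation together with the almost-sure convergence $\Phi_N(t)u \to \Phi(t)u$, and the middle equality is the finite-dimensional invariance. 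The principal obstacle is not the invariance itself but securing the $N$-uniformity of the stability estimate that drives the almost-sure convergence $\Phi_N(t)u \to \Phi(t)u$; this is precisely the payoff of the random averaging operator framework engineered for Theorem \ref{THM:main}, so the present argument is essentially a harvesting of that analytic machinery by the standard probabilistic globalisation scheme.
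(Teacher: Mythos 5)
Your proposal is correct and follows exactly the route the paper intends: the paper explicitly omits this proof as standard, citing Bourgain's invariant measure argument and its adaptation to the random averaging operator setting in Deng--Nahmod--Yue, which is precisely what you carry out (truncated Liouville-plus-conservation invariance, Bourgain globalisation with the measure estimate from Theorem~\ref{THM:main}, stability of the truncated flow map to pass to the limit, and then reading off the group property and invariance). The fleshed-out details you supply are consistent with the cited references and with what the local theory of Theorem~\ref{THM:main} provides.
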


Once we have Theorem \ref{THM:main}, the proof of Theorem \ref{THM:main2} is standard by now, thus we omit the proof. 
See Deng-Nahmod-Yue \cite[Section 6]{DNY2} for a proof of the 2D NLS case; also see Bourgain {\cite{BO94,BO96}}, Deng-Nahmod-Yue \cite{DNY4}, and Sun-Tzvetkov {\cite{ST20}} for more discussion. 
The crucial point is that, in the proof of Theorem \ref{THM:main}, we established a probabilistic local well-posedness result with a fine structure of the solution of \eqref{FNLS}. See Subsection \ref{SUB:idea} for further discussion.
In what follows, we shall focus on the proof of Theorem \ref{THM:main}.

\begin{remark}
  {\rm
  Theorem \ref{THM:main2} is for FNLS \eqref{FNLS} (or \eqref{FNLS2}) with the defocusing nonlinearity $| u |^2 u$. 
  The local-in-time theory still holds if we replace the defocusing
  nonlinearity with the focusing one $- | u |^2 u$. In the focusing case, to use Bourgain's invariant measure argument extending local-in-time
  dynamics globally, one needs to construct the associated focusing Gibbs measure. The authors {\cite{LW22}} constructed the focusing Gibbs measure for \eqref{FNLS}. See also {\cite{RS22}} for a different construction of the focusing Gibbs measure from quantum many-body mechanics.
  }
\end{remark}

As pointed out by Sun-Tzvetkov \cite{ST21}, the second Picard iterate enjoys some smoothing effect when $\al > 1$.
We further observe that the high-high interaction of the second Picard iterate exhibits an even stronger smoothing property; see Theorem \ref{THM:div} (i) and also Subsection \ref{SUB:HHCCC}, which plays an important role in our analysis. However, when $\al = 1$, the second Picard iterate fails to gain any smoothing; see Theorem \ref{THM:div} (ii).
To illustrate these ideas, we consider the following truncated version of the second Picard iterate of high-high interaction.
Given $N\in \N$, define the following second Picard iterate
\begin{align}
    \label{Picard}
    Z^{(2)}_N (t) = \int_0^t  e^{i(t-t'){\rm D}_x^\al} \Pi_N \bigg[\bigg( |z_N (t')|^2 - 2 \fint |z_N (t')|^2 dx \bigg) z_N (t') \bigg] dt',
\end{align}

\noi 
where 
\[
z_N (t) = \sum_{N^{1-\dl} < k < N} \frac{g_k(\o)}{\jbb{k}^{\frac\al2}} e^{it|k|^\al + ikx} 
\]

\noi 
is the truncation of the random linear solution. 
In \eqref{Picard}, we only see the high-high interactions, i.e. $N_{\med} \ges N_{\max}^{1-\dl}$. 
The following theorem shows a sharp contrast of the smoothing property of \eqref{Picard} for $\al > 1$ and for $\al =1$, respectively.
More precisely, we have the following result.

\begin{theorem}
\label{THM:div}
Let $Z^{(2)}_N (t)$ be defined by \eqref{Picard} with $N \gg 1$ and $|t| \sim 1$.
\begin{itemize}
    \item[(i)] When $\al > 1$, there exists $\dl >0$, such that $(\mathbb{E}[\|Z^{(2)}_N (t)\|^2_{L^2 (\T)}])^{\frac12} \les N^{-\frac12 -}$.
    \item[(ii)] When $\al = 1$, we have
    $(\mathbb{E}[\|Z^{(2)}_N (t) \|^2_{L^2(\T)})^{\frac12} \ges \delta^{\frac32} (\log N)^{\frac{3}{2}}$ with the constant independent of $\dl$.
\end{itemize} 

\end{theorem} 

\begin{remark} \label{RMK:phaset}
    \rm 
    Theorem \ref{THM:div} shows a dramatic change in the regularity of the second Picard iterate between $\al > 1$ and $\al = 1$. In particular, Theorem \ref{THM:div} quantifies the phase transition of the smoothing effect for Picard iterates at $\al = 1$.
    As the smoothing property of the second Picard iterate plays a crucial role in the proof of Theorem \ref{THM:main},  Theorem \ref{THM:div} also implies that the strategy in proving Theorem \ref{THM:main} does not work for the half-wave equation, i.e. \eqref{FNLS2} with $\al =1$.
\end{remark}

\begin{remark}\label{RMK:nosmooth}
    \rm 
    Picard iterate's lack of smoothing property for other weakly dispersive models has been observed. Oh \cite{Oh11} considered the Szeg\"o equation and proved that the first nontrivial Picard iterate does not gain regularity compared to the initial data. See also Camps-Gassot-Ibrahim \cite{CGI22} for a similar observation for the cubic Schr\"odinger half-wave equation.
\end{remark} 

\subsection{Main ideas}
\label{SUB:idea}
We explain the key ideas in proving Theorem \ref{THM:main}, i.e. how to construct local-in-time solutions to \eqref{FNLS} with initial data distributed according to the Gibbs measure.
We note that the Gibbs measure $d\rho$ in \eqref{Gibbs2} (or in \eqref{fGibbs}) is absolutely continuous with respect to the Gaussian measure $d\mu$ in \eqref{GFM}. 
Therefore, to prove Theorem \ref{THM:main}, it only suffices to consider \eqref{FNLS} with initial data distributed according to $d\mu$, i.e. $u|_{t=0} = u_0^\o$ given in \eqref{GFF}. 
The main difference between the weakly dispersive case $\al \in (1,2)$ and the standard case $\al  =2$ comes from the counting estimates. For instance, the values of
\[
|k_1|^\al - |k_2|^\al + |k_3|^\al - |k|^\al  
\]

\noi 
may be dense in an interval of size $1$,
under the constraint of $(k_1,k_2,k_3) \in \Gamma (k)$. This causes the loss of regularity in establishing linear and multilinear estimates regarding solutions to \eqref{FNLS2}.

\subsubsection{The Ansatz}
In this subsection, we recall the theory of random averaging operators recently developed by Deng-Nahmod-Yue \cite{DNY2}.
The idea is to include the high-low interactions in the ansatz, and write them as a low-frequency operator applied to the high-frequency. 
See \cite{DNY2} for further discussion. 

To construct the solution to \eqref{FNLS2_N} locally, 
we introduce a time cutoff.
Let $\eta$ be any Schwartz function such that $\eta (t) = 1$ for $| t |
\le  1$ and $\eta (t) = 0$ for $| t | \ge  2$. 
To simplify the notation, we will denote by
\begin{align}
    \label{chi}
    \chi (t) = \eta_T (t) : = \eta\bigg(\frac{t}T \bigg),
\end{align}

\noi 
for some $T \ll 1$ to be determined later.
Applying the interaction representation to the unknown $u_N (t)$, still denoted by $u_N(t)$,
\begin{equation*}
  u_N (t)  =
  e^{i  t {\rm D}_x^{\alpha}} u_N (t),
\end{equation*}

\noi 
the Duhamel formulation of \eqref{FNLS2_N}, restricted to $|t| \le T$, becomes
\begin{equation}
  \label{Duhamel_N} 
  \begin{split}
    u_N (t) = & \chi (t) \Pi_N u^{\omega}_0 - i  \chi (t)
    \int^t_0 \Pi_N \mathcal{M} (u_N) (t') d t'  + i 
    \chi (t) \int^t_0 \Pi_N \mathcal{R} (u_N) (t') d t',
  \end{split}
\end{equation}
where $\mathcal R$ is as in \eqref{trilinear} attaching a factor $\chi(t)$, 
and $\mathcal M (u) = \mathcal M(u,u,u)$ is the trilinear form defined by
\begin{equation}
  \label{Eqn:M} 
  \mathcal{M} (u, v, w)_k (t) = {{\chi (t)}}
  \sum_{\substack{
    k_1 - k_2 + k_3 = k\\
    k_2 \not\in \{ k_1, k_3 \}
  }} e^{i  t \Phi} \cdot u_{k_1} (t) \overline{v_{k_2}} (t)
  w_{k_3} (t),
\end{equation}
where
\begin{equation}
  \label{Eqn:Phi} 
  \Phi = \Phi (\bar k) := |k_1|^{\alpha} - |k_2|^{\alpha} +
  |k_3|^{\alpha} - |k|^{\alpha} .
\end{equation}

\noi 
In what follows, 
we focus on the formulation \eqref{Duhamel_N} in proving Theorem \ref{THM:main}. We also note that $u_{1 /
2} = 0$.

Let $y_N$ be as
\begin{equation}
  \label{Eqn:yNdef} 
  y_N = u_N - u_{N / 2}. 
\end{equation}

\noi 
Then from \eqref{Duhamel_N} we see that $y_N$ in \eqref{Eqn:yNdef} satisfies
\begin{equation}
  \label{Eqn:yN} 
  \begin{split}
    & y_N (t) = \chi (t) F_N - i  \sum_{N_{\max} = N} \chi (t) \int^t_0
    \Pi_N \mathcal{M} (y_{N_1}, y_{N_2}, y_{N_3}) (t') d t'\\
    & \hspace{1.6cm} - i  \sum_{N_{\max} \le  N / 2} \chi (t)
    \int^t_0 \Delta_N \mathcal{M} (y_{N_1}, y_{N_2}, y_{N_3}) (t') d
    t'\\
    & \hspace{1.6cm} + i  \chi (t) \int^t_0 (\Pi_N \mathcal{R} (u_N) -
    \Pi_{N / 2} \mathcal{R} (u_{N / 2})) (t') d t',
  \end{split}
\end{equation}
where
\begin{equation}
  \label{Eqn:FN} F_N = \Delta_N u_0^{\omega} = \Pi_N u^{\omega}_0 - \Pi_{N/2} u^{\omega}_0,
\end{equation}
and $N_{\max} = \max (N_1, N_2, N_3)$. 
To construct $y_N$ perturbatively, 
the main difficulty comes from the
low-low-high interactions of the second term in \eqref{Eqn:yN}.
To remove this bad interaction, we introduce a random averaging operator. 
To be more precise, let \ $0 < \delta
\ll 1$ be determined later, and denote by
\begin{align} 
\label{LN}
L_N = L(N) := \max \{ L \in 2^{\mathbb{Z}} ; L < N^{1 - \delta} \}.
\end{align}

\noi 
We define $\psi_{N,L_N}$ to be the solution to the linear equation 
\begin{equation}
  \label{Eqn:method}  
    \psi_{N, L_N} (t) = \chi (t) F_N - 2 i  \chi (t) \int^t_0 \Pi_N
    \mathcal{M} (u_{L_N}, u_{L_N}, \psi_{N, L_N}) (t') d t'  ,
\end{equation}

\noi 
which is expected to consist of bad high-low interactions from $y_N$.
We note that the local-in-time existence of solutions to \eqref{Eqn:method} is guaranteed by a fixed point argument, as the norm of the random averaging operator \eqref{Eqn:l} can be made small provided $T \ll 1$; see \eqref{smallP}.
We then further decompose $y_N$ into
\begin{align}
    \label{zN} 
    y_N =  \psi_{N, L_N} + z_N .
\end{align}

\noi 
It is, therefore, expected that the remainder $z_N$ behaves
better than $y_N$. 
From \eqref{Eqn:yN}, \eqref{LN}, \eqref{Eqn:method}, and \eqref{zN}, we note that the remainder
$z_N$ satisfies
\begin{equation}
  \label{Eqn:newzN} 
  \begin{split}
    & z_N (t) = - i  \sum_{N_{\max} = N_{\med} = N} \chi (t) \int^t_0 \Pi_N
    \mathcal{M} (y_{N_1}, y_{N_2}, y_{N_3}) (t') d t'\\
    & \hspace{1.6cm} - 2 i  \sum_{N_1, N_2 \le  N / 2} \chi (t)
    \int^t_0 \Pi_N \mathcal{M} (y_{N_1}, y_{N_2}, z_N) (t') d t'\\
    & \hspace{1.6cm} - 2 i  \sum_{L_N <  N_{\med} \le  N / 2} \chi (t) \int^t_0
    \Pi_N \mathcal{M} (y_{N_1}, y_{N_2}, \psi_{N, L_N}) (t') d t'\\
    & \hspace{1.6cm} - i  \sum_{N_1, N_3 \le  N / 2} \chi (t)
    \int^t_0 \Pi_N \mathcal{M} (y_{N_1}, z_N, y_{N_3}) (t') d t'\\
    & \hspace{1.6cm} - i  \sum_{N_1, N_3 \le  N / 2} \chi (t)
    \int^t_0 \Pi_N \mathcal{M} (y_{N_1}, \psi_{N, L_N}, y_{N_3}) (t')
    d t'\\
    & \hspace{1.6cm} - i  \sum_{N_{\max} \le  N / 2} \chi (t)
    \int^t_0 \Delta_N \mathcal{M} (y_{N_1}, y_{N_2}, y_{N_3}) (t') d
    t'\\
    & \hspace{1.6cm} + i  \chi (t) \int^t_0 (\Pi_N \mathcal{R} (u_N) -
    \Pi_{N / 2} \mathcal{R} (u_{N / 2})) (t') d t' .
  \end{split}
\end{equation}

\noi 
The equation \eqref{Eqn:newzN} will be our main concern when constructing $z_N$.
The main advantage of this formulation is the appearance of the lower bounds
of $N_{\med}$ in the third term of the right-hand side
of \eqref{Eqn:newzN}, i.e. $L_N <  N_{\med} \le  N / 2$. Therefore, there is essentially no low-low-high interaction from this formulation. This lower bound of $N_{\med}$ comes from the removal of $\psi_{N, L_N}$ from $y_N$.  
In what follows, we shall construct $z_N$ in an induction manner. See Section \ref{Sec:proof} for further discussion.

\subsubsection{The random averaging operator}
Let us have a closer look at $\psi_{N,L_N}$ defined in \eqref{Eqn:method}.
For any $0 \le   L \le  N / 2$ of dyadic $L \in 2^{\mathbb{Z}_{\ge 
0}} \cup \{ 0 \}$, consider the linear equation for $\Psi$:
\begin{equation}
  \label{Eqn:l+} 
  \partial_t \Psi (t) = - 2 i  \Pi_N \mathcal{M} (u_L,
  u_L, \Psi) (t),
\end{equation}
where $u_L$ are solutions to \eqref{Duhamel_N} with $N$ being replaced by $L$, and $\mathcal{M}$ is the trilinear form given in \eqref{Eqn:M}. We remark that $u_L (t)$, the solution to \eqref{FNLS2_N}, uniquely exists for $t\in \R$. 
If \eqref{Eqn:l+} has initial
data $\Psi (0) = \Psi_0$, then we can rewrite \eqref{Eqn:l+} as
\begin{equation}
  \label{Eqn:l+1} 
  \Psi (t) - \chi (t) \Psi_0 = \mathcal{P}^{N, L} [\Psi](t) ,
\end{equation}
for $|t| \le T$,
where $\mathcal{P}^{N, L} : X^b_N (J) \to X^b_N (J)$, for $J = [-T,T]$, are the linear operators defined by
\begin{equation}
  \label{Eqn:l} 
  \mathcal{P}^{N, L} [\Psi](t) = - 2 i  \chi (t) \int^t_0
  \Pi_N \mathcal{M} (u_L, u_L, \Psi) (t') d t' .
\end{equation}

\noi 
Here the space $X^b_N (J)$ is the projection of $X^b (J)$ to its finite Fourier mode, i.e. $\jb{k} \le N$.
Here $\jb{k}$ is defined in \eqref{Eqn:br}.
The operator $\mathcal P^{N,L}$ is known as the random averaging operator, which roughly acts as averaging over low-frequency objects. 
Once the operator $\mathcal{P}^{N, L}$ is properly controlled, as in Definition \ref{Def:LocalM}, we can solve \eqref{Eqn:l+1} and get
\begin{equation}
  \label{Eqn:l+2} 
  \Psi (t) = \mathcal{H}^{N, L}   [\chi (\cdot) \Psi_0] (t)  ,
\end{equation}
where
\begin{equation}
  \label{Eqn:calH} 
  \mathcal{H}^{N, L} := (1 - \mathcal{P}^{N, L})^{- 1} .
\end{equation}

\noi 
We denote $H_{k k'}^{N, L}$ the kernel of $\mathcal{H}^{N, L} $ in the following sense, for a function $u:\R \times \T \to \C$,
\begin{equation*}
  \mathcal F_t (\mathcal{H}^{N, L} [u])_k (\tau) = \int_\R \sum_{k'} \ft{H_{k k'}^{N, L}} (\tau, \tau') \ft u_{k'} (\tau') d\tau'.
\end{equation*}
Then, the solution \eqref{Eqn:l+2} can be expressed as
\[ 
\begin{split}
  \Psi (t,x) & = \mathcal{H}^{N, L} [\chi (\cdot) \Psi_0] (t,x) \\ 
  & = c \int_\R \sum_k 
 \bigg[ \sum_{k'}  \Big( \int_\R  \ft{H_{k k'}^{N, L}} (\tau, \tau') \ft \chi(\tau') d\tau'\Big) (\Psi_0)_{k'} \bigg]e^{i  (k x + t\tau)} d\tau. 
\end{split}
\]
In what follows, 
we set the initial data $\Psi_0 = F_N$ defined in 
\eqref{Eqn:FN}, and we denote $\psi_{N, L}$ to be the corresponding solution, i.e.
\begin{equation}
  \label{Eqn:psiNLdef} 
  \begin{split}
  \psi_{N, L} & (t,x) = \mathcal{H}^{N, L} [\chi (\cdot)  F_N] (t,x) \\ 
  & = c \int_\R \sum_k 
 \bigg[ \sum_{\frac{N}{2} < \jb{k'} \le  N}  \Big( \int_\R  \ft{H_{k k'}^{N, L}} (\tau, \tau') \ft \chi(\tau') d\tau'\Big) \frac{g_{k'}}{\jbb{ k'}^{\alpha / 2}} \bigg]e^{i  (k x + t\tau)} d\tau.
  \end{split}
\end{equation}

\noi  
From \eqref{Eqn:l+1} and \eqref{Eqn:l}, we note that $\psi_{N, L}$ defined in
\eqref{Eqn:psiNLdef} coincides with that in \eqref{Eqn:method} when $L$ is
chosen to be $L_N$ in \eqref{LN}.
We define the operator 
\begin{equation}
  \label{Eqn:hNL}  
    \mathfrak{h}^{N, L}  :=  {\mathcal{H}}^{N, L}   - {\mathcal{H}}^{N, L/2}   .
\end{equation}
By denoting by $ \zeta^{N, L} := \psi_{N, L} - \psi_{N, L / 2} $,
we have
\begin{align}
    \label{zetaN}
    \begin{split} 
    \zeta^{N, L} &:= \psi_{N, L} - \psi_{N, L / 2}  
    =  {\mathfrak{h}}^{N, L} [\chi(\cdot) F_N]  .
    \end{split} 
\end{align}

We also define the flow map version of the operator $\mathcal H^{N,L}$, denoted by $\wt{\mathcal{H}}^{N, L} (t)$ and defined as
\begin{align}
    \wt{\mathcal{H}}^{N, L} (t) [\phi] (x) := \mathcal{H}^{N, L} [ \chi (\cdot) \phi] (t,x) = \sum_k e^{ikx} \sum_{k'} \wt{{H}}^{N, L}_{kk'} (t) \phi_{k'},
    \label{wtH}
\end{align}

\noi 
where $\{\wt{{H}}^{N, L}_{kk'} (t)\}_{kk'}$ is the kernel of the operator $\wt{\mathcal{H}}^{N, L} (t)$. Then denote by
\begin{equation}
  \label{wthNL}  
    \wt{\mathfrak{h}}^{N, L} (t):= \wt{\mathcal{H}}^{N, L}  (t) - \wt{\mathcal{H}}^{N, L/2}  (t) ,
\end{equation}

\noi 
whose kernel is given by 
\begin{equation}
  \label{Eqn:keh} 
  h_{k k'}^{N, L} (t)  =  \wt{{H}}^{N, L}_{kk'} (t) - \wt{{H}}^{N, L/2}_{kk'} (t) .
\end{equation}

\noi 
One of the key observations is that $ h_{k k'}^{N, L}$ and $\wt H_{k k'}^{N, L} $ are Borel functions of $(g_k (\omega))_{\jb{k} \le L}$, and are thus independent
of the Gaussians $F_N$ given by \eqref{Eqn:FN} and \eqref{GFF}. 
Such an independent structure enables one to use probabilistic tools to exploit the high-moment cancellation structure of $F_N$; see Proposition \ref{Prop:te}.

\subsubsection{The solution structure} 
\label{SUBS:SS}
We note that, from \eqref{zN}, \eqref{zetaN}, and the fact that $\psi_{N,1/2} = F_N$, 
we have the following formal expansion of $y_N$ on $[-T,T]$. 
\begin{equation}
  \label{Eqn:deco}  
  \begin{split}
    y_N (t) & = \chi (t) \psi_{N, L_N} (t) + z_N (t)\\
    & = \chi (t) F_N + \sum_{1 \le  L \le  L_N} \zeta^{N, L} (t) + z_N (t)\\
    & = \chi (t) F_N + \sum_{1 \le  L \le  L_N}   {\mathfrak{h}}^{N, L} [\chi(\cdot) F_N]
    (t) + z_N (t)\\
    & = \chi (t) F_N + \sum_{1 \le  L \le  L_N}  \wt {\mathfrak{h}}^{N, L} [ F_N]
    (t) + z_N (t).
  \end{split}
\end{equation}

\noi 
We will construct $z_N$ and $\mathfrak{h}^{N, L}$, for all $N \ge \frac12$ and $1 \le L \le L_N$, 
by an induction argument.
As a matter of fact, the random averaging operator $\mathfrak{h}^{N, L}$ in \eqref{Eqn:deco}  for $L\le L_N$ only depend on $(u_{L})_{L\ll N}$, and thus $(y_{L})_{ L\ll N}$ due to \eqref{Eqn:yNdef}.
Then, from \eqref{zN}, we see that $(y_{L})_{ L\ll N}$ are again determined by $(z_{L'})_{1 \le L' \le L}$ and $(\mathfrak h^{L',R'})_{L'\le L, R' < (L')^{1-\dl}}$. In all, we see that $\mathfrak{h}^{N, L}$ is determined by $(z_{L'})_{L' \ll N}$ and $(\mathfrak h^{L',R'})_{L', R'\ll N}$. On the other hand, from \eqref{Eqn:newzN}, we see that $z_N$ depends on $(z_{N'})_{N' \le N}$ and $(\mathfrak h^{N',L})_{N'\le N, L < (N')^{1-\dl}}$. See Definition \ref{Def:LocalM} for further details. 

Finally, from \eqref{Eqn:yNdef}, \eqref{Eqn:deco}  , and the fact that $u_{1/2} = F_{1/2} = 0$, we arrive at:
\begin{equation}
  \label{Eqn:Ansatz} 
  u (t) = \chi(t) u_0^{\omega} + \sum_{N \in
  2^{\mathbb{Z}_{\ge  0}}} \sum_{1 \le  L \le  L_N}
  \wt{\mathfrak{h}}^{N, L} [F_N] (t) + \sum_{N \in 2^{\mathbb{Z}_{\ge  0}}}
  z_N (t),
\end{equation}

\noi 
provided the summations converge in a proper sense. 
From \eqref{Eqn:Ansatz}, we see that the expected solution $u$ in Theorem \ref{THM:main} has the following structure
    \begin{equation*}  
      u = \text{random linear term} + 
      \text{random averaging term}
      + 
      \text{smoother term}
      .
    \end{equation*}

\noi 
We shall construct the smoother term in $H^{\frac12+} (\T)$, much smoother than the random linear and random averaging terms, which lie in $H^{\frac{\al -1}2-} (\T)$, as $\al$ is close to $1$.
The random averaging term bridges the random linear term and the smoother term. 
On the one hand, it records quasilinear roughness from high-low interactions in $u$, thus guaranteeing that the ``smoother term" has higher regularity. On the other hand, it preserves the independent structure of the initial data, such that we may apply probabilistic arguments. 

\medskip 

\text{{\bfseries{Notation conventions.}}} Throughout the paper, we will denote
by $A \lesssim B$, $A \lesssim_{a_1, \cdots, a_n} B$ and $A \ll B$ any
estimate of the form $A \le  C B$, $A \le  C_{a_1, \cdots, a_n} B$ and
$A \le  c B$ respectively, where $C >0$ is an absolute constant,
$C_{a_1, \cdots, a_n} >0$ is a constant that only depends on parameters $a_1,
\cdots, a_n$, and $c > 0$ is a small constant. We also denote by $A \sim_{a_1, \cdots, a_n} B$ if $A \lesssim_{a_1, \cdots, a_n} B$ \& $B \lesssim_{a_1, \cdots, a_n} A$ hold. We also use $a +$ (and $a -$) to mean $a + \varepsilon$ (and $a -
\varepsilon$, respectively) for arbitrarily small $\varepsilon > 0$. 
We denote by
\begin{equation}
  \label{Eqn:br} 
  \begin{cases}
    \langle k \rangle := \sqrt{1 + | k |^2} ;\\
    \jbb{k} := (1 + | k |^{\alpha})^{1 / \alpha} .
  \end{cases}
\end{equation}
We note that $\jb{k} = \jbb{k}$ for $\al = 2$.
We denote
by $\bar{z}$ the complex conjugate of $z$. We also use $(\mathcal F_x f) (k) $ to denote the
Fourier coefficients of $f$, i.e.
\begin{align*}
  (\mathcal{F}_x f) (k) = \int_{\T} f(x) e^{-ikx} dx,
\end{align*}  

\noi 
which will be abbreviated as $f_k$, i.e. $f_k = (\mathcal{F}_x f) (k)$. We may also abuse notations and write $f = (f_k)$.
The Fourier transform is invertible such that
\begin{align*} 
    f(x) = \frac1{2\pi} \sum_{k \in \Z}  f_k e^{ikx}.
\end{align*}  

\noi 
Let $u = u(t,x)$ be a space-time distribution. We denote $u_k (t) : = \mathcal{F}_x (u(\cdot,t)) (k) $ to be the spatial Fourier coefficient of $u(\cdot, t)$. Given $k\in \Z$,  the Fourier
coefficient $u_k$ is still a function of time. 
We use $\widehat{u_k}$ to represent the temporal Fourier transform of $u_k$ only, i.e.
\[ 
  \widehat{u_k} (\tau) := \mathcal{F}_t (u_k) (\tau) = \int_\R u_k (t) e^{ - it\tau} dt. 
\]
We use $t$, $t'$ to
denote temporal variables and $\tau$, $\tau'$, $\tau_i$ to denote Fourier
variables of time. 
When we work with space-time function spaces, we use short-hand notations such
as $C_T H_x^s = C ([0, T] ; H^s (\mathbb{T}))$. 

Let $N_i \ge  \frac12$ for $i
= 1, 2, 3$ be dyadic numbers. We use $N_{\max}$,
$N_{\med}$, and $N_{\min}$ to denote the largest,
second largest, and smallest number among $N_1$, $N_2$, and $N_3$,
respectively. 
Let $k_i$ be the frequency associated with the dyadic number $N_i$ in the sense that $\jb{k_i} \le N_i$.
Please note that we only attach an upper bound to $\jb{k_i}$ here. 
If the object under consideration is of type (G)\footnote{See Subsection \ref{Sub:refo} for definitions of type (G), (C), and (D).}, then we may further restrict $\jb{k_i}$ to the dyadic block $N_i/2 < \jb{k_i} \le N_i$, which is not necessarily true for type (C) and (D) objects. 
Define $k_{\max}$ to be the $k_i$ such that $N_i = N_{\max}$.
When $N_{\max} = N_{\med}$, the roles of $k_{\max} $ and $k_{\med}$ are interchangeable in this context; i.e. if $N_1 = N_2 > N_3$, then we can designate either $k_1$ or $k_2$ as be $k_{\max}$ and the other to be $k_{\med}$ without changing other parts of the proof.
Similarly, we define $k_{\med}$ and $k_{\min}$. We use $a \wedge b$ and $a \vee b$ to denote $\min (a, b)$ and
$\max (a, b)$, respectively. We denote by $\#S$ or $|S|$ the cardinal number of the set
$S$. 
We also need the sharp cutoff 
\begin{align} 
\label{ind}
\ind_{B} (x)
= \begin{cases}
    1, \quad & x\in B;\\
    0, & \textup{otherwise}.
\end{cases}
\end{align}

Let $X_i$ for $i = 1,2$ be normed space equipped with norm $\|\cdot \|_{X_i}$. 
Then we define
\[
\| \cdot \|_{X_1 \cap X_2} = \| \cdot \|_{X_1 }  + \| \cdot \|_{ X_2} .
\]


\noi 
Similarly, we can define the intersection of more than two normed spaces.

For any $N$, we denote by $\mathcal{B}_{\le  N}$ the
$\sigma$-algebra generated by the random variables $( g_k )_{\jb{k}
\le  N}$.

The rest of the paper is organized as follows. In Section
\ref{Sec:preliminary}, we recall some basic definitions and tools and we also
prove some crucial tensor estimates. In Section \ref{Sec:proof}, we prove the
main theorem. In Sections \ref{Sec:raos} and \ref{Sec:rems}, we prove Proposition
\ref{PROP:main}. In Section \ref{Sec:alphagtr1}, we discuss the
sharpness of the condition $\alpha > 1$.

\section{Preliminary}\label{Sec:preliminary}

In this section, we collect some results that will be used throughout the paper.

\subsection{Norms}
Let $A$ be a finite index set.
We denote by $k_A = \{k_j; j\in A\} \in \Z^A$. 
A tensor $H =  H_{k_A}$ is a function $\Z^A \to \C$ with variables $k_A$. In what follows, we assume $H_{k_A} \in \l^2_{k_A}$, i.e.
\[
\| H\|_{\l^2_{A}} : = \| H_{k_A}\|_{k_{A}} = \bigg( \sum_{k_A} |H_{k_A}|^2 \bigg)^{\frac12} < \infty.
\]
The support of $H$ is the set of $k_A$ such that $H_{k_A} \neq 0$.
If $(B, C)$ is a partition of $A$, namely $B \cap C = \emptyset$ and $B \cup
C = A$, we define the tensor norm $\| H \|_{k_B \to  k_C}$ such that
\begin{equation}
  \label{Eqn:opnorm} 
  \| H \|_{k_B \to  k_C}^2 = \sup \bigg\{ \sum_{k_C}
  \Big| \sum_{k_B} H_{k_A} z_{k_B} \Big|^2 ; \sum_{k_B} | z_{k_B} |^2 = 1
  \bigg\} .
\end{equation}
Let us remark that
\begin{equation} 
\label{CSinq}
  \| H \|_{k_B \to  k_C} = \| H \|_{k_C \to 
  k_B} \le \| H \|_{k_A}.
\end{equation}

\noi 
Let $f \in \l^2_{B} $ be a function $f : \Z^B \to \C$ such that $\|f({k_B})\|_{\l^2_B} < \infty$. 
We may use $H$ to define an operator, still denoted by $H : \l^2_{k_B} \to \l^2_{k_C}$, given by
\[
H[f] (k_C) = \sum_{k_B} H_{k_A} f ({k_B}).
\]
Then the tensor norm $\| H \|_{k_B \to  k_C}$ defined in \eqref{Eqn:opnorm} can be recast as an operator norm.
\[
\| H \|_{k_B \to  k_C} = \| H \|_{\l_{k_B}^2 \to  \l_{k_C}^2} : = \sup_{\|f\|_{\l^2_{k_B}} =1} \| H[f]\|_{\l^2_{k_C}} .
\]

\noi 
We remark that if $C = \emptyset$, then $B = A$ and $\| H \|_{k_B \to  k_C} = \| H \|_{k_A}$.

For a space-time function $u \in C(\R; L^2 (\T))$, we have
\[ 
u (t,x) = \frac1{2\pi} \sum_{k \in \mathbb{Z}} u_k (t)
e^{i  k x}.
\]

\noi 
We define the Fourier restriction norm 
\[
\| u \|_{X^b} := \bigg( \int_{\mathbb{R}} \langle \tau \rangle^{2 b} \|
    \widehat{u_k} (\tau) \|_k^2 d \tau \bigg)^{\frac12}  = \bigg(\int_{\mathbb{R}} \langle \tau \rangle^{2 b} \sum_{k} |
    \widehat{u_k} (\tau) |^2 d \tau\bigg)^{\frac12}.
\]

\noi 
For a time dependent operator $\mathcal H (t)$ given by its  kernel $\mathcal{H} (t) = \{ H_{k k'} (t) \}_{k
k'}$, i.e.
\[
\mathcal{H} (t) [f] = \frac1{2\pi} \sum_{k \in \Z} \Big( \sum_{k'\in \Z}H_{k k'} (t) f_{k'} \Big) e^{ikx},
\]

\noi 
we define the operator norm
\begin{align}
    \label{Yb}
    \begin{split}
\| \mathcal{H} \|_{Y^b} & := \big\| \langle \tau \rangle^{b} \widehat{H_{k k'}} (\tau)  \big\|_{\ell_k^2 L_{\tau}^2 \to \ell_{k'}^2} \\
& = \sup \bigg\{ \bigg( \int_{\mathbb{R}} \langle \tau \rangle^{2 b} \sum_{k}
\Big| \sum_{k'} \ft{H_{kk'}} (\tau) f_{k'} \Big|^2 d\tau\bigg)^{\frac12}; \,\, \sum_{k'} |f_{k'}|^2 = 1 \bigg\} \\
& = \sup_{\|f\|_{L^2} = 1} \big\|\mathcal H (t) [f] \big\|_{X^b}, 
\end{split}
\end{align} 

\noi 
and the Hilbert–Schmidt norm of the operator $\mathcal H$,
\begin{align}
    \label{Zb}
    \| \mathcal{H} \|_{Z^b} := \bigg( \int_{\mathbb{R}} \langle \tau \rangle^{2
    b} \| \widehat{H_{k k'}} (\tau) \|_{k k'}^2 d \tau \bigg)^{\frac12} = \bigg( \int_{\mathbb{R}} \langle \tau \rangle^{2 b} \sum_{k,k'} \big|
     \widehat{H_{k k'}} (\tau) \big|^2 d \tau\bigg)^{\frac12}. 
\end{align} 

\noi 
Now we consider an operator $\mathcal P$ over space-time functions given by  
\[
\mathcal P [u] (t,x) = \frac1{(2\pi)^2} \sum_{k \in \Z} \int_{\R} \bigg( \sum_{k'\in \Z} \int_\R \ft {P_{k k'}} (\tau,\tau') \ft u_{k'} (\tau') d\tau' \bigg)  e^{i(t\tau + kx)} d\tau,
\]

\noi 
where $\ft {P_{k k'}} (\tau,\tau')$ is the double temporal Fourier transform given by
\[
\ft {P_{k k'}} (\tau,\tau') = \int_{\R^2} P_{k k'} (t, t') e^{-i(t\tau + t' \tau')} dt dt'.
\]

\noi 
We define the operator norm of $\mathcal P$ by
\begin{align}
    \label{Ybb}
    \begin{split}
    \| \mathcal{P} \|_{Y^{b, b'}} & := \big\| \langle \tau \rangle^{ b}
    \langle \tau' \rangle^{-  b'} \widehat{P_{k k'}} (\tau, \tau')
    \big\|_{\ell_k^2 L_{\tau}^2 \to  \ell_{k'}^2 L_{\tau'}^2} \\ 
    & = \sup_{\|u\|_{X^{b'}} = 1} \frac1{(2\pi)^2} \bigg( \int_{\mathbb{R}} \langle \tau \rangle^{2 b} \sum_{k} \bigg|  \sum_{k'} \int_\R  \widehat{P_{k k'}} (\tau,\tau') \ft u_{k'} (\tau') d\tau' \bigg|^2 d \tau \bigg)^{\frac12} \\
     & = \sup_{\|u\|_{X^{b'}} = 1} \big\| \mathcal P [u] \big\|_{X^b},
    \end{split}
\end{align}

\noi 
and the Hilbert–Schmidt norm of the operator $\mathcal P$ 

\begin{align} 
\label{Zbb} 
\begin{split} 
    \| \mathcal{P} \|_{Z^{b, b'}} 
    & = \bigg( \int_{\mathbb{R}^2} \langle \tau
    \rangle^{2 b} \langle \tau' \rangle^{- 2 b'} \sum_{k,k'} \big| \widehat{P_{k k'}} (\tau,
    \tau') \big|^2 d \tau d \tau' \bigg)^{\frac12}.
    \end{split}
\end{align}

For any finite interval $I$, define the corresponding localized norms
\[ 
  \| u \|_{X^b (I)} := \inf \left\{ \| v \|_{X^b} ; v = u \text{ on } I
  \right\} 
\]
and similarly define $Y^{b, b'} (I)$ and $Z^{b, b'} (I)$. By abusing
notations, we will call the above $v$ an  extension of $u$,
and still denotes $v$ by $u$. We will use $X^b$ to denote $X^b (I)$ for simplicity unless otherwise specified. 

To conclude this subsection, we record some basic estimates from \cite{DNY1,DNY3}. Define the original and truncated integral
operators
\begin{align}
    \label{Duhamel_I}
  \mathcal{I} v (t) = \int^t_0 v (t') d t', \quad
  \mathcal{I}_{\eta} v (t) = \eta (t) \int^t_0 \eta (t') v (t') d t', 
\end{align}  

\noi 
where $\eta(t)$ is a smooth cutoff function, such that $\eta(t) = 1$ for $|t|\le 1$; and $\eta(t) = 0$ for $|t|\ge 2$. 
By comparing equations \eqref{Duhamel} with \eqref{Duhamel_N}, 
we observe that the Duhamel operator from \eqref{Duhamel} roughly reduces to the integral operator $\mathcal I$ defined in \eqref{Duhamel_I} under the transformation $u (t)  \to
e^{i  t {\rm D}_x^{\alpha}} u (t)$. 
Therefore, the operators $\mathcal I$ and $\mathcal I_\eta$ are also referred to as Duhamel operator and truncated Duhamel operator, respectively. 
See \cite[Section 4.1]{DNY3} for further details.
We have the following kernel estimates. See \cite[Lemma 3.1]{DNY1} and \cite[Lemma 4.1]{DNY3} for a proof.

\begin{lemma}
  \label{LEM:kerneles}
  We have the formula
  \begin{equation}
    \label{Eqn:Ik} 
    \widehat{\mathcal{I}_{\eta} v} (\lambda) =
    \int_{\mathbb{R}} \mathcal{K} (\lambda, \lambda') \hat{v} (\lambda')
    d \lambda',
  \end{equation}
  where the kernel $\mathcal{K}$ satisfies the following pointwise estimates
  \begin{equation}
    \label{Eqn:bk} 
    | \mathcal{K} (\ld,\ld') | + | \partial_{\lambda, \lambda'}
    \mathcal{K} (\ld,\ld')  | \lesssim \bigg( \frac{1}{\langle \lambda \rangle^3} +
    \frac{1}{\langle \lambda - \lambda' \rangle^3} \bigg) \frac{1}{\langle
    \lambda' \rangle} \lesssim \frac{1}{\langle \lambda \rangle \langle
    \lambda - \lambda' \rangle} .
  \end{equation}
\end{lemma}

We also need the following short-time bound on Fourier restriction norm from \cite[Proposition 2.7]{DNY2}.

\begin{proposition}
  \label{Prop:stb}{{\em (Short time bounds).\/}} 
  Let $\eta$ be any Schwartz
  function. Recall that $\eta_T (t) = \eta (T^{- 1} t)$ for $T \ll
  1$. Then for any $u \in X^{b_1}$, we have
  \[ 
    \| \eta_T \cdot u \|_{X^b} \lesssim T^{b_1 - b} \| u \|_{X^{b_1}}, 
  \]
  provided either $0 < b \le  b_1 < 1 / 2$, or $u_k (0) = 0$ and $1 / 2 <
  b \le  b_1 < 1$.
\end{proposition}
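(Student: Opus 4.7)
The plan is to reduce the estimate to a one-dimensional scalar inequality in time and then prove it via Sobolev--Slobodeckij characterizations combined with Hardy's inequality (or Hölder continuity at zero).

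\smallskip

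\textbf{Step 1 (Reduction to a scalar estimate).} From the definition \eqref{Yb}, the $X^b$-norm factors as $\|u\|_{X^b}^2 = \sum_{k \in \Z} \|u_k\|_{H^b_t(\R)}^2$ through the spatial Fourier decomposition, and multiplication by $\eta_T(t)$ acts identically on each coefficient $u_k$. Hence the proposition reduces to the scalar statement
\[
\|\eta_T w\|_{H^b_t(\R)} \lesssim T^{b_1 - b} \|w\|_{H^{b_1}_t(\R)}
\]
for any $w: \R \to \C$, with the condition $w(0) = 0$ corresponding to $u_k(0) = 0$ in the second regime. After this reduction, the problem is purely one-dimensional.

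\smallskip

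\textbf{Step 2 (Case $0 < b \le b_1 < 1/2$).} I would use the Sobolev--Slobodeckij equivalence $\|f\|_{H^b}^2 \sim \|f\|_{L^2}^2 + \iint |f(t)-f(t')|^2 |t-t'|^{-1-2b}\, dt\, dt'$. For the $L^2$-part, Hardy's inequality (which holds since $0 < b_1 < 1/2$) gives
\[
\|\eta_T w\|_{L^2}^2 \lesssim \int_{|t| \le 2T} |w(t)|^2\, dt \lesssim T^{2 b_1} \|w\|_{H^{b_1}}^2 \le T^{2(b_1 - b)} \|w\|_{H^{b_1}}^2.
\]
For the seminorm, decompose $w = w_{\textup{lo}} + w_{\textup{hi}}$ via a temporal Littlewood--Paley cutoff at frequency $T^{-1}$. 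The high-frequency contribution is bounded directly by $\|w_{\textup{hi}}\|_{H^b}^2$, gaining $T^{2(b_1-b)}$ from the pointwise estimate $N^{2b} \le T^{2(b_1-b)} N^{2 b_1}$ on each dyadic frequency $N > T^{-1}$. The low-frequency contribution is controlled by Bernstein in time, yielding $\|w_{\textup{lo}}\|_{L^\infty} \lesssim T^{b_1 - 1/2} \|w\|_{H^{b_1}}$ and $\|w_{\textup{lo}}'\|_{L^\infty} \lesssim T^{b_1 - 3/2} \|w\|_{H^{b_1}}$, combined with the elementary bound $\int \min(a|s|^2, C) |s|^{-1-2b}\, ds \lesssim a^b C^{1-b}$.

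\smallskip

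\textbf{Step 3 (Case $1/2 < b \le b_1 < 1$ with $w(0) = 0$).} Here Hardy's inequality fails, but $H^{b_1} \hookrightarrow C^{b_1 - 1/2}$ together with $w(0) = 0$ yields the pointwise Hölder bound $|w(t)| \lesssim |t|^{b_1 - 1/2} \|w\|_{H^{b_1}}$, which serves as a replacement and gives the same $L^2$ bound as in Step 2. For the $H^b$ part, use the equivalent norm $\|f\|_{H^b}^2 \sim \|f\|_{L^2}^2 + \|f'\|_{H^{b-1}}^2$ together with the product rule $(\eta_T w)' = \eta_T' w + \eta_T w'$. The term $\|\eta_T w'\|_{H^{b-1}}$ is handled by duality: since $w' \in H^{b_1 - 1}$ and $\langle \eta_T w', g\rangle = \langle w', \eta_T g\rangle$ for $g \in H^{1-b}$, the estimate reduces to Step 2 with dual exponents $(\beta, \beta_1) := (1 - b_1, 1 - b) \in (0, 1/2)$ satisfying $\beta_1 - \beta = b_1 - b$. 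The residual term $\|\eta_T' w\|_{H^{b-1}}$ is absorbed by combining the $T^{-1}$ scaling of $\eta_T'$ with the pointwise vanishing of $w$ near $0$, giving the same $T^{b_1 - b}$ factor.

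\smallskip

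The main technical obstacle lies in Step 3, where the condition $w(0) = 0$ is indispensable: without it, the $T^{-1}$ loss from differentiating $\eta_T$ cannot be absorbed, reflecting the fact that for $b > 1/2$, $H^b$-functions have a well-defined trace at $0$, and cutting off by $\eta_T$ interacts nontrivially with that trace.
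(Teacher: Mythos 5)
The paper does not prove this statement; it is cited verbatim from \cite[Proposition 2.7]{DNY2}, where the standard argument runs on the Fourier side: one writes $\widehat{\eta_T u_k}(\tau) = T\int \hat\eta(T(\tau-\sigma))\hat u_k(\sigma)\,d\sigma$ and estimates the convolution kernel $T\langle\tau\rangle^b|\hat\eta(T(\tau-\sigma))|\langle\sigma\rangle^{-b_1}$ via Schur's test, treating $|\tau|\gtrsim T^{-1}$ and $|\tau|\lesssim T^{-1}$ separately. Your proposal is a genuine physical-space alternative (Slobodeckij seminorm, Hardy inequality, duality), and the reduction in Step 1 is legitimate since for the interaction representation the $X^b$ norm is exactly $\ell^2_k H^b_t$. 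The plan is correct, the parameter bookkeeping checks out (in particular $a^bC^{1-b}$ with $a=T^{2(b_1-3/2)}$, $C=T^{2(b_1-1/2)}$ combined with the $O(T)$ support factor yields exactly $T^{2(b_1-b)}$, and the dual exponents $(\beta,\beta_1)=(1-b_1,1-b)$ do land in the first regime). Two places are understated, though neither is fatal. First, in Step 2 the high-frequency bound $\|\eta_T w_{\rm hi}\|_{H^b}\lesssim\|w_{\rm hi}\|_{H^b}$ uniformly in $T$ is not ``direct'': you need the commutator split $\eta_T(t)w_{\rm hi}(t)-\eta_T(t')w_{\rm hi}(t') = \eta_T(t)(w_{\rm hi}(t)-w_{\rm hi}(t'))+(\eta_T(t)-\eta_T(t'))w_{\rm hi}(t')$ and must control the kernel $\int_\R|\eta_T(t)-\eta_T(t')|^2|t-t'|^{-1-2b}\,dt$, which is $\lesssim T^{-2b}\ind_{|t'|\lesssim T}+T|t'|^{-1-2b}\ind_{|t'|\gtrsim T}$; this is then absorbed using $\|w_{\rm hi}\|_{L^2}\lesssim T^{b_1}\|w_{\rm hi}\|_{H^{b_1}}$ from the frequency localization. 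Second, in Step 3 the residual term $\|\eta_T'w\|_{H^{b-1}}$ is not absorbed by the $L^2$ embedding alone (that only gives $T^{b_1-1}$, which is too weak since $b<1$); you need the duality pairing $|\langle\eta_T'w,g\rangle|\le\|\eta_T'\|_\infty\|w\|_{L^2(|t|\lesssim T)}\|g\|_{L^2(|t|\lesssim T)}$ together with Hardy's inequality applied to $g\in H^{1-b}$, $1-b<1/2$, to extract $T^{1-b}$ from the localized $L^2$ norm of $g$, giving the full $T^{-1}\cdot T^{b_1}\cdot T^{1-b}=T^{b_1-b}$. With those two points spelled out, the argument is complete and arguably more elementary than the standard kernel computation, at the cost of invoking the Slobodeckij characterization and fractional Hardy inequality as black boxes.
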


\subsection{Counting estimates}
\label{SUB:count}
Counting estimates are crucial in our analysis.
In this subsection,
we shall show  some almost sharp counting estimates of solutions to some fractional Diophantine equations. 
To be more precise,
given dyadic numbers $N$, $N_1$, $N_2$, $N_3 > 0$ and some fixed number $m \in \R$, define the set
  \begin{equation}
    \label{Eqn:S} 
    S = \left\{
    \begin{aligned} 
      & (k, k_1, k_2, k_3) \in \mathbb{Z}^4 , \quad k_2 \not\in \{ k_1, k_3 \}, \\  
      & k = k_1 - k_2 + k_3 ,\\
      & |{k_1}|^{\alpha} - |{k_2}|^{\alpha} + |{k_3}|^{\alpha} - |{k}|^{\alpha}
      = m + O (1), \\
      & | k | \le  N, \,\, | k_j | \le  N_j\, \textup{ for } j \in \{ 1, 2, 3
      \} ,
    \end{aligned}
    \right.
  \end{equation}

  \noi 
  which is a subset of the hyperplane $k = k_1 - k_2 + k_3 $ in $\Z^4$.
  Denote by $S_k$ the set of $(k_1, k_2, k_3) \in \Z^3$ such that $(k, k_1, k_2, k_3) \in S$ when $k$ is fixed.
  It is easy to see that \[
  S_k \subset \Gamma (k),
  \]
  
  \noi  
  where $\Gamma (k)$ is the hyperplane of $\Z^3$ defined in \eqref{hyperplane}. Similarly, we can define $S_{k_1}$, $S_{k k_1}$, $S_{kk_1k_2}$, and etc. 
  For a set $A$, we use $|A|$ or $\# A$ to denote its cardinality. If $A$ is a finite set, then $|A|$ denotes the number of elements of $A$.
  The main purpose of this subsection is to count the  cardinality of these sets defined above.

\medskip 

We start with a basic counting estimate from {\cite[Lemma 2.6]{ST21}}.
\begin{lemma}
    \label{LEM:basic}
    Let $I$, J be two intervals and $\phi$ be a
  real-valued  $C^1$ function defined on $I$. Then, we have
  \[ 
    \# \{ k \in I \cap \mathbb{Z}; \phi (k) \in J \} \lesssim 1 + \frac{| J
    |}{\operatorname{ inf}\limits_{\xi \in I} | \phi' (\xi) |}. 
  \]
\end{lemma}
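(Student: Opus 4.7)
The plan is to reduce the statement to an elementary count of integers in an interval, exploiting strict monotonicity of $\phi$. First, I would handle the trivial case: if $c := \inf_{\xi \in I}|\phi'(\xi)| = 0$, the right-hand side is $+\infty$ and nothing needs proving. So henceforth I assume $c > 0$. Since $\phi \in C^1(I)$ and $|\phi'| \ge c > 0$ on $I$, continuity of $\phi'$ (via the intermediate value theorem) forces $\phi'$ to have constant sign throughout $I$. Consequently $\phi$ is strictly monotone on $I$, which is the key structural fact underlying the argument.

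Next, I would bound the diameter of the set $E := \{ k \in I \cap \Z : \phi(k) \in J\}$ using the mean value theorem. For any $k_1, k_2 \in E$, there exists $\xi \in I$ between $k_1$ and $k_2$ with $|\phi(k_1) - \phi(k_2)| = |\phi'(\xi)|\,|k_1 - k_2| \ge c\,|k_1 - k_2|$. Since $\phi(k_1), \phi(k_2) \in J$, we also have $|\phi(k_1) - \phi(k_2)| \le |J|$, and hence $|k_1 - k_2| \le |J|/c$. Thus $E$ lies inside a real interval of length at most $|J|/c$, and any such interval contains at most $L + 1$ integers when its length is $L$. This yields $\#E \les 1 + |J|/c$, as claimed.

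There is no serious obstacle here; the lemma is entirely elementary. The only mild subtlety is that the degenerate case $c = 0$ must be disposed of separately before invoking monotonicity, since the mean value argument becomes vacuous in that regime. The bound is sharp up to constants (taking $\phi$ affine already saturates it), and the utility of the lemma in the sequel will come from pairing it with explicit lower bounds on the derivative of the fractional phase $\Phi$ in \eqref{Eqn:Phi} arising in counting sets such as \eqref{Eqn:S}.
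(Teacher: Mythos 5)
Your proof is correct. The paper itself gives no proof of this lemma; it is simply cited as \cite[Lemma 2.6]{ST21}, so there is nothing in the text to compare against directly. Your mean-value-theorem argument is the standard elementary proof of such a statement and surely matches what any careful writing-out would produce: the MVT gives $|k_1-k_2|\le |J|/c$ for any two $k_1,k_2$ in the counted set, so the set sits inside an interval of length $|J|/c$ and hence contains at most $|J|/c+1$ integers, with the $c=0$ case dispatched trivially. One small remark: the paragraph establishing strict monotonicity of $\phi$ (via constant sign of $\phi'$) is never actually used. The MVT step already applies to arbitrary $k_1,k_2\in I$ without any monotonicity hypothesis, and the diameter bound $|k_1-k_2|\le|J|/c$ follows directly from $|\phi(k_1)-\phi(k_2)|\le|J|$ and $|\phi'|\ge c$; monotonicity is a consequence of the hypotheses, not an ingredient of the count. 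You could drop that paragraph and lose nothing.
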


In this paper, we will focus on a special function $\phi$. Let 
\begin{align}
    \label{phib}
    \phi_{b,\pm} (x) = | x |^{\alpha} \pm | x - b |^{\alpha},
\end{align}

\noi 
where $\al \in (1,2)$ and $b \in \R$. Then, we see that $\phi_{b,\pm} \in C^1$ for $\al > 1$, and we have
\begin{align}
    \label{phib'}
    \phi'_{b, \pm} (x) = \alpha \big(\ensuremath{\operatorname{sgn}} (x) | x
    |^{\alpha - 1} \pm \ensuremath{\operatorname{sgn}} (x - b) | x - b |^{\alpha
    - 1} \big).
\end{align}
Then, we have the following elementary estimates.

\begin{lemma}
  \label{LEM:c}
  Let $\phi_{b,\pm} $ be as in \eqref{phib}.   
  \begin{itemize}
  \item[(i)] 
  Let $\al \in (1,2)$.
  Then, we have
    \begin{equation}
    \label{Eqn:lbm} | \phi'_{b, -} (x) | \gtrsim_{\alpha} \min ( | b |  | x |^{\alpha - 2}, |b|^{\al -1}) 
  \end{equation}
  provided $x \neq 0$.

  \item[(ii)]  
  Let $\al \in (1,2)$ and $|b| \ge 1$. Then, we have
  \begin{equation}
    \label{Eqn:lowb+}
    | \phi_{b, +}' (x) | \gtrsim_{\alpha}  |b|^{\alpha-1}
\end{equation} 
for $|2x-b| \ges |b| $.
  For $|2x-b| \ll |b|$, if we further assume that $ | 2 x - b  | \ges |b|^{1-\frac\al2} $, then we have
  \begin{equation}
    \label{Eqn:sharp}
    | \phi_{b, +}' (x) | \gtrsim_{\alpha} |b|^{\frac\al2 -1}.
  \end{equation} 
  \end{itemize} 
\end{lemma}

\begin{proof}
  (i) We first consider $b < 0$, for which we use \eqref{phib'} to get
  \[ 
    \phi'_{b, -} (x) = 
    \begin{cases}
      \alpha (- | x |^{\alpha - 1} + | x - b |^{\alpha - 1}), & x \in (-
      \infty, b];\\
      \alpha (- | x |^{\alpha - 1} - | x - b |^{\alpha - 1}), & x \in (b, 0)
      ;\\
      \alpha (| x |^{\alpha - 1} - | x - b |^{\alpha - 1}), & x \in (0, +
      \infty).
   \end{cases} 
  \]
  
  \noi 
  When $x \in (b, 0)$, it is easy to see that 
  \begin{align*}
    | \phi'_{b, -} (x) | = \alpha (| x |^{\alpha - 1} + | x - b |^{\alpha - 1}) \gtrsim_{\alpha} |b|^{\al -1}, 
  \end{align*}

  \noi  
  due to the fact that $\al > 1$, which is sufficient for \eqref{Eqn:lbm}. 
  We turn to the case when $x \in (- \infty, b] \cup (0, + \infty)$, where we have that  $x$ and $x-b$ have the same sign.
  Therefore, by the Fundamental Theorem of Calculus, we have
  \begin{align} 
  \label{phib-2}
    \begin{split}
      & | \phi'_{b, -} (x) | = \alpha (\alpha - 1) \bigg| \int_b^0 | x - b + s
      |^{\alpha - 2} d s \bigg|\\
      & \quad \gtrsim_{\alpha} | b | \cdot \min (| x |^{\alpha - 2}, | x - b
      |^{\alpha - 2}) ,
    \end{split} 
  \end{align}
  where we used the fact that $\al - 2 < 0$. If $x \in (-\infty, b]$, then $|x-b| \le |x|$ and thus $\min (| x |^{\alpha - 2}, | x - b |^{\alpha - 2}) = | x |^{\alpha - 2}$, which, together with \eqref{phib-2}, gives \eqref{Eqn:lbm}. In the following, let us assume that $x \in (0, \infty)$. Then we have $|x-b| \ge |x|$. If we further assume that $|x-b| \sim |x|$, then  $\min (| x |^{\alpha - 2}, | x - b |^{\alpha - 2}) \ges | x |^{\alpha - 2}$, which is again sufficient for \eqref{Eqn:lbm}.
  It remains to consider the case $|x-b| \gg |x|$, which implies $|x| \ll |b|$ and thus $|x-b| \sim |b|$. Therefore, we have $\min (| x |^{\alpha - 2}, | x - b |^{\alpha - 2}) = | x -b|^{\alpha - 2} \sim |b|^{\al - 2}$, which, together with \eqref{phib-2}, gives \eqref{Eqn:lbm} again. 
  Thus we finish the proof of \eqref{Eqn:lbm} for $b <0$. The proof for the case when $b > 0$ is similar and therefore omitted.

  (ii) We only consider the case $b <0$, as the proof for the case $b >0$ is similar. 
  From \eqref{phib'}, we have
  \begin{align} 
  \label{phib+1}
    \phi'_{b, +} (x) = 
    \begin{cases}
      \alpha (- | x |^{\alpha - 1} - | x - b |^{\alpha - 1}), & x \in (-
      \infty, b];\\
      \alpha (- | x |^{\alpha - 1} + | x - b |^{\alpha - 1}), & x \in (b, 0)
      ;\\
      \alpha (| x |^{\alpha - 1} + | x - b |^{\alpha - 1}), & x \in (0, +
      \infty).
   \end{cases} 
  \end{align}
  When $x \in (- \infty, b] \cup (0, \infty)$, from \eqref{phib+1} we have
  \begin{equation*} 
    | \phi_{b, +}' (x) | = \alpha | | x |^{\alpha - 1} + | b -
    x |^{\alpha - 1} | \gtrsim_{\alpha} |b|^{\al -1},  
  \end{equation*}

  \noi 
  since $\al > 1$.
  From now on, we assume $x \in (b,0]$.
  From \eqref{phib+1}, we have
  \begin{align}
      \label{phib+2}
      \begin{split}
      |\phi_{b,+}' (x)| & = \alpha \big| | x |^{\alpha - 1} - | x - b |^{\alpha - 1} \big| \\
      & =  {\al}{(\al - 1)} \bigg| \int_x^0 t^{\al -2} dt - \int_0^{x-b} t^{\al -2} dt \bigg| \\ 
      & = {\al}{(\al - 1)} \bigg| \int_{-x}^{x-b} t^{\al -2} dt \bigg|\\
      & \ge  {\al}{(\al - 1)}  | 2x-b| |b|^{\al -2},
      \end{split}
  \end{align}

\noi 
where we used the facts that $|x|, |x-b| \le |b|$. 
It is easy to see that \eqref{phib+2} implies \eqref{Eqn:lowb+}, provided $|2x-b| \ges |b|$.
On the other hand, if $ | 2 x - b  | \ges |b|^{1-\frac\al2} $, we have
\begin{align}
    \label{phib+3}
     {\al}{(\al - 1)}  | 2x-b| |b|^{\al -2} \ges |b|^{\frac\al2 -1},
\end{align}
which, together with \eqref{phib+2}, gives \eqref{Eqn:sharp}.
Thus we finish the proof of \eqref{Eqn:sharp}.
\end{proof}

Now we are ready to show our main counting estimates.

\begin{lemma}
  \label{LEM:counting1} 
  Let $\alpha \in (1,2)$ and $1 \le N_1, N_2, N_3 \le N$. 
  Then we have the following counting estimates
  \begin{align} 
  \label{counting1}
    \begin{split} 
      |S_{k k_1}| & \lesssim (N_2 \wedge N_3)^{2 - \alpha} \jb{k_1 - k}^{- 1} +
      1 ;\\
      |S_{k k_3}| & \lesssim (N_1 \wedge N_2)^{2 - \alpha} \jb{k - k_3}^{- 1} +
      1 ;\\
      |S_{k_1 k_2}| & \lesssim N_3^{2 - \alpha} \jb{k_1 - k_2}^{- 1} + 1 ;\\
      |S_{k_2 k_3}| & \lesssim N_1^{2 - \alpha} \jb{k_2 - k_3}^{- 1} + 1 ,
    \end{split} 
    \end{align}
    
\noi 
where $\jb{k} = (1+|k|^2)^{\frac{1}{2}}$ is given in \eqref{Eqn:br}. 
\end{lemma}

\begin{proof}
We start with the bound for $|S_{kk_1}|$. 
We first note that $k \not\in \{ k_1, k_3 \}$ is equivalent to $k_2 \not\in \{ k_1, k_3 \}$ on the hyperplane $k = k_1 - k_2 + k_3$.
Moreover, on this hyperplane, for fixed $(k,k_1)$ (such that $|k|\le N$, $|k_1|\le N_1$, and $k \neq k_1$), once we further fix $k_3$, then $k_2$ is uniquely determined. Therefore, we have
\begin{align*} 
    \begin{split}
    |S_{k k_1}|  & =   | \{ k_3 \in \mathbb{Z}; k_3 \neq  k , |k_3|\le N_3, | k_1 + k_3 - k | \le N_2, \\
    & \hspace{2cm}| k_3 |^{\alpha} - | k_1 + k_3 - k |^{\alpha} = | k |^{\alpha} - |
      k_1 |^{\alpha} - m + O (1) \} |\\        
     & =  | \{ k_3 \in \mathbb{Z}; k_3 \neq  k , |k_3|\le N_3, | k_1 + k_3 - k | \le N_2, \\
    & \hspace{2cm} \phi_{k - k_1, -} (k_3) = | k |^{\alpha} - | k_1 |^{\alpha} - m + O (1) \} |,
    \end{split}
\end{align*}

\noi 
where $\phi_{k - k_1,-}$ is given in \eqref{phib} with $b = k-k_1$. 
We now apply Lemma \ref{LEM:basic} and then Lemma \ref{LEM:c} to get
\begin{align}
\begin{split}
     |S_{k k_1}|  & \les 1 + \frac{1}{\inf_{|k_3| \les N_3 } |\phi'_{k-k_1,-} (k_3)|} \\ 
    & \les 1 + \big( \min ( \inf_{|k_3| \le N_3} |k-k_1| |k_3|^{\al -2}, |k-k_1|^{\al -1}) \big)^{-1}\\  
    & \les  1 +  \jb{k-k_1}^{-1} N_3^{2-\al}, 
\end{split}
\label{Skk1_2}
\end{align}

\noi 
where $k \neq k_1$ and $\al \in (1,2)$.  

By switching $k_2$ and $k_3$ in the above argument, we have 
\begin{align*} 
    \begin{split}
    |S_{k k_1}|  & =   | \{ k_2 \in \mathbb{Z}; k_2 \neq  k_1 , |k_2|\le N_2, | k_1 - k_2 - k | \le N_3, \\
    & \hspace{2cm}| k_2 |^{\alpha} - | k_1 - k_2 - k |^{\alpha} = | k_1 |^{\alpha} - |
      k |^{\alpha} + m + O (1) \} | \\ 
    & =  | \{ k_2 \in \mathbb{Z}; k_2 \neq  k_1 , |k_2|\le N_2, | k_1 - k_2 - k | \le N_3, \\
    & \hspace{2cm} \phi_{k_1 - k, -} (k_2) = | k_1 |^{\alpha} - |
      k |^{\alpha} + m + O (1) \} |.        
    \end{split}
\end{align*}

\noi 
Then by the same argument as in \eqref{Skk1_2}, we get
\begin{align}
    \label{Skk1_4}
    |S_{k k_1}| \les  1 +  \jb{k-k_1}^{-1} N_2^{2-\al}. 
\end{align}

\noi 
Thus we finish the proof of \eqref{counting1} for the bound $S_{kk_1}$ by combining \eqref{Skk1_2} and \eqref{Skk1_4}.

The proof for the rest of \eqref{counting1} is similar; thus, we omit their details.
\end{proof}

\medskip

We point out that the decay $\jb{k-k_1}^{-1}$ in \eqref{counting1} plays a crucial role in our later analysis. 
We have the following estimates as a consequence of Lemma \ref{LEM:counting1}.

\begin{corollary}
  \label{COR:counting} 
  Let $\alpha \in (1,2)$ and $1 \le N_1, N_2, N_3 \le N$. 
  Then, we have 
  \begin{align} 
  \label{counting2}
    \begin{split}
      |S_k| & \lesssim \min ((N_2 \wedge N_3)^{2 - \alpha} \log N_1  + N_1, (N_1 \wedge N_2)^{2 - \alpha} \log N_3 + N_3) ; \\
      |S_{k_1}| & \lesssim N_3^{2 - \alpha} \log N_2 +
      N_2 ;\\
      |S_{k_2}| & \lesssim \min (N_3^{2 - \alpha}
      \log N_1  + N_1, N_1^{2 - \alpha} \log N_3 
      + N_3) ;\\
      |S_{k_3}| & \lesssim  N_1^{2 - \alpha} \log N_2  +
      N_2 .
    \end{split} 
    \end{align}
\end{corollary}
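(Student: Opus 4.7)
My plan is to prove Corollary \ref{COR:counting} as a direct consequence of Lemma \ref{Lma:counting1}, by summing each pair-count $|S_{k_\ast k_{\ast\ast}}|$ over the remaining free index. For each single-index set, the two alternatives inside the $\min$ in \eqref{counting2} correspond to two natural choices of which free variable to sum over.

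The key input is the elementary pair of estimates
\[
\sum_{|j| \le M} \jb{j - j_0}^{-1} \lesssim \log M, \qquad \sum_{|j| \le M} 1 \lesssim M,
\]
which convert any estimate of the shape $C \jb{j - j_0}^{-1} + 1$ from \eqref{counting1} into a bound of the shape $C \log M + M$. For $|S_k|$ I would write $|S_k| = \sum_{|k_1| \le N_1} |S_{k k_1}|$ and use the first line of \eqref{counting1} to obtain the first alternative $(N_2 \wedge N_3)^{2-\alpha} \log N_1 + N_1$; writing $|S_k| = \sum_{|k_3| \le N_3} |S_{k k_3}|$ instead and using the second line of \eqref{counting1} gives the second alternative. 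Taking the minimum yields the full claim for $|S_k|$. The bound for $|S_{k_2}|$ follows identically, summing $|S_{k_1 k_2}|$ over $|k_1| \le N_1$ and $|S_{k_2 k_3}|$ over $|k_3| \le N_3$ respectively to produce the two alternatives.

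For $|S_{k_1}|$ and $|S_{k_3}|$, one alternative of each bound is immediate from the directly available pair-counts: summing $|S_{k_1 k_2}|$ over $|k_2| \le N_2$ gives the first alternative $N_3^{2-\alpha}\log N_2 + N_2$ for $|S_{k_1}|$, and summing $|S_{k_2 k_3}|$ over $|k_2| \le N_2$ gives $N_1^{2-\alpha}\log N_2 + N_2$ for $|S_{k_3}|$. For the second alternative I would exploit the a priori constraint $|k - k_1| = |k_3 - k_2| \le N_2 + N_3 \lesssim N_2 \vee N_3$ inherited from the constraint $k = k_1 - k_2 + k_3$, writing $|S_{k_1}| = \sum_{|k - k_1| \lesssim N_2 \vee N_3,\; |k|\le N} |S_{k k_1}|$ and applying the first line of \eqref{counting1}; this produces a bound of the form $(N_2 \wedge N_3)^{2-\alpha} \log(N_2 \vee N_3) + (N_2 \vee N_3)$, which is exactly the second alternative written in a case-split form. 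The bound for $|S_{k_3}|$ is obtained analogously via $|S_{k k_3}|$, with the a priori bound $|k - k_3| \le N_1 + N_2 \lesssim N_1 \vee N_2$.

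The proof is mechanical once Lemma \ref{Lma:counting1} is in hand, and I do not expect any substantive analytic obstacle. The only careful point is the bookkeeping of the summation ranges: verifying that the logarithmic factor really absorbs into $\log M$ for the correct $M$, and that the linear factor correctly matches the second term inside the $\min$ after the case split $N_2 \le N_3$ versus $N_2 \ge N_3$ (and symmetrically for $|S_{k_3}|$). No new probabilistic or analytic input beyond Lemma \ref{Lma:counting1} and the elementary summation identities above is required.
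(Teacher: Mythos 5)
Your overall strategy (sum the two-index counts $|S_{\ast\ast}|$ from Lemma \ref{Lma:counting1} over the remaining free index, once for each alternative inside the $\min$) is exactly the paper's approach, and it works cleanly for $|S_k|$ and $|S_{k_2}|$: for each of these two quantities, both alternatives of the $\min$ arise from summing one of the Lemma \ref{Lma:counting1} pair-counts over the free variable $k_1$ or $k_3$. The paper records only the case of $|S_k|$ and the first alternative of $|S_{k_1}|$ explicitly (obtaining $N_3^{2-\alpha}\log N_2 + N_2$ from $\sum_{|k_2|\le N_2}|S_{k_1k_2}|$) and declares the rest ``similar,'' so for these parts you are on the same page.

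The issue is with the second alternatives for $|S_{k_1}|$ and $|S_{k_3}|$, where your argument does not deliver what you claim. Writing $|S_{k_1}| \le \sum_{|k-k_1|\lesssim N_2\vee N_3}|S_{kk_1}|$ and invoking the first line of \eqref{counting1} produces
\[
(N_2\wedge N_3)^{2-\alpha}\log(N_2\vee N_3) + (N_2\vee N_3),
\]
and you assert this ``is exactly the second alternative written in a case-split form.'' That is only half right. When $N_2\le N_3$ this expression does equal the second alternative $N_2^{2-\alpha}\log N_3 + N_3$; but when $N_2 > N_3$ it collapses to $N_3^{2-\alpha}\log N_2 + N_2$, which is precisely the \emph{first} alternative you already had. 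In that regime you have established $|S_{k_1}|\lesssim A$ (twice) but not $|S_{k_1}|\lesssim B$, so the claimed $\min(A,B)$ is not proved. The same remark applies to $|S_{k_3}|$ when $N_1 < N_2$. Note that the only remaining two-index fibration, over $k_3$, leads to $|S_{k_1k_3}|$, which is \emph{not} covered by Lemma \ref{Lma:counting1} at all --- it is the difficult $\phi_{b,+}$ case handled separately via the good/bad split of Lemma \ref{LEM:counting2} and Corollary \ref{Cor:bad}. So there is no pair-count to sum in this direction that produces a $\jb{\cdot}^{-1}$ decay and hence a $\log$ factor. Whatever argument closes this gap, it is not of the ``mechanical'' type you advertise, and it would need the refined $\phi_{b,+}$ counting machinery. (It is also worth noting that when the paper actually needs a second alternative for $\|T\|_{k_1\to kk_2k_3}$ or $\|T\|_{k_3\to kk_1k_2}$, in Lemma \ref{LEM:tensor3} it replaces it with the bound $N_3(N_1\wedge N_3)^{1-\alpha/2}+N_3$, obtained precisely from $S_{k_1k_3}$ via the good/bad split, rather than from the second alternative of Corollary \ref{COR:counting}.)
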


\begin{proof}
We start with the bound for $|S_k|$. We first note that
  \begin{align} 
  \label{Sk}
    |S_k| \lesssim \min \bigg( \sum_{|k_1| \le  N_1} |S_{k k_1}|,
    {\sum_{|k_3| \le  N_3}} 
    |S_{k k_3}| \bigg) . 
  \end{align}

  \noi 
  By using Lemma \ref{LEM:counting1}, we have
  \begin{align} 
  \label{Sk_1}
    \begin{split}
       \sum_{|k_1| \le  N_1} |S_{k k_1}| & \lesssim (N_2 \wedge N_3)^{2 -
      \alpha} \sum_{|k_1| \le  N_1} \jb{k_1 - k}^{- 1} + \sum_{|k_1| \le  N_1} 1\\
      &  \lesssim (N_2 \wedge N_3)^{2 - \alpha} \log N_1 + N_1,
    \end{split} 
  \end{align}

  \noi 
  and similarly,
  \begin{align} 
  \label{Sk_2}
    \begin{split}
       \sum_{|k_3| \le  N_3} |S_{k k_3}| & \lesssim (N_1 \wedge N_2)^{2 -
      \alpha} \sum_{|k_3| \le  N_3} \jb{k - k_3}^{- 1} + \sum_{|k_1| \le  N_3} 1 \\
      &  \lesssim (N_1 \wedge N_2)^{2 - \alpha} \log N_3 + N_3.
    \end{split} 
  \end{align}

  \noi 
  Therefore, the first bound in \eqref{counting2} follows from \eqref{Sk}, \eqref{Sk_1}, and \eqref{Sk_2}.

By a similar argument, we have
\begin{align*} 
\begin{split}
|S_{k_1}| & \lesssim  \sum_{|k_2| \le  N_2} |S_{k_1 k_2}| \\
& \lesssim N_3^{2 -
\alpha} \sum_{|k_2| \le  N_2} \jb{k_1 - k_2}^{- 1} + \sum_{|k_2| \le  N_2} 1 \\
& \lesssim N_3^{2 - \alpha} \log N_2 + N_2.
\end{split}
\end{align*}

\noi 
Thus, we finish the proof for the second estimate of \eqref{counting2}.
  
The proof for the rest of \eqref{counting2} can be handled similarly. Thus, we omit them.
\end{proof}

For the countings of $|S_{k_1k_3}|$ and $|S_{kk_2}|$, the argument in Lemma \ref{LEM:counting1} is not sufficient.
We need the following further decomposition.
  \begin{equation}
    \label{Eqn:bad1} 
    \begin{cases}  
    S^{\rm bad}_{k_1 k_3} = \left\{ (k, k_2) \in
      S_{k_1k_3} ;   | 2k - (k_1 + k_3) |
      \ll  |k_1 + k_3| \right\} ;\\
      S^{\rm good}_{k_1 k_3} = \left\{ (k, k_2) \in
      S_{k_1k_3} ;  | 2k - (k_1 + k_3)  | \ges
      |k_1 + k_3| \right\} ;\\
      S^{\rm bad}_{k k_2} = \left\{ (k_1, k_3) \in
      S_{kk_2} ;  | 2 k_1 - (k + k_2)  |
      \ll  |k + k_2| \right\}
      ;\\
      S^{\rm good}_{k k_2} = \left\{ (k_1, k_3) \in
      S_{kk_2}  ;  | 2 k_1 - (k + k_2)  | \ges 
      |k + k_2|
      \right\} .
    \end{cases}
  \end{equation}

\noi 
With these notations, we have the following improved counting estimates.

\begin{lemma}
\label{LEM:counting2} 
Let $\alpha \in (1,2)$. 
Then we have the following counting estimates
  \begin{align} 
    \begin{split}
      |S_{k k_2}^{\rm good}|,
      |S_{k_1 k_3}^{\rm good}| & \lesssim 1 ;\\
      |S_{k k_2}^{\rm bad}| & \lesssim | k + k_2 |^{1
      - \frac{\alpha}{2}} +1 ; \\ 
      |S_{k_1 k_3}^{\rm bad}|  & \lesssim | k_1 + k_3
      |^{1 - \frac{\alpha}{2}} +1.
    \end{split} 
  \label{counting3}
  \end{align}
\end{lemma}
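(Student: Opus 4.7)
\medskip

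\noi
\textbf{Proof proposal for Lemma \ref{LEM:counting2}.}
The plan is to reduce each of the four counts to a one-variable counting problem for a fractional phase, and then apply Lemma \ref{LEM:basic} together with the derivative lower bounds from Lemma \ref{Lma:c}(ii). First I would carry out the reduction: fixing $(k,k_2)$, the constraint $k=k_1-k_2+k_3$ forces $k_3=b-k_1$ with $b:=k+k_2$, so $|S_{kk_2}|$ equals the number of integers $k_1$ (excluding the $O(1)$ values $k_1\in\{k,k_2\}$) satisfying $\phi_{b,+}(k_1)=c+O(1)$ for $\phi_{b,+}$ as in \eqref{phib} and $c=|k|^\alpha+|k_2|^\alpha+m$. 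Similarly, fixing $(k_1,k_3)$ and setting $b':=k_1+k_3$, counting $S_{k_1k_3}$ reduces to counting integers $k_2$ with $\phi_{b',+}(k_2)=c'+O(1)$. Note that since $k=b'-k_2$, one has $|2k-(k_1+k_3)|=|2k_2-b'|$, so the good/bad partition in \eqref{Eqn:bad1} translates cleanly to the dichotomy on the counting variable $k_2$.

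For the good estimates, if $b=0$ (i.e.\ $k=-k_2$) then the condition reduces to $2|k_1|^\alpha=c+O(1)$, which has at most $O(1)$ integer solutions. If $|b|\ge 1$, then on the good region $|2k_1-b|\gtrsim|b|$, the bound \eqref{Eqn:lowb+} gives $|\phi'_{b,+}(k_1)|\gtrsim|b|^{\alpha-1}\gtrsim 1$, and applying Lemma \ref{LEM:basic} on each of the (at most two) intervals comprising the good region yields the count $\lesssim 1$. The identical argument, with $b$ replaced by $b'$ and $k_1$ replaced by $k_2$, gives $|S^{\rm good}_{k_1k_3}|\lesssim 1$.

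For the bad estimates I would further partition the bad region $|2k_1-b|\ll|b|$ according to the threshold $|b|^{1-\alpha/2}$. On the sub-region where $|b|^{1-\alpha/2}\lesssim|2k_1-b|\ll|b|$, the sharp lower bound \eqref{Eqn:sharp} yields $|\phi'_{b,+}(k_1)|\gtrsim|b|^{\alpha/2-1}$, so Lemma \ref{LEM:basic} controls the count by
\[
\lesssim 1+\frac{1}{|b|^{\alpha/2-1}}\lesssim 1+|b|^{1-\alpha/2}.
\]
On the complementary sub-region $|2k_1-b|\lesssim|b|^{1-\alpha/2}$, the constraint alone confines the integer $k_1$ to an interval of length $\lesssim|b|^{1-\alpha/2}$, giving $\lesssim|b|^{1-\alpha/2}+1$ candidates. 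Summing the two contributions produces $|S^{\rm bad}_{kk_2}|\lesssim|k+k_2|^{1-\alpha/2}+1$, and repeating the procedure with $(b',k_2)$ in place of $(b,k_1)$ delivers $|S^{\rm bad}_{k_1k_3}|\lesssim|k_1+k_3|^{1-\alpha/2}+1$.

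The routine parts are the reduction and the application of Lemma \ref{LEM:basic}. The main delicate point, which I expect to be the principal obstacle, is arranging the sub-dichotomy in the bad region precisely at the scale $|b|^{1-\alpha/2}$ so that the degenerate zone where \eqref{Eqn:sharp} is unavailable is still small enough (as a set of integers) to contribute only $|b|^{1-\alpha/2}+1$. One also has to track the identification $|2k-(k_1+k_3)|=|2k_2-(k_1+k_3)|$ when passing between the ambient good/bad definition in \eqref{Eqn:bad1} and the counting variable, so that the same two-region split applies verbatim to $S_{k_1k_3}$.
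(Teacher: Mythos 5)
Your proof is correct and follows essentially the same route as the paper: reduce to a one-variable count for $\phi_{b,+}$, apply Lemma \ref{LEM:basic} with the derivative lower bound \eqref{Eqn:lowb+} on the good region, and handle the bad region by splitting at the threshold $|b|^{1-\alpha/2}$, using a trivial cardinality bound on the inner piece and \eqref{Eqn:sharp} on the outer piece. The only cosmetic difference is that you treat $b=0$ explicitly whereas the paper simply restricts to $k+k_2\neq 0$; the identification $|2k-(k_1+k_3)|=|2k_2-(k_1+k_3)|$ you highlight is indeed the small bookkeeping step that lets the same argument carry over to $S_{k_1k_3}$.
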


\begin{proof}
We start with the estimate of $|S_{k k_2}^{\rm good}|$ with $k+k_2 \neq 0$.
Similar to the proof of Lemma \ref{LEM:counting1}, we may recast the set $S_{k k_2}^{\rm good}$ as
\begin{align*}
\begin{split}
|S_{k k_2}^{\rm good}| & = |S_{k k_2} \cap \{ (k_1,k_3) \in S_{k k_2}; |2k_1 - (k  + k_2)| \ges |k + k_2|  \}| \\ 
& =  |\{ k_1 \in \mathbb{Z}; k_1 \neq  k , |k_1|\le N_1, | k_1 - k_2 - k | \le N_3, \\
& \hspace{2.5cm} \phi_{k + k_2, +} (k_1) = | k |^{\alpha} + | k_2 |^{\alpha} - m + O (1) \}  \\
  & \hspace{0.5cm} \cap \{ (k_1,k_3) \in S_{k k_2}; |2k_1 - (k  + k_2)| \ges |k + k_2|  \}|,
\end{split}
\end{align*}

\noi 
where $\phi_{k + k_2,+}$ is given in \eqref{phib} with $b = k +k_2$.
Thus by using Lemma \ref{LEM:basic} and then \eqref{Eqn:lowb+}, we have
\begin{align*}
\begin{split}
|S_{k k_2}^{\rm good} |  \les 1 +  |k + k_2|^{1-\al} \les 1,
\end{split} 
\end{align*}

\noi 
since $k+k_2 \neq 0$.
Thus, we finish the proof of the bound for $|S_{k k_2}^{\rm good} | $. 

Now we turn to $|S_{k k_2}^{\rm bad} | $.  
We distinguish two cases.
We first consider the case when $|2k_1 - (k  + k_2)| \le |k + k_2|^{1-\frac{\al}2}$, for which we have the crude estimate
\begin{align} 
\begin{split}
|S_{k k_2}^{\rm bad} & \cap \{ (k_1,k_3) \in S_{kk_2}; |2k_1 - (k  + k_2)| \le |k + k_2|^{1-\frac{\al}2}  \}| \\
& \le | \{ k_1 \in \Z;  |2k_1 - (k  + k_2)| \le |k + k_2|^{1-\frac{\al}2}  \}| \\ 
& \les |k + k_2|^{1-\frac{\al}2} + 1,
\end{split}
\label{Sbad1}
\end{align}

\noi 
which is sufficient for our purpose. From now on, we assume $|2k_1 - (k  + k_2)| > |k + k_2|^{1-\frac{\al}2}$.
Similar to the above, we may rewrite 
\begin{align*}
\begin{split}
|S_{k k_2}^{\rm bad} & \cap \{ (k_1,k_3) \in S_{kk_2}; |2k_1 - (k  + k_2)| > |k + k_2|^{1-\frac{\al}2}\}|\\  
& = |S_{k k_2} \cap \{ (k_1,k_3) \in S_{k k_2};  |k + k_2|^{1-\frac{\al}2}  \le |2k_1 - (k  + k_2)| \ll |k + k_2|  \}| \\ 
& =  |\{ k_1 \in \mathbb{Z}; k_1 \neq  k , |k_1|\le N_1, | k_1 - k_2 - k | \le N_3, \\
& \hspace{2.5cm} \phi_{k + k_2, +} (k_1) = | k |^{\alpha} + | k_2 |^{\alpha} - m + O (1) \}  \\
  & \hspace{0.5cm} \cap \{ (k_1,k_3) \in S_{k k_2}; |k + k_2|^{1-\frac{\al}2}  \le  |2k_1 - (k  + k_2)| \ll |k + k_2|  \}|.
\end{split}
\end{align*}

\noi 
Then we may apply Lemma \ref{LEM:basic}, together with \eqref{Eqn:sharp}, to get 
\begin{align}
\begin{split}
|S_{k k_2}^{\rm bad} & \cap \{ (k_1,k_3) \in S_{kk_2}; |k + k_2|^{1-\frac{\al}2}  \le  |2k_1 - (k  + k_2)|\}| \\
& \les 1 + |k + k_2|^{1- \frac{\al}2},
\end{split}
\label{Sbad2}
\end{align}

\noi 
which is again sufficient for our purpose. By collecting \eqref{Sbad1} and \eqref{Sbad2}, we proved the estimate for $|S_{k k_2}^{\rm bad}|$ .

The proof for the rest of \eqref{counting3} is similar and thus omitted. 
\end{proof}

\begin{corollary}
\label{Cor:bad} 
The following bounds hold.
\[
|S^{\rm bad}_{k k_2}| \lesssim ( N_1 \wedge N_3 )^{1-\frac\alpha2};\qquad |S^{\rm bad}_{k_1 k_3} | \lesssim ( N \wedge N_2 )^{1-\frac\alpha2},
\]
  where $S^{\rm bad}_{k k_2}$ and $S^{\rm bad}_{k_1 k_3}$ are defined in \eqref{Eqn:bad1}.
\end{corollary}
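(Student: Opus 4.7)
The plan is to upgrade the bounds from Lemma \ref{LEM:counting2} by exploiting the defining ``bad'' condition to estimate $|k+k_2|$ (respectively $|k_1+k_3|$) in terms of the dyadic parameters. The key geometric input is the hyperplane identity $k_1 + k_3 = k + k_2$ which holds on the support of $S$, coupled with the observation that the bad condition forces near-equality of the two ``paired'' frequencies.

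First I would handle $|S^{\rm bad}_{k k_2}|$. Lemma \ref{LEM:counting2} already gives $|S^{\rm bad}_{k k_2}| \lesssim |k+k_2|^{1-\frac{\alpha}{2}} + 1$, so it suffices to bound $|k+k_2|$ by $N_1 \wedge N_3$. By definition of $S^{\rm bad}_{kk_2}$ in \eqref{Eqn:bad1}, we have $|2k_1 - (k + k_2)| \ll |k+k_2|$, i.e. $k_1$ is within $o(|k+k_2|)$ of $(k+k_2)/2$; since $k_3 = (k+k_2) - k_1$, the same holds for $k_3$. Consequently
\[
|k_1| \sim |k_3| \sim \tfrac{1}{2}|k+k_2|,
\]
and applying the constraints $|k_1| \le N_1$ and $|k_3| \le N_3$ yields $|k+k_2| \lesssim N_1 \wedge N_3$. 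Plugging back in and using $N_1 \wedge N_3 \ge 1$ to absorb the $+1$ gives the desired bound
\[
|S^{\rm bad}_{k k_2}| \lesssim (N_1 \wedge N_3)^{1-\frac{\alpha}{2}}.
\]

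The second bound $|S^{\rm bad}_{k_1 k_3}| \lesssim (N \wedge N_2)^{1-\frac{\alpha}{2}}$ follows by the entirely symmetric argument: the defining condition $|2k - (k_1 + k_3)| \ll |k_1+k_3|$, together with the identity $k_2 = (k_1 + k_3) - k$, forces $|k| \sim |k_2| \sim \frac{1}{2}|k_1 + k_3|$, so that $|k_1+k_3| \lesssim N \wedge N_2$, and we conclude by Lemma \ref{LEM:counting2} again.

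There is no serious obstacle; the corollary is a clean consequence of Lemma \ref{LEM:counting2} once one unpacks the geometric content of the ``bad'' condition and uses the hyperplane relation $k_1+k_3 = k+k_2$. The only small caveat is the absorption of the $+1$ terms, which uses the standing hypothesis $N_i \ge 1$.
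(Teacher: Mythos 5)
Your proof is correct and follows essentially the same route as the paper: start from Lemma \ref{LEM:counting2}, which bounds $|S^{\rm bad}_{kk_2}|$ (resp.\ $|S^{\rm bad}_{k_1k_3}|$) by $|k+k_2|^{1-\alpha/2}+1$ (resp.\ $|k_1+k_3|^{1-\alpha/2}+1$), then use the defining bad condition together with $k_1+k_3 = k+k_2$ to force $|k_1|\sim|k_3|\sim|k+k_2|$ (resp.\ $|k|\sim|k_2|\sim|k_1+k_3|$) and hence $|k+k_2|\lesssim N_1\wedge N_3$ (resp.\ $|k_1+k_3|\lesssim N\wedge N_2$). Your explicit remark about absorbing the $+1$ via $N_i\ge 1$ and $1-\alpha/2>0$ is a small point the paper leaves implicit, but otherwise the arguments coincide.
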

\begin{proof}
We only consider the estimate for $|S^{\rm bad}_{k k_2}|$, as the proof for the bound of $|S^{\rm bad}_{k_1 k_3}|$ is similar. We first note that for $(k_1,k_3) \in S^{\rm bad}_{k k_2}$, we have
\[
|2k_3 - (k+k_2)| = |2k_1 - (k+k_2)| \ll |k+k_2|,
\]

\noi 
which implies that 
\begin{align} 
\label{bad_kk2}
|k_1| \sim |k+k_2| \sim |k_3|.
\end{align}

\noi 
Then recall that $|k_1| \le N_1$ and $|k_3| \le N_3$, which together with Lemma \ref{LEM:counting2} and \eqref{bad_kk2}, implies
\[
|S^{\rm bad}_{k k_2}| \les |k+k_2|^{1-\frac\al2} +1 \les \min (N_1^{1-\frac\al2}, N_3^{1-\frac\al2}).
\]

\noi 
Thus, we finish the proof.
\end{proof}

\begin{remark}\rm
\label{RMK:Skk2}
As a consequence of Lemma \ref{LEM:counting2} and Corollary \ref{Cor:bad}, we have shown that
\begin{align*} 
    |S_{k k_2}| \lesssim ( N_1 \wedge N_3 )^{1-\frac\alpha2},\qquad |S_{k_1 k_3} | \lesssim ( N \wedge N_2 )^{1-\frac\alpha2}.
\end{align*}
With this improvement, we may deduce further upper bounds for $|S_{k_1}|$ and $|S_{k_3}|$ as follows
\begin{align}
\label{impro_13}
|S_{k_1}| \les N_3 ( N \wedge N_2 )^{1-\frac\alpha2}, \qquad |S_{k_3}| \les N_1 ( N \wedge N_2 )^{1-\frac\alpha2}.  
\end{align}

\end{remark}

Finally, we are ready to estimate the size of $|S|$.

\begin{lemma}
    \label{LEM:S}
    Let $S$ be given in \eqref{Eqn:S} with $\al \in (1,2)$ and $1 \le N_1, N_2, N_3 \le N$. Then, we have
    \begin{align}  
    \begin{split}
      |S| & \lesssim \min \big (N_3^{2 - \alpha} (N_1 \wedge N_2) \log (N_1 \vee
      N_2) + N_1 N_2, \\ 
      & \hspace{1.5cm}  N_1^{2 -
      \alpha} (N_2 \wedge N_3) \log (N_2 \vee N_3) + N_2 N_3 \big) .
      \end{split}
      \label{coun:S}
    \end{align}
\end{lemma}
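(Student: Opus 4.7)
The plan is to reduce the four-dimensional counting problem to a two-dimensional sum of the fiber counts from Lemma \ref{Lma:counting1}. Since $S$ lives on the hyperplane $k = k_1 - k_2 + k_3$, any three of the four variables determine the fourth. In particular, once a pair $(k_i, k_j)$ is fixed, the number of elements of $S$ projecting onto that pair is exactly $|S_{k_i k_j}|$. This gives the two clean identities
\begin{align*}
|S| = \sum_{|k_1|\le N_1,\, |k_2|\le N_2} |S_{k_1 k_2}| = \sum_{|k_2|\le N_2,\, |k_3|\le N_3} |S_{k_2 k_3}|,
\end{align*}
which will be the starting point for the two halves of the minimum in \eqref{coun:S}.

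For the first half, I would insert the bound
\begin{align*}
|S_{k_1 k_2}| \lesssim N_3^{2-\alpha} \jb{k_1 - k_2}^{-1} + 1
\end{align*}
from Lemma \ref{Lma:counting1} into the first identity. The constant term contributes at most $N_1 N_2$, which matches the second term in the bound. The decaying term requires evaluating
\begin{align*}
\sum_{|k_1|\le N_1,\, |k_2|\le N_2} \jb{k_1 - k_2}^{-1} \lesssim (N_1 \wedge N_2)\,\log(N_1 \vee N_2),
\end{align*}
obtained by first summing the smaller of the two variables freely (yielding a logarithm) and then summing the remaining variable over its range of size $N_1 \wedge N_2$. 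Multiplying by $N_3^{2-\alpha}$ produces the first term of the first half of \eqref{coun:S}.

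For the second half, I would apply the same strategy with the bound $|S_{k_2 k_3}| \lesssim N_1^{2-\alpha}\jb{k_2-k_3}^{-1} + 1$ and the analogous logarithmic summation over $(k_2, k_3)$. Taking the minimum of the two resulting estimates yields \eqref{coun:S}.

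There is no real obstacle here beyond book-keeping: the substantive work (using the phase derivative lower bound from Lemma \ref{Lma:c} to produce the crucial decay factor $\jb{k_i - k_j}^{-1}$) has already been done in Lemma \ref{Lma:counting1}. The only minor point to watch is that the decay must be placed on a pair of variables whose $\ell^1$ sum produces exactly one logarithm and one minimum of the relevant dyadic scales, which is precisely why the two pairings $(k_1, k_2)$ and $(k_2, k_3)$ give the two distinct bounds in the minimum.
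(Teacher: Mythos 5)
Your proof is correct and follows essentially the same route as the paper: reduce $|S|$ to a sum of the pairwise fiber counts $|S_{k_1 k_2}|$ (resp.\ $|S_{k_2 k_3}|$), insert the bound from Lemma \ref{Lma:counting1}, and evaluate the resulting double sum of $\jb{k_i - k_j}^{-1}$ to produce one minimum and one logarithm. The only cosmetic difference is that you correctly observe the reduction is an exact identity, whereas the paper writes it (more cautiously, but unnecessarily) as a $\lesssim$.
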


\begin{proof}
We first observe that
  \begin{align} 
    |S| \lesssim \min \bigg( \sum_{k_1, k_2} |S_{k_1 k_2}|, \sum_{k_2, k_3} |S_{k_2
    k_3}| \bigg). 
  \label{|S|}
  \end{align} 

  \noi 
  We shall estimate the right-hand side of \eqref{|S|} term by term. 
  From \eqref{counting1}, we have
  \begin{align} 
    \begin{split}
      \sum_{k_1, k_2} & |S_{k_1 k_2}| \lesssim N_3^{2 - \alpha} \sum_{k_1, k_2}
      \jb{k_1 - k_2}^{- 1} + \sum_{k_1, k_2} 1\\
      & \lesssim N_3^{2 - \alpha} \sum_{| k_1 | \le  N_1} \sum_{| k_2 |
      \le  N_2} \jb{k_1 - k_2}^{- 1} + N_1N_2\\
      & \lesssim N_3^{2 - \alpha} (N_1 \wedge N_2) \log (N_1 \vee N_2) + N_1 N_2 .
    \end{split} 
  \label{|S|_1}
  \end{align}

  \noi 
  Similarly, we have
  \begin{align}
    \label{|S|_3}
    \sum_{k_2, k_3} |S_{k_2
    k_3}|  \lesssim N_1^{2 - \alpha} (N_2 \wedge N_3) \log (N_2 \vee N_3) + N_2 N_3.
  \end{align}

  \noi 
  Then \eqref{coun:S} follows from \eqref{|S|_1} and \eqref{|S|_3}.
\end{proof}

Another structure that we shall exploit is the so-called $\Gamma$-condition. 
To be more precise,
if there exists a positive number $\Gamma$ such that 
\[ 
|k_{\max}| \le \Gamma < |k|,
\]

\noi 
where $k_{\max}
$ is the frequency corresponding to $N_{\max}$,
then we call that $S$ given in \eqref{Eqn:S} satisfies the $\Gamma$-condition. To simplify the notation, let
\begin{align} 
\label{gammaB}
B_\Gamma = \{ (k,k_1,k_2,k_3) \in S; |k_{\max}| \le \Gamma < |k|\}.
\end{align}

\noi 
From the fact that $k = k_1 - k_2 + k_3$, we see that
\begin{equation}
  \label{Eqn:gamma} 
  \begin{cases}
    \Gamma \le  | k | \le  \Gamma + 2
    N_{\med} ;\\
    \Gamma - 2 N_{\med} \le  | k_{\max} | \le 
    \Gamma.
  \end{cases}
\end{equation}
Thus, both $| k |$ and $| k_{\max} |$ locate in an interval of length $2
N_{\med}$.

\medskip

With the $\Gamma$-condition, we can improve the previous counting estimates.

\begin{lemma}
  \label{LEM:gamma} 
  Let $S$ be given in \eqref{Eqn:S} with $\al \in (1,2)$, $B_\Gamma$ be as in \eqref{gammaB}, and $1 \le N_1, N_2, N_3 \le N$. Then, we have
  \begin{align} 
      |{B_\Gamma} \cap S| & \lesssim N_{\min} N_{\med} ; \label{Sgamma}\\
      |({B_\Gamma} \cap S)_k|, |({B_\Gamma} \cap S)_{k_i}| &
      \lesssim N_{\med}, \,\,\text{ for }\,\, i = 1,2,3 ;\label{Skgamma}\\
      |({B_\Gamma} \cap S)_{k_{\min} k_{\med}}| &
      \lesssim N_{\med} . \label{Skkgamma}
  \end{align}
  Here $({B_\Gamma} \cap S)_k$ is the set of $(k_1,k_2,k_3) \in \Z^3$ such that $(k, k_1,k_2,k_3) \in {B_\Gamma} \cap S$ when $k$ is fixed. The definitions of $({B_\Gamma} \cap S)_{k_i}$ and $({B_\Gamma} \cap S)_{k_{\min} k_{\med}}$ are similar.
\end{lemma}

\begin{proof}
The key observation is that under the $\Gamma$-condition \eqref{Eqn:gamma}, both $|k|$ and $|k_{\max}|$ are confined in some intervals of size $2N_{\med}$.
We first prove \eqref{Sgamma}.
Recall that 
  \begin{align}
  \label{Sgamma1}
    \begin{split}
     & |{B_\Gamma} \cap S| \\
     & \lesssim \min \bigg( \sum_{k, k_1} |({B_\Gamma} \cap S)_{k k_1}|, \sum_{k, k_3}
     |({B_\Gamma} \cap S)_{k k_3}|, \sum_{k_1, k_2}  |({B_\Gamma} \cap S)_{k_1 k_2}|, \sum_{k_2, k_3}  |({B_\Gamma} \cap S)_{k_2 k_3}|  \bigg)  \\ 
     & \les \min \bigg( \sum_{k, k_1} \big( (N_2 \wedge N_3)^{2 - \alpha} \jb{k_1 - k}^{- 1} +
      1 \big), \sum_{k, k_3}
     \big( (N_1 \wedge N_2)^{2 - \alpha} \jb{k - k_3}^{- 1} +
      1\big) , \\
      & \hspace{2cm} \sum_{k_1, k_2}  \big( N_3^{2 - \alpha} \jb{k_1 - k_2}^{- 1} +  1 \big), \sum_{k_2, k_3} \big( N_1^{2 - \alpha} \jb{k_2 - k_3}^{- 1} +  1 \big)\bigg),
   \end{split}   
  \end{align}

  \noi 
  where the summations are under \eqref{Eqn:gamma}.
  We then distinguish three cases.
  If $N_1 = N_{\min}$, then from \eqref{Eqn:gamma} and \eqref{Sgamma1} we have
  \begin{align}
    \label{G_N1}
    \begin{split}
    |{B_\Gamma} \cap S| & \lesssim 
    \sum_{k, k_1} \big( (N_2 \wedge N_3)^{2 - \alpha} \jb{k_1 - k}^{- 1} +
      1 \big) \\ 
    & \les \sum_{ k_1} \big[ (N_2 \wedge N_3)^{2 - \alpha} \log N_{\med} +  N_{\med} \big] \\
      & \les N_{\min} N_{\med}^{2 - \alpha} \log N_{\med} +
      N_{\min} N_{\med} \\ 
      & \les N_{\min} N_{\med},
    \end{split}
  \end{align}

  \noi 
  where we used the $\Gamma$-condition \eqref{Eqn:gamma} in the first step when summing over $k$. Thus we finish the proof for the case $N_1 = N_{\min}$.
  If $N_3 = N_{\min}$, the argument is similar and thus omitted.
  In the following, we assume that $N_2 = N_{\min}$. If we further assume that $N_3 = N_{\med}$, then similar computation as in \eqref{G_N1} yields
  \begin{align*} 
    \begin{split}
    |{B_\Gamma} \cap S| & \lesssim 
    \sum_{k_1, k_2} \big( N_3^{2 - \alpha} \jb{k_1 - k_2}^{- 1} +  1 \big) \\ 
    & \les \sum_{ k_{2}} \big( N_{\med}^{2 - \alpha} \log N_{\med} +  N_{\med} \big) \\
      & \les N_{\min} N_{\med}^{2 - \alpha} \log N_{\med} +
      N_{\min} N_{\med} \\ 
      & \les N_{\min} N_{\med}.
    \end{split}
  \end{align*}

  \noi 
  It remains to consider the case when $N_2 = N_{\min}$ and $N_1 = N_{\med}$.
  Then, from \eqref{Eqn:gamma} and \eqref{Sgamma1} we have 
  \begin{align*} 
    \begin{split}
    |{B_\Gamma} \cap S| & \lesssim 
    \sum_{k_2, k_3} \big( N_1^{2 - \alpha} \jb{k_2 - k_3}^{- 1} +  1 \big) \\ 
    & \les \sum_{ k_{2}} (N_{\med}^{2 - \alpha} \log N_{\med} +
       N_{\med}) \\
      & \les N_{\min} N_{\med}^{2 - \alpha} \log N_{\med} +
      N_{\min} N_{\med} \\ 
      & \les N_{\min} N_{\med}.
    \end{split}
  \end{align*}  

  \noi 
  Thus we finish the proof. 

  Now we consider \eqref{Skgamma}. We first note that
  \begin{align*}
    \begin{split}
    & |({B_\Gamma} \cap S)_k|  \les \min \bigg( \sum_{k_1} |({B_\Gamma} \cap S)_{k k_1}|,   \sum_{k_3}  |({B_\Gamma} \cap S)_{k k_3}|  \bigg)  \\ 
     & \les \min \bigg( \sum_{k_1} \big( (N_2 \wedge N_3)^{2 - \alpha} \jb{k_1 - k}^{- 1} +
      1 \big),  \sum_{k_3} \big( (N_1 \wedge N_2)^{2 - \alpha} \jb{k - k_3}^{- 1} +  1 \big)\bigg) \\
      & \les \sum_{k_{\med}} \big( N_{\med}^{2 - \alpha} \jb{k - k_{\med}}^{- 1} +  1 \big)  \les N_{\med}^{2 - \alpha + \eps }  + N_{\med } \les N_{\med}.
      \end{split}
  \end{align*}

  \noi 
  Similarly, we have
  \begin{align*}
    \begin{split}
   |({B_\Gamma} \cap S)_{k_1}|  &  \lesssim  \min \bigg( \sum_{k_2}  |({B_\Gamma} \cap S)_{k_1 k_2}|, \,    \sum_{k}  |({B_\Gamma} \cap S)_{k k_1}| \bigg) \\ 
     & \les  \min \bigg( \sum_{k_2}
     \big(  N_3^{2 - \alpha} \jb{k_1 - k_2}^{- 1} + 1\big) , \, \sum_{k}
     \big(  N_2^{2 - \alpha} \jb{k - k_1}^{- 1} + 1\big) \bigg)  \\
      & \les N_{\med},
      \end{split}
  \end{align*}

  \noi 
  where in the last step, we used \eqref{Eqn:gamma}, which suggests the summation over $k_2$ is at most over an interval of size $N_{\med}$.
  The rest of \eqref{Skgamma} follows similarly. 

  Finally, we turn to  \eqref{Skkgamma}. 
  We note that
  \begin{align*}
\begin{split}
|(B_\Gamma \cap S)_{k_{\min} k_{\med}}|  & = |S_{k_{\min} k_{\med}} \cap \{ \eqref{Eqn:gamma}  \}| \\ 
& \le  | \{ k_{\max} \textup{ range in an interval of size at most } 2N_{\med} \}| \\ 
& \le 2N_{\med}.
\end{split}
\end{align*}
  
\noi 
Thus, we finish the proof of \eqref{Skkgamma}.
\end{proof}

\subsection{Tensor norm estimates} 
\label{SUB:TNE}
This subsection will introduce the base tensor operator and then discuss its operator norm estimates. 
Given $m\in \Z$, $\al \in (1,2)$, and dyadic numbers $1 \le N_1, N_2, N_3 \le N$,
we define the base tensor ${\rm T}^{{\rm b},m}$ as
\begin{align}
    \label{baseT}
    {\rm T}^{{\rm b},m} = {\rm T}^{{\rm b},m}_{kk_1k_2k_3} = \ind_{S} (k,k_1,k_2,k_3),
\end{align}

\noi 
where $\ind_S$ is the indicator function defined in \eqref{ind}, and $S$ is the subset of $\Z^4$ given in \eqref{Eqn:S}.
Here, with a slight abuse of notation, we will not distinguish the tensor operator ${\rm T}^{{\rm b},m}$ and its kernel ${\rm T}^{{\rm b},m}_{kk_1k_2k_3}$.
From \eqref{Eqn:S} we see that
\[
\begin{split}
{\rm T}^{{\rm b},m}_{kk_1k_2k_3} & = \ind_{k-k_1+k_2-k_3 = 0} \cdot \ind_{k_2 \notin \{k_1,k_3\}} \cdot \ind_{\{|k|^\al - |k_1|^\al + |k_2|^\al - |k_3|^\al  = m + O(1)\}} \\
& \hphantom{XXXXXX} \times \ind_{|k|\le N} \cdot \prod_{i=1}^3 \ind_{|k_i| \le N_i}.
\end{split}
\]

\noi 
Recall the operator norms defined in \eqref{Eqn:opnorm}. For example,
\begin{align} 
\label{baseT1}
\| {\rm T}^{{\rm b},m}_{k k_1 k_2 k_3} \|^2_{kk_1 k_2 k_3} = \sum_{ k,k_1, k_2, k_3 \in \mathbb{Z}}  \big| {\rm T}^{{\rm b},m}_{k k_1 k_2 k_3} \big|^2 = |S|,
\end{align}
or 
\[ 
  \begin{split}
    & \| {\rm T}^{{\rm b},m}_{k k_1 k_2 k_3} \|^2_{ k_2 k_3
    \to   k k_1} = \sup \bigg\{ \sum_{k,k_1 \in \Z} \Big| \sum_{ k_2, k_3 \in \mathbb{Z}}
    {\rm T}^{{\rm b},m}_{k k_1 k_2 k_3} z_{k_2 k_3} \Big|^2 ;
    \sum_{k_2, k_3 \in \mathbb{Z}} | z_{k_2 k_3} |^2 = 1 \bigg\} .
  \end{split} 
\]
Now we are ready to state the estimates for tensor norms, which are the main tools of our later analysis.
We start with the Hilbert–Schmidt norm estimate of the base tensor operator ${\rm T}^{{\rm b},m}$, which is a consequence of Lemma \ref{LEM:S}.
\begin{lemma}
\label{LEM:tensor} 
Let ${\rm T}^{{\rm b},m}$ be the base tensor defined in \eqref{baseT}. 
Then we have
\begin{align}
\begin{split}
\| {\rm T}^{{\rm b},m}_{k k_1 k_2 k_3} \|_{k
k_1 k_2 k_3}^2 & \lesssim \min  \big(N_3^{2 - \alpha} (N_1 \wedge N_2) \log (N_1 \vee
N_2) + N_1 N_2, \\ 
& \hspace{1.5cm}  N_1^{2 -
\alpha} (N_2 \wedge N_3) \log (N_2 \vee N_3) + N_2 N_3) .
\end{split}
  \label{tensor_E1}
\end{align} 
\end{lemma}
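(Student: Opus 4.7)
The plan is essentially to observe that this statement is a direct reformulation of Lemma \ref{LEM:S} through the definitions. Since the base tensor is the indicator function of $S$ by \eqref{baseT}, its squared Hilbert--Schmidt norm reduces to counting the elements of $S$.

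More precisely, I would first unfold the definition in \eqref{baseT1}, noting that
\[
\big\| {\rm T}^{{\rm b},m}_{k k_1 k_2 k_3} \big\|_{k k_1 k_2 k_3}^2 = \sum_{k,k_1,k_2,k_3 \in \mathbb{Z}} \big| \ind_S(k,k_1,k_2,k_3) \big|^2 = \#S = |S|.
\]
Since $\ind_S$ takes values in $\{0,1\}$, squaring it leaves it unchanged, so the $\ell^2$-norm squared of the tensor is simply the cardinality of its support.

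Then I would invoke Lemma \ref{LEM:S}, which gives exactly the required bound
\[
|S| \lesssim \min\bigl( N_3^{2-\alpha} \log(N_1 \vee N_2)(N_1 \wedge N_2) + N_1 N_2,\ N_1^{2-\alpha} \log(N_2 \vee N_3)(N_2 \wedge N_3) + N_2 N_3 \bigr),
\]
yielding \eqref{tensor_E1} immediately. There is no genuine obstacle here; all the analytic work has already been carried out in Subsection \ref{SUB:count}, and the tensor norm estimate is a cosmetic packaging of Lemma \ref{LEM:S} into the language of tensor operators that will be needed in later subsections where nontrivial operator norms (with partitions $(B,C)$ of the index set) are estimated.
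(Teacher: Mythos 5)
Your proposal is correct and matches the paper's proof exactly: the paper also cites \eqref{baseT1} (which gives $\|{\rm T}^{{\rm b},m}\|_{kk_1k_2k_3}^2 = |S|$, since $\ind_S^2 = \ind_S$) and then invokes Lemma \ref{LEM:S}. Nothing to add.
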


\begin{proof}
The estimate \eqref{tensor_E1} is a consequence of \eqref{baseT1} and Lemma \ref{LEM:S}.
\end{proof}

\begin{lemma}
\label{LEM:tensor2}
Let ${\rm T}^{{\rm b},m}$ be the base tensor defined in \eqref{baseT}. 
Then we have
\begin{align}
\begin{split}  
\| {\rm T}^{{\rm b},m}_{k k_1 k_2 k_3} \|_{k k_1 \to  k_2 k_3}^2 & \lesssim (N_2 \wedge N_3)^{2 - {\alpha} } N_1^{2 - \al}, \\
\| {\rm T}^{{\rm b},m}_{k k_1 k_2 k_3} \|_{k k_3 \to  k_1 k_2}^2 & \lesssim (N_1 \wedge N_2)^{2 - \al} N_3^{2 - \al} ,\\
\| {\rm T}^{{\rm b},m}_{k k_1 k_2 k_3} \|_{k k_2 \to  k_1 k_3}^2 & \lesssim (N_1 \wedge N_3)^{1 -
\frac\al2} (N \wedge N_2)^{1 - \frac\al2}.
\end{split}
  \label{tensor2}
\end{align} 
\end{lemma}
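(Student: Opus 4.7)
My plan is to deduce all three bounds in Lemma \ref{LEM:tensor2} from the counting estimates in Lemma \ref{Lma:counting1}, Lemma \ref{LEM:counting2}, Corollary \ref{Cor:bad} (Remark \ref{RMK:Skk2}) via Schur's test for indicator tensors. Indeed, since ${\rm T}^{{\rm b},m}_{kk_1k_2k_3}$ is $0/1$-valued, for any partition $(B,C)$ of $\{k,k_1,k_2,k_3\}$, Cauchy--Schwarz in the $k_B$ variable gives, for any sequence $\{z_{k_B}\}$ with $\sum_{k_B}|z_{k_B}|^2=1$,
\begin{equation*}
    \sum_{k_C}\Big|\sum_{k_B} {\rm T}^{{\rm b},m}_{kk_1k_2k_3} z_{k_B}\Big|^2 \le \Big( \sup_{k_C}\sum_{k_B} {\rm T}^{{\rm b},m}_{kk_1k_2k_3}\Big) \Big( \sup_{k_B}\sum_{k_C} {\rm T}^{{\rm b},m}_{kk_1k_2k_3}\Big),
\end{equation*}
so that in view of \eqref{Eqn:opnorm} the operator norm is bounded by the product of the worst column sum and the worst row sum, i.e.~$\sup_{k_B}\#\{k_C:(k,k_1,k_2,k_3)\in S\}\cdot \sup_{k_C}\#\{k_B:(k,k_1,k_2,k_3)\in S\}$.

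For the first inequality I would take $B=\{k,k_1\}$, $C=\{k_2,k_3\}$, so that the two sup-counts are exactly $\sup_{k,k_1}|S_{kk_1}|$ and $\sup_{k_2,k_3}|S_{k_2k_3}|$. Discarding the harmless factor $\jb{k_1-k}^{-1}$ (resp.\ $\jb{k_2-k_3}^{-1}$) by bounding it by $1$, Lemma \ref{Lma:counting1} yields $\sup_{k,k_1}|S_{kk_1}|\lesssim (N_2\wedge N_3)^{2-\alpha}+1$ and $\sup_{k_2,k_3}|S_{k_2k_3}|\lesssim N_1^{2-\alpha}+1$, and their product, using $N_j\ge 1/2$ so that $(N_2\wedge N_3)^{2-\alpha}, N_1^{2-\alpha}\gtrsim 1$, gives exactly $(N_2\wedge N_3)^{2-\alpha}N_1^{2-\alpha}$. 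The second inequality is symmetric: choose $B=\{k,k_3\}$, $C=\{k_1,k_2\}$, and repeat with the bounds on $|S_{kk_3}|$ and $|S_{k_1k_2}|$ from Lemma \ref{Lma:counting1}.

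For the third and most delicate bound I would take $B=\{k,k_2\}$, $C=\{k_1,k_3\}$, reducing matters to $\sup_{k,k_2}|S_{kk_2}|$ and $\sup_{k_1,k_3}|S_{k_1k_3}|$. Here the naive counting from Lemma \ref{Lma:counting1} is not available; instead I appeal to the sharper estimates from Lemma \ref{LEM:counting2} together with the improvement in Corollary \ref{Cor:bad}, summarised in Remark \ref{RMK:Skk2}, which give $|S_{kk_2}|\lesssim (N_1\wedge N_3)^{1-\alpha/2}$ and $|S_{k_1k_3}|\lesssim (N\wedge N_2)^{1-\alpha/2}$. Multiplying these two bounds produces precisely $(N_1\wedge N_3)^{1-\alpha/2}(N\wedge N_2)^{1-\alpha/2}$, as claimed.

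The only nontrivial step is locating the correct counting estimate for each partition: in particular, for the $\{k,k_2\}\to\{k_1,k_3\}$ partition the resonance function $\phi_{b,+}$ can degenerate near $2x=b$, so one genuinely needs the good/bad splitting from \eqref{Eqn:bad1}--\eqref{counting3} (rather than a direct application of Lemma \ref{Lma:c}\,(i)) to extract the $1-\alpha/2$ exponent. The rest is a bookkeeping exercise of Schur's test combined with the already-proved counting lemmas.
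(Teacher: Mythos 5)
Your proof is correct and follows essentially the same route as the paper: Schur's test for the $0/1$-valued tensor, with the first two bounds coming directly from the counting estimates of Lemma~\ref{Lma:counting1} (after discarding the $\jb{\cdot}^{-1}$ decay) and the third from the good/bad-split bounds summarised in Remark~\ref{RMK:Skk2}. The explicit Cauchy--Schwarz derivation of the Schur bound and the remark on why the $\{k,k_2\}\to\{k_1,k_3\}$ partition requires the sharper Lemma~\ref{LEM:counting2}/Corollary~\ref{Cor:bad} are accurate expansions of the paper's terser argument, not departures from it.
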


\begin{proof}
We start with the first estimate of \eqref{tensor2}.
By the Schur's test, we have
  \[ 
    \begin{split}
      \| {\rm T}^{{\rm b},m}_{k k_1 k_2 k_3} \|^2_{k k_1
      \to  k_2 k_3} & \lesssim \bigg( \sup_{k, k_1 \in \mathbb{Z}}
      \sum_{k_2, k_3 \in \mathbb{Z}} | {\rm T}^{{\rm b},m}_{k k_1 k_2
      k_3} | \bigg) \times \bigg( \sup_{k_2, k_3 \in \mathbb{Z}}
      \sum_{k, k_1 \in \mathbb{Z}} | {\rm T}^{{\rm b},m}_{k k_1 k_2
      k_3} | \bigg) .
    \end{split} 
  \]

  \noi 
  which gives the first estimate of \eqref{tensor2} by using Lemma \ref{LEM:counting1}. 
  Same argument as above works for the second estimate of \eqref{tensor2}.
  For the third estimate, we use Remark \ref{RMK:Skk2} instead of Lemma \ref{LEM:counting1}.
\end{proof}

\begin{lemma} 
\label{LEM:tensor3}
Let ${\rm T}^{{\rm b},m}$ be the base tensor defined in \eqref{baseT}. 
Then we have
\begin{align}
\begin{split} 
\| {\rm T}^{{\rm b},m}_{k k_1 k_2 k_3} \|_{k \to  k_1 k_2 k_3}^2 & \lesssim \min ((N_2 \wedge N_3)^{2 - \alpha} \log N_1  + N_1, (N_1 \wedge N_2)^{2 - \alpha} \log N_3 + N_3) ;\\
\| {\rm T}^{{\rm b},m}_{k k_1 k_2 k_3}
\|_{k_1 \to  k k_2 k_3}^2 & \lesssim \min ( N_3^{2 - \alpha} \log N_2 +  N_2, N_3 (N \wedge N_2)^{1 -
\frac\al2})  ;\\
\| {\rm T}^{{\rm b},m}_{k k_1 k_2 k_3}
\|_{k_2 \to  k k_1 k_3}^2 & \lesssim \min (N_3^{2 - \alpha} \log N_1  + N_1, N_1^{2 - \alpha} \log N_3  + N_3)  ;\\
 \| {\rm T}^{{\rm b},m}_{k k_1 k_2 k_3}
\|_{k_3 \to  k k_1 k_2}^2 & \lesssim \min( N_1^{2 - \alpha} \log N_2  +  N_2, N_1 (N \wedge N_2)^{1 - \frac\al2}).\\
\end{split} 
\label{tensor3}
\end{align} 
\end{lemma}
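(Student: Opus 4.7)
The plan is to reduce each of the four operator norms to a pointwise upper bound on a fiber of the set $S$, after which everything follows from the counting work in Subsection~\ref{SUB:count}.

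First I would exploit the hyperplane structure: the support of ${\rm T}^{{\rm b},m}$ is contained in $\{k=k_1-k_2+k_3\}$, so any one of the four Fourier indices is determined by the other three. Consequently, in the definition~\eqref{Eqn:opnorm} of $\|\cdot\|_{k_B\to k_C}$ with a one-to-three partition, the inner sum over the ``source'' index contains at most a single nonzero contribution. A direct computation then collapses the operator norm squared to a pointwise maximum of fiber cardinalities:
\[
\|{\rm T}^{{\rm b},m}\|_{k\to k_1k_2k_3}^{2}=\sup_k|S_k|,\qquad \|{\rm T}^{{\rm b},m}\|_{k_i\to \,\cdot\,}^{2}=\sup_{k_i}|S_{k_i}|,\quad i=1,2,3.
\]
This is the only structural input; the rest is counting.

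The first bound in each of the four estimates, together with both bounds in the $k_2$ estimate, is precisely one of the fiber counts already recorded in Corollary~\ref{COR:counting}, so nothing new is required there.

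For the alternative second bounds in the $k_1$ and $k_3$ estimates---the ones carrying the factor $(\,\cdot\,)^{1-\alpha/2}$---I would decompose along the good/bad split. For $|S_{k_1}|$, write
\[
|S_{k_1}|=\sum_{|k_3|\le N_3}|S_{k_1k_3}|\le \sum_{|k_3|\le N_3}\bigl(|S^{\rm good}_{k_1k_3}|+|S^{\rm bad}_{k_1k_3}|\bigr),
\]
and invoke Lemma~\ref{LEM:counting2} for the good part ($\lesssim 1$) together with Corollary~\ref{Cor:bad} for the bad part. Summation in $k_3$ then yields a bound of the shape $N_3+N_3(\,\cdot\,)^{1-\alpha/2}$, which is exactly the claimed form. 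The $k_3 \to k k_1 k_2$ estimate is obtained identically, with $k_1$ and $k_3$ swapped and the summation running over $|k_1|\le N_1$ instead.

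There is no serious obstacle. The only conceptual step is the reduction to fiber sizes via the hyperplane constraint; after that, everything is a direct invocation of Corollary~\ref{COR:counting}, Lemma~\ref{LEM:counting2}, and Corollary~\ref{Cor:bad}. The minor care required is selecting which of the two fiber bounds is sharper in a given regime, which is the reason each estimate is stated as a minimum of two expressions.
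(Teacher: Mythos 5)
Your reduction is correct and is essentially the paper's own approach: the paper proves the first estimate via Schur's test, which combined with the fact that each of $k,k_1,k_2,k_3$ determines the remaining one on the hyperplane $k=k_1-k_2+k_3$ collapses the operator norm to $\sup_k|S_k|$, exactly as you observe (you get an equality because ${\rm T}^{{\rm b},m}$ is $\{0,1\}$-valued; the paper only needs the inequality, and says ``others follow similarly''). The first alternative in all four lines and both alternatives in the $k_2$ line do then read off directly from Corollary~\ref{COR:counting}, and the $k_3$ second alternative is exactly $\sum_{|k_1|\le N_1}(|S^{\rm good}_{k_1k_3}|+|S^{\rm bad}_{k_1k_3}|)\lesssim N_1+N_1(N\wedge N_2)^{1-\alpha/2}$ via Lemma~\ref{LEM:counting2} and Corollary~\ref{Cor:bad}.

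However, you have glossed over a genuine discrepancy in the $k_1\to kk_2k_3$ line. Corollary~\ref{Cor:bad} gives $|S^{\rm bad}_{k_1k_3}|\lesssim (N\wedge N_2)^{1-\alpha/2}$, so summing $|S_{k_1k_3}|$ over $|k_3|\le N_3$ yields $N_3+N_3(N\wedge N_2)^{1-\alpha/2}$, \emph{not} the stated $N_3+N_3(N_1\wedge N_3)^{1-\alpha/2}$. Your phrase ``which is exactly the claimed form'' elides the fact that the factor inside $(\,\cdot\,)^{1-\alpha/2}$ that your argument produces is $N\wedge N_2$, whereas the lemma asserts $N_1\wedge N_3$ --- these are different quantities, and in general neither dominates the other. (Note that $N_1\wedge N_3$ is precisely the bound Corollary~\ref{Cor:bad} gives for $|S^{\rm bad}_{kk_2}|$, \emph{not} for $|S^{\rm bad}_{k_1k_3}|$; the statement appears to have the two cases of Corollary~\ref{Cor:bad} swapped in this line.) A careful proof would have to either establish the stated bound by a different counting argument that exploits the level-set constraint more finely, or correct the displayed bound to $N_3(N\wedge N_2)^{1-\alpha/2}+N_3$; your proposal does neither, and you should flag this rather than assert agreement.
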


\begin{proof}
Similar to in the proof of Lemma \ref{LEM:tensor2}, the estimates of \eqref{tensor3} are consequences of \eqref{counting2}. We only show how to get the first estimate, as others follow similarly\footnote{For the second and fourth estimates, we may need to use \eqref{impro_13} as well.}.

By the Schur's test, we have
  \begin{align}
      \label{tensor3_1} 
    \begin{split}
      \| {\rm T}^{{\rm b},m}_{k k_1 k_2 k_3} \|^2_{k
      \to  k_1 k_2 k_3} & \lesssim \bigg( \sup_{k  \in \mathbb{Z}}
      \sum_{k_1, k_2, k_3 \in \mathbb{Z}} | {\rm T}^{{\rm b},m}_{k k_1 k_2
      k_3} | \bigg) \times \bigg( \sup_{k_1, k_2, k_3 \in \mathbb{Z}}
      \sum_{k \in \mathbb{Z}} | {\rm T}^{{\rm b},m}_{k k_1 k_2
      k_3} | \bigg) .
    \end{split} 
  \end{align}

  \noi 
  We note that once $k_1,k_2,k_3$ are fixed, then $k$ is uniquely determined provided $(k,k_1,k_2,k_3) \in S$, where $S$ is given in \eqref{Eqn:S}. Therefore, we have
  \begin{align}
      \label{tensor3_2}
  \sup_{k_1, k_2, k_3 \in \mathbb{Z}}\sum_{k \in \mathbb{Z}} | {\rm T}^{{\rm b},m}_{k k_1 k_2 k_3} | \le 1.
  \end{align}

  \noi 
  On the other hand, for fixed $k\in \Z$, from \eqref{baseT} and \eqref{counting2}, we note
  \begin{align}
      \label{tensor3_3}
      \begin{split}
      \sum_{k_1, k_2, k_3 \in \mathbb{Z}} & | {\rm T}^{{\rm b},m}_{k k_1 k_2
      k_3} | = |S_k| \\
      & \les \min ((N_2 \wedge N_3)^{2 - \alpha} \log N_1  + N_1, (N_1 \wedge N_2)^{2 - \alpha} \log N_3 + N_3).
      \end{split}
  \end{align}
  
  \noi 
  Then, the first estimate of \eqref{tensor3} follows from \eqref{tensor3_1},  \eqref{tensor3_2}, and \eqref{tensor3_3}.
\end{proof}

We will need some estimates of the base tensor ${\rm T}^{{\rm b},m}$ subject to further restrictions,
for which we expect better estimates.

\begin{lemma}
\label{LEM:tensor4}
Let ${\rm T}^{{\rm b},m}$ be the base tensor defined in \eqref{baseT}. 
Then we have
\begin{align} 
\label{tensor4}
\begin{split}
  \| \ind_{|k_1 +   k_3| < |k_2|}
  {\rm T}^{{\rm b},m}_{k k_1 k_2 k_3} \|_{k k_1 k_2 k_3}^2 & \lesssim N_1 N_3 ;\\
  \| \ind_{|k_1 +   k_3| < |k_2|}
  {\rm T}^{{\rm b},m}_{k k_1 k_2 k_3} \|_{k k_2 \to  k_1 k_3}^2 & \lesssim (N_1 \wedge N_3)^{1-\frac\al2} ;\\
  \| \ind_{|k_1 +   k_3| < |k_2|}
  {\rm T}^{{\rm b},m}_{k k_1 k_2 k_3} \|_{k_1
  \to  k k_2 k_3}^2 & \lesssim N_3 ;\\
  \| \ind_{|k_1 +   k_3| < |k_2|}
  {\rm T}^{{\rm b},m}_{k k_1 k_2 k_3} \|_{k_3
  \to  k k_1 k_2}^2 & \lesssim N_1 .
\end{split}
\end{align}
\end{lemma}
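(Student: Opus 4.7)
The plan is to exploit the resonance identity $k+k_2 = k_1+k_3$, which is implicit in the definition of $S$ through $k=k_1-k_2+k_3$, in order to convert the cutoff $|k_1+k_3|<|k_2|$ into a decisive constraint on $k_2$ alone for fixed $(k_1,k_3)$. Indeed, the cutoff yields
\[
|2k_2-(k_1+k_3)| \ge 2|k_2| - |k_1+k_3| > |k_2| > |k_1+k_3|,
\]
placing $k_2$ squarely in the ``good'' regime paralleling \eqref{Eqn:bad1} (with the roles of the pair $(k_1,k_3)$ and the single variable $k_2$ interchanged).

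The key counting step is as follows. For fixed $(k_1,k_3)$ with $k_1+k_3\neq 0$, I rewrite the dispersion constraint defining $S$ as
\[
\phi_{k_1+k_3,+}(k_2) = |k_1|^\al + |k_3|^\al - m + O(1),
\]
with $\phi_{b,+}$ given in \eqref{phib}. Combining Lemma \ref{LEM:basic} with the lower bound \eqref{Eqn:lowb+} from Lemma \ref{Lma:c}(ii), and using the derived inequality $|2k_2-(k_1+k_3)|\ges |k_1+k_3|$, I get $|\phi'_{k_1+k_3,+}(k_2)| \ges |k_1+k_3|^{\al-1} \ges 1$, where the last step uses $\al>1$ together with the integrality $|k_1+k_3|\ge 1$. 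This yields an $O(1)$ count of admissible $k_2$ for each fixed $(k_1,k_3)$. The degenerate case $k_1+k_3=0$ must be treated directly: the constraint collapses to $2|k_2|^\al = \textup{const}+O(1)$, which again admits $O(1)$ values of $k_2$ because consecutive gaps in $|k_2|^\al$ exceed $1$ for $|k_2|$ large.

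With this uniform $O(1)$ bound on the $k_2$-count over $(k_1,k_3)$ in hand, all four estimates follow via Schur's test. For the first estimate (Hilbert--Schmidt), summing $1$ over the $N_1 N_3$ choices of $(k_1,k_3)$ and noting that $k=k_1-k_2+k_3$ is then determined gives $|\{S : |k_1+k_3|<|k_2|\}|\les N_1 N_3$. For the third estimate $\|\cdot\|^2_{k_1\to kk_2k_3}$, Schur's test produces the product of $\sup_{k_1}\sum_{k,k_2,k_3}$ and $\sup_{k,k_2,k_3}\sum_{k_1}$; the latter factor is $\le 1$ since $k_1=k+k_2-k_3$ is uniquely determined, while the former factor is $\les N_3$ by summing the $O(1)$ count in $k_2$ over at most $N_3$ choices of $k_3$. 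The fourth estimate $\|\cdot\|^2_{k_3\to kk_1k_2}\les N_1$ follows by the symmetric argument. For the second estimate $\|\cdot\|^2_{kk_2\to k_1k_3}$, Schur's test gives the product of $\sup_{k_1,k_3}\sum_{k,k_2}$, which is $O(1)$ by the key count, and $\sup_{k,k_2}\sum_{k_1,k_3}$, which is bounded by $|S_{kk_2}|\les (N_1\wedge N_3)^{1-\al/2}$ from Remark \ref{RMK:Skk2}.

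The main obstacle is verifying uniformity of the $O(1)$ count across the boundary scales, especially when $k_1+k_3=0$ or when $|k_1+k_3|$ is comparable to $|k_2|$. The hypothesis $\al>1$ is used sharply here, since the derivative lower bound $|k_1+k_3|^{\al-1}\ges 1$ relies on it; the cutoff $|k_1+k_3|<|k_2|$ is precisely what pushes $k_2$ into the good regime of Lemma \ref{Lma:c}(ii) and activates this bound. Once the uniform $O(1)$ count is secured, the rest is routine bookkeeping through Schur's test.
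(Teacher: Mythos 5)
Your argument is correct and follows the paper's proof in all essentials: you make the same key observation that the cutoff $|k_1+k_3|<|k_2|$, together with the identity $|2k-(k_1+k_3)|=|2k_2-(k_1+k_3)|$ valid on the hyperplane, forces $(k,k_2)\in S_{k_1k_3}^{\rm good}$, and you then run the identical Schur's test decompositions. The only cosmetic difference is that you re-derive the $O(1)$ bound on $|S_{k_1k_3}^{\rm good}|$ from Lemma~\ref{LEM:basic} and Lemma~\ref{Lma:c}(ii) inline (including a separate, correct treatment of the degenerate case $k_1+k_3=0$), whereas the paper simply cites Lemma~\ref{LEM:counting2}.
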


\begin{proof}
We start with the first bound in \eqref{tensor3}.
We have 
\begin{align}
\label{tensor4_1}
\begin{split}
\| \ind_{|k_1 +   k_3| < |k_2|}
  {\rm T}^{{\rm b},m}_{k k_1 k_2 k_3} \|_{k k_1 k_2 k_3}^2 & = \sum_{k_1,k_3} \sum_{k,k_2} | \ind_{|k_1 +   k_3| < |k_2|}
  {\rm T}^{{\rm b},m}_{k k_1 k_2 k_3} |^2 \\ 
  & = \sum_{k_1,k_3} |S_{k_1k_3} \cap \{|k_1 +   k_3| < |k_2|\} |.
\end{split}
\end{align}

\noi 
We observe that under the condition $|k_1 +   k_3| < |k_2|$, we have
\[
| 2k - (k_1 + k_3) | = | 2k_2 - (k_1 + k_3) | \ge  2 |k_2| - |k_1 + k_3| > |k_2| > |k_1 +   k_3|,
\]

\noi 
which, together with \eqref{Eqn:bad1}, implies that
\begin{align}
\label{tensor4_2}
 S_{k_1k_3} \cap \{|k_1 +   k_3| < |k_2|\}  \subset  S_{k_1k_3}^{\rm good}.
\end{align}

\noi 
Thus, from \eqref{tensor3_1}, \eqref{tensor3_2}, \eqref{tensor4_1}, \eqref{tensor4_2}, and Lemma \ref{LEM:counting2}, we get
\[
\| \ind_{|k_1 +   k_3| < |k_2|}
  {\rm T}^{{\rm b},m}_{k k_1 k_2 k_3} \|_{k k_1 k_2 k_3}^2 \le \sum_{k_1,k_3} |S_{k_1k_3}^{\rm good}| \les N_1N_3,
\]

\noi 
which finishes the proof of the first estimate of \eqref{tensor3}.

We turn to the estimate of $\| \ind_{|k_1 +   k_3| < |k_2|}  {\rm T}^{{\rm b},m}_{k k_1 k_2 k_3} \|_{k k_2 \to  k_1 k_3}$. 
By a similar Schur's test argument as in \eqref{tensor3_1}, we have
  \[ 
    \begin{split}
      \| & \ind_{|k_1 +   k_3| < |k_2|} {\rm T}^{{\rm b},m}_{k k_1 k_2 k_3} \|^2_{k k_2
      \to  k_1 k_3} \\ 
      & \lesssim \Big( \sup_{k, k_2 \in \mathbb{Z}}
      \sum_{k_1, k_3 \in \mathbb{Z}} | {\rm T}^{{\rm b},m}_{k k_1 k_2
      k_3} | \Big) \times \Big( \sup_{k_1, k_3 \in \mathbb{Z}}
      \sum_{k, k_2 \in \mathbb{Z}} | \ind_{|k_1 +   k_3| < |k_2|} {\rm T}^{{\rm b},m}_{k k_1 k_2
      k_3} | \Big)\\
      & = \Big(\sup_{k, k_2 \in \mathbb{Z}} |S_{kk_2}| \Big) \cdot \Big(\sup_{k_1, k_3 \in
      \mathbb{Z}} |S^{\rm good}_{k_1 k_3}|\Big), 
    \end{split} 
  \]

  \noi  
  which, together with Lemma \ref{LEM:counting2}, gives the desired bound.

 For the third estimate $\| \ind_{|k_1 +   k_3| < |k_2|}
  {\rm T}^{{\rm b},m}_{k k_1 k_2 k_3} \|_{k_1
  \to  k k_2 k_3}$,
  we proceed similarly as the above to get 
    \begin{align}
      \label{tensor4_3} 
    \begin{split}
      \| \ind_{|k_1 +   k_3| < |k_2|} {\rm T}^{{\rm b},m}_{k k_1 k_2 k_3} \|^2_{k_1
      \to  k k_2 k_3} & \lesssim   \sup_{k_1  \in \mathbb{Z}}
      \sum_{k, k_2, k_3 \in \mathbb{Z}} |\ind_{|k_1 +   k_3| < |k_2|} {\rm T}^{{\rm b},m}_{k k_1 k_2
      k_3} |  \\ 
      & \les \sup_{k_1  \in \mathbb{Z}}
      \sum_{ k_3 \in \mathbb{Z}} |S_{k_1k_3}^{\rm good} | \les N_3,
    \end{split} 
  \end{align}

  \noi 
  where we used \eqref{tensor4_2} and Lemma \ref{LEM:counting2}.
  
  The last estimate of \eqref{tensor4} can be handled similarly as in \eqref{tensor4_3}. Thus, we omit the details.
\end{proof}

We also record the following result, which is a consequence of Lemma \ref{LEM:gamma}.

\begin{corollary}
\label{COR:gammaT}
Let ${\rm T}^{{\rm b},m}$ be the base tensor defined in \eqref{baseT}. 
Then we have
\begin{align*} 
\begin{split}
  \| \ind_{B_{\Gamma}}
  {\rm T}^{{\rm b},m}_{k k_1 k_2 k_3} \|_{k k_1 k_2 k_3}^2 & \lesssim N_{\min} N_{\med} ,\\ 
  \| \ind_{B_{\Gamma}}
  {\rm T}^{{\rm b},m}_{k k_1 k_2 k_3} \|_{k_1
  \to  k k_2 k_3}^2 & \lesssim N_{\med} ,\\
  \| \ind_{B_{\Gamma}}
  {\rm T}^{{\rm b},m}_{k k_1 k_2 k_3} \|_{k_3
  \to  k k_1 k_2}^2 & \lesssim N_{\med} ,\\
  \| \ind_{B_{\Gamma}}
  {\rm T}^{{\rm b},m}_{k k_1 k_2 k_3} \|_{k_2
  \to  k k_1 k_3}^2 & \lesssim N_{\med} .
\end{split}
\end{align*}
\end{corollary}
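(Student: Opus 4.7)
The plan is to derive all four bounds as a direct consequence of the counting estimates in Lemma \ref{Lma:gamma}, following exactly the same Schur-test framework used in Lemma \ref{LEM:tensor3}. Since $\ind_{B_\Gamma} {\rm T}^{{\rm b},m}$ is $\{0,1\}$-valued with support equal to $B_\Gamma \cap S$, each squared tensor norm reduces to a counting problem that Lemma \ref{Lma:gamma} already resolves.

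For the Hilbert--Schmidt bound, I would simply observe that
\[
\| \ind_{B_\Gamma} {\rm T}^{{\rm b},m}_{k k_1 k_2 k_3} \|_{k k_1 k_2 k_3}^2 = |B_\Gamma \cap S|,
\]
and then invoke \eqref{Sgamma} to get $N_{\min} N_{\med}$. This is the same reduction that produced \eqref{baseT1} in Lemma \ref{LEM:tensor}.

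For the three operator norms of the form $\|\cdot\|_{k_i \to \text{others}}$, I would apply Schur's test exactly as in \eqref{tensor3_1}. Take, say, the $k_1 \to k k_2 k_3$ case:
\[
\| \ind_{B_\Gamma} {\rm T}^{{\rm b},m} \|_{k_1 \to k k_2 k_3}^2 \lesssim \Big( \sup_{k_1} \sum_{k,k_2,k_3} \ind_{B_\Gamma} {\rm T}^{{\rm b},m}_{k k_1 k_2 k_3} \Big) \cdot \Big( \sup_{k,k_2,k_3} \sum_{k_1} \ind_{B_\Gamma} {\rm T}^{{\rm b},m}_{k k_1 k_2 k_3} \Big).
\]
The second factor is $\le 1$ because $k_1$ is forced by the hyperplane relation $k = k_1 - k_2 + k_3$ (this is the same trivial step as \eqref{tensor3_2}), while the first factor equals $\sup_{k_1} |B_\Gamma \cap S_{k_1}|$, which is $\lesssim N_{\med}$ by \eqref{Skgamma}. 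This gives the bound $N_{\med}$. The $k_2 \to k k_1 k_3$ and $k_3 \to k k_1 k_2$ cases are identical, using the $S_{k_2}$ and $S_{k_3}$ bounds in \eqref{Skgamma}.

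There is no real obstacle here: all the technical work has been done in Lemma \ref{Lma:gamma}, where the key geometric observation \eqref{Eqn:gamma} confines both $|k|$ and $|k_{\max}|$ to intervals of length $2N_{\med}$. The only thing to double-check is that the Schur-test reduction is legitimate, i.e.\ that the ``uniquely determined'' step for each partition ($k_1$ from $k, k_2, k_3$; $k_2$ from $k, k_1, k_3$; $k_3$ from $k, k_1, k_2$) is valid, which it is by the linear constraint $k_1 - k_2 + k_3 = k$. Consequently the proof will be a short paragraph assembling these three ingredients.
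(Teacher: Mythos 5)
Your proof is correct and is exactly the argument the paper leaves implicit when it states that the corollary ``is a consequence of Lemma~\ref{Lma:gamma}'': reduce the Hilbert--Schmidt norm to $|B_\Gamma \cap S|$ and the three operator norms to $\sup_{k_i}|B_\Gamma \cap S_{k_i}|$ via the Schur test of Lemma~\ref{LEM:tensor3}, then apply \eqref{Sgamma} and \eqref{Skgamma}.
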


In the rest of this subsection, 
we recall some bilinear tensor estimates from {\cite[Proposition 4.11]{DNY3}}, which serve as ``nonlinear" estimates for operator norms of tensors.

\begin{proposition}
  \label{PROP:contr}
  Consider two tensors $h_{k_{A_1}}^{(1)}$ and
  $h_{k_{A_2}}^{(2)}$, where $A_1 \cap A_2 = C$. Let $A_1 \Delta A_2 = A$ and
  define the semi-product
  \[ 
    H_{k_A} = \sum_{k_C} h_{k_{A_1}}^{(1)} h_{k_{A_2}}^{(2)} . 
  \]
  For any partition $(X, Y)$ of $A$, let $X \cap A_i = X_i$, $Y \cap A_i
  = Y_i$ for $i = 1, 2$. Then, we have
  \[ 
    \| H \|_{k_X \to  k_Y} \le  \| h^{(1)} \|_{k_{X_1 \cup C} \to 
    k_{Y_1}} \cdot \| h^{(2)} \|_{k_{X_2} \to  k_{C \cup Y_2}} . 
  \]
\end{proposition}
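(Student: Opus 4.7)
The plan is to establish the bound by testing $H$ against an $\ell^2$-normalized tensor $z_{k_X}$, decomposing the resulting sum via an auxiliary tensor that isolates the two factors, and then applying the stated operator norms of $h^{(1)}$ and $h^{(2)}$ in the correct order. The key observation is that $A_1 = X_1 \cup Y_1 \cup C$ and $A_2 = X_2 \cup Y_2 \cup C$ are disjoint unions, so the shared index set $C$ can simultaneously serve as part of the output for $h^{(2)}$ (matching the norm $\|h^{(2)}\|_{k_{X_2} \to k_{C \cup Y_2}}$) and part of the input for $h^{(1)}$ (matching $\|h^{(1)}\|_{k_{X_1 \cup C} \to k_{Y_1}}$). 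This role asymmetry on $k_C$ dictates the order of the two applications.

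First, fix a test tensor $z = z_{k_X}$ with $\|z\|_{\ell^2_{k_X}} = 1$ and unfold
\begin{align*}
H[z](k_{Y_1}, k_{Y_2}) = \sum_{k_C} \sum_{k_{X_1}, k_{X_2}} h^{(1)}_{k_{X_1}, k_{Y_1}, k_C} \, h^{(2)}_{k_{X_2}, k_{Y_2}, k_C} \, z_{k_{X_1}, k_{X_2}}.
\end{align*}
Introduce the intermediate tensor
\begin{align*}
w_{k_{X_1}, k_C, k_{Y_2}} := \sum_{k_{X_2}} h^{(2)}_{k_{X_2}, k_{Y_2}, k_C} \, z_{k_{X_1}, k_{X_2}},
\end{align*}
which, for each fixed $k_{X_1}$, is precisely $h^{(2)}$ acting on $z(k_{X_1}, \cdot)$ as an operator $\ell^2_{k_{X_2}} \to \ell^2_{k_{C \cup Y_2}}$. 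Then $H[z](k_{Y_1}, k_{Y_2}) = \sum_{k_{X_1}, k_C} h^{(1)}_{k_{X_1}, k_{Y_1}, k_C} \, w_{k_{X_1}, k_C, k_{Y_2}}$, and for each fixed $k_{Y_2}$ this is $h^{(1)}$ applied to $w(\cdot, \cdot, k_{Y_2})$ as an operator $\ell^2_{k_{X_1 \cup C}} \to \ell^2_{k_{Y_1}}$.

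Applying the definition of $\|h^{(1)}\|_{k_{X_1 \cup C} \to k_{Y_1}}$ for each fixed $k_{Y_2}$ and then summing over $k_{Y_2}$ yields
\begin{align*}
\|H[z]\|_{\ell^2_{k_Y}}^2 \le \|h^{(1)}\|_{k_{X_1 \cup C} \to k_{Y_1}}^2 \sum_{k_{X_1}} \sum_{k_C, k_{Y_2}} |w_{k_{X_1}, k_C, k_{Y_2}}|^2.
\end{align*}
Applying $\|h^{(2)}\|_{k_{X_2} \to k_{C \cup Y_2}}$ for each fixed $k_{X_1}$ gives
\begin{align*}
\sum_{k_C, k_{Y_2}} |w_{k_{X_1}, k_C, k_{Y_2}}|^2 \le \|h^{(2)}\|_{k_{X_2} \to k_{C \cup Y_2}}^2 \sum_{k_{X_2}} |z_{k_{X_1}, k_{X_2}}|^2,
\end{align*}
and summing in $k_{X_1}$ with $\|z\|_{\ell^2_{k_X}} = 1$, followed by the supremum over $z$, produces the claimed inequality.

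There is no hard analytic input here: the proof reduces to careful combinatorial bookkeeping of which index subfamilies of $A_1$ and $A_2$ play the role of inputs versus outputs at each stage. The main thing to get right is the order of the two applications of operator norms, and the choice of intermediate tensor $w$ that realises exactly the partitions $(X_1 \cup C, Y_1)$ of $A_1$ and $(X_2, C \cup Y_2)$ of $A_2$ prescribed by the statement; any other ordering would either mismatch the norms or introduce unwanted suprema.
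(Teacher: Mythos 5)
Your proof is correct and follows the same strategy that the paper illustrates in miniature in Lemma~\ref{Lma:tech} (the paper itself defers the general case to \cite[Proposition~4.11]{DNY3}): test against a unit-norm $z_{k_X}$, isolate the factor $w_{k_{X_1},k_C,k_{Y_2}}=\sum_{k_{X_2}}h^{(2)}_{k_{X_2},k_{Y_2},k_C}z_{k_{X_1},k_{X_2}}$, and chain the operator-norm bounds in the order that puts $k_C$ on the output side of $h^{(2)}$ and the input side of $h^{(1)}$. The only point worth underscoring — which you handle correctly — is that the two Hilbert--Schmidt-style summations over the ``spectator'' variables ($k_{Y_2}$ while applying $h^{(1)}$, then $k_{X_1}$ while applying $h^{(2)}$) commute past the respective operator norms because those norms are applied pointwise in the spectator index before summing its squares.
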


The following estimate for special tensor semi-products can be proved using Proposition \ref{PROP:contr}. However, we demonstrate a fundamental proof to showcase the idea of proving  Proposition \ref{PROP:contr}.

\begin{lemma}
  \label{LEM:tech}
  The following holds.
  \[ 
    \| h^{(1)}_{k_1 k_1'} h^{(2)}_{k_2 k_1'} \|_{k_1' \to  k_1 k_2}
    \le  \| h^{(1)}_{k_1 k_1'} \|_{k_1' \to  k_1} \| h^{(2)}_{k_2
    k_1'} \|_{k'_1 \to  k_2} . 
  \]
\end{lemma}

\begin{proof}
By using the definition of the tensor norm \eqref{Eqn:opnorm}, we have
\[ 
  \begin{split}
    \| h^{(1)}_{k_1 k_1'} h^{(2)}_{k_2 k_1'} \|_{k_1' \to  k_1 k_2} &  = \sup_{\|a_{k_1'} \|_{k_1'} = 1} 
    \bigg\| \sum_{k_1'} h^{(1)}_{k_1 k_1'} h^{(2)}_{k_2 k_1'} a_{k_1'}
    \bigg\|_{k_1 k_2} \\ 
    & \le  \| h^{(1)}_{k_1 k_1'} \|_{k_1' \to 
    k_1} \sup_{\|a_{k_1'} \|_{k_1'} = 1}  \| h^{(2)}_{k_2 k_1'} a_{k_1'} \|_{k'_1 k_2}\\
    &  \le \| h^{(1)}_{k_1 k_1'} \|_{k_1' \to  k_1} \| h^{(2)}_{k_2
    k_1'} \|_{k_1' \to  k_2}.
  \end{split} 
\]
Thus, we finish the proof.
\end{proof}

We conclude this subsection with a weighted estimate, whose proof can be found in {\cite[Proposition 2.5]{DNY2}}. Also, see \cite[Proposition 2.9]{DNY4}.

\begin{proposition} 
  \label{Prop:co} 
  Suppose that matrices $h = h_{k k''}$, $h^{(1)} = h^{(1)}_{k
  k'}$ and $h^{(2)} = h_{k' k''}^{(2)}$ satisfy that
  \[ 
    h_{k k''} = \sum_{k'} h_{k k'}^{(1)} h_{k' k''}^{(2)}, 
  \]
  and $h_{k k'}^{(1)}$ is supported in $\{| k - k' | \lesssim L\}$, then we have
  \[ 
    \Big\| \Big( 1 + \frac{| k - k'' |}{L} \Big)^{\kappa} h_{k k''}
    \Big\|_{\ell_{k k''}^2} \lesssim \| h^{(1)}_{kk'} \|_{k \to  k'}
    \cdot \Big\| \Big( 1 + \frac{| k' - k'' |}{L} \Big)^{\kappa}
    h^{(2)}_{k' k''} \Big\|_{\ell_{k' k''}^2} . 
  \]
\end{proposition}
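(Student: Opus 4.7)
\textbf{Proof proposal for Proposition \ref{Prop:co}.} The strategy is to exploit the support restriction on $h^{(1)}_{kk'}$ to transfer the weight from the pair $(k,k'')$ onto the pair $(k',k'')$, and then apply the operator-norm bound for $h^{(1)}$ in the slot-contraction that links $k$ and $k'$. The plan has three short steps.

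First, I would start from the identity
\[
\Big\| \Big( 1 + \tfrac{|k-k''|}{L} \Big)^{\kappa} h_{kk''} \Big\|_{\ell^2_{kk''}}^2 = \sum_{k,k''} \Big| \sum_{k'} \Big( 1 + \tfrac{|k-k''|}{L} \Big)^{\kappa} h^{(1)}_{kk'} h^{(2)}_{k'k''} \Big|^2,
\]
and use the support hypothesis $|k-k'| \lesssim L$ together with the triangle inequality $|k-k''| \le |k-k'| + |k'-k''|$ to deduce the pointwise comparison
\[
\Big( 1 + \tfrac{|k-k''|}{L} \Big)^{\kappa} \lesssim \Big( 1 + \tfrac{|k'-k''|}{L} \Big)^{\kappa}
\]
inside the summand (whenever $h^{(1)}_{kk'} \neq 0$). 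This lets me absorb the weight into $h^{(2)}_{k'k''}$.

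Second, for each fixed $k''$ I would set $a_{k'} := \big( 1 + |k'-k''|/L \big)^{\kappa} h^{(2)}_{k'k''}$ and view the inner sum as $\sum_{k'} h^{(1)}_{kk'} a_{k'}$, i.e.\ the action of the operator with kernel $h^{(1)}$ on the sequence $(a_{k'})_{k'}$. Applying the definition \eqref{Eqn:opnorm} of the tensor operator norm gives
\[
\sum_k \Big| \sum_{k'} h^{(1)}_{kk'} a_{k'} \Big|^2 \le \| h^{(1)} \|_{k' \to k}^{2} \sum_{k'} |a_{k'}|^2,
\]
and then summing over $k''$ and using $\|h^{(1)}\|_{k'\to k} = \|h^{(1)}\|_{k\to k'}$ (symmetry of the operator norm for tensors) produces the desired bound.

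The argument is essentially two lines of estimate, with no real obstacle: the only genuine content is noticing that the support condition permits the triangle-inequality swap of the weight. A minor point to check carefully is that $\kappa \ge 0$ is allowed (so the comparison goes in the right direction) and that the implicit constant depends only on $\kappa$ and on the $L$-support constant. This is exactly the philosophy of Proposition \ref{PROP:contr} applied to the partition where $k'$ is the contracted index, so one could alternatively deduce Proposition \ref{Prop:co} as a corollary of Proposition \ref{PROP:contr} after incorporating the weight $\big(1+|k'-k''|/L\big)^{\kappa}$ into $h^{(2)}$; however, the direct derivation sketched above is shorter and more transparent.
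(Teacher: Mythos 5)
Your overall strategy is the right one, but the single step you call ``absorb the weight'' is not a valid inequality as stated, and this is where the actual content of the proposition lives. Concretely, once you pull the weight outside the inner sum, you are trying to bound
\[
\sum_{k}\Big(1+\tfrac{|k-k''|}{L}\Big)^{2\kappa}\Big|\sum_{k'}h^{(1)}_{kk'}h^{(2)}_{k'k''}\Big|^2
\quad\text{by}\quad
\sum_{k}\Big|\sum_{k'}h^{(1)}_{kk'}a_{k'}\Big|^2
\]
with $a_{k'}=(1+|k'-k''|/L)^{\kappa}h^{(2)}_{k'k''}$. This is not a componentwise comparison of two vectors: the left and right inner sums differ by a $k'$-dependent weight, and cancellation behaves differently under the two weights. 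For instance, if for some $k$ the weighted sum $\sum_{k'}(1+|k'-k''|/L)^{\kappa}h^{(1)}_{kk'}h^{(2)}_{k'k''}$ vanishes by cancellation while the unweighted sum does not, the right side contributes $0$ for that $k$ while the left side does not. Alternatively, if you apply the triangle inequality to move the weight inside, you end up bounding by the operator norm of the entrywise absolute value $|h^{(1)}|$, which for a banded matrix can exceed $\|h^{(1)}\|_{k\to k'}$ by a factor of order $L^{1/2}$ (take random $\pm1$ entries on the band). Either way the proof is not complete as written.

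The fix is to make the weight transfer a genuine operator statement. Note that for fixed $k''$, the left side equals $\|D\,h^{(1)}\,D^{-1}a\|_{\ell^2_k}^2$ where $D$ is the diagonal matrix with entries $D_{kk}=(1+|k-k''|/L)^{\kappa}$. So what you actually need is the (nontrivial) bound $\|D\,h^{(1)}\,D^{-1}\|_{\ell^2\to\ell^2}\lesssim\|h^{(1)}\|_{\ell^2\to\ell^2}$ for a banded matrix conjugated by a weight that varies by a bounded factor across the band. This is true, but requires an additional decomposition: partition the index set into intervals of length $\sim L$ (or dyadically in $|k-k''|/L$), on each of which the weight is comparable to a constant; write $h^{(1)}$ as a block-tridiagonal matrix with respect to this partition (possible because of the band structure); and observe that conjugation by a diagonal that is essentially constant on each block distorts each block's operator norm only by an absolute constant, hence the whole matrix's operator norm only by an absolute constant. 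After this, summing the resulting bound over $k''$ gives exactly the claimed estimate, and the dependence of the implied constant on $\kappa$ and on the support constant in $|k-k'|\lesssim L$ comes out of the block comparison, as you anticipated. Your remark about $\kappa\ge0$ is well placed — it is needed for the direction of the triangle-inequality comparison — and in the paper's usage $\kappa\gg1$, so that is consistent.
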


\medskip

\subsection{Random tensor estimates}

In this subsection, we collect some probability results.
We start with the Wiener chaos estimate (see {\cite[Theorem I.22]{Simon}}).

\begin{lemma}
  \label{LEM:wc}
  Let $\{ g_n \}_{n \in \mathbb{N}}$ be a sequence of
  independent standard real-valued Gaussian random variables. Given $k \in
  \mathbb{Z}_{\ge  0}$, let $\{ P_j \}_{j \in \mathbb{N}}$ be a sequence
  of polynomials in $\bar{g} = \{ g_n \}_{n \in \mathbb{N}}$ of degree at most
  $k$. Then, for $p \ge  2$, we have
  \[ 
    \bigg\| \sum_{j \in \mathbb{N}} P_j (\bar{g}) \bigg\|_{L^p (\Omega)} 
    \le  (p - 1)^{\frac{k}{2}} \bigg\| \sum_{j \in \mathbb{N}} P_j
     (\bar{g}) \bigg\|_{L^2 (\Omega)} . 
  \]
\end{lemma}

We recall the definition of $A$-certain.

\begin{definition}
\label{DEF:certain}
Let $\theta$ be the constant as in Theorem \ref{THM:main}.
If some statement $S$ for a random variable holds with probability $\P (S) \ge 1- C_{\theta} e^{-A^{\theta}}$ for some $A > 0$,
we say that this statement $S$ is $A$-certain.
\end{definition}

For a complex number $z$, we define $z^+ = z$ and $z^- = \bar z$.
For a finite index set $A$,
we also use the notation $z^{\zeta_j}$ for 
$\{ \zeta_j \}_{j \in A}$ with $\zeta_j \in \{
\pm \}$.   

\begin{definition}
    \label{DEF:pair}
    We say that $(k_i,k_j)$ for $i,j \in A$ is a pairing in $k_A$,
    if $k_i = k_j$ and $\zeta_i + \zeta_j = 0$.
    We say a pairing is over-paired if $k_i = k_j = k_\l$ for some $\l \in A \backslash \{i,j\}$.
\end{definition}

We also recall the following Large deviation inequality for multilinear Gaussians. It is a special case of {\cite[Lemma 4.1]{DNY2}}. We give a simpler proof for completeness.

\begin{lemma}
\label{LEM:LD}
  Let $E \subset \mathbb{Z}$ be a finite set, and $a = a_{k_1 \cdots k_r}
  (\omega)$ be a random tensor such that the collection $\{ a_{k_1 \cdots k_r}
  \}$ is independent with the collection $\{ g_k (\omega) ; k \in E \}$. Let
  $\zeta_j \in \{ \pm \}$ and assume that in the support of $a_{k_1 \cdots
  k_r}$ there is no pairing in $\{ k_1, \cdots, k_r \}$ associated with the
  signs $\zeta_j$. Define the random variable $X$ as
  \begin{align*} 
    X (\omega) := \sum_{k_1, \cdots, k_r} a_{k_1 \cdots k_r} \prod_{j = 1}^r
    \eta_{k_j} (\omega)^{\zeta_j}, 
  \end{align*}
  where $\eta_{k_j} \in \{ g_{k_j} , |g_{k_j}|^2-1 \}$.
  Then, for any $A > |E|$, we have $A$-certainly that
  \begin{align} 
  \label{LDE}
    | X (\omega) |^2 \le  A^{\theta} \cdot \sum_{k_1, \cdots, k_r} |
    a_{k_1 \cdots k_r} (\omega) |^2 . 
  \end{align}
\end{lemma}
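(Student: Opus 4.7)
The plan is to fix the randomness of the tensor $a$ by conditioning, apply the Wiener chaos inequality to $X$ viewed as a polynomial in the remaining Gaussians, and finally convert the resulting $L^p$ bound into an $A$-certain statement via Markov's inequality.

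First, let $\mathcal{F}_a$ denote the $\sigma$-algebra generated by the collection $\{a_{k_A}\}$. By the independence hypothesis, conditional on $\mathcal{F}_a$, the random variables $\{g_k : k \in E\}$ remain independent standard complex Gaussians, and $X(\omega)$ is a polynomial of degree at most $2r$ in the real and imaginary parts of these Gaussians (the factor $2$ accounting for the case $\eta_{k_j} = |g_{k_j}|^2 - 1$, which is a centered quadratic polynomial). The key conditional computation is the second moment
\[
\mathbb{E}\bigl[|X|^2 \,\big|\, \mathcal{F}_a\bigr] = \sum_{k_A, k'_A} a_{k_A}\,\overline{a_{k'_A}}\, \mathbb{E}\Big[\prod_{j=1}^r \eta_{k_j}^{\zeta_j}\,\overline{\eta_{k'_j}^{\zeta_j}}\Big].
\]
Expanding the inner expectation by Wick's/Isserlis' theorem yields a sum over pairings of the $2r$ (or $4r$, when quadratic $\eta$'s are present) Gaussian factors. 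The hypothesis that no pairing in $\{k_1, \dots, k_r\}$ (with signs $\zeta_j$) occurs in the support of $a$ is designed precisely to kill all contractions that pair two indices within the same tuple $k_A$ or within the same tuple $k'_A$; only contractions matching an index of $k_A$ to one of $k'_A$ survive. Since there are at most $C_r$ such matchings and each contributes a diagonal term, we obtain
\[
\mathbb{E}\bigl[|X|^2 \,\big|\, \mathcal{F}_a\bigr] \le C_r \sum_{k_A} |a_{k_A}(\omega)|^2.
\]

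Second, I apply Lemma \ref{Lma:wc} conditionally on $\mathcal{F}_a$: since $X$ is, on the conditional probability space, a polynomial of degree at most $2r$ in independent standard Gaussians, for any $p \ge 2$,
\[
\bigl\|X\bigr\|_{L^p(\Omega\mid\mathcal{F}_a)} \le (p-1)^{r}\,\bigl\|X\bigr\|_{L^2(\Omega\mid\mathcal{F}_a)} \le C_r\,(p-1)^{r}\,\Big(\sum_{k_A} |a_{k_A}|^2\Big)^{1/2}.
\]
Then conditional Markov gives
\[
\mathbb{P}\Big(|X|^2 > A^{\theta}\sum_{k_A} |a_{k_A}|^2 \,\Big|\,\mathcal{F}_a\Big) \le A^{-\theta p/2}\,C_r^{p}\,(p-1)^{rp}.
\]
Choosing $p \sim A^{\eps/(2r)}$ for some small $\eps>0$ (using $A > |E|$ so that $\log A$ dominates constants depending on $E$), and taking $\theta$ a small multiple of $\eps$ large enough to absorb $(p-1)^{rp} \le \exp(C_r\,p\log p)$, the right-hand side is bounded by $C_\eps\exp(-A^{\eps})$. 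Integrating this estimate over $\mathcal{F}_a$ and invoking Definition \ref{DEF:certain} yields the claimed $A$-certain bound \eqref{LDE}.

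The main obstacle is the first step: one must carefully verify via Wick's theorem that the no-pairing-within-$k_A$ assumption really eliminates all "internal" contractions so that only cross-pairings between $k_A$ and $k'_A$ survive, producing the clean $\sum_{k_A}|a_{k_A}|^2$ bound with a constant depending only on $r$. The mixed case $\eta_{k_j} = |g_{k_j}|^2 - 1$ needs a bit of extra bookkeeping because each such factor contributes two Gaussian slots (rather than one), but since $|g_k|^2 - 1$ is centered, self-contractions within that factor do not generate uncontrolled terms, and the degree count $2r$ in the Wiener chaos bound is preserved. Once this combinatorial step is in place, the Markov plus Wiener chaos conclusion is routine.
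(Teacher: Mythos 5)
Your proof is correct and follows essentially the same route as the paper's: compute the (conditional) second moment of $X$, appeal to the Wiener chaos hypercontractivity bound (Lemma \ref{Lma:wc}) to control $\|X\|_{L^p}$, and then run a Markov--Nelson optimization in $p$ to obtain the $A$-certain bound. You are somewhat more careful than the paper about two points — making the conditioning on the $\sigma$-algebra of $a$ explicit (the paper leaves this implicit, which is why $|a_{k_A}(\omega)|^2$ appears on the right side of \eqref{LDE} as a random quantity), and noting that the Wick expansion of $\mathbb{E}[|X|^2\mid\mathcal{F}_a]$ may still produce cross-pairing permutation terms bounded by $C_r\sum|a_{k_A}|^2$ rather than exactly equalling $\sum|a_{k_A}|^2$ — but these refinements are consistent with, and implicitly needed for, the paper's argument.
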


 \begin{proof}
By using Lemma \ref{LEM:wc}, we have
\[
\E [|X|^p] \les 
p^{\frac{2rp}2} \big( \E [|X|^2] \big)^{\frac{p}2}.
\]

\noi 
Due to the no-pairing assumption and independence, we have
\[
\begin{split}
\E [|X|^2] & = \E \bigg[  \Big| \sum_{k_1, \cdots, k_r} a_{k_1 \cdots k_r} \prod_{j = 1}^r
\eta_{k_j} (\omega)^{\zeta_j} \Big|^2\bigg] \\
& =  \sum_{k_1, \cdots, k_r} | a_{k_1 \cdots k_r}|^2 \prod_{j = 1}^r \E \big[  |\eta_{k_j} (\omega)|^2 \big] \\
& \les \sum_{k_1, \cdots, k_r} | a_{k_1 \cdots k_r}|^2,
\end{split}
\]

\noi 
where we used that $\E[|g_{k}|^2] \sim \E [|(|g_k|^2-1)|^2] \sim 1$.
Thus we have shown that
\[
\E [|X|^p] \les p^{\frac{2rp}2} \bigg( \sum_{k_1, \cdots, k_r} |a_{k_1 \cdots k_r}|^2 \bigg)^{\frac{p}2}.
\]

\noi 
Then, \eqref{LDE} follows by a standard Nelson's argument.
See, for example, \cite[Lemma 4.5]{Tz10}.
 \end{proof}

Now we are ready to state the random tensor estimate, whose proof can be found in {\cite[Proposition 4.14]{DNY3}}. See also \cite[footnote 39]{DNY4} and \cite{BDNY} for related discussions.

\begin{proposition}
  \label{Prop:te} 
  Let $A$ be a finite set and $h_{b c k_A} = h_{b c k_A}
  (\omega)$ be a tensor, where each $k_j$ and $(b, c) \in \mathbb{Z}^q$ \ for
  some integer $q \ge  2$. Given signs $\zeta_j \in \{ \pm \}$, we also
  assume that $\jb{b-b_0}, \jb{c-c_0} \lesssim M$ and $\jb{k_j } \lesssim M$ for some fixed $b_0, c_0 \in \Z$ and all $j \in A$, where $M$ is a dyadic number, and that in support of $h_{b c k_A}$
  there is no pairing in $k_A$. Define the tensor
  \begin{equation}
    \label{Eqn:Hbc} 
    H_{b c} = \sum_{k_A} h_{b c k_A} \prod_{j \in A}
    \eta^{\zeta_j}_{k_j},
  \end{equation}
  where we restrict $k_j \in E$ in \eqref{Eqn:Hbc}, $E$ being a finite set
  such that $\{ h_{b c k_A} \}$ is independent with $\{ \eta_k ; k \in E \}$, and $\eta_{k_j} \in \{ g_{k_j} , |g_{k_j}|^2-1 \}$.  
  Then $T^{-1}M$-certainly, we have
  \[ 
    \| H_{b c} \|_{b \to  c} \lesssim T^{- \theta} M^{\theta} \cdot
    \max_{(B, C)} \| h \|_{b k_B \to  c k_C}, 
  \]
  for some $T\ll 1$,
  where $(B, C)$ runs over all partitions of $A$.
\end{proposition}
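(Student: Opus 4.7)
The plan is to use the moment method, combining the Wiener chaos inequality from Lemma \ref{Lma:wc} with a trace bound for the operator norm. First I would reduce to a moment estimate: since for any even integer $p \ge 1$ the operator norm satisfies
\[
\| H_{bc} \|_{b\to c}^{2p} \le \operatorname{Tr}\bigl((HH^*)^p\bigr),
\]
it suffices to establish
\[
\E\bigl[\operatorname{Tr}\bigl((HH^*)^p\bigr)\bigr] \lesssim (Cp)^{|A|p} M^{\theta p} \max_{(B,C)} \| h \|_{bk_B \to ck_C}^{2p}.
\]
Coupled with Markov's inequality and the choice $p \sim \log(1/T) + \log M$, the factor $M^{\theta}$ absorbing logarithms, this yields the $T$-certain bound in the sense of Definition \ref{DEF:certain}.

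Next, I would expand the trace as a $2p$-fold sum over independent copies of $h$, indexed by $\ell = 1,\ldots,2p$, each bringing a fresh $k_A^{(\ell)}$ and appropriate signs $\zeta_j^{(\ell)}$ alternating with conjugation. The outer $b^{(\ell)}, c^{(\ell)}$ indices are contracted cyclically by the trace structure. Taking expectation over $(\eta_k)_{k\in E}$, which is independent of $h$, I would apply Wick's theorem (extended to the mixed Hermite-polynomial setting to handle the $\eta_{k_j} = |g_{k_j}|^2 - 1$ case as a centered second-order chaos). The no-pairing hypothesis on the support of $h$ rules out self-pairings within a single copy $\ell$; every surviving Wick contraction is thus a perfect matching $\pi$ of the $2p|A|$ random indices across distinct copies, with sign compatibility dictated by the $\zeta_j$.

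For each matching $\pi$, the cyclic trace together with $\pi$ determines a partition $(B, C)$ of $A$ by grouping the $A$-indices of a reference copy according to whether their Wick-partner lies in an ``adjacent'' or ``opposite'' copy in the cyclic structure. After relabelling, the contracted sum rewrites as an iterated composition of $h$ and $h^*$ viewed as maps $\ell^2_{bk_B} \to \ell^2_{ck_C}$, which by Proposition \ref{PROP:contr} and the Cauchy--Schwarz trick of Lemma \ref{Lma:tech} is bounded by $\| h \|_{bk_B \to ck_C}^{2p}$. The number of admissible matchings is $\le (Cp)^{|A|p}$ by a standard enumeration, yielding the required moment bound after taking the maximum over the induced partitions. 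The restriction $k_j \in E$ together with $\jb{b - b_0}, \jb{c-c_0}, \jb{k_j} \lesssim M$ contributes only a polynomial factor in $M$, which is absorbed into $M^{\theta}$.

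The main obstacle will be the combinatorial bookkeeping: one must verify that every Wick pairing pattern $\pi$ indeed induces an admissible partition $(B, C)$ of $A$ and that the resulting iterated contraction collapses cleanly onto a single operator norm $\| h \|_{bk_B \to ck_C}$. A secondary difficulty is the mixed case where some $\eta_{k_j}$ are first-chaos Gaussians and others are centered second-chaos variables $|g_{k_j}|^2-1$: the Wick expansion must be stratified so that paired second-chaos indices never produce over-pairings in the sense of Definition \ref{DEF:pair}, which is exactly ensured by the centering. Once these combinatorial points are handled, the passage from $L^{2p}$ moments to a $T$-certain probability bound is a routine Nelson-type argument, already used in the proof of Lemma \ref{LEM:LD}.
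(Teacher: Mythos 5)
The paper does not give its own proof of Proposition~\ref{Prop:te}: it cites \cite[Proposition~4.14]{DNY3}, and the argument there is the trace/moment method you propose. Your reduction $\| H_{bc}\|_{b\to c}^{2p} \le \operatorname{Tr}\bigl((HH^{*})^p\bigr)$, Wick expansion after conditioning on $h$, enumeration of matchings with a bound of the form $(Cp)^{|A|p}$, and closure via Markov with $p\sim\log(T^{-1})+\log M$ is exactly the DNY scheme, and the $T^{-\theta}M^{\theta}$ loss is an artefact of taking the $2p$-th root. So the approach is correct.

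The part you flag as the ``main obstacle'' is indeed the genuine content and is treated too loosely as stated. A single Wick matching does \emph{not} in general assign one partition $(B,C)$ to all $2p$ copies of $h$; rather each copy $\ell$ acquires its own $(B_\ell,C_\ell)$ determined by whether each $k_j^{(\ell)}$ is matched ``forward'' or ``backward'' along the trace cycle, and one must verify that the iterated sup/sum structure closes under Proposition~\ref{PROP:contr} (the issue is a variable summed on one side of a factor but placed in the sup on the other; the DNY argument handles this with a graph-theoretic bookkeeping on the contraction graph, then passes to $\max_{(B,C)}\|h\|_{bk_B\to ck_C}^{2p}$ by submultiplicativity). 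On the mixed-chaos point, the no-pairing hypothesis only forbids pairings of $k_A$ \emph{within} a single copy; across copies one does pair $g_k$ with $\bar g_k$ and $(|g_k|^2-1)$ with $(|g_k|^2-1)$, and the relevant fact is the orthogonality of Wick powers of different orders, not merely the centering, which prevents a first-chaos index from contracting with a second-chaos one. These are fillable gaps in the write-up, not errors in the plan; with them supplied your proposal reproduces the cited proof.
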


\begin{remark}\rm 
\label{RMK:abs}
In what follows, we may ignore the constant $T^{-\theta} M^\theta$, as these constants may be absorbed by choosing $\theta \ll 1$ in our analysis.
For instance, the factor $T^{-\theta}$ can be absorbed by Proposition \ref{Prop:stb} by choosing $\theta \ll b_1 - b$.
See also \cite{DNY3,DNY4} for similar arguments. 
\end{remark}

\section{Proof of Theorem \ref{THM:main}}
\label{Sec:proof}

We are ready to prove the main result of this paper, i.e. Theorem \ref{THM:main}.
As explained in Subsection \ref{SUBS:SS}, we shall proceed with an induction argument.
To be more precise,
to construct the solution $u(t)$ to \eqref{FNLS} with the formal expansion \eqref{Eqn:Ansatz}, we need to (i) construct $z_N$ and $\mathfrak h^{N,L}$ with $1\le L < N^{1-\dl}$ for each dyadic $N\ge 1$; (ii) show the convergence of \eqref{Eqn:Ansatz} in a proper space. Now we illustrate how to construct $z_N$ and $\mathfrak h^{N,L}$ in an induction manner. 
We first note that $\Pi_{1/2} = 0$\footnote{Here we used the fact that $\{k\in \Z; \jb{k} \le 1/2\} = \emptyset$.}, which implies that $u_{1/2} = y_{1/2} = F_{1/2} = z_{1/2} = 0$,  $\mathcal H^{N,1/2} = \Pi_N$, and $\psi_{N,1/2} = \chi(t)F_N$.
Then we move to $z_1$. 
We note that from \eqref{zN} that
\begin{align}
    \label{y1}
    y_1 = \psi_{1,1/2} + z_1 = F_1 + z_1.
\end{align}
From \eqref{Eqn:newzN} and \eqref{y1}, it follows that
\begin{align}
    \label{z1}
    \begin{split}
    z_1(t) & = -i \chi (t) \bigg( \int^t_0 \Pi_1
    \mathcal{M} (y_{1}, y_{1}, y_{1}) (t') d t' + \int^t_0 \Pi_1 \mathcal{R} (y_1) (t') d t' \bigg) \\
    & = -i \chi (t) \sum_{w_1^{(i)} \in \{F_1, z_1\}} \bigg( \int^t_0 \Pi_1
    \mathcal{M} (w_{1}^{(1)}, w_{1}^{(2)}, w_{1}^{(3)}) (t') d t' \\
    & \hphantom{XXX} + \int^t_0 \Pi_1 \mathcal{R} (w_{1}^{(1)}, w_{1}^{(2)}, w_{1}^{(3)}) (t') d t' \bigg) 
    \end{split}
\end{align} 

\noi 
which can be solved, provided $T\ll 1$, as $\Pi_1$ is the projection to zero frequency. 
Given $z_1$ solved from \eqref{z1}, we obtain $y_1$, from \eqref{zN} and $\psi_{1,1/2} = F_1$, and thus $u_1$ by \eqref{Eqn:yNdef}. 
Next, we can use \eqref{Eqn:method} to construct $\psi_{N,1}$, which is a linear equation that can be solved locally for small $t$\footnote{See \eqref{Eqn:method} for more details.}, where we used $u_1$ that we obtained earlier.  
By plugging $\psi_{2,1}$ and $y_1$ into \eqref{Eqn:newzN}, we can solve for $z_2$\footnote{This can be done via a fixed point argument as it is essentially an ODE system. Also see Section \ref{Sec:rems} for further discussion.}.
Similarly, we can get $y_2$ from \eqref{zN} and then $u_2$ from \eqref{Eqn:yNdef}.
We repeat the above process to construct $\psi_{N,2}$ using \eqref{Eqn:method} and $u_2$. 
By iterating this procedure, we obtain a sequence of $\{z_L, u_L, \psi_{N,L}\}$ for all dyadic numbers $L,N$, such that $L < N^{1-\dl}$. To show point (ii), which is the convergence of \eqref{Eqn:Ansatz} in some space, we need to show that each term in the sequence $\{z_L, u_L, \psi_{N,L}\}$ satisfies some proper bounds, which are elaborated in the following definition. 

\begin{definition}
  \label{Def:LocalM} 
  Given $0 < T \ll 1$, $b > \frac12$ sufficiently closed to $\frac12$, and let $J = [- T, T]$ and $\chi(t) = \eta_T(t)$. For any
  dyadic number $M$, consider the following statements, which we call
  $\text{{{\em {\textsf{Local}}\/}}} (M) :$
  \begin{itemize}
    \item[(i)] For the operator $\wt{\mathfrak{h}}^{N, L}$ defined in \eqref{wthNL},
    where $L < \min (M, N^{1 - \delta})$, we have
    \begin{equation}
      \label{Eqn:1} 
      \| \wt{\mathfrak{h}}^{N, L} \|_{Y^b} \le  L^{- \delta_0},
      \text{\quad} \| \wt{\mathfrak{h}}^{N, L} \|_{Z^b} \le 
      N^{\frac{1}{2} + \gamma_0} L^{- \frac{1}{2}},
    \end{equation}
    as well as
    \begin{equation}
      \label{Eqn:2}
      \Big\| \Big( 1 + \frac{| k - k' |}{L}
      \Big)^{\kappa} h_{k k'}^{N, L} \Big\|_{Z^b} \le 
      {{N}}.
    \end{equation} 
    Here the $Y^b$ and $Z^b$ norms are defined in \eqref{Yb} and \eqref{Zb}, respectively.
    
    \item[(ii)] For the terms $z_N$, where $N \le  M$, we have
    \begin{equation}
      \label{Eqn:3} \| z_N \|_{X^b (J)} \le 
      N^{- \frac{1}{2} - \gamma}.
    \end{equation}
    
    \item[(iii)] For any dyadic numbers $L_1$, $L_2 < M \le N$ the operator
    $\mathcal{P}^{\pm}$ defined by
    \begin{equation}
      \label{Eqn:5} 
      \begin{split}
        & \mathcal{P}^+ [\psi] (t) = \mathcal{P}^{+}_{N,L_1,L_2} [\psi] (t) : = - i  \chi(t) \int^t_0 \Pi_N \mathcal{M}
        (y_{L_1}, y_{L_2},  \psi) (t') d t'\\
        & \mathcal{P}^{-} [\psi] (t)  = \mathcal{P}^{-}_{N,L_1,L_2} [\psi] (t) : = - i  \chi(t) \int^t_0 \Pi_N \mathcal{M}
        (y_{L_1}, \psi, y_{L_2}) (t') d t'
      \end{split}
    \end{equation}
    whose kernel $P_{k k'}^{\pm} (t, t')$ has Fourier transform $\widehat{P_{k
    k'}^{\pm}} (\tau, \tau')$, which satisfies $|k-k'| \les L_{\max}$ and
    \begin{equation}
      \label{Eqn:6} 
      \int_{\mathbb{R}^2} \langle \tau \rangle^{2 (1 - b)}
      \langle \tau' \rangle^{- 2 b} \| \widehat{P^{\pm}_{k k'}} (\tau, \tau')
      \|_{k \to  k'}^2 d \tau d \tau' \lesssim
      T^{2\theta} L^{-  6\delta_0 }_{\max},
    \end{equation}
    \begin{equation}
      \label{Eqn:7} 
      \int_{\mathbb{R}^2} \langle \tau \rangle^{2 (1 - b)}
      \langle \tau' \rangle^{- 2 b} \| \widehat{P^{\pm}_{k k'}} (\tau, \tau')
      \|_{k k'}^2 d \tau d \tau' \lesssim T^{2\theta} N^{1 +
       \gamma_0 } L^{- 1 - 4\g_0}_{\max},
    \end{equation}
    \begin{equation}
      \label{Eqn:8} 
      \int_{\mathbb{R}^2} \langle \tau \rangle^{2 (1 - b)}
      \langle \tau' \rangle^{- 2 b} \Big\| \Big( 1 + \frac{| k - k'
      |}{{ {L_{\max}}}} \Big)^{\kappa} \widehat{P^{\pm}_{k k'}}
      (\tau, \tau') \Big\|_{k k'}^2  d \tau d \tau' \lesssim T^{2\theta} N^{2},
    \end{equation}
    where $L_{\max} = \max (L_1, L_2)$.
  \end{itemize}

\noi 
In the above definition, we choose the parameters $0 < \eps \ll \delta \ll \gamma_0  \ll \gamma \ll  \delta_0 \ll \al - 1$ and $\kk \gg 1$. 
\end{definition}

\begin{remark}
\rm 
Random averaging operators $\mathcal P^\pm$ play a crucial role in our later analysis. These operators were first introduced in \cite{DNY2} for Schr\"odinger equations; see Section \ref{Sec:raos} for further details. In \cite[Proposition 3.3]{DNY2}, it is observed that the operator $\mathcal P^+$ enjoys a better bound.
A similar better estimate for the $\mathcal P^+$ than \eqref{Eqn:7} also holds; see Section \ref{Sec:raos} for details. However, this improved estimate for $\mathcal P^+$ does not play an essential role in this paper.
\end{remark} 

Now, by using Definition \ref{Def:LocalM}, we are ready to state the main a priori estimate, which is the key ingredient in proving Theorem \ref{THM:main}. 
\begin{proposition}
  \label{PROP:main}
  Given $0 < T \ll 1$, 
  the probability that $\text{{{\em
  {\textsf{Local}}\/}}} (M / 2)$ holds but $\text{{{\em {\textsf{Local}}\/}}}
  (M)$ does not hold is less than $C_{\theta} e^{- (T^{- 1}
  M)^{\theta}}$ for some $\theta, C_{\theta} > 0$.
\end{proposition}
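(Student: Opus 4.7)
The plan is to run an induction on the dyadic scale $M$: assuming that $\textsf{Local}(M/2)$ holds on an event of suitably large probability, I would establish each of the three assertions of $\textsf{Local}(M)$ at the new scales $N \in (M/2, M]$ (and $L_1, L_2 < N$ for part (iii)), discarding only an exceptional set of probability at most $C_\theta e^{-(T^{-1}M)^\theta}$. The three pieces are coupled: the bounds on $\wt{\mathfrak{h}}^{N,L}$ in (i) require the kernel estimates on $\mathcal{P}^\pm$ in (iii) via the Neumann expansion $\mathcal{H}^{N,L} = (1 - \mathcal{P}^{N,L})^{-1}$, the estimates for $z_N$ in (ii) feed on (i) and (iii) through the equation \eqref{Eqn:newzN}, and (iii) at scale $N$ depends on the previously established data $u_L$ for $L \le M/2$ through the solution structure \eqref{Eqn:deco}.

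First I would establish the operator bounds \eqref{Eqn:6}--\eqref{Eqn:8} for $\mathcal{P}^\pm_{N,L_1,L_2}$. I would expand $y_{L_1}, y_{L_2}$ according to \eqref{Eqn:deco} as a sum of a Gaussian linear piece, random averaging pieces $\wt{\mathfrak{h}}^{L,\ell}[F_L]$, and smoother remainders $z_L$, all controlled inductively by $\textsf{Local}(M/2)$. This expresses $\widehat{P^\pm_{kk'}}(\tau,\tau')$ as a sum of products of the base tensor ${\rm T}^{{\rm b},m}_{kk_1k_2k_3}$ with Gaussian factors, multiplied by lower-scale kernels whose bounds are assumed. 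The weights $\langle \tau\rangle^{2(1-b)}\langle \tau'\rangle^{-2b}$ are produced by the Duhamel kernel estimate \eqref{Eqn:bk} of Lemma \ref{Lma:kerneles}. I would then apply the random tensor machinery of Proposition \ref{Prop:te} (together with Lemma \ref{LEM:LD} for scalar contractions) to reduce each stochastic contribution to deterministic tensor norms bounded by Lemma \ref{LEM:tensor} through Lemma \ref{LEM:tensor4} (and Corollary \ref{COR:gammaT} whenever the $\Gamma$-condition of Lemma \ref{Lma:gamma} is active). The short-time factor $T^\theta$ from Proposition \ref{Prop:stb} absorbs the $T^{-\theta}N^\theta$ losses of Proposition \ref{Prop:te}. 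Once (iii) is in place at scale $N$, the kernel bounds \eqref{Eqn:1}--\eqref{Eqn:2} in (i) follow by expanding $(1-\mathcal{P}^{N,L})^{-1}$ and using Proposition \ref{Prop:co} to propagate the weighted localization in $|k-k'|$.

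For (ii), I would set up a fixed point in the ball of radius $N^{-\frac12-\gamma}$ of $X^b(J)$ by regarding the right-hand side of \eqref{Eqn:newzN} as a functional $\Phi(z_N)$. Each of the seven terms is treated separately: the purely deterministic contributions (those multilinear in $z_{L}$ and $y_{L}$ for $L < N$ or linear in $z_N$ itself) are handled by trilinear $X^b$-estimates using the operator norms from Lemma \ref{LEM:tensor2}--Lemma \ref{LEM:tensor3} and the inductive $z_L$ bound \eqref{Eqn:3}; the contraction factor comes from the gap $N^{-\gamma+\gamma_0}$. The stochastic terms involving $\wt{\mathfrak{h}}^{N,L}[F_N]$ in the third and fifth lines of \eqref{Eqn:newzN} benefit from the crucial lower bound $N_{\med} > L_N \sim N^{1-\delta}$, which eliminates the dangerous low-low-high interaction; here the enhanced smoothing of the second Picard iterate (Theorem \ref{THM:div}(i), and its refinement for high-high interactions) produces the required $N^{-\frac12-\gamma}$ decay via Proposition \ref{Prop:te}. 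The first and sixth terms, purely random at the top scale, are handled analogously using the Gaussian independence of $F_N$ from the lower-scale kernels.

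Finally, each application of Proposition \ref{Prop:te} or Lemma \ref{LEM:LD} contributes an exceptional set of probability $\le C_\theta e^{-(T^{-1}N)^\theta}$, and there are only finitely many (polylogarithmic in $M$) applications across the dyadic pieces of $(N,L_1,L_2)$ with $N \in (M/2,M]$, so the union bound yields the claimed probability $C_\theta e^{-(T^{-1}M)^\theta}$. The main obstacle will be the Hilbert--Schmidt bound \eqref{Eqn:7} near the threshold $L_{\max} \sim N^{1-\delta}$, where one has almost no room: one must simultaneously exploit the decay factor $\langle k-k_1\rangle^{-1}$ in Lemma \ref{Lma:counting1} (which, via Lemma \ref{LEM:S}, effectively removes two dimensions from the counting rather than the one that suffices for $\alpha=1$, cf.\ Theorem \ref{THM:div}), the sharper $\Gamma$-bounds of Lemma \ref{Lma:gamma} and Corollary \ref{COR:gammaT}, and the unitary conservation structure of the random averaging operator (Subsection \ref{SUB:unitary}) to produce the cancellation responsible for the small power $N^{-3\delta^3} L_{\max}^{-3\eps'}$ on the right-hand side of \eqref{Eqn:7}.
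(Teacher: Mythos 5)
Your proposal follows essentially the same route as the paper: Sections 4--5 reduce \eqref{Eqn:6}--\eqref{Eqn:8} to time-independent random tensor bounds via the Duhamel kernel (Lemma \ref{Lma:kerneles}) and Proposition \ref{Prop:te}, derive \eqref{Eqn:1}--\eqref{Eqn:2} from the Neumann series with Proposition \ref{Prop:co}, and close \eqref{Eqn:3} term by term over \eqref{Eqn:newzN} using the counting lemmas, the $\Gamma$-condition, and the unitary cancellation of Subsection \ref{SUB:unitary}. One small logical caveat: Theorem \ref{THM:div}(i) is itself a consequence of the estimates in Subsection \ref{SUB:HHCCC}, so you should not invoke it to establish \eqref{Eqn:3} --- the paper proves the high-high smoothing directly there and then deduces Theorem \ref{THM:div}(i) afterwards.
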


The proof of Proposition \ref{PROP:main} will occupy Section \ref{Sec:raos} and Section \ref{Sec:rems}.

\subsection{Proof of the main theorem}
Let us prove Theorem \ref{THM:main} by assuming Proposition \ref{PROP:main}.

\begin{proof}[Proof of Theorem \ref{THM:main}]
From Proposition \ref{PROP:main}, it follows that $T^{-1}$-certainly, the event $\text{{\textsf{Local}}} (M)$, defined in Definition \ref{Def:LocalM}, holds for any $M$. 
Recall the decomposition \eqref{Eqn:deco} that
\begin{align} 
\begin{split}
 y_N & = \chi(t) F_N + \sum_{1 \le  L \le  L_N} \mathfrak{h}^{N, L} [\chi(\cdot) F_N] + z_N (t) \\
      & = \chi(t) F_N + \sum_{1 \le  L \le  L_N} \wt{\mathfrak{h}}^{N, L}
      [F_N] + z_N (t)
    \end{split} 
    \label{decoyN}
  \end{align}
where $F_N$ is given in \eqref{Eqn:FN}, $\mathfrak{h}^{N, L}$ and $\wt{\mathfrak{h}}^{N, L}$ are the operators given in \eqref{Eqn:hNL} and \eqref{wthNL}, and $z_N$ is the solution to \eqref{Eqn:newzN}. 
From the definition of $\text{{\textsf{Local}}} (M)$ in Definition \ref{Def:LocalM}, we see that for each $N$, the decomposition of $y_N$ in the above is well defined, i.e. both $\mathfrak{h}^{N, L}$ and $z_N$ are well defined.
To prove the local well-posedness part of Theorem \ref{THM:main}, it suffices to justify the convergence of the summation $\sum y_N$ in some proper sense.

From \eqref{GFF:reg} and \eqref{Eqn:FN}, we see that 
\begin{align}
\label{con_FN}
u_0^\o = \sum_{N\ge 1} F_N  \quad \textup{ converges in }H^{\frac{\al-1}2 -} (\T). 
\end{align}
 
From \eqref{Eqn:keh}, 
we also have that 
\begin{align}
    \label{hkk}
    \wt{\mathfrak{h}}^{N, L}
      [F_N] = \sum_k
      e^{i  k \cdot x} \sum_{ k'} h_{k
      k'}^{N, L} (t) \frac{g_{k'}}{\jbb{k'}^{\frac{\alpha}{2}}} ,
\end{align}

\noi 
where $h_{k k'}^{N, L}$, the kernel of $\wt{\mathfrak{h}}^{N, L}$ given in \eqref{Eqn:keh}, is independent of $\{g_{k'}\}_{N/2 < \jb{{k'}} \le N}$.
Therefore, by using 
Proposition \ref{Prop:te} (as well as Remark \ref{RMK:abs}), we obtain
\begin{align}
    \label{con_aver}
     \begin{split}
      \| \wt{  \mathfrak{h}}^{N, L} [F_N] \|_{X^b}^2 & = \int_{\R} \jb{\tau}^{2b} \bigg\| \sum_{ k' } \ft{h_{k k'}^{N, L}} (\tau) \frac{g_{k'}}{\jbb{k'}^{\frac{\alpha}{2}}}
      \bigg\|_{\ell_k^2}^2 d \tau \\
    & \les  N^{- \al} \int_{\R} \jb{\tau}^{2b}  \big\| \ft{h_{k k'}^{N, L}} (\tau)   \big \|_{k k'}^2  d \tau \\
      &  = N^{- \al}  \| \wt{\mathfrak{h}}^{N, L} \|^2_{Z^b (J)} \\
      & \lesssim N^{ 1 -
      \alpha  + 2\gamma_0} L^{- 1} ,
    \end{split}    
\end{align}

\noi 
where we used \eqref{Eqn:1} in the last step.
Then from \eqref{Eqn:2}, by choosing $\kappa \gg 1$ large enough,  we have $|k-k'| \le LN^\eps$ for some $\eps \ll 1$. This together with \eqref{hkk} implies that the Fourier support of $\wt{\mathfrak{h}}^{N, L} [F_N]$ is $N/4 < |k| \le 2N$. Therefore, from \eqref{con_aver} we have that 
\begin{align*} 
    \begin{split}
    \bigg\| \sum_{1 \le  L \le  L_N} \wt{\mathfrak{h}}^{N, L} [F_N]
    \bigg\|_{C^0_J H^{\frac{\alpha-1}{2} - \gamma}_x } 
    & \les N^{\frac{\alpha-1}{2} - \gamma}\bigg\| \sum_{1 \le  L \le  L_N} \wt{\mathfrak{h}}^{N, L} [F_N]
    \bigg\|_{X^b(J)} \\ 
    & \les N^{\frac{\alpha-1}{2} - \gamma} \sum_{1 \le  L \le  L_N}  \| \wt{\mathfrak{h}}^{N, L} [F_N] \|_{X^b (J)} \\ 
    & \lesssim N^{ \g_0 - \g} ,
    \end{split}
\end{align*}

\noi 
where we used the fact that $b > \frac12$. Also, recalling that $\g_0 \ll \g$,  we thus conclude that
\begin{align}
    \label{con_aver3}
    \sum_{N \ge 1} \sum_{1 \le  L \le  L_N} \mathfrak{h}^{N, L}    [ \chi(\cdot) F_N] \quad \textup{ converges in } C(J; H^{\frac{\al -1}2  -\g} (\T)) \subset  C (J;L^2(\T))
\end{align}

\noi 
by choosing $\g \ll 1$. 

From \eqref{Eqn:3}, we see that 
\begin{align}
    \label{con_zN}
    \sum_{N\ge 1} z_N \quad \textup{ converges in } X^{\frac12 + \g -,b} (J),
\end{align}

\noi  
and thus in $C (J; H^{\frac12 + \g-} (\T))$ for some short time interval $J$. 

Finally, by collecting \eqref{decoyN}, \eqref{con_FN}, \eqref{con_aver3}, and \eqref{con_zN}, we conclude that the sequence
\[
u_N  = \sum_{M= 1}^N y_M
\]

\noi 
converges in $C(J; H^{\frac{\al -1}2  -\g} (\T)) \subset  C (J;L^2(\T))$; i.e. there exists an unique $u \in C(J; H^{\frac{\al -1}2  -\g} (\T)) $ such that
\[
u = \lim_N u_N = \lim_N \sum_{M= 1}^N y_M
\]

\noi 
converges in $C(J; H^{\frac{\al -1}2  -\g} (\T))$.
\end{proof}

\begin{remark}
\rm 
\label{RMK:meshing}
The rigorous proof of \eqref{con_aver} would require a {\itshape meshing argument}. 
As the application of meshing argument has become a standard procedure nowadays, we only sketch its key ideas here. 
For detailed argument and further discussion, see \cite[Remark 4.3]{DNY2}, \cite[Subsection 6.1]{DNY3}, and \cite[Subsection 3.4]{DNY4}. 
First, we may restrict to low modulation $|\tau|\leq N^{\kappa}$ for some $\kappa\gg1$, as otherwise, we may gain considerable smoothing. Second, we divide the big interval $\{\tau\in\mathbb{R};|\tau|\leq N^{\kappa}\}$ into small intervals of length $\nu:=\exp(-(\log N)^6)$. 
Third, by taking averages
on these small intervals and using Poincar\'e inequality, 
we can approximate the tensor $\widetilde{\mathfrak{h}}^{N,L}= (\widehat{h^{N,L}_{kk'}}(\tau) )_{k,k'\in \Z}$ by  $\widetilde{\mathfrak{h}}_{\text{avg}}^{N,L}=(\widehat{(h^{N,L}_{\text{avg}})_{kk'}}(\tau))_{k,k'\in \Z}$ in the sense that
\[
\|\widetilde{\mathfrak{h}}^{N,L} - \widetilde{\mathfrak{h}}^{N,L}_{\text{avg}} \|_{Z^b(J)} \les\, \nu\,\bigg( \int_{\mathbb{R}} \jb{\tau}^{2b} \|  \ft{h^{N,L}_{kk'}} (\tau)\|_{kk'}^2 d\tau + \int_{\mathbb{R}}  \jb{\tau}^{2b}\|  \partial_\tau \ft{h^{N,L}_{kk'}} (\tau)\|_{kk'}^2 d\tau \bigg)^{\frac12}.
\]
Once the above is properly controlled, it suffices to consider at most $\exp(\kappa(\log N)^6)$ different values of $\tau$. This, in particular, allows us to obtain \eqref{con_aver} after removing an exceptional set whose probability is still $O (e^{-(T^{-1}M)^{\theta}})$.
Roughly speaking, we may fix the $\tau$ variable in the computation such as \eqref{con_aver}.
Similar meshing arguments would be needed when we reduce to \eqref{Eqn:ktok}, \eqref{Eqn:eykk}, \eqref{Eqn:ktok-}, \eqref{Eqn:eykk-}, and \eqref{Eqn:ex}, from their time-dependent counterpart.
\end{remark}

\subsection{Reformulation}
\label{Sub:refo}
It remains to prove Proposition \ref{PROP:main}.
As explained at the beginning of this section, the main strategy is to proceed via induction. 
Recall that $\Pi_{1/2} = 0$, and thus $u_{1/2} = y_{1/2} = F_{1/2} = z_{1/2} = 0$,  $\mathcal H^{N,1/2} = \Pi_N$, and $\psi_{N,1/2} = \chi(t)F_N$.
Therefore, $\text{{\textsf{Local}}} \left( \frac{1}{2} \right)$ holds trivially. 
Here and in what follows, $\text{{\textsf{Local}}} \left(M \right)$ is given in Definition \ref{Def:LocalM}.

In what follows, let us assume $\text{{\textsf{Local}}} \left( M' \right)$ holds for all $M' \le \frac{M}{2}$ for some $M\ge 1$.
Then, the aim is to show that $\text{{\textsf{Local}}} \left({M} \right)$ holds with large probability; i.e. the probability of $\textup{{{{\textsf{Local}}\/}}} (M)$ does not hold is less than $C_{\theta} e^{- (T^{- 1} M)^{\theta}}$ for some $\theta, C_{\theta} > 0$.

Before proceeding to show $\text{{\textsf{Local}}} \left({M} \right)$, let us reformulate the estimates given in $\text{{\textsf{Local}}} \left( \frac{M}{2} \right)$. 
From $\text{{\textsf{Local}}} \left( \frac{M}{2} \right)$, we have \eqref{Eqn:1}, \eqref{Eqn:2}, and \eqref{Eqn:3} with $N\le \frac{M}2$ and $L< \min(\frac{M}2, N^{1-\dl})$. This together with \eqref{Eqn:deco} implies that 
\begin{align}
    \label{Eqn:ym}
     y_L (t) = \chi (t) F_L + \sum_{1 \le  R <  L^{1-\dl}} \wt{\mathfrak{h}}^{L, R}
      [F_L] (t) + z_L (t),
\end{align} 

\noi 
is well defined for all $L \le \frac{M}2$ and $t\in J = [-T,T]$. 
To show that $\text{{\textsf{Local}}} \left({M} \right)$-(ii) holds, that is, \eqref{Eqn:3}, we substitute $y_L$ for $L \le \frac{M}2$ from \eqref{Eqn:ym} into \eqref{Eqn:newzN} and then solve for $z_M$. Likewise, to show that $\text{{\textsf{Local}}} \left({M} \right)$-(iii) holds, that is, \eqref{Eqn:6}, \eqref{Eqn:7}, and \eqref{Eqn:8} with $L_1, L_2 < N \le M$, we substitute $y_L$ from \eqref{Eqn:ym} into the random averaging operators $\mathcal P^{\pm}$ in \eqref{Eqn:5}. To simplify the notations, we can assume that $y_L$ is either $\chi (t) F_L$, $\wt{\mathfrak{h}}^{L, R} (F_{L})$, or $z_L$, depending on the decomposition \eqref{Eqn:ym}.
In particular, we assume the input $w_L$ is one of the following three types:
\begin{itemize}
  \item Type (G), we have
  \[ 
    ({w_{L}})_k (t) := \ind_{L / 2 <
    \jb{k} \le  L}  \frac{g_k (\omega)}{\jbb{k}^{\alpha / 2}} {\chi} (t) . 
  \]
  \item Type (C), where
  \begin{equation}
    \label{Eqn:C1} 
    ({w_{L}})_k (t) := \sum_{ k'} {h_{k k'}^{L, R}} (t) \frac{g_{k'}
    (\omega)}{\jbb{k'}^{\alpha / 2}},
  \end{equation}
  with $h_{k k'}^{L, R}$  being  
  supported in the set $\{ (k, k') ; L / 2 < \jb{k'} \le  L \}$, $\mathcal{B}_{\le  R}$-measurable for some
  $R <  L^{1-\dl}$, and satisfying the bounds
  \begin{equation}
    \label{Eqn:e1} 
    \begin{split}
      & \| \langle \tau \rangle^b \ft{h_{k k'}^{L, R}} (\tau) \|_{L_{\tau}^2
      (\ell_k^2 \to  \ell_{k'}^2)} \lesssim R^{- \delta_0},\\
      & \| \langle \tau \rangle^b \ft{h_{k k'}^{L, R}} (\tau) \|_{L_{\tau}^2 \ell_{k k'}^2      } \lesssim L^{\frac{1}{2} + \gamma_0} R^{- \frac{1}{2}},
    \end{split}
  \end{equation}
  for some $b > \frac12$. 
  Moreover, from
  \eqref{Eqn:2} we may assume that $h_{k k'}^{L, R}$ is supported in  \{$|
  k - k' | \lesssim L^{\eps} R$\} for $\eps \ll 1$ that only depends on $\kk$.
  
  \item Type (D), where $(w_{L})_k$ is supported in $\{ k\in \Z; \jb{  k } \le L
  \}$ and satisfies
  \begin{equation}
    \label{Eqn:e3} 
    \| w_{L} \|_{X^b} \lesssim L^{- \frac{1}{2} -
    \gamma} ,
  \end{equation}

  \noi 
  for some $b > \frac12$, provided $1 \le  L \le  \frac M2$.
\end{itemize}

\begin{remark}
  {\rm
    By letting $h^{L, 1 / 2}_{k k'} := \ind_{L / 2 < \jb{k} \le L}  \ind_{k = k'} \cdot {\chi} (t)$,
    we can view type (G) input as a special case of type (C). 
  Therefore, we will only consider type (C) and type (D) in what follows.}
\end{remark}

\subsection{Unitary property}
\label{SUB:unitary}
In this subsection, we prove a crucial cancellation property of $\mathcal H^{N,L}$, originated from the $L^2$ conservation of the linear equation \eqref{Eqn:l+}.
We first show that  the operator $\wt{\mathcal H}^{N,L}$ given in \eqref{wtH} is well defined, provided 
\begin{align}
    \label{smallP}
    \big\| \mathcal P^{N,L} \big\|_{Y^{b,b}} < 1.
\end{align}   

\noi 
Here the operator norm $Y^{b,b}$ is given in \eqref{Ybb}.
From \eqref{Eqn:calH} and \eqref{smallP}, we have
\begin{align*}
    \begin{split}
    \| \wt{\mathcal H}^{N,L}  [\Psi_0] \|_{X^b} & = \| \mathcal H^{N,L} [\chi (\cdot) \Psi_0] \|_{X^b} = \| (1 - \mathcal P^{N,L} )^{-1} [ \chi (\cdot) \Psi_0] \|_{X^b} \\ 
    & \le \|   \chi (\cdot) \Psi_0  \|_{X^b}  + \sum_{n \ge 1} \| (\mathcal P^{N,L} )^{n} [ \chi (\cdot) \Psi_0] \|_{X^b} \\ 
    & \le \|   \chi (\cdot) \Psi_0  \|_{X^b}  + \sum_{n \ge 1} \|  \mathcal P^{N,L}  \|_{Y^{b,b}}^{n} \|\chi (\cdot) \Psi_0] \|_{X^b} ,
    \end{split}
\end{align*}

\noi 
which implies that $\wt{\mathcal H}^{N,L}$ is well defined, provided \eqref{smallP}. 
Since $b > \frac12$, by the Sobolev embedding, we have that $\wt{\mathcal H}^{N,L} [ \Psi_0] \in C_T L^2_x$.
We have the following group property on $[-T,T]$, i.e. 
\[
\Psi(t_2, \Psi(t_1)) = \Psi(t_1 + t_2, \Psi(0)),
\]

\noi 
where $\Psi(t, W)$ is the solution to \eqref{Eqn:l+1} with initial data $\Psi_0 = W$.
Thus $\wt{\mathcal H}^{N,L}(t)$ is invertible since $\wt{\mathcal H}^{N,L} (t) \circ \wt{\mathcal H}^{N,L} (-t) = {\rm Id}$.
Finally, we note that \eqref{smallP} is guaranteed by \eqref{Eqn:6}.
As a consequence, we also have the following unitary property.

\begin{lemma}
  \label{LEM:uni}
  Given $M > \frac12$, assume $\text{{{\em {\textsf{Local}}\/}}} (M)$ given in Definition \ref{Def:LocalM} holds.
  For $L < \min (M, N^{1-\dl})$, there exists $T \ll 1$ such that for each $|t| \le T$, the matrix
  $\wt{\mathcal{H}}^{N, L} (t) = \{ \wt{H}_{k k'}^{N, L}(t) \}_{k k'}$ is unitary, i.e.
  \begin{align}
    \label{uni}  
    \sum_k \wt{H}_{k_1 k}^{N, L} (t) \overline{\wt{H}^{N, L}_{k_2 k}} (t) d\tau = \delta_{k_1} (k_2) 
  \end{align}
  where $\wt{\mathcal{H}}^{N, L}$ is the linear mapping defined in
  \eqref{wtH}.
\end{lemma}

Bourgain \cite{B97} first observed this property in the context of Hartree-type NLS. 
See also \cite{DNY4}.
Here we give a proof for reader's convenience. 

\begin{proof}
From \eqref{Eqn:l+}, we know that
 \[
      \partial_t \sum_k | \Psi_k |^2 = 2 \sum_k \ensuremath{\operatorname{Re}} (\overline{\Psi_k} \partial_t \Psi_k) = 4\ensuremath{\operatorname{Im}} \bigg( \sum_k
      \sum_{\substack{
        k_1 - k_2 + k_3 = k\\
        k_2 \not\in \{ k_1, k_3 \}
      }} e^{i  t' \Phi} \cdot (u_L)_{k_1}
      \overline{(u_L)_{k_2}} \overline{\Psi_k} \Psi_{k_3} \bigg) .
 \]
 By swapping $(k, k_1, k_2, k_3) \mapsto (k_3, k_2, k_1, k)$ in the summand,
 we see that we are taking imaginary part for reals, which gives zero. Namely
 \[ \partial_t \sum_k | \Psi_k |^2 = 0, \]

 \noi
 for $|t| \le T$.
It then follows that
 \[ \| \Psi (t) \|_{L^2 (\mathbb{T})}^2 = \| \Psi_0 \|_{L^2 (\mathbb{T})}^2,
    \text{\quad for all } |t| \le T. \]

\noi 
This shows that $\wt{\mathcal H}^{N,L} (t)$ is isometric on $L^2$ for $t\in [-T,T]$.
Furthermore, given $t\in [-T,T]$, we know that $\wt {\mathcal H}^{N,L} (t)$ is invertible and thus surjective. Therefore, for fixed $t\in [-T,T]$, the linear operator $\wt{\mathcal H}^{N,L} (t)$ is unitary\footnote{A linear map is unitary if it is surjective and isometric.}, i.e. 
 \[
\big(\wt{\mathcal H}^{N,L}   (t) \big)^* \wt{\mathcal H}^{N,L}  (t) = \wt{\mathcal H}^{N,L}  (t) \big(\wt{\mathcal H}^{N,L}  (t) \big)^* 
 =\text{Id},
 \]

\noi 
for $|t| \le T$, which finishes the proof.
\end{proof} 

We have the following corollary of Lemma \ref{LEM:uni}.

\begin{corollary}
\label{COR:cancel}
    With the same assumption as in Lemma \ref{LEM:uni}, we have
    \begin{align} 
    \label{Can}
    \sum_{k} \sum_{L_1,L_2 \le L} h^{N,L_1}_{k_1k} (t) 
 \cj{h^{N,L_2}_{k_2k} (t) 
 } \frac1{\jbb{k}^{\frac\al2}} = \sum_{k} \sum_{L_1,L_2 \le L} h^{N,L_1}_{k_1k} (t)  \cj{h^{N,L_2}_{k_2k} (t) } \bigg( \frac1{\jbb{k}^{\frac\al2}} -\frac1{\jbb{k_1}^{\frac\al2}} \bigg)
 \end{align}

    \noi 
    provided $|t| \le T$ and $k_1 \neq k_2$.
\end{corollary}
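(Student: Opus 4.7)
Subtracting the right-hand side from the left-hand side, the stated equality is equivalent to the pointwise identity
\begin{equation*}
\sum_k \sum_{L_1, L_2 \le L} h^{N, L_1}_{k_1 k}(t) \cj{h^{N, L_2}_{k_2 k}(t)}\, \jbb{k}^{-\al/2} = 0, \qquad k_1 \neq k_2,
\end{equation*}
which is what I plan to establish.

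My first step is to collapse the dyadic sums in $L_1, L_2$ via the telescoping identity from \eqref{Eqn:keh}, namely
\begin{equation*}
\sum_{L' \le L} h^{N, L'}_{k k'}(t) = \wt H^{N, L}_{k k'}(t) - \wt H^{N, 1/2}_{k k'}(t).
\end{equation*}
Because $u_{1/2} = 0$ we have $\mathcal P^{N, 1/2} = 0$, hence $\mathcal H^{N, 1/2} = \Id$ on $X^b_N$, and for $|t| \le T$ (where $\chi(t) = 1$) this kernel further collapses to $\wt H^{N, 1/2}_{k k'}(t) = \delta_{k, k'}$. Thus the double sum factors through $\bigl(\wt H^{N, L}(t) - \Id\bigr)_{k_1 k}\, \cj{\bigl(\wt H^{N, L}(t) - \Id\bigr)_{k_2 k}}$ on the relevant Fourier support.

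The second step is to invoke Lemma \ref{Lma:uni}, which states that $\wt{\mathcal H}^{N, L}(t)$ is unitary and so
$\sum_k \wt H^{N, L}_{k_1 k}(t) \cj{\wt H^{N, L}_{k_2 k}(t)} = \delta_{k_1, k_2}$. Expanding the four cross-terms coming from the factored product and substituting this unitary identity, together with the $\delta$-form of $\wt H^{N, 1/2}$, produces a sum whose pieces I expect to combine into the desired vanishing when $k_1 \neq k_2$.

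The main obstacle is that the weight $\jbb{k}^{-\al/2}$ is non-constant in $k$ and therefore does not mesh immediately with the flat unitary identity. The resolution should exploit the support convention from \eqref{Eqn:C1} that $h^{N, L'}_{k k'}$ is supported in the dyadic shell $N/2 < \jb{k'} \le N$: on this shell $\jbb{k}^{-\al/2}$ is essentially frozen at size $N^{-\al/2}$, so the weighted identity reduces to its unweighted counterpart; meanwhile the boundary terms coming from $\wt H^{N, 1/2}$ pin down exactly the off-diagonal entries $\wt H^{N, L}_{k_1 k_2}$ and $\cj{\wt H^{N, L}_{k_2 k_1}}$, which cancel against each other through the Hermitian structure of the generator of the flow established in the proof of Lemma \ref{Lma:uni}. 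This last cancellation is the delicate point of the argument and the place where the conservation law of the random averaging operator is genuinely used.
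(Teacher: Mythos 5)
Your reduction at the outset is exactly right if one reads the displayed identity literally: subtracting, \eqref{Can} is equivalent to
\begin{equation*}
\sum_{k}\sum_{L_1,L_2\le L}h^{N,L_1}_{k_1 k}(t)\,\cj{h^{N,L_2}_{k_2 k}(t)}\,\frac{1}{\jbb{k}^{\al/2}}=0,\qquad k_1\neq k_2.
\end{equation*}
However, this is not what the paper's own proof establishes, and the gap you flag at the end is a real one, not just a ``delicate point.'' After you telescope to $\wt H^{N,L}-\mathrm{Id}$ and expand, the two places you lean on informal reasoning both fail as exact statements. First, ``$\jbb{k}^{-\al/2}$ is essentially frozen on the shell'' is a quantitative observation, but \eqref{Can} is an algebraic identity; the off-diagonal entry of $\wt H^{N,L}D\,\wt H^{N,L\,*}$ with $D=\mathrm{diag}(\jbb{k}^{-\al/2})$ is generically nonzero whenever $\wt H^{N,L}$ fails to commute with $D$, which it does (the generator $-2i\Pi_N\mathcal M(u_L,u_L,\cdot)$ is a frequency-shifting operator). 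Second, the asserted cancellation of the boundary cross-terms $\wt H^{N,L}_{k_1k_2}(t)$ and $\cj{\wt H^{N,L}_{k_2k_1}(t)}$ does not follow from unitarity: unitarity plus the flow property give $\cj{\wt H^{N,L}_{k_2k_1}(t)}=\big(\wt{\mathcal H}^{N,L}(t)^{*}\big)_{k_1k_2}=\big(\wt{\mathcal H}^{N,L}(t)^{-1}\big)_{k_1k_2}=\wt H^{N,L}_{k_1k_2}(-t)$, a value of the kernel at the opposite time, which has no reason to equal $-\wt H^{N,L}_{k_1k_2}(t)$ beyond first order in $t$. So neither ingredient of your proposed resolution closes the argument.

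The reason the paper's proof is so short is that it actually proves the \emph{unweighted} identity
\begin{equation*}
\sum_{k}\sum_{L_1,L_2\le L}h^{N,L_1}_{k_1 k}(t)\,\cj{h^{N,L_2}_{k_2 k}(t)}=\delta_{k_1}(k_2),
\end{equation*}
which is the telescoped form of $\sum_k\wt H^{N,L}_{k_1k}\cj{\wt H^{N,L}_{k_2k}}=\delta_{k_1}(k_2)$, i.e.\ exactly Lemma \ref{Lma:uni}. If you look at how the Corollary and Remark \ref{RMK:cancel} are actually deployed in \eqref{Eqn:CCCp2}, the zero quantity that is added carries the weight $\jbb{k_1}^{-\al}$ with $k_1$ \emph{fixed}, not the summation variable; that constant factors out of the $k$-sum, and the identity reduces to the unweighted one. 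This strongly suggests that \eqref{Can} as printed has a typographical swap of $\jbb{k}$ and $\jbb{k_1}$: the intended statement has the $k$-dependent weight on the left (and first term on the right) and the fixed weight $\jbb{k_1}^{-\al/2}$ as the subtracted term, so that the difference of the two sides is $\jbb{k_1}^{-\al/2}\cdot\delta_{k_1}(k_2)=0$. Once you read it this way, the telescoping plus Lemma \ref{Lma:uni} is a complete proof and matches the paper exactly; the extra machinery you were reaching for (shell localization, Hermitian cancellations of boundary terms) is not needed and, as written, would not work.
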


\begin{proof}
From \eqref{Eqn:keh}, we have that
\begin{align}
\label{Can1}
\sum_{L_i \le L} h_{k_ik}^{N,L_i} (t) = \wt{\mathcal{H}}^{N, L}_{k_ik} (t) ,
\end{align}

\noi 
for $i = 1,2$.
Then it follows that 
\begin{align*}
\begin{split}
\sum_{L_1, L_2 \le L}  \sum_{k}   h_{k_1k}^{N,L_1} (t) \cj{h_{k_2k}^{N,L_2} } (t) =   \sum_{k}    \wt{\mathcal{H}}^{N, L}_{k_1k} (t) \wt{\mathcal{H}}^{N, L}_{k_2k} (t)  = \dl_{k_1} (k_2) 
\end{split}
\end{align*}

\noi 
for $|t| \le T$. Thus \eqref{Can} follows.
\end{proof}

\begin{remark}\rm
\label{RMK:cancel}
Without loss of generality, we may assume that  
\[
\sum_{k} h^{N,L_1}_{k_1k} (t) 
 \cj{h^{N,L_2}_{k_2k} (t) 
 } \frac1{\jbb{k}^{\frac\al2}} = \sum_{k} h^{N,L_1}_{k_1k} (t)  \cj{h^{N,L_2}_{k_2k} (t) } \bigg( \frac1{\jbb{k}^{\frac\al2}} -\frac1{\jbb{k_1}^{\frac\al2}} \bigg),
\]

\noi 
in our later analysis. The reason is that from Corollary \ref{COR:cancel}, we can always add
\[
0 = \sum_{k} \sum_{L_1,L_2 \le L} h^{N,L_1}_{k_1k} (t)  \cj{h^{N,L_2}_{k_2k} (t) }  \frac1{\jbb{k_1}^{\frac\al2}}  ,
\] 

\noi 
from the beginning of our analysis.
\end{remark}

\section{The random averaging operators}
\label{Sec:raos}

This and the next section will be devoted to the proof of Proposition \ref{PROP:main}.
The plan of the proof is as follows;
we first further reduce the problem to a time-dependent problem in Subsection \ref{SUB:redu};
then we prove \eqref{Eqn:6}--\eqref{Eqn:8} of $\text{{\textsf{Local}}} \left( M
\right)$ in Subsections \ref{SUB:LLH} and \ref{SUB:LHL}; in Subsection \ref{SUB:RAO},
we verify \eqref{Eqn:1} and \eqref{Eqn:2} of $\text{{\textsf{Local}}} \left( M
\right)$. 
We defer the proof of \eqref{Eqn:3} to the next section.

\subsection{Further reduction}
\label{SUB:redu}
To make the proof more clear, we can recast the estimates \eqref{Eqn:6} -- \eqref{Eqn:8} in a simpler way. The key point is that the main solution space $X^b$ for $b > \frac12$ embeds in $C_t $ in the temporal variable, so we can ignore the $t$ variable in most of our analysis.

Recall operators $\mathcal{P}^{\pm}$ defined in \eqref{Eqn:5}. Due to the factor $\chi (t) = \eta (T^{- 1} t)$, by Proposition
\ref{Prop:stb}, we can gain a small factor $T^{\theta}$. 
Recall that $\widehat{P^{\pm}_{k k'}} (\tau,
\tau')$ is the temporal Fourier transform of the kernel $P^{\pm}_{k k'} (t,
t')$ of $\mathcal{P}^{\pm}$ in \eqref{Eqn:5}, i.e.
\[ 
  \widehat{(\mathcal{P}^{\pm} \psi)_k} (\tau) = \sum_{k'}
  \int_{\mathbb{R}} \widehat{P^{\pm}_{k k'}} (\tau, \tau')
  \widehat{\psi_{k'}} (\tau') d \tau' . 
\]
We decompose $\{ | k' | \sim N_3 \}$ into intervals $B'_i$ of lengths
$L_{\max} = \max (L_1, L_2)$, and then write
\[ 
  \widehat{P^{\pm}_{k k'}} (\tau, \tau') = \sum_i \widehat{P^{\pm}_{k k'}}
  (\tau, \tau') \ind_{k' \in B_i'} . 
\]

\noi 
Here, we may further restrict to the low modulation case $||k|^\alpha-|k'|^\alpha|\lesssim L_{\max}^\kappa$ for some $\kappa \gg 1$, as otherwise, we may gain considerable regularity from an $X^{s,b}$ argument. With this further restriction, we note that the number of intervals $B_i'$ is at most of $O(L_{\max}^{c \kappa})$.
See \cite[Proposition 4.15]{DNY3} for a similar argument.
Due to the fact that $| k - k' |\lesssim L_{\max}$, we
see that
\begin{equation}
  \label{Eqn:lm} 
  \| \widehat{P^{\pm}_{k k'}} (\tau, \tau') \|_{k \to 
  k'} \lesssim \sup_i \| \widehat{P^{\pm}_{k k'}} (\tau, \tau')
  \ind_{k' \in B_i'} \|_{k \to  k'} .
\end{equation}

\noi 
Therefore,
in what follows, we may assume that $k$ and $k'$ are both localized in intervals of length $L_{\max} = \max (L_1, L_2)$.

We start with the operator $\mathcal{P}^+$.
By using \eqref{Eqn:M}, \eqref{Eqn:Ik}, and
\eqref{Eqn:5}, we may rewrite the temporal
Fourier transform of the kernel as
\begin{equation}
  \label{Eqn:pkkp} 
  \begin{split}
    & \widehat{P^+_{k k'}} (\tau, \tau')\\
    & \quad = - i  \sum_{\substack{
      k_1 - k_2 = k - k'\\
      k_2 \not\in \{ k_1, k' \}
    }} \int_{\mathbb{R}^2} \mathcal{K} (\tau, \Phi + \tau' +
    \tau_1 - \tau_2) \cdot ( \widehat{y_{L_1}} )_{k_1} (\tau_1)
    \cdot \overline{(\widehat{y_{L_2}})_{k_2}} (\tau_2)
    d \tau_1 d \tau_2\\
    & \quad = - i  \sum_{m \in \mathbb{Z}} \int_{\mathbb{R}^2} \sum_{k_1, k_2}
    \mathcal{K} (\tau, m + \Phi - [\Phi] + \tau' + \tau_1 - \tau_2)\\
    & \hspace{3cm} \times \mathrm{T}_{k k_1 k_2 k'}^{\text{b}, m} \cdot ( \widehat{y_{L_1}}
    )_{k_1} (\tau_1) \cdot \overline{(\widehat{y_{L_2}})_{k_2}} (\tau_2) d \tau_1 d \tau_2,
  \end{split}
\end{equation}
where $\Phi$ is defined in \eqref{Eqn:Phi}, $[\Phi]$ is the largest integer
that does not greater than $\Phi$, and the base tensor ${\rm T}^{{\rm b},m}$ is given by
\eqref{baseT}. Then by Minkowski inequality, \eqref{Eqn:pkkp}, and \eqref{Eqn:bk}, we have
\begin{equation}
  \label{Eqn:re2} 
  \begin{split}
   \| \mathcal P^+   & \|_{Y^{1-b,b'}}^2 \les  \int_{\mathbb{R}^2} \langle \tau \rangle^{2 (1 - b)} \langle \tau'
    \rangle^{- 2 b'} \| \widehat{P^+_{k k'}} (\tau, \tau') \|_{k \to 
    k'}^2 d \tau d \tau'\\
    & \lesssim \int_{\mathbb{R}^2} \langle \tau \rangle^{- 2 b} \langle \tau'
    \rangle^{- 2 b'} \bigg( \sum_{m \in \mathbb{Z}} \int_{\mathbb{R}^2} 
    \langle \tau_1 \rangle^{- b} \langle \tau_2 \rangle^{- b}\\
    & \hspace{1cm} \times \Big\|  \sum_{k_1, k_2} \langle \tau - m + \Phi - [\Phi]
    - \tau' - \tau_1 + \tau_2 \rangle^{- 1} \\
    & \hspace{1.5cm} \times \mathrm{T}_{k k_1 k_2
    k'}^{\text{b}, m} \cdot ( \langle \tau_1 \rangle^b \widehat{y_{L_1}}
    )_{k_1} (\tau_1) \cdot \overline{( \langle \tau_2
    \rangle^b \widehat{y_{L_2}} )_{k_2} } (\tau_2)  \Big\|_{k
    \to  k'}  d \tau_1 d \tau_2\bigg)^2 d \tau
    d \tau'\\
    & \lesssim \int_{\mathbb{R}^2} \langle \tau \rangle^{- 2 b} \langle \tau'
    \rangle^{- 2 b'} \bigg( \sum_{m \in \mathbb{Z}} \int_{\mathbb{R}^2} 
    \langle \tau - m - \tau' - \tau_1 + \tau_2 \rangle^{- 1} \langle \tau_1
    \rangle^{- b} \langle \tau_2 \rangle^{- b}\\
    & \hspace{1cm}  \times \Big\|\sum_{k_1, k_2} \mathrm{T}_{k k_1
    k_2 k'}^{\text{b}, m} \cdot ( \langle \tau_1 \rangle^b
    \widehat{y_{L_1}} )_{k_1} (\tau_1) \cdot \overline{( \langle \tau_2
    \rangle^b \widehat{y_{L_2}} )_{k_2} }(\tau_2) \Big\|_{k
    \to  k'} d \tau_1 d \tau_2 \bigg)^2 d
    \tau d \tau',
  \end{split}
\end{equation}

\noi 
where $b > b' > \frac12$.
To deal with the summation over $m$ in \eqref{Eqn:re2}, we need to localize the
frequencies $k, k_1, k_2, k'$. To be more precise, if $m = [\Phi] \in
\mathbb{Z}$ ranges over an interval of size $R$, then from our construction,
we have
\begin{equation}
  \label{Eqn:gener} 
  \sum_m \langle \tau - m - \tau' - \tau_1 + \tau_2
  \rangle^{- 1} \lesssim \log (1 + R) .
\end{equation}
In what follows, we shall use some special cases of \eqref{Eqn:gener}. Recall
$\Phi = |{k_1}|^{\alpha} - |{k_2}|^{\alpha} + |k'|^{\alpha} - |{k}|^{\alpha}$ from \eqref{Eqn:Phi} and
the relation $k = k_1 - k_2 + k'$. If we assume $| k_1 | \lesssim L_1$, $| k_2
| \lesssim L_2$, and $k \in B_i'$, where $B_i'$ is the ball considered in
\eqref{Eqn:lm}, then we have a refined bound
\begin{equation}
  \label{Eqn:sm} 
  \sum_m \langle \tau - m - \tau' - \tau_1 + \tau_2 \rangle^{-
  1} \lesssim \log (1 + L_{\max}) .
\end{equation}

\noi 
By using \eqref{Eqn:sm} and Cauchy-Schwarz inequality in $\tau_1,
\tau_2$ integrations, we may bound \eqref{Eqn:re2} by
\begin{equation}
  \label{Eqn:goal1} 
  (\log (1 + L_{\max}))^2 \Big\| \sum_{k_1, k_2}
  \mathrm{T}_{k k_1 k_2 k'}^{\text{b}, m} \cdot ( \langle \tau_1
  \rangle^b \widehat{y_{L_1}} )_{k_1} (\tau_1) \cdot \overline{( \langle \tau_2
    \rangle^b \widehat{y_{L_2}} )_{k_2} } (\tau_2)
  \Big\|^2_{L_{\tau_1 \tau_2}^2 (k \to  k')} ,
\end{equation}
where we used that $b, b' > \frac12$.
From the discussion in Subsection \ref{Sub:refo}, we proceed with estimating \eqref{Eqn:goal1} by replacing $y_{L_j}$ with an input $w_{L_j}$ of either type (C) or type (D), respectively. 

From the definition of type (C), type (D),
Remark \ref{RMK:meshing},
and \eqref{Eqn:goal1}, we may further simplify the estimate \eqref{Eqn:6} to a time-independent estimate.
To be more precise, to prove \eqref{Eqn:6},
it suffices to show
\begin{equation}
  \label{Eqn:ktok} 
  \| \mathscr{Y}_{k, k'}^{+} \|_{k \to  k'} \lesssim L^{- 4
  \delta_0 }_{\max},
\end{equation}
where $\mathscr{Y}^+_{k, k'}$ is the time-dependent random tensor given by
\begin{equation}
  \label{Eqn:ykk} 
  \mathscr{Y}_{k, k'}^{+} = \sum_{k_1, k_2} \mathrm{T}_{k k_1 k_2
  k'}^{\text{b}, m} \cdot (w_{L_1})_{k_1}  \overline{(w_{L_2})_{k_2}}.
\end{equation}

\noi 
Here $w_{L_j}$ for $j \in \{ 1, 2 \}$ are of the following two types, with a
slight abuse of notation, we still call them type (C) and type (D).  
\begin{itemize}
  \item Type (C), where
  \[ 
    (w_{L_j})_{k_j} := \sum_{ {k_j'} } h_{k_j
    k_j'}^{L_j, R_j} (\omega) \frac{g_{k_j'} (\omega)}{\jbb{k_j'}^{\alpha / 2}},
  \]
  with $h_{k_j k_j'}^{L_j, R_j}$ supported in the set $\{  k_j'; L_j /
  2 < \jb{k_j'} \le  L_j \}$ and $\mathcal{B}_{\le 
  R_j}$-measurable for some $R_j \le  L_j^{1 - \delta}$, and satisfying
  the bounds
  \begin{equation}
    \label{Eqn:ee1} 
    \begin{split}
      & \| h_{k_j k_j'}^{L_j, R_j} \|_{\ell_{k_j}^2 \to  \ell_{k_j'}^2}
      \lesssim R_j^{- \delta_0},\\
      & \| h_{k_j k_j'}^{L_j, R_j} \|_{{\ell_{k_j k_j'}^2}} \lesssim
      L_j^{\frac{1}{2} + \gamma_0} R_j^{- \frac{1}{2}},
    \end{split}
  \end{equation}
  for $0< \dl \ll \gamma_0 \ll \gamma \ll \delta_0 \ll \al - 1$. Moreover, from
  \eqref{Eqn:2} we may assume that $h_{k_j k_j'}^{L_j, R_j}$ is supported in 
  \{$| k_j - k_j' | \lesssim L_j^{\eps} R_j$\} for any $\eps > 0$.

  \smallskip
  
  \item Type (D), where $(w_{L_j})_{k_j}$ is supported in $\{k_j \in \Z;  \jb{ k_j } \le 
  L_j \}$, and satisfies
  \begin{equation}
    \label{Eqn:ee3} 
    \| (w_{L_j})_{k_j} \|_{\ell_{k_j}^2} \lesssim L_j^{-
    \frac{1}{2} - \gamma} .
  \end{equation}
\end{itemize}
A similar argument allows us to reduce \eqref{Eqn:7} to
\begin{equation}
  \label{Eqn:eykk} 
  \| \mathscr{Y}_{k, k'}^+ \|_{k k'} \lesssim N^{\frac{1}{2} + \frac{\g_0}3 } L^{{ {- \frac{1}{2} -  3 \g_0}}}_{\max},
\end{equation}
where $\mathscr{Y}_{k, k'}^+$ is as in \eqref{Eqn:ykk} and $0 < \varepsilon' \ll 1$.

The same argument as above also works for the estimate \eqref{Eqn:6} with the operator $\mathcal P^-$, for which we only need to prove the following time-independent random matrix estimates.
\begin{align}
  \label{Eqn:ktok-} 
  \| \mathscr{Y}_{k, k'}^{-} \|_{k \to  k'} & \lesssim L^{- 4
  \delta_0 }_{\max}, \\
    \label{Eqn:eykk-} 
  \| \mathscr{Y}_{k, k'}^{-} \|_{k k'} & \lesssim N^{\frac{1}{2} +
  \frac{\gamma_0}3 } L^{{ {- \frac{1}{2} - 3\g_0}}}_{\max},
\end{align}

\noi 
where the corresponding random tensor $\mathscr{Y}_{k, k'}^{-}$ is given by
\begin{equation}
  \label{Eqn:ykk-} 
  \mathscr{Y}_{k, k'}^{-} = \sum_{k_1, k_3} \mathrm{T}_{k k_1 k'
  k_3}^{\text{b}, m} \cdot (w_{L_1})_{k_1} \left( {w_{L_3}}
  \right)_{k_3}.
\end{equation}

\noi 
Here $w_{L_1}$ and $w_{L_3}$ in \eqref{Eqn:ykk-} are of either type (C) satisfying \eqref{Eqn:ee1}, or type (D) satisfying \eqref{Eqn:ee3}.

The proof of \eqref{Eqn:ktok}, \eqref{Eqn:eykk}, \eqref{Eqn:ktok-}, and \eqref{Eqn:eykk-} will be detailed in Subsections \ref{SUB:LLH} and \ref{SUB:LHL}.
The above argument yields the following conditional proof of \eqref{Eqn:6} -- \eqref{Eqn:8} in $\textup{{\textsf{Local}}} \left( {M} \right)$.

\begin{proof}[Proof of \eqref{Eqn:6} -- \eqref{Eqn:8} in $\textup{{\textsf{Local}}} \left( {M} \right)$]
Given $\text{{\textsf{Local}}} \left( \frac{M}{2} \right)$, we may decompose $y_{L_i}$ into $w_{L_i}$ of type (C) or type (D) for $L_i \le \frac{M}2$. 
From the above argument, 
we have seen that \eqref{Eqn:6} and \eqref{Eqn:7} follows from \eqref{Eqn:ktok}, \eqref{Eqn:eykk}, \eqref{Eqn:ktok-}, and \eqref{Eqn:eykk-}. 
Finally, the estimate \eqref{Eqn:8} follows from \eqref{Eqn:7} and the fact $|k-k'| \les L_{\max}$.
\end{proof}

We conclude this subsection by recording a consequence of \eqref{Eqn:6} -- \eqref{Eqn:8}.

\begin{corollary}
\label{COR:RAO}
Given $\textup{{\textsf{Local}}} \left( {M} \right)$ defined in Definition \ref{Def:LocalM} and the above notations, we have
  \begin{align} 
    \| \mathcal{P}^{\pm} \|_{Y^{b,b}} & \lesssim T^{c\theta} L_{\max}^{- 2 
     \delta_0 }, \label{r_OP} \\
    \| \mathcal{P}^{\pm} \|_{Z^{b,b}} & \lesssim T^{c\theta} N^{\frac{1}{2} + \frac{2 \gamma_0}3 } L^{- \frac{1}{2} - \g_0}_{\max},  \label{R_HS} \\
    \bigg\| \bigg( 1 + \frac{| k - k' |}{L_{\max}} \bigg)^{\kappa}
    \widehat{P_{k k'}^{\pm}} \bigg\|_{Z^{b,b}} & \lesssim T^{c\theta} N, \label{R_W}
  \end{align}

  \noi 
  provided $b > \frac12$ and close to $\frac12$,
  where $Y^{b,b}$ and $Z^{b,b}$ are defined in \eqref{Ybb} and \eqref{Zbb}, respectively.
\end{corollary}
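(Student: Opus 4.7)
My plan is to prove Corollary \ref{COR:RAO} by upgrading the estimates \eqref{Eqn:6}--\eqref{Eqn:8} of $\text{\textsf{Local}}(M)$-(iii), which control $\|\mathcal{P}^\pm\|_{Y^{1-b,b}}$ and $\|\mathcal{P}^\pm\|_{Z^{1-b,b}}$, to the stronger $Y^{b,b}$ and $Z^{b,b}$ norms. The two norm families differ only by the weight $\jb{\tau}^{1-b}$ versus $\jb{\tau}^{b}$ on the output side, which is an $\eps$-order change since $b > \frac12$ is taken arbitrarily close to $\frac12$. The extra growth $\jb{\tau}^{2(2b-1)}$ will be absorbed using the short-time bound of Proposition \ref{Prop:stb}, which trades regularity for a small power of $T$.

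The key structural observation is that $\mathcal{P}^\pm\psi = \chi(t) V(t)$, where $V(t) = \int_0^t (\cdots)\,dt'$ satisfies $V_k(0) = 0$, and $\mathcal{P}^\pm\psi$ is supported in $|t| \le 2T$ by the outer cutoff $\chi = \eta_T$. Choosing an auxiliary cutoff $\wt\eta$ with $\wt\eta \equiv 1$ on $[-2T, 2T]$ gives $\mathcal{P}^\pm\psi = \wt\eta \cdot \mathcal{P}^\pm\psi$, and Proposition \ref{Prop:stb} (second regime, $\frac12 < b \le b_1 < 1$) with $b_1 := b + \theta$ yields
\begin{align*}
\|\mathcal{P}^\pm\psi\|_{X^b} \lesssim T^{\theta}\,\|\mathcal{P}^\pm\psi\|_{X^{b_1}},
\qquad
\|\mathcal{P}^\pm\|_{Z^{b,b}} \lesssim T^{\theta}\,\|\mathcal{P}^\pm\|_{Z^{b_1,b}}.
\end{align*}
It therefore suffices to bound $\|\mathcal{P}^\pm\|_{Y^{b_1,b}}$ and $\|\mathcal{P}^\pm\|_{Z^{b_1,b}}$.

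To obtain these, I would repeat the Fourier analysis of Subsection \ref{SUB:redu} (from \eqref{Eqn:re2} through \eqref{Eqn:goal1}) verbatim with $b_1$ in place of $1-b$ on the output. The Duhamel kernel bound $|\mathcal{K}(\tau,\lambda')| \lesssim \jb{\tau}^{-1}\jb{\tau-\lambda'}^{-1}$ of Lemma \ref{Lma:kerneles} produces the weight $\jb{\tau}^{2(b_1-1)}\jb{\tau-\lambda'}^{-2}$, which remains $L^1$ in $\tau$ with implicit constant $O(1)$ as long as $b_1$ stays bounded away from $1$; the summation \eqref{Eqn:sm} over $m = [\Phi]$ still produces $\log(1+L_{\max})$, absorbable into $L_{\max}^{O(\theta)}$; and Cauchy-Schwarz in $\tau_1, \tau_2$ reduces everything to the time-independent random tensor bounds \eqref{Eqn:ktok}--\eqref{Eqn:eykk-} for $\mathscr{Y}^\pm_{k,k'}$. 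The resulting estimates match \eqref{Eqn:6} and \eqref{Eqn:7} up to an $O(\theta)$ loss in the exponents of $L_{\max}$ and $N$. Combined with the $T^\theta$ gain and absorbing the $T^{2\theta}$ already present, we obtain \eqref{r_OP} and \eqref{R_HS} with $T^{c\theta}$ for a suitable constant $c$. The weighted bound \eqref{R_W} follows by the same argument, together with the observation that the convolution structure $k - k' = k_1 - k_2$ (for $\mathcal{P}^+$) or $k_1 + k_3$ (for $\mathcal{P}^-$), combined with the kernel localization $|k_j - k_j'| \lesssim L_j^{\eps} R_j$ of the type (C) inputs inherited from \eqref{Eqn:2}, forces $\widehat{P^\pm_{k,k'}}$ to be supported in $|k - k'| \lesssim L_{\max}^{1+\eps}$; the weight $(1 + |k-k'|/L_{\max})^\kk$ then costs at most $L_{\max}^{O(\kk\eps)}$ on its support, which is easily absorbed into the crude $N$ on the right-hand side.

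The main obstacle is careful bookkeeping of several small losses --- from the $\tau$-integration as $b_1$ approaches $1$, from the weight change $\jb{\tau}^{2(1-b)} \to \jb{\tau}^{2b_1}$, from the $\log(1+L_{\max})$ contribution, and from the kernel-localization weight --- to ensure they collectively fit within the margins $\delta_0^{1/2}/2 \to \delta_0^{3/4}$, $\frac{3}{2}\eps' \to 2\eps'$, and $\frac{3}{2}\delta^3 \to 2\delta^3$ built into the stated exponents. The parameter hierarchy $\eps \ll \theta \ll \gamma_0 \ll \delta \ll \gamma \ll \delta_0 \ll 1$ from Definition \ref{Def:LocalM} guarantees that a valid choice of $\theta$ exists.
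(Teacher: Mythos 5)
Your approach is genuinely different from the paper's. The paper proves Corollary \ref{COR:RAO} by interpolation: it keeps the bounds \eqref{Eqn:6}--\eqref{Eqn:8} at output regularity $1-b$, establishes crude bounds $\|\mathcal P^{\pm}\|_{Y^{1,0}} \lesssim L_{\max}^{12}$ and $\|\mathcal P^{N,L}\|_{Z^{1,b}} \lesssim N^2 L_{\max}^{12}$ by direct energy estimates (cf. \eqref{embed1} and \eqref{embed3}), and then interpolates; the small exponents $(1-b)/b$ and $(2b-1)/b$, together with the hierarchy $\gamma_0 \ll \delta \ll \delta_0$, absorb the losses. You instead propose to upgrade the output regularity directly by applying Proposition \ref{Prop:stb} and then re-running the Fourier analysis of Subsection \ref{SUB:redu} with $b_1 > \frac12$ in place of $1-b$.

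There is a genuine gap in the second step, specifically in the claim that the analysis of \eqref{Eqn:re2}--\eqref{Eqn:goal1} goes through ``verbatim.'' The derivation of \eqref{Eqn:re2} relies on pulling the factor $\langle\tau\rangle^{-1}$ out of the Duhamel kernel bound $|\mathcal K(\tau, \lambda')| \lesssim \langle\tau\rangle^{-1}\langle\tau - \lambda'\rangle^{-1}$, which converts the outer weight $\langle\tau\rangle^{2(1-b)}$ into the integrable $\langle\tau\rangle^{-2b}$; the residual factor $\langle\tau - \lambda'\rangle^{-1}$ (with $\lambda'$ depending on $m, \tau', \tau_1, \tau_2$ via $\Phi$) stays inside the $m$-sum and $\tau_1,\tau_2$-integral and is handled by \eqref{Eqn:sm} followed by Cauchy--Schwarz. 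If you substitute $b_1 > \frac12$ for $1-b$, the weight after pulling out $\langle\tau\rangle^{-2}$ becomes $\langle\tau\rangle^{2(b_1-1)}$, which is \emph{not} integrable, so this step collapses. You correctly note that $\int\langle\tau\rangle^{2(b_1-1)}\langle\tau-\lambda'\rangle^{-2}\,d\tau = O(1)$ uniformly in $\lambda'$, but this requires keeping $\langle\tau-\lambda'\rangle^{-2}$ inside the $\tau$-integral rather than inside the $(\sum_m\int\cdots)^2$, and since $\lambda'$ depends on the inner variables $m, \tau_1, \tau_2$, the factor cannot be moved across the square without expanding the cross-terms and restructuring the Cauchy--Schwarz and $m$-summation. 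That restructured argument (likely exploiting the stronger decay $(\langle\lambda\rangle^{-3}+\langle\lambda-\lambda'\rangle^{-3})\langle\lambda'\rangle^{-1}$ of Lemma \ref{Lma:kerneles} rather than the symmetric bound) would be a genuinely new computation, not a substitution of exponents. The paper's interpolation sidesteps the issue entirely; you should either adopt it, or spell out the modified $\tau$-estimate explicitly.
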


\begin{proof}
From \eqref{Ybb} and \eqref{Eqn:6}, 
we have that 
\begin{align}
\label{r_OPp}
\begin{split}
\|\mathcal P^{\pm}\|_{Y^{1-b,b}} 
\le \bigg( \int_{\mathbb{R}^2} \langle \tau \rangle^{2 (1 - b)}
\langle \tau' \rangle^{- 2 b} \| \widehat{P^{\pm}_{k k'}} (\tau, \tau')
\|_{k \to  k'}^2 d \tau d \tau' \bigg)^{\frac12}  \les T^{\theta} L_{\max}^{- 3\dl_0 },
\end{split}
\end{align}

\noi 
for some $b > \frac12$.
We recall that 
\[
\mathcal{P}^+ [\psi] (t) = - i  \chi(t) \int^t_0 \Pi_N \mathcal{M} (y_{L_1}, y_{L_2}, \psi) (t') d t'.
\]

\noi 
It then follows that 
\begin{align}
    \label{embed1}
    \begin{split}
    \| \mathcal{P}^+ [\psi] (t) &  \|_{X^1}\sim \|\mathcal{P}^+ [\psi] (t) \|_{L^2_{t,x} ([-T,T] \times \T)} + \|\partial_t (\mathcal{P}^+ [\psi] (t) ) \|_{L^2_{t,x} ([-T,T] \times \T)} \\
    & \les L_{\max}^4 \|y_{L_{1}} \|_{X^b}\|y_{L_{2}} \|_{X^b}  \|\psi \|_{X^0}   \les L_{\max}^{12} \|\psi \|_{X^0} ,
    \end{split}
\end{align}

\noi 
where we used that $y_L = u_L - u_{L/2}$ is Fourier supported in $[-L,L]$, and that $u_L = \sum_{L' \le L} y_{L'}$ satisfies $\|y_{L'}\|_{X^b} \les L'$ due to the induction argument.
In particular,
\eqref{embed1} implies that
\begin{align*} 
\|\mathcal P^{\pm}\|_{Y^{1,0}} \les L_{\max}^{12} ,
\end{align*}

\noi 
which together with \eqref{r_OPp} implies \eqref{r_OP} by interpolation.

We turn to the proof of \eqref{R_HS}.
By using \eqref{Eqn:bk} and Minkowski
inequality, it follows that
\[ 
  \begin{split}
    & \big\| \langle \tau \rangle \langle \tau' \rangle^{- b} \ft{ P_{k k'}^{\pm}} (\tau, \tau') \big\|_{L^2_{\tau \tau'}}\\
    & \quad \lesssim \sum_{k_1 - k_2 = k - k'} \int_{\mathbb{R}^2} \left\|
    \frac{1}{\langle \tau' \rangle \langle \tau - \tau_1 + \tau_2 - \tau' -
    \Phi \rangle} \right\|_{L^2_{\tau \tau'}}  | (\widehat{y_{L_1}})_{k_1} (\tau_1) \overline{(
    \widehat{y_{L_2}} )_{k_2} } (\tau_2) | d \tau_1
    d \tau_2\\
    & \quad \lesssim \int_{\mathbb{R}^2} \| (\widehat{y_{L_1}})_{k_1} (\tau_1)
    \|_{\ell_{k_1}^2} \| \overline{(\widehat{y_{L_2}} )_{k_2}}
    (\tau_2) \|_{\ell_{k_2}^2} d \tau_1 d \tau_2\\
    & \quad \lesssim L_{\max}^{12}.
  \end{split} 
\]
Thus by summing above over $| k |, | k' | \lesssim N$ we arrive at
\begin{align}
    \label{embed3}
    \| \mathcal{P}^{\pm} \|_{Z^{1, b}} \lesssim N^2 L_{\max}^{12}.
\end{align} 

\noi 
Then we conclude \eqref{R_HS} by interpolating \eqref{Eqn:7} and \eqref{embed3}. 

The proof for \eqref{R_W} is similar; thus, we omit it. 
\end{proof}

In the next two subsections, we will establish \eqref{Eqn:ktok} and \eqref{Eqn:eykk} for the low-low-high random matrix $\mathscr{Y}_{k, k'}^{+} $ given in \eqref{Eqn:ykk}; and \eqref{Eqn:ktok-} and \eqref{Eqn:eykk-} for the low-low-high random matrix $\mathscr{Y}_{k, k'}^{-} $ given in \eqref{Eqn:ykk-}.

\subsection{Low-low-high random matrix} 
\label{SUB:LLH}
In this subsection, we focus on the low-low-high random matrix $\mathscr{Y}_{k, k'}^{+} $ given in \eqref{Eqn:ykk}, and establish \eqref{Eqn:ktok} and \eqref{Eqn:eykk}.
We consider \eqref{Eqn:ykk} with input  $(w_{L_1}, w_{L_2})$ of types (a) (C, C), (b) (C, D), (c) (D, C), (d) (D, D).

\subsubsection{Case (a)}  
\label{Subsub:cc}
Type (C, C). 
In this case, the matrix \eqref{Eqn:ykk} can be written as
\[ 
  \mathscr{Y}_{k, k'}^{+} = \sum_{k_1, k_2} {\rm T}^{{\rm b},m}_{k k_1 k_2
  k'} \cdot \sum_{k_1', k_2'} h_{k_1 k_1'}^{L_1, R_1} \overline{h_{k_2 k_2'}^{L_2, R_2}}   \frac{g_{k_1'} \cj{g_{k_2'}}}{\jbb{k_1'}^{\frac\al2} \jbb{k_2'}^{\frac\al2}} 
\]
where $\wt{\mathfrak{h}}^{L_j, R_j} = \{ h_{k_j k'_j}^{L_j, R_j} \}$ ($j \in \{ 1,
2 \}$) are either identity map over $L_j / 2 < \jb{k_j} \le  L_j$ or satisfying \eqref{Eqn:ee1}.

We first consider the non-pairing case, i.e. $k'_1 \neq k'_2$, for which we apply Proposition \ref{Prop:te} followed by Proposition \ref{PROP:contr} to get 
  \begin{align} 
  \label{P_CC}
  \begin{split}
    & \| \mathscr{Y}_{k, k'}^{+} \|_{k \to  k'}   \lesssim (L_1 L_2)^{- \alpha / 2} \big ( \| \mathrm{T}_{k k_1
    k_2 k'}^{\text{b}, m} \|_{k k_1 \to  k_2 k'} + \|
    \mathrm{T}_{k k_1 k_2 k'}^{\text{b}, m} \|_{k k_2 \to  k_1
    k'} \\
    & \hspace{2cm} + \| \mathrm{T}_{k k_1 k_2
    k'}^{\text{b}, m} \|_{k k_1 k_2 \to  k'} + \|
    \mathrm{T}_{k k_1 k_2 k'}^{\text{b}, m} \|_{k \to  k_1 k_2
    k'} \big )  \prod_{j=1}^2 \| h_{k_j k_j'}^{L_j, R_j} \|_{k_j \to  k'_j} ,
  \end{split} 
\end{align}

\noi 
which is enough for our purpose.
When $L_1 = L_2$, \eqref{P_CC} follows from Proposition \ref{Prop:te} and then Proposition \ref{PROP:contr} as $k_1' \neq k_2'$.
In what follows, we only consider \eqref{P_CC} for the case where $L_1 \neq L_2$. Suppose $L_1 < L_2$ (the case $L_2 < L_1$ is similar). Then the coefficients $h_{k_2 k_2'}^{L_2, R_2}$ may not be independent of $(g_{k_1'})_{L_1/2<\jb{k_1'} \le L_1}$. Therefore, we cannot use Proposition \ref{Prop:te} to $k_1'$ and  $k_2'$ directly. 
To overcome this obstacle, we first use Proposition \ref{Prop:te} only with respect to $k_2'$ and then Proposition \ref{PROP:contr} to get
\begin{align}
\label{tensor_e1}
\begin{split}
\| &  \mathscr{Y}_{k, k'}^{+} \|_{k \to  k'} 
\les L_2^{- \alpha / 2} 
 \bigg\| \sum_{k_1, k_2} {\rm T}^{{\rm b},m}_{k k_1 k_2 k'} \sum_{\substack{k_1' }} h_{k_1 k_1'}^{L_1, R_1}  
\overline{h_{k_2 k_2'}^{L_2, R_2}}   \frac{g_{k_1'}}{\jbb{k_1'}^{\frac\al2}} \bigg\|_{(kk_2' \to k')\cap (k \to k'k_2')} \\ 
& \hphantom{X} \les L_2^{- \alpha / 2} \bigg\| \sum_{k_1 } {\rm T}^{{\rm b},m}_{k k_1 k_2 k'} \sum_{\substack{ k_1' }} h_{k_1 k_1'}^{L_1, R_1} \frac{g_{k_1'}}{\jbb{k_1'}^{\frac\al2}} \bigg\|_{(kk_2 \to k')\cap (k \to k'k_2)}   \| {h_{k_2 k_2'}^{L_2, R_2}} \|_{k_2 \to  k'_2}  
  \end{split}
\end{align}

\noi 
where $\| \cdot \|_{X\cap Y} = \| \cdot \|_{X } + \| \cdot \|_{ Y}$.
Then, we note that the tensor ${\rm T}^{{\rm b},m}_{k k_1 k_2 k'}  h_{k_1 k_1'}^{L_1, R_1} $ is independent of $(g_{k_1'})_{L_1/2<\jb{k_1'} \le L_1}$, which enables us to apply Proposition \ref{Prop:te} again with respect to $k_1'$,
\begin{align}
\label{tensor_e2}
\begin{split}
\bigg\| & \sum_{k_1 } {\rm T}^{{\rm b},m}_{k k_1 k_2 k'} \sum_{ k_1' } h_{k_1 k_1'}^{L_1, R_1} \frac{g_{k_1'}}{\jbb{k_1'}^{\frac\al2}} \bigg\|_{(kk_2 \to k')\cap (k \to k'k_2)}  \\
& \les L_1^{- \alpha / 2}  \bigg\| \sum_{k_1 } {\rm T}^{{\rm b},m}_{k k_1 k_2 k'}  h_{k_1 k_1'}^{L_1, R_1}  \bigg\|_{(kk_2k_1' \to k')\cap (kk_1' \to k'k_2)\cap (kk_2 \to k'k_1')\cap (k \to k'k_2k_1')} \\
& \les L_1^{- \alpha / 2}  \| {\rm T}^{{\rm b},m}_{k k_1 k_2 k'}  \|_{(kk_2k_1 \to k')\cap (kk_1 \to k'k_2)\cap (kk_2 \to k'k_1)\cap (k \to k'k_2k_1)} \| h_{k_1 k_1'}^{L_1, R_1} \|_{k_1 \to  k'_1} .
\end{split}
\end{align}

\noi 
By collecting \eqref{tensor_e1} and \eqref{tensor_e2}, we conclude \eqref{P_CC} for $L_1 < L_2$.
By  Lemma \ref{LEM:tensor2}, Lemma \ref{LEM:tensor3}, with $N_1 = L_1$, $N_2 = L_2$, and $N_3 = N$, we obtain
\begin{align}
    \label{CC_e3}
    \begin{split}
    \|& {\rm T}^{{\rm b},m}_{k k_1 k_2 k'}  \|_{(kk_2k_1 \to k')\cap (kk_1 \to k'k_2)\cap (kk_2 \to k'k_1)\cap (k \to k'k_2k_1)} \\ 
    & = \|  \mathrm{T}_{k k_1
    k_2 k'}^{\text{b}, m} \|_{k k_1 \to  k_2 k'} + \|
    \mathrm{T}_{k k_1 k_2 k'}^{\text{b}, m} \|_{k k_2 \to  k_1 k'} \\ 
    & \hphantom{XXXXX} + \| \mathrm{T}_{k k_1 k_2
    k'}^{\text{b}, m} \|_{k k_1 k_2 \to  k'} + \|
    \mathrm{T}_{k k_1 k_2 k'}^{\text{b}, m} \|_{k \to  k_1 k_2 k'} \\
    & \les L_2^{1 - \frac{\alpha}2 } L_1^{1 - \frac\al2}  + L_1^{\frac12 - \frac{\alpha}4 } L_2^{\frac12 - \frac\al4} + L_1^{1 - \frac\alpha2} (\log L_2)^{\frac12}  +  L_2^{\frac12}  + L_2^{1 - \frac\al2} (\log L_1)^{\frac12}  + L_1^{\frac12} \\ 
    & \les (L_1L_2)^{\frac12}. 
    \end{split}
\end{align}

\noi 
From \eqref{P_CC}, \eqref{CC_e3}, and \eqref{Eqn:ee1}, we conclude that
\[ 
  \| \mathscr{Y}_{k, k'}^{+} \|_{k \to  k'} \lesssim (L_1 L_2)^{- \alpha / 2} L_1^{\frac{1}{2}} L_2^{\frac{1}{2}} \lesssim
  L^{- \frac{ \alpha -1}{2}}_{\max}, 
\]
which is sufficient for \eqref{Eqn:ktok} since $\al > 1$ and $\dl_0 \ll \al -1$. 

A similar argument as above also gives
\[
  \begin{split}
  \| \mathscr{Y}_{k, k'}^{+} \|_{k k'} &  \lesssim (L_1 L_2)^{- \alpha / 2}  \|
    {\rm T}^{{\rm b},m}_{k k_1 k_2 k'} \|_{k k_1 k_2 k'}  \prod_{j=1}^2 \| h_{k_j k_j'}^{L_j, R_j} \|_{k_j \to  k'_j} \\
    & \les (L_1 L_2)^{- \alpha / 2}  \big(  N^{1-\frac\al2} L_{\min}^{\frac12} (\log L_{\max})^{\frac12} + (L_1 L_2)^{\frac12} \big) \\ 
    & \les (L_1 L_2)^{- \frac{\alpha}{2}}
    N^{\frac{1}{2}} L_{\min}^{\frac12}  L^{\varepsilon}_{\max} \lesssim N^{\frac{1}{2}} L^{- \frac{\alpha}{2} +
    \varepsilon}_{\max} ,
  \end{split} 
\]
where $\eps \ll 1$, $L_{\min} = \min(L_1,L_2)$, and $L_{\max} = \max(L_1,L_2)$,
which gives \eqref{Eqn:eykk}.

Now we consider the pairing case, i.e. $k'_1 = k'_2$. Note that $L_1 = L_2 = L$.
In this case, we have
\begin{align} 
\label{ykk1}
  \begin{split}
    \mathscr{Y}_{k, k'}^{+} & = \sum_{k_1, k_2} {\rm T}^{{\rm b},m}_{k k_1
    k_2 k'} \sum_{k_1'} h_{k_1 k_1'}^{L_1, R_1}
    \overline{h_{k_2 k_1'}^{L_1, R_2}} \frac{|g_{k_1'}|^2}{\jbb{k_1'}^{\alpha}}\\
    & = \sum_{k_1, k_2} {\rm T}^{{\rm b},m}_{k k_1
    k_2 k'} \sum_{k_1'} h_{k_1 k_1'}^{L_1, R_1}
    \overline{h_{k_2 k_1'}^{L_1, R_2}} \frac{1}{\jbb{k_1'}^{\alpha}}  \\ 
    & \hphantom{XXXX} + \sum_{k_1, k_2} {\rm T}^{{\rm b},m}_{k k_1 k_2 k'}
    \sum_{k_1'} h_{k_1 k_1'}^{L_1, R_1}
    \overline{h_{k_2 k_1'}^{L_1, R_2}} \frac{| g_{k_1'} |^2 -
    1}{\jbb{k_1'}^{\alpha}}\\
    & =: \mathscr{Y}_{k, k'}^{(1)} +\mathscr{Y}_{k, k'}^{(2)}.
  \end{split} 
\end{align}

\noi 
We first consider the term $ \mathscr{Y}_{k, k'}^{(1)}$. 
By using Proposition \ref{PROP:contr}, and then Lemma \ref{LEM:tensor3}, we have
\begin{align} 
\label{ykk1_1}
  \begin{split}
    \big \| & \mathscr{Y}_{k, k'}^{(1)} \big \|_{k \to  k'}\les L^{- \alpha} \bigg\| \sum_{k_1, k_2, k_1'}  {\rm T}^{{\rm b},m}_{k k_1
    k_2 k'} h_{k_1 k_1'}^{L_1, R_1}
    \overline{h_{k_2 k_1'}^{L_1, R_2}} \bigg\|_{k \to k'} \\ 
    & \les L^{- \alpha} \min \big( \|
    {\rm T}^{{\rm b},m}_{k k_1 k_2 k'} \|_{k k_1 k_2 \to  k'}, \| {\rm T}^{{\rm b},m}_{k k_1 k_2 k'} \|_{k
    \to k_1 k_2 k' } \big) \Big\| \sum_{k_1'} h_{k_1 k_1'}^{L_1, R_1}
    \overline{h_{k_2 k_1'}^{L_1, R_2}} \Big\|_{k_1k_2} \\  
    & \les L^{- \alpha} \min \big( \|
    {\rm T}^{{\rm b},m}_{k k_1 k_2 k'} \|_{k k_1 k_2 \to  k'}, \| {\rm T}^{{\rm b},m}_{k k_1 k_2 k'} \|_{k' k_1 k_2
    \to  k} \big) \|h_{k_1 k_1'}^{L_1, R_1} \|_{k_1 \to 
    k_1'} \| {h_{k_2 k_1'}^{L_1, R_2}} \|_{k_2 k_1'}\\ 
    & \lesssim  L^{- \alpha} \big( L^{1-\frac\al2 +\eps} + L^{\frac12} \big)  L^{\frac12 + \g_0}   \lesssim  L^{1-\al+\g_0 } ,
  \end{split} 
\end{align}

\noi 
which is sufficient for \eqref{Eqn:ktok} provided $\g_0 \ll \al -1$. 
For the term $\mathscr{Y}_{k, k'}^{(2)} $ in \eqref{ykk1}, 
we apply Proposition
\ref{Prop:te} with $\eta_{k_1'} = | g_{k_1'} |^2 - 1$, then Lemma \ref{LEM:tech}, and Lemma \ref{LEM:tensor3}, to get
\begin{align} 
\label{ykk1_2}
  \begin{split}
    \| & \mathscr{Y}_{k, k'}^{(2)} \|_{k \to  k'}\lesssim \bigg\|
    \sum_{k_1, k_2} \sum_{ k_1' } 
    {\rm T}^{{\rm b},m}_{k k_1 k_2 k'} h_{k_1 k_1'}^{L_1, R_1}
    \overline{h_{k_2 k_1'}^{L_1, R_2}} \frac{\eta_{k_1'}}{\jbb{k_1'}^{\alpha}} \bigg\|_{k \to  k'}\\
    & \lesssim L^{- \alpha}  \bigg\| \sum_{k_1, k_2}
    {\rm T}^{{\rm b},m}_{k k_1 k_2 k'} h_{k_1 k_1'}^{L_1, R_1}
    \overline{h_{k_2 k_1'}^{L_1, R_2}} \bigg\|_{(k k'_1 \to  k') \cap (k
    \to  k' k'_1)}  \\
    & \lesssim L^{- \alpha} \Big( \| {\rm T}^{{\rm b},m}_{k k_1
    k_2 k'} \|_{k k_1 k_2 \to  k'}  +\|
    {\rm T}^{{\rm b},m}_{k k_1 k_2 k'} \|_{k \to  k_1 k_2 k'} \Big)   \prod_{j=1}^2 \| h_{k_j k_j'}^{L_j, R_j} \|_{k_j \to  k'_j} \\
    & \lesssim  L^{\frac12-\al} , 
  \end{split} 
\end{align}
which is sufficient for \eqref{Eqn:ktok}. 

We proceed to consider the estimate \eqref{Eqn:eykk} for the pairing case, where $\mathscr{Y}_{k, k'}^+$ is given by \eqref{ykk1}. 
We begin with $\mathscr{Y}_{k, k'}^{(1)}$ in \eqref{ykk1}. Here we have to use the cancellation that arises from the unitary property of $\wt H^{N,L}_{kk'}$, namely Lemma \ref{LEM:uni}. Specifically, by Corollary \ref{COR:cancel} and Remark \ref{RMK:cancel}, we can redefine $ \mathscr{Y}_{k, k'}^{(1)} $, keeping the same notation for $ \mathscr{Y}_{k, k'}^{(1)} $, as
\[
 \mathscr{Y}_{k, k'}^{(1)} =  \sum_{k_1, k_2} {\rm T}^{{\rm b},m}_{k k_1
    k_2 k'} \sum_{k_1'} h_{k_1 k_1'}^{L, R_1}
    \overline{h_{k_2 k_1'}^{L, R_2}} \bigg( \frac{1}{\jbb{k_1'}^{\alpha}} - \frac{1}{\jbb{k_1}^{\alpha}}  \bigg),
\]

\noi 
since $k_1 \neq k_2$ and $L_1 = L_2 = L$. 
From \eqref{Eqn:2}, we may assume that $| k_1 - k_1' | \lesssim
L^{\eps} R_1$, which implies that
\begin{equation}
  \label{Eqn:pg1} 
  \left| \frac{1}{\jbb{k_1'}^{\alpha}} - \frac{1}{\jbb{k_1}^{\alpha}}
  \right| \lesssim L^{- \alpha - 1 + \eps} R_1 .
\end{equation}

\noi 
Then, a similar argument as in \eqref{ykk1_1} together with \eqref{Eqn:pg1} yields
\begin{align}  
  \begin{split}
    \big \|\mathscr{Y}_{k, k'}^{(1)} \big \|_{k k'} & \les  L^{- \alpha - 1 + \eps} R_1  \bigg\| \sum_{k_1, k_2, k_1'}  {\rm T}^{{\rm b},m}_{k k_1
    k_2 k'} h_{k_1 k_1'}^{L_1, R_1}
    \overline{h_{k_2 k_1'}^{L_1, R_2}} \bigg\|_{k k'} \\  
    & \les L^{- \alpha - 1 + \eps} R_1   \|
    {\rm T}^{{\rm b},m}_{k k_1 k_2 k'} \|_{k k_1 k_2 k'} \|h_{k_1 k_1'}^{L_1, R_1} \|_{k_1 
    k_1'} \| {h_{k_2 k_1'}^{L_1, R_2}} \|_{k_2 \to k_1'}.
  \end{split} 
\end{align}

\noi 
By Lemma \ref{LEM:tensor} and \eqref{Eqn:ee1}, we get
\begin{align} 
\label{ykk2_2}
  \begin{split}
    \big \|\mathscr{Y}_{k, k'}^{(1)} \big \|_{k k'} 
    & \lesssim   L^{- \alpha - 1 + \eps} R_1  \big( N^{1-\frac\al2} L^{\frac12 + \eps} + L\big)  L^{\frac12 + \g_0} R_1^{-\frac12}\\
    & \lesssim  N^{1-\frac\al2 } L^{\frac12 -\al + 2\eps + \g_0} + L^{1-\al + \eps + \g_0},
  \end{split} 
\end{align}

\noi 
which is sufficient for the estimate \eqref{Eqn:eykk}.

Finally, we turn to $\mathscr{Y}_{k, k'}^{(2)}$.
Proceeding as in \eqref{ykk1_2},
we have 
\begin{align}
    \label{ykk2_3}
    \begin{split}
    \| \mathscr{Y}_{k, k'}^{(2)} \|_{k k'} & \lesssim   L^{-
    \alpha}  \bigg\| \sum_{k_1, k_2} {\rm T}^{{\rm b},m}_{k
    k_1 k_2 k'} h_{k_1 k_1'}^{L, R_1} \overline{h_{k_2
    k'_1}^{L, R_2}} \bigg\|_{(k k' \to  k_1')\cap (k
    k' k_1')}  \\
    & \lesssim L^{ - \alpha}\big( \|
    {\rm T}^{{\rm b},m}_{k k_1 k_2 k'} \|_{k k' \to  k_1 k_2}
    \|h_{k_1 k_1'}^{L, R_1} \overline{h_{k_2 k'_1}^{L, R_2}}
    \|_{k_1 k_2 \to  k_1'} \\
    & \hspace{2cm} + \| {\rm T}^{{\rm b},m}_{k k_1
    k_2 k'} \|_{k k_1 k_2 k'} \|h_{k_1 k_1'}^{L, R_1}
    \overline{h_{k_2 k'_1}^{L, R_2}} \|_{k_1' \to  k_1 k_2}
    \big)\\
    & \les L^{ - \alpha}  \| {\rm T}^{{\rm b},m}_{k k_1
    k_2 k'} \|_{k k_1 k_2 k'}   \prod_{j=1}^2 \| h_{k_j k_j'}^{L_j, R_j} \|_{k_j \to  k'_j}  \\
    & \lesssim L^{ - \alpha} ( N^{1-\frac\al2 + \eps} L^{\frac12} + L )   \lesssim N^{1 - \frac\al2 + \eps} L^{\frac{1}{2} - \alpha} + L^{1-\al},
    \end{split}
\end{align}

\noi 
which is again sufficient for \eqref{Eqn:eykk}.

Finally, by collecting \eqref{ykk1}, \eqref{ykk1_1}, \eqref{ykk1_2}, \eqref{ykk2_2}, and \eqref{ykk2_3}, we finish the proof of \eqref{Eqn:ktok} and \eqref{Eqn:eykk}
for Case (a).

\subsubsection{Case (b) and Case (c)}
\label{SUB:DC+}
Type (C, D) and type (D, C).
Without loss of generality, we only consider the case when $(w_{L_1}, w_{L_2})$ are of type (D, C). 
The random matrix \eqref{Eqn:ykk} can be written as
\[ 
  \mathscr{Y}_{k, k'}^{+} = \sum_{k_1, k_2} {\rm T}^{{\rm b},m}_{k k_1 k_2
  k'} (w_{L_1})_{k_1} \cdot \sum_{k_2'}
  \overline{h_{k_2 k_2'}^{L_2, R_2}} \frac{\cj{g_{k_2'}}}{\jbb{k_2'}^{\frac\al2}} , 
\]
where $w_{L_1}$ satisfies \eqref{Eqn:ee3} and $\wt{\mathfrak{h}}^{L_2, R_2} = \{
h_{k_2 k'_2}^{L_2, R_2} \}$ are either identity map over $L_2 / 2 < \jb{k_2} \le  L_2$ or satisfying
\eqref{Eqn:ee1}. 
We apply Proposition \ref{PROP:contr} to get
\begin{align}
\label{DC1}
\begin{split}
\|\mathscr{Y}_{k, k'}^{+} \|_{k \to k'} & \les \min \bigg(  \bigg\| \sum_{ k_2} {\rm T}^{{\rm b},m}_{k k_1 k_2 k'} \sum_{k_2'}
  \overline{h_{k_2 k_2'}^{L_2, R_2}} \frac{\cj{g_{k_2'}}}{\jbb{k_2'}^{\frac\al2}}  \bigg\|_{k\to k_1k'}, \\
  & \hphantom{XXXX,} \bigg\| \sum_{ k_2} {\rm T}^{{\rm b},m}_{k k_1 k_2 k'} \sum_{k_2'}
  \overline{h_{k_2 k_2'}^{L_2, R_2}} \frac{\cj{g_{k_2'}}}{\jbb{k_2'}^{\frac\al2}}  \bigg\|_{kk_1 \to k'} \bigg) \|(w_{L_1})_{k_1}\|_{\l^2_{k_1}}.
\end{split}
\end{align}

\noi 
Recall that the tensor ${\rm T}^{{\rm b},m}_{k k_1 k_2 k'} \overline{h_{k_2 k_2'}^{L_2, R_2}} $ is independent of $F_{L_2}$. We can apply Proposition \ref{Prop:te} to \eqref{DC1}, and then apply Proposition \ref{PROP:contr} to get
\begin{align}
\label{DC2}
\begin{split}
\|\mathscr{Y}_{k, k'}^{+} \|_{k \to k'} & \les L_2^{-\frac\al2} \min\bigg(  \Big\| \sum_{k_2} {\rm T}^{{\rm b},m}_{k k_1 k_2 k'} \overline{h_{k_2 k_2'}^{L_2, R_2}} \Big\|_{(kk_2'\to k_1k') \cap (k\to k_1k_2'k')},  \\
& \hspace{2.2cm} \Big\| \sum_{k_2} {\rm T}^{{\rm b},m}_{k k_1 k_2 k'} \overline{h_{k_2 k_2'}^{L_2, R_2}} \Big\|_{(kk_1k_2'\to k') \cap (k k_1\to k_2'k')} \bigg)  \|(w_{L_1})_{k_1}\|_{\l^2_{k_1}}\\ 
& \les  L_2^{-\frac\al2} \big\| {h_{k_2 k_2'}^{L_2, R_2}} \big\|_{k_2\to k_2'} \min\Big(  \big\| {\rm T}^{{\rm b},m}_{k k_1 k_2 k'} \big\|_{(kk_2\to k_1k') \cap (k\to k_1k_2k')},  \\
& \hspace{2.2cm} \big\| {\rm T}^{{\rm b},m}_{k k_1 k_2 k'} \big\|_{(kk_1k_2\to k') \cap (k k_1\to k_2k')} \Big)  \|(w_{L_1})_{k_1}\|_{\l^2_{k_1}} \\ 
& \les  L_2^{-\frac\al2} \big\| {h_{k_2 k_2'}^{L_2, R_2}} \big\|_{k_2\to k_2'} \big\| {\rm T}^{{\rm b},m}_{k k_1 k_2 k'} \big\|_{(kk_1k_2\to k') \cap (k k_1\to k_2k')}   \|(w_{L_1})_{k_1}\|_{\l^2_{k_1}}.
\end{split}
\end{align}

\noi
By Lemma \ref{LEM:tensor2} and Lemma \ref{LEM:tensor3}, we have
\begin{align*}
    \begin{split}
    \big\|  {\rm T}^{{\rm b},m}_{k k_1 k_2 k'}   & \big\|_{(kk_1k_2\to k') \cap (k k_1\to k_2k')}= \big\| {\rm T}^{{\rm b},m}_{k k_1 k_2 k'} \big\|_{kk_1k_2\to k'} + \big\| {\rm T}^{{\rm b},m}_{k k_1 k_2 k'} \big\|_{k k_1\to k_2k'} \\ 
    & \les L_1^{1-\frac\al2} (\log L_2)^{\frac12} + L_2^{\frac12} + L_1^{1-\frac\al2} L_2^{1-\frac\al2}  \les L_1^{1-\frac\al2} L_2^{\frac12},
    \end{split}
\end{align*}

\noi 
which together with \eqref{DC2}, \eqref{Eqn:ee1}, and \eqref{Eqn:ee3}, implies that 
\begin{align*}
\begin{split}
\|\mathscr{Y}_{k, k'}^{+} \|_{k \to k'} & \les  L_2^{-\frac\al2} \big\| {h_{k_2 k_2'}^{L_2, R_2}} \big\|_{k_2\to k_2'} L_1^{1-\frac\al2} L_2^{\frac12} \|(w_{L_1})_{k_1}\|_{\l^2_{k_1}} \\
& \les L_1^{1-\frac\al2} L_1^{-\frac12 - \g} L_2^{\frac12 - \frac\al2} \les (L_1 L_2)^{\frac12 - \frac\al2} ,
\end{split}
\end{align*}

\noi 
which proves \eqref{Eqn:ktok} since  $\dl_0  \ll \al - 1$.

Now we turn to the estimate \eqref{Eqn:eykk}. Similar computation as in \eqref{DC1} and \eqref{DC2} yields
\begin{align}
\label{DC5}
\begin{split}
\|\mathscr{Y}_{k, k'}^{+} \|_{k k'} & \les \min \bigg( \bigg\| \sum_{ k_2} {\rm T}^{{\rm b},m}_{k k_1 k_2 k'} \sum_{k_2'}
  \overline{h_{k_2 k_2'}^{L_2, R_2}} \frac{\cj{g_{k_2'}}}{\jbb{k_2'}^{\frac\al2}} \bigg\|_{k k_1k' }, \\
  & \hphantom{XXXX} \bigg\| \sum_{ k_2} {\rm T}^{{\rm b},m}_{k k_1 k_2 k'} \sum_{k_2'}
  \overline{h_{k_2 k_2'}^{L_2, R_2}} \frac{\cj{g_{k_2'}}}{\jbb{k_2'}^{\frac\al2}} \bigg\|_{kk' \to k_1} \bigg) \|(w_{L_1})_{k_1}\|_{\l^2_{k_1}} \\ 
& \les L_2^{-\frac\al2} \min\bigg(  \Big\|  \sum_{ k_2} {\rm T}^{{\rm b},m}_{k k_1 k_2 k'} \overline{h_{k_2 k_2'}^{L_2, R_2}} \Big\|_{(kk_1k_2'k') \cap (kk_1k'\to k_2')},  \\
& \hspace{2.2cm} \Big\|  \sum_{ k_2} {\rm T}^{{\rm b},m}_{k k_1 k_2 k'} \overline{h_{k_2 k_2'}^{L_2, R_2}} \Big\|_{(kk_2'k'\to k_1) \cap (k k'\to k_1k_2')} \bigg)  \|(w_{L_1})_{k_1}\|_{\l^2_{k_1}}\\ 
& \les  L_2^{-\frac\al2} \big\| {h_{k_2 k_2'}^{L_2, R_2}} \big\|_{k_2\to k_2'} \min\Big(  \big\| {\rm T}^{{\rm b},m}_{k k_1 k_2 k'} \big\|_{(kk_1k_2k') \cap (kk_1k'\to k_2)},  \\
& \hspace{2.2cm} \big\| {\rm T}^{{\rm b},m}_{k k_1 k_2 k'} \big\|_{(kk_2k'\to k_1) \cap (k k'\to k_1k_2)} \Big)  \|(w_{L_1})_{k_1}\|_{\l^2_{k_1}} \\ 
& \les  L_2^{-\frac\al2} \big\| {h_{k_2 k_2'}^{L_2, R_2}} \big\|_{k_2\to k_2'} \big\| {\rm T}^{{\rm b},m}_{k k_1 k_2 k'} \big\|_{(kk_2k'\to k_1) \cap (k k'\to k_1k_2)}   \|(w_{L_1})_{k_1}\|_{\l^2_{k_1}}.
\end{split}
\end{align}

\noi
By Lemma \ref{LEM:tensor2} and Lemma \ref{LEM:tensor3}, we have
\begin{align}
    \label{DC6}
    \begin{split}
    \big\| {\rm T}^{{\rm b},m}_{k k_1 k_2 k'}   &  \big\|_{(kk_2k'\to k_1) \cap (k k'\to k_1k_2)}= \big\| {\rm T}^{{\rm b},m}_{k k_1 k_2 k'} \big\|_{kk_2k'\to k_1} + \big\| {\rm T}^{{\rm b},m}_{k k_1 k_2 k'} \big\|_{k k'\to k_1k_2} \\ 
    & \les N^{1-\frac\al2} (\log L_2)^{\frac12} + L_2^{\frac12} + L_{\min}^{1-\frac\al2} N^{1-\frac\al2} \\
    & \les N^{1-\frac\al2} L_2^{\frac{\al -1}2} + L_{\min}^{1-\frac\al2} N^{1-\frac\al2}.
    \end{split}
\end{align}

\noi 
By collecting  \eqref{DC2}, \eqref{Eqn:ee1},  \eqref{Eqn:ee3}, and \eqref{DC6}, we conclude that 
\begin{align}
\label{DC7}
\begin{split}
\|\mathscr{Y}_{k, k'}^{+}\|_{k k'} & \les  L_2^{-\frac\al2} \big\| {h_{k_2 k_2'}^{L_2, R_2}} \big\|_{k_2\to k_2'} \big( N^{1-\frac\al2} L_2^{\frac{\al -1}2} + L_{\min}^{1-\frac\al2} N^{1-\frac\al2} \big) \|(w_{L_1})_{k_1}\|_{\l^2_{k_1}} \\
& \les \big( N^{1-\frac\al2} L_2^{\frac{\al -1}2} + L_{\min}^{1-\frac\al2} N^{1-\frac\al2} \big) L_2^{-\frac\al2}  L_1^{-\frac12 - \g}\\ 
&  \les N^{1-\frac\al2} L_2^{-\frac{1}2} L_1^{-\frac12 - \g} + N^{1-\frac\al2} L_{\min}^{1-\frac\al2}  L_2^{-\frac\al2}  L_1^{-\frac12 - \g} \\ 
&  \les N^{\frac12 } L^{{- \frac{\al}{2} }}_{\max},
\end{split}
\end{align}

\noi 
which is sufficient for \eqref{Eqn:eykk}.

Therefore, we finish the proof of \eqref{Eqn:ktok} and \eqref{Eqn:eykk} 
for Case (b): type (C, D). The proof for Case (c): type (D, C) follows similarly. 

\subsubsection{Case (d)} Type: (D, D).
In this case, the matrix \eqref{Eqn:ktok} can be written as
\[ 
  \mathscr{Y}_{k, k'}^{+} = \sum_{k_1, k_2} \mathrm{T}_{k k_1 k_2 k'}^{\text{b},
  m}  \cdot (w_{L_1})_{k_1} \cdot  \overline{(w_{L_2})_{k_2}}, 
\]
where $w_{L_1}$ and $w_{L_2}$ satisfy \eqref{Eqn:ee3}. 
By using
Proposition \ref{PROP:contr}, followed by Lemma
\ref{LEM:tensor2} and \eqref{Eqn:ee3}, we obtain
\[ 
  \begin{split}
    \| \mathscr{Y}_{k, k'}^{+} \|_{k \to  k'} & \lesssim \|
    {\rm T}^{{\rm b},m}_{k k_1 k_2 k'} \|_{k k_1
    \to  k_2 k'} \| (w_{L_1})_{k_1} \|_{k_1} \| \overline{(
    w_{L_2})_{k_2}}  \|_{k_2}\\
    &  \lesssim (L_1 L_2)^{1 - \frac{\alpha}{2}} (L_1 L_2)^{- \frac{1}{2}
    - \gamma} \\  
    & \lesssim L^{\frac{1 - \alpha}{2} - \gamma} \les L^{- 4 \dl_0 },
  \end{split} 
\]
which is sufficient for \eqref{Eqn:ktok}, provided $\dl_0  \ll \al - 1$. 

Similarly, we have
\[ 
  \begin{split}
    \| \mathscr{Y}_{k, k'}^{+} \|_{k k'} & \lesssim \| \mathrm{T}_{k k_1 k_2
    k'}^{\text{b}, m} \|_{k k' \to  k_1 k_2} \| (w_{L_1})_{k_1}
    \|_{k_1} \| \overline{(
    w_{L_2})_{k_2}} \|_{k_2} \\
    & \lesssim N^{1 - \frac{\alpha}{2}} L_{\min}^{1 -
    \frac{\alpha}{2}} (L_1 L_2)^{- \frac{1}{2} - \gamma} \\ 
    & \lesssim N^{1-\frac{\al}{2}} L_{\max}^{- \frac{1}{2} - \gamma} \les N^{\frac12} L_{\max}^{-\frac\al2 -\g} ,
  \end{split} 
\]
which again proves \eqref{Eqn:eykk}.

Therefore,we have finished the proof of \eqref{Eqn:ktok} and \eqref{Eqn:ykk}
for Case (d).

\subsection{Low-high-low random averaging operator}
\label{SUB:LHL}
In this subsection, we consider the low-low-high random matrix $\mathscr{Y}_{k, k'}^{-} $ defined in \eqref{Eqn:ykk-}, and prove \eqref{Eqn:ktok-} and \eqref{Eqn:eykk-}.
We follow a similar strategy as in the previous subsection, and consider four cases for \eqref{Eqn:ykk-} depending on whether $(w_{L_1}, w_{L_3})$ are of types (a) (C, C), (b) (C, D), (c) (D, C), (d) (D, D).

\subsubsection{Case (a)}  
\label{Subsub:cc-}
Type (C, C). 
In this case, the matrix \eqref{Eqn:ykk-} can be written as
\[ 
  \mathscr{Y}_{k, k'}^{-} = \sum_{k_1, k_3} {\rm T}^{{\rm b},m}_{k k_1 k'
  k_3} \sum_{k_1', k_3'} h_{k_1 k_1'}^{L_1, R_1} 
  {h_{k_3 k_3'}^{L_3, R_3}} \frac{g_{k_1'}{g_{k_3'}}}{\jbb{k_1'}^{\frac\al2} \jbb{k_3'}^{\frac\al2}} 
\]
where $\mathfrak{h}^{L_j, R_j} = \{ h_{k_j k'_j}^{L_j, R_j} \}$ ($j \in \{ 1,
3 \}$) are either identity map over $L_j / 2 < \jb{k_j'} \le  L_j$ or satisfying \eqref{Eqn:ee1}.
Similar argument as in Subsection \ref{Subsub:cc} yields
  \begin{align} 
  \label{P_CC-}
  \begin{split}
    & \| \mathscr{Y}_{k, k'}^{-} \|_{k \to  k'}   \lesssim (L_1 L_3)^{- \alpha / 2} \big ( \| \mathrm{T}_{k k_1
    k' k_3}^{\text{b}, m} \|_{k k_1 \to  k' k_3} + \|
    \mathrm{T}_{k k_1 k' k_3}^{\text{b}, m} \|_{k \to  k_1 k' k_3} \\
    & \hspace{2.5cm} + \| \mathrm{T}_{k k_1 
    k' k_3}^{\text{b}, m} \|_{k k_1 k_3 \to  k'} + \|
    \mathrm{T}_{k k_1 k' k_3}^{\text{b}, m} \|_{k k_3 \to  k_1 k'} \big )\\
    & \hspace{1.3cm} \times \| h_{k_1 k_1'}^{L_1, R_1} \|_{k_1 \to 
    k'_1} \| {h_{k_3 k_3'}^{L_3, R_3}} \|_{k_3
    \to  k'_3} .
  \end{split} 
\end{align}

\noi 
By using Lemma \ref{LEM:tensor2}, Lemma \ref{LEM:tensor3}, with $N_1 = L_1$, $N_2 \sim N$, and $N_3 = L_3$, we obtain
\begin{align}
    \label{CC_e3-}
    \begin{split}
     \| & \mathrm{T}_{k k_1
    k' k_3}^{\text{b}, m} \|_{k k_1 \to  k' k_3} + \|
    \mathrm{T}_{k k_1 k' k_3}^{\text{b}, m} \|_{k \to  k_1 k' k_3} + \| \mathrm{T}_{k k_1 k'
    k_3}^{\text{b}, m} \|_{k k_1 k_3 \to  k'} + \|
    \mathrm{T}_{k k_1 k' k_3}^{\text{b}, m} \|_{kk_3 \to  k_1 k'} \\
    & \les (L_1L_3)^{1 - \frac\al2}  + L_{\max}^{1 - \frac{\alpha}2 } (\log L_{\min})^{\frac12} + L_{\min}^{\frac12}  \\ 
    & \les (L_1L_3)^{\frac12}, 
    \end{split}
\end{align}

\noi 
where $L_{\min} = \min \{L_1,L_3\}$ and $L_{\max} = \max \{L_1,L_3\}$.
Thus it follows from \eqref{P_CC-}, \eqref{CC_e3-}, and \eqref{Eqn:ee1}, that
\[ 
  \| \mathscr{Y}_{k, k'}^{-} \|_{k \to  k'} \lesssim (L_1 L_3)^{- \alpha / 2} L_1^{\frac{1}{2}} L_3^{\frac{1}{2}} \lesssim
  L^{\frac{1 - \alpha}{2}}_{\max}, 
\]
which is sufficient for \eqref{Eqn:ktok-}. 
A similar argument also gives
\begin{align} 
\label{CC_e4-}
  \begin{split}
   \| \mathscr{Y}_{k, k'}^{-} \|_{k k'}   & \lesssim (L_1 L_3)^{- \alpha / 2} \big( \|{\rm T}^{{\rm b},m}_{k k_1 k' k_3}
    \|_{k k' k_3 \to  k_1} + \| \mathrm{T}^{\text{b},
    m}_{k k_1 k' k_3} \|_{k k_1 k' \to  k_3}\\ 
    & \hspace{2em} + \|
    {\rm T}^{{\rm b},m}_{k k_1 k'  k_3} \|_{k k_1 k' k_3} +
    \| {\rm T}^{{\rm b},m}_{k k_1 k' k_3} \|_{k k' \to  k_1 k_3} \big)\\
    & \hspace{1em} \times \| h_{k_1 k_1'}^{L_1, R_1} \|_{k_1 \to  k'_1} \|
    h_{k_3 k_3'}^{L_3, R_3} \|_{k_3 \to k'_3} .
  \end{split} 
\end{align}

\noi 
By using Lemma \ref{LEM:tensor2}, Lemma \ref{LEM:tensor3}, with $N_1 = L_1$, $N_2 = N$, and $N_3 = L_3$, we obtain
\begin{align}
\label{CC_e5-}
\begin{split}
    \| & {\rm T}^{{\rm b},m}_{k k_1 k' k_3}
    \|_{k k' k_3 \to  k_1} + \| \mathrm{T}^{\text{b},
    m}_{k k_1 k' k_3} \|_{k k_1 k' \to  k_3}  + \|
    {\rm T}^{{\rm b},m}_{k k_1 k' k_3} \|_{k k_1 k' k_3} + \| {\rm T}^{{\rm b},m}_{k k_1 k' k_3} \|_{k k' \to  k_1 k_3}  \\ 
    & \les L_3^{1-\frac\al2} (\log N)^{\frac12} + N^{\frac12} + L_1^{1-\frac\al2} (\log N)^{\frac12} + L_{\max}^{1-\frac\al2} (L_{\min} \log N)^{\frac12} + (L_{\min} N)^{\frac12} \\ 
    & \les N^{\frac 12} L_{\min}^{\frac12}  .
\end{split}
\end{align}

\noi 
By collecting \eqref{CC_e4-} and \eqref{CC_e5-}, we conclude that
\begin{align*}
    \begin{split}
    \| \mathscr{Y}_{k, k'}^{-} \|_{k k'}   & \les (L_1 L_3)^{- \alpha / 2}  N^{\frac 12} L_{\min}^{\frac12}   \les  N^{\frac 12} L_{\max}^{-\frac\al2},
    \end{split}
\end{align*}

\noi 
from which by choosing $\g_0 \ll \al -1$ we finish the proof of \eqref{Eqn:eykk-}.

\subsubsection{Case (b) and Case (c)}
Type (C, D) and type (D, C).
Without loss of generality, we only consider $(w_{L_1}, w_{L_3})$ of type (D, C). Thus, the random matrix \eqref{Eqn:ykk-} can be written as
\[ 
  \mathscr{Y}_{k, k'}^{-} = \sum_{k_1, k_3} {\rm T}^{{\rm b},m}_{k k_1 k' k_3} (w_{L_1})_{k_1} \cdot \sum_{k_3'} {h_{k_3 k_3'}^{L_3, R_3}} \frac{{g_{k_3'}}}{\jbb{k_3'}^{\frac\al2}} , 
\]
where $w_{L_1}$ satisfies \eqref{Eqn:ee3} and $\mathfrak{h}^{L_3, R_3} = \{
h_{k_3 k'_3}^{L_3, R_3} \}$ are either identity map over $L_3 / 2 < \jb{k_3'} \le  L_3$ or satisfying
\eqref{Eqn:ee1}. 
Similar argument as in Subsection \ref{SUB:DC+} yields 
\begin{align*}
\begin{split}
\|\mathscr{Y}_{k, k'}^{-} \|_{k \to k'} 
& \les  L_3^{-\frac\al2} \big\| {h_{k_3 k_3'}^{L_3, R_3}} \big\|_{k_3\to k_3'} \big\| {\rm T}^{{\rm b},m}_{k k_1 k' k_3} \big\|_{(kk_1k_3\to k') \cap (k k_1\to k_3k')}   \|(w_{L_1})_{k_1}\|_{\l^2_{k_1}} \\
& \les L_1^{-\frac12 -\g} 
L_3^{-\frac\al2}  \big(L_{\max}^{1-\frac\al2} (\log L_{\min})^{\frac12} + L_{\min}^{\frac12} +  (L_1L_3)^{1-\frac\al2} \big)\\
& \les L_1^{-\frac12 -\g} 
L_3^{-\frac\al2}  (L_1L_3)^{1-\frac\al2}  \les L_{\max}^{\frac12 - \frac\al 2 - \g},
\end{split}
\end{align*}

\noi 
which is again sufficient for \eqref{Eqn:ktok-} provided $\dl_0  \ll \al - 1$.

Similar computation as in \eqref{DC5}, \eqref{DC6}, and \eqref{DC7} yields
\begin{align*} 
\begin{split}
\|\mathscr{Y}_{k, k'}^{-} \|_{k k'} 
& \les  L_3^{-\frac\al2} \big\| {h_{k_3 k_3'}^{L_3, R_3}} \big\|_{k_3\to k_3'} \big\| {\rm T}^{{\rm b},m}_{k k_1 k' k_3} \big\|_{(kk'k_3 \to k_1) \cap (k k'\to k_1k_3)}   \|(w_{L_1})_{k_1}\|_{\l^2_{k_1}} \\
& \les L_1^{-\frac12 -\g} L_3^{-\frac\al2}  \big( L_3^{1-\frac\al2} (\log N)^{\frac12} + N^{\frac12} + L_{\min}^{\frac12 - \frac\al4} N^{\frac12 - \frac\al4} \big)  \\
& \les N^{\eps} L_1^{-\g -\frac12} L_3^{1-\al} + N^{\frac12} L_1^{-\frac12 -\g} L_3^{-\frac\al2}  + N^{\frac12 - \frac\al4} L_1^{-\g -\frac\al4} L_3^{\frac12-\frac{3\al}4} \\
& \les N^{\frac12} L_{\max}^{-\frac12 -\g},
\end{split}
\end{align*}

\noi 
which is sufficient for \eqref{Eqn:eykk-} as $\g_0 \ll \g$. Here we used that $\al \in (1,2)$.

Therefore, we finish the proof of \eqref{Eqn:ktok-} and \eqref{Eqn:eykk-} 
for Case (b): type (C, D). The proof for Case (c): type (D, C) follows similarly. 

\subsubsection{Case (d)} Type: (D, D).
In this case, the matrix \eqref{Eqn:ykk-} can be written as
\[ 
  \mathscr{Y}_{k, k'}^{-} = \sum_{k_1, k_3} \mathrm{T}_{k k_1 k' k_3}^{\text{b},
  m}  \cdot (w_{L_1})_{k_1} \cdot \left( {w_{L_3}} \right)_{k_3}, 
\]
where $w_{L_1}$ and $w_{L_3}$ satisfy \eqref{Eqn:ee3}. 
By using
Proposition \ref{PROP:contr}, followed by Lemma
\ref{LEM:tensor2} and \eqref{Eqn:ee3}, we obtain
\[ 
  \begin{split}
    \| \mathscr{Y}_{k, k'}^{-}  & \|_{k \to  k'}\lesssim \|
    {\rm T}^{{\rm b},m}_{k k_1 k' k_3} \|_{k k_1
    \to  k' k_3} \| (w_{L_1})_{k_1} \|_{k_1} \| (
    {w_{L_3}} )_{k_3} \|_{k_3}\\
    &  \lesssim (L_1 L_3)^{1 - \frac{\alpha}{2}} (L_1 L_3)^{- \frac{1}{2}
    - \gamma}  \lesssim L_{\max}^{\frac{1 - \alpha}{2} - \gamma},
  \end{split} 
\]
which is sufficient for \eqref{Eqn:ktok}. 

Similarly, we have
\[ 
  \begin{split}
    \| \mathscr{Y}_{k, k'}^{-} \|_{k k'} & \lesssim \| \mathrm{T}_{k k_1 k' k_3}^{\text{b}, m} \|_{k k' \to  k_1 k_3} \| (w_{L_1})_{k_1} \|_{k_1} 
    \| ( {w_{L_3}} )_{k_3} \|_{k_3} \\
    & \lesssim (N  L_{\min})^{\frac12 -
    \frac{\alpha}{4}} (L_1 L_2)^{- \frac{1}{2} - \gamma} \\ 
    & \lesssim N^{\frac12 -
    \frac{\alpha}{4}} L_{\max}^{- \frac{1}{2} - \gamma} \les N^{\frac12} L_{\max}^{- \frac{1}{2} -\frac\al4 - \gamma} ,
  \end{split} 
\]
where we used that $N \ge L_{\max}$, which proves \eqref{Eqn:eykk-}.

Therefore,we have finished the proof of \eqref{Eqn:ktok-} and \eqref{Eqn:ykk-}
for Case (d): type (D, D).

\subsection{Random averaging operator}
\label{SUB:RAO}
The main purpose of this subsection is to 
prove \eqref{Eqn:1} and \eqref{Eqn:2} in $\text{{\textsf{Local}}} (M)$ by
assuming $\text{{\textsf{Local}}} \left( \frac{M}{2} \right)$.  
Let 
\begin{equation}
  \label{Eqn:smallpnl}
  \mathfrak{p}^{N, L} = \mathcal{P}^{N, L} - \mathcal{P}^{N, L / 2},
\end{equation}
where $\mathcal{P}^{N, L}$ be the
linear operator defined in \eqref{Eqn:l}.
Recall from
\eqref{Eqn:calH} and \eqref{Eqn:smallpnl} that
\begin{equation}
  \label{Eqn:hpnl} 
  \mathcal{H}^{N, L} = \Pi_N (1 - \mathcal{P}^{N, L})^{- 1} = \Pi_N +
  \sum_{n = 1}^{\infty} (\mathcal{P}^{N, L})^n,
\end{equation}
and thus the operator $\mathfrak{h}^{N,L}$ in \eqref{Eqn:hNL} has the following expansion,
\begin{equation}
  \label{Eqn:hnlse} 
  \mathfrak{h}^{N, L} = \sum_{n = 1}^{\infty} (- 1)^{n - 1}
  (\mathcal{H}^{N, L} \mathfrak{p}^{N, L})^n \mathcal{H}^{N, L}.
\end{equation} 
Recalling the operator $\mathcal{P}^+ := \mathcal{P}^+_{N, L_1, L_2}$ from \eqref{Eqn:5}, we have
\begin{equation}
  \label{Eqn:ind} 
  \begin{split}
    \mathcal{P}^{N, L} [\psi] & = - 2i  \chi(t) \int_0^t \Pi_N \mathcal{M} (u_L,
    u_L, \psi) (t') d t' \\
    & = - 2i  \sum_{L_1, L_2 \le  L} \chi(t) \int_0^t \Pi_N \mathcal{M}
    (y_{L_1}, y_{L_2}, \psi) (t') d t' \\ 
    & = \mathcal{P}^{N, \frac{L}{2}}
    \psi + 2 \sum_{L_{\max} = L} \mathcal{P}_{N, L_1, L_2}^+ \psi ,
  \end{split}
\end{equation}

\noi 
where $\chi(t)$ is the time localization given in \eqref{chi}.
Recall that $\mathcal{P}_{N, L_1, L_2}^+$ satisfies the estimates
\eqref{Eqn:6}-\eqref{Eqn:7}, 
which implies that $\mathfrak{p}^{N, L}$ also satisfies the estimates \eqref{Eqn:6} and \eqref{Eqn:7} but with a logarithm loss coming from the summation over $L_i$. 
This logarithm loss is harmless since it can be absorbed. 
Namely, from \eqref{Eqn:smallpnl}, \eqref{Eqn:ind}, and Corollary \ref{COR:RAO}, we have
\begin{equation}
  \label{Eqn:pnlbounds} 
  \begin{cases}
    \|\mathfrak {p}^{N, L} \|_{X^b \to  X^{b}} \lesssim T^{c\theta}
    L^{- 2 \delta_0 } (\log L)^2 ;\\
    \|\mathfrak {p}^{N, L} \|_{Z^{b, b}} \lesssim T^{c\theta} N^{\frac{1}{2}
    + \frac{2 \gamma_0}3 } L^{- \frac{1}{2} - \g_0 } (\log L)^2,
  \end{cases}
\end{equation}
which together with \eqref{Eqn:smallpnl} implies
\begin{equation}
  \label{Eqn:bigpnlop} 
  \begin{cases}
  \| \mathcal{P}^{N, L} \|_{X^{b} \to 
  X^{b}} \lesssim T^{c \theta} ,\\
  \|\mathcal {P}^{N, L} \|_{Z^{b, b}} \lesssim T^{c\theta} N^{\frac{1}{2}
    + \frac{2\gamma_0}3}
  \end{cases}
\end{equation}

\noi 
for some $c > 0$.

From the definition of the operator norm, we note that
\begin{equation}
  \label{Eqn:ABxaxa} 
  \| \mathcal{A} \mathcal{B} \|_{Y^{b,b}} =  \| \mathcal{A} \mathcal{B} \|_{X^{b} \to 
  X^{b}} \le  \| \mathcal{A} \|_{X^{b} \to  X^{b}}
  \| \mathcal{B} \|_{X^{b} \to  X^{b}}
\end{equation}
which together with \eqref{Eqn:hpnl} and \eqref{Eqn:bigpnlop} implies that
\begin{equation}
  \label{Eqn:hnl11} 
  \begin{split}
    \|\mathcal{H}^{N, L} - \Pi_N \|_{X^{b} \to  X^{b}}
     &  \le  \sum_{n = 1}^{\infty} \| (\mathcal{P}^{N, L})^n \|_{X^{b}
    \to  X^{b}}  \le  \sum_{n = 1}^{\infty} \| \mathcal{P}^{N, L} \|^n_{X^{b}
    \to  X^{b}}\\
    & \le  \| \mathcal{P}^{N, L} \|_{X^{b} \to  X^{b}}
    \sum_{n \ge  1} (C T^{c \theta})^{n - 1}  \lesssim T^{c \theta}.
  \end{split}
\end{equation}

\noi 
Also, note that $\| \Pi_N \|_{X^b (J) \to X^b (J)}  \les 1$.
In particular, we conclude from \eqref{Eqn:hnl11} that
\[
\| \mathcal{H}^{N, L} \|_{X^{b} \to  X^{b}} \les 1,
\]

\noi 
which together with \eqref{Eqn:hnlse}, \eqref{Eqn:ABxaxa}, and \eqref{Eqn:pnlbounds}, implies 
\[ 
  \begin{split}
    \| \mathfrak{h}^{N, L}  &\|_{X^{b} \to  X^{b}} \le 
    \sum_{n = 1}^{\infty} \| (\mathcal{H}^{N, L} \mathfrak{p}^{N, L})^n
    \mathcal{H}^{N, L} \|_{X^{b} \to  X^{b}} \le  \sum_{n = 1}^{\infty} \| \mathcal{H}^{N, L} \|_{X^{b}
    \to  X^{b}}^{n + 1} \|\mathfrak{p}^{N, L} \|^n_{X^{b}
    \to  X^{b}}\\
    & \le  \|\mathfrak{p}^{N, L} \|_{X^{b} \to  X^{b}}
    \sum_{n = 1}^{\infty} C^{n + 1} (T^{c \theta} L^{-
    \delta_0})^{n - 1} \les T^{c\theta}
    L^{-   \delta_0 } ,
  \end{split} 
\]
which gives the first bound in \eqref{Eqn:1} by choosing $T\ll 1$.

We also note that 
\begin{equation*}
  \| \mathcal{A} \mathcal{B} \|_{Z^{b,b}} \le \min\big( \|
  \mathcal{A} \|_{X^{b} \to  X^{b}} \| \mathcal{B}
  \|_{Z^{b,b}}, \| \mathcal{A}
  \|_{Z^{b,b}} \|
  \mathcal{B} \|_{X^{b} \to  X^{b}} \big),
\end{equation*}
which together with \eqref{Eqn:hnlse} and \eqref{Eqn:bigpnlop} implies
\[ 
  \begin{split}
    \| \mathfrak{h}^{N, L} \|_{Z^{b,b}} & \le  \sum_{n = 1}^{\infty}
    \| (\mathcal{H}^{N, L} \mathfrak{p}^{N, L})^n \mathcal{H}^{N, L}
    \|_{Z^{b,b}}\\
    & \le  \sum_{n = 1}^{\infty} \| \mathcal{H}^{N, L} \|^{n+1}_{X^{b}
    \to  X^{b}} \| \mathfrak{p}^{N, L} \|^{n-1}_{X^{b}
    \to  X^{b}} \| \mathfrak{p}^{N, L} \|_{Z^{b,b}}\\
    & \le  \| \mathfrak{p}^{N, L} \|_{Z^{b,b}} \sum_{n = 1}^{\infty}
    C^{n+1} (T^{c\theta} L^{- \delta_0 })^{n-1}\\
    & \le  T^{c\theta} N^{\frac{1}{2} + \gamma_0 } L^{-\frac12},
  \end{split} 
\]
provided $T \ll 1$, 
which completes the proof of the second bound of \eqref{Eqn:1}.

Finally, from Proposition \ref{Prop:co}, we have
\[ 
  \bigg\| \bigg( 1 + \frac{| k - k' |}{L} \bigg)^{\kappa}
  (\widehat{\mathcal{A} \mathcal{B}})_{k k'} \bigg\|_{Z^{b,b}} \le 
  \| \mathcal{A} \|_{X^{b} \to  X^{b}} \bigg\| \bigg( 1 +
  \frac{| k - k' |}{L} \bigg)^{\kappa} \widehat{\mathcal{B}}_{k k'}
  \bigg\|_{Z^{b,b}}, 
\]
which, together with a similar argument as above, gives
\begin{equation*} 
  \bigg\| \bigg( 1 + \frac{| k - k' |}{L} \bigg)^{\kappa} h^{N, L}_{k k'} \bigg\|_{Z^{b,b}} \le 
  T^{c\theta} N,
\end{equation*}
provided $T \ll 1$, which proves \eqref{Eqn:2}.

\section{The remainder terms}\label{Sec:rems}
In this section, we continue the proof of Proposition \ref{PROP:main}.
We have shown 
\eqref{Eqn:1},  \eqref{Eqn:2}, 
\eqref{Eqn:6}, 
\eqref{Eqn:7}, and \eqref{Eqn:8} of $\text{{\textsf{Local}}} \left( M
\right)$ in Section \ref{Sec:raos}. 
It remains to prove \eqref{Eqn:3}, which will be the main focus of this section.
To be more precise, we shall prove the bound \eqref{Eqn:3} with $N = M$,
i.e.
\begin{align} 
\label{Eqn:3m}
  \| z_M \|_{X^b (J)} \le  M^{- \frac{1}{2}
  - \gamma} ,
\end{align}
by assuming $\text{{\textsf{Local}}} \left( \frac{M}{2} \right)$ from Definition \ref{Def:LocalM}. 
The main strategy of this section is to show \eqref{Eqn:3m} by a continuity argument, with the help of \eqref{Eqn:1},  \eqref{Eqn:2}, 
\eqref{Eqn:6}, 
\eqref{Eqn:7}, and \eqref{Eqn:8} of $\text{{\textsf{Local}}} \left( M
\right)$.
Recall that $z_M$ is given by the equation \eqref{Eqn:newzN}, whose right-hand side can be written as the combination of the expressions 
\begin{equation}
  \label{termA} 
  \mathscr{A} (w_{N_1}, w_{N_2}, w_{N_3}) (t) =
  { {\chi}} (t) \int^t_0 \Pi_N \mathcal{M} (w_{N_1}, w_{N_2},
  w_{N_3}) (s) d s,
\end{equation}
and
\begin{align}
\label{termB}
  \mathscr{B} (w_{N_1}, w_{N_2}, w_{N_3}) (t) = { {\chi}} (t)
  \int^t_0 \Delta_N \mathcal{M} (w_{N_1}, w_{N_2}, w_{N_3}) (s) d s,
\end{align}

\noi 
\noi 
where $w_{N_i}$ are of either type $(C)$ or type $(D)$ defined in Subsection \ref{Sub:refo}.
In particular, we can group the right-hand side of \eqref{Eqn:newzN} as follows
\begin{itemize}
  \item[\textup{(I)}] $\mathscr{A} (w_{N_1}, w_{N_2}, w_{N_3})$ with $w_{N_j}$ for $j\in \{1,2,3\}$ of any type
  and $N_{\med} = N_{\max} = M$.
  
  \item[\textup{(II)}] $\mathscr{A} (w_{N_1}, w_{N_2}, z_M)$ with $w_{N_j}$ for $j\in \{1,2\}$ of any type
  and $N_j \le  \frac{M}{2}$ for $j \in \{ 1, 2 \}$.
  
  \item[\textup{(III)}] $\mathscr{A} (w_{N_1}, w_{N_2}, \psi_{M, L_M})$ with $w_{N_j}$ for $j
  \in \{ 1, 2 \}$ of any type with $L_M < \max (N_1, N_2) \le 
  \frac{M}{2}$.
  
  \item[\textup{(IV)}] $\mathscr{A} (w_{N_1}, z_M, w_{N_3})$ with $w_{N_j}$ for $j\in \{1,3\}$ of any type
  and $N_j \le  \frac{M}{2}$ for $j \in \{ 1, 3 \}$.
  
  \item[\textup{(V)}] $\mathscr{A} (w_{N_1}, \psi_{M, L_M}, w_{N_3})$ where $w_{N_j}$ for $j
  \in \{ 1, 3 \}$ of any type with $\max (N_1, N_3) \le 
  \frac{M}{2}$.
  
  \item[\textup{(VI)}] $\mathscr{B} (w_{N_1}, w_{N_2}, w_{N_3})$ with $w_{N_j}$ for $j\in \{1,2,3\}$ of any type
  and $N_{\max} \le  \frac{M}{2}$.
  
  \item[\textup{(VII)}] The term
  \[ \int_0^t \mathcal{R} (w_N, w_N, w_N) (t') d t' \]
  with $w_N$ of any type and { {$N \in \{ M, M / 2 \}$}}.
\end{itemize}

Our goal in this section is to recover the bound for $z_M$ in \eqref{Eqn:newzN} for each of
the terms (I)--(VII) above, i.e.
\begin{align}
  \| \mathscr{A} (w_{N_1}, w_{N_2}, w_{N_3}) \|_{X^b}
  \lesssim M^{- \frac{1}{2} - \gamma}, \label{Eqn:Awww1} \\
\| \mathscr{B} (w_{N_1}, w_{N_2}, w_{N_3}) \|_{X^b}
  \lesssim M^{- \frac{1}{2} - \gamma}, \label{Eqn:Bwww1}
\end{align}

\noi 
for all possible types of $(w_{N_1}, w_{N_2}, w_{N_3})$ as above.

\subsection{Preparation of the proof}\label{Sub:pre}

We first see that
\[ 
  \| \mathscr{A} (w_{N_1}, w_{N_2}, w_{N_3}) \|_{X^1} \lesssim \| \mathcal{M}
  (w_{N_1}, w_{N_2}, w_{N_3}) \|_{L^2_{t, x}} \lesssim {{M^2}}
  \prod_{i = 1}^3 \| w_{N_i} \|_{X^b} \lesssim M^{14} .
\]

\noi 
Therefore, to prove \eqref{Eqn:Awww1}, from  interpolation, it suffices to show
\begin{equation}
  \label{Eqn:Awww} 
  \| \mathscr{A} (w_{N_1}, w_{N_2}, w_{N_3}) \|_{X^{1 - b}}
  \lesssim M^{- \frac{1}{2} {{- 2\g  }}},
\end{equation}
provided $b - \frac{1}{2}  \ll \gamma_0$. By \eqref{Eqn:M},
\eqref{Eqn:Ik}, and \eqref{termA}, we have
\begin{align} 
\label{Eqn:FAwww}
  \begin{split}
    & \mathcal{F}_{t, x} (\mathscr{A} (w_{N_1}, w_{N_2}, w_{N_3})) (\tau, k)\\
    & \quad = { {- i }} \sum_{\substack{
      k_1 - k_2 + k_3 = k\\
      k_2 \not\in \{ k_1, k_3 \}
    }} \int_{\mathbb{R}^3} \mathcal{K} (\tau, \Phi + \tau_1 -
    \tau_2 + \tau_3)\\
    & \hspace{1.5cm} \times ( \widehat{w_{N_1}} )_{k_1} (\tau_1)
    \cdot \overline{( \widehat{w_{N_2}} )_{k_2}} (\tau_2) \cdot
    ( \widehat{w_{N_3}} )_{k_3} (\tau_3) d \tau_1
    d \tau_2 d \tau_3\\
    & = { {- i }} \sum_m \int_{\mathbb{R}^3} \sum_{k_1, k_2,
    k_3} \mathcal{K} (\tau, m + \Phi - [\Phi] + \tau_1 - \tau_2 + \tau_3)
    \times \mathrm{T}_{k k_1 k_2 k_3}^{\text{b}, m}\\
    & \hspace{1.5cm} \times ( \widehat{w_{N_1}} )_{k_1} (\tau_1)
    \cdot \overline{( \widehat{w_{N_2}} )_{k_2}} (\tau_2) \cdot
    \left( \widehat{w_{N_3}} \right)_{k_3} (\tau_3) d \tau_1
    d \tau_2 d \tau_3,
  \end{split} 
\end{align}
where $\Phi$ is defined in \eqref{Eqn:Phi}, $[\Phi]$ is the integer part of $\Phi$, and the tensor $\mathrm{T}_{k k_1
k_2 k_3}^{\text{b}, m}$ is the base tensor given in \eqref{baseT}. 
We note that $\Phi - [\Phi] = O(1)$. 
Then from \eqref{Eqn:FAwww}, \eqref{Eqn:bk}, and Minkowski inequality, we obtain
\[ 
  \begin{split}
    & \| \mathscr{A} (w_{N_1}, w_{N_2}, w_{N_3}) \|_{X^{1 - b}}^2\\
    & \quad \lesssim \int_{\mathbb{R}} \langle \tau \rangle^{- 2 b} \bigg(
    \sum_m \int_{\mathbb{R}^3}  \bigg\| \sum_{k_1, k_2, k_3}  \langle \tau - m +
    O(1) - \tau_1 + \tau_2 - \tau_3 \rangle^{- 1}\\
    & \hspace{2cm} \times  \langle \tau_1 \rangle^{- b} \langle \tau_2
    \rangle^{- b} \langle \tau_3 \rangle^{- b} 
    \mathrm{T}_{k k_1
    k_2 k_3}^{\text{b}, m} \prod_{i=1}^3 \jb{\tau_i}^b ( \widehat{w_{N_i}} )^{\zeta_i}_{k_i} (\tau_i)  
    \bigg\|_{\ell_k^2} d \tau_1 d \tau_2 d \tau_3\bigg)^2
    d \tau\\
    & \quad \lesssim \int_{\mathbb{R}} \langle \tau \rangle^{- 2 b} \bigg(
    \sum_{m } \int_{\mathbb{R}^3} \langle \tau - m - \tau_1 +
    \tau_2 - \tau_3 \rangle^{- 1} \langle \tau_1 \rangle^{- b} \langle \tau_2
    \rangle^{- b} \langle \tau_3 \rangle^{- b} \\
    & \hspace{2cm} \times \bigg\| \sum_{k_1, k_2, k_3} \mathrm{T}_{k k_1
    k_2 k_3}^{\text{b}, m} \prod_{i=1}^3 \jb{\tau_i}^b ( \widehat{w_{N_i}} )^{\zeta_i}_{k_i} (\tau_i) \bigg\|_{\ell_k^2}d \tau_1 d \tau_2 d \tau_3\bigg)^2
    d \tau,
  \end{split} 
\]

\noi 
where $\zeta_1, \zeta_3 = +$ and $\zeta_2 = -$.
Due to the fact that $|{k_1}|^{\alpha} - |{k_2}|^{\alpha} +
|{k_3}|^{\alpha} - |{k}|^{\alpha} - O(1) = m$, we see that $| m |
\lesssim N^{\al}$, which leads to 
\[ 
\sum_{m \in \mathbb{Z}} \langle \tau - m -
\tau_1 + \tau_2 - \tau_3 \rangle^{- 1} \lesssim 1 + \log N. 
\]
Therefore, to
prove \eqref{Eqn:Awww} it only needs to show
\begin{align}
\label{Eqn:ex1}
\begin{split}
\bigg\| \sum_{k_1, k_2, k_3} \mathrm{T}_{k k_1
k_2 k_3}^{\text{b}, m} \prod_{i=1}^3 \jb{\tau_i}^b ( \widehat{w_{N_i}} )^{\zeta_i}_{k_i} (\tau_i)\bigg\|_{L^2_{\tau_1 \tau_2 \tau_3} \ell_k^2} \les M^{- \frac{1}{2} -
3\gamma },
\end{split}
\end{align}

\noi  
where $w_{N_i}$ are of types (C) or (D).
In view of \eqref{Eqn:e1} and \eqref{Eqn:e3}, we may further reduce \eqref{Eqn:ex1} to the following estimate,
\begin{equation}
  \label{Eqn:ex} 
  \| \mathscr{X}_k \|_{\ell_k^2} \lesssim M^{- \frac{1}{2} -
  3\gamma },
\end{equation}
where $\mathscr{X}_k $ is given by
\begin{equation}
  \label{Eqn:Xk} 
  \mathscr{X}_k = \sum_{k_1, k_2, k_3} \mathrm{T}_{k k_1 k_2
  k_3}^{\text{b}, m} \cdot (w_{N_1})_{k_1} \cdot   \overline{(w_{N_2}) _{k_2}} \cdot (w_{N_3})_{k_3}
\end{equation}
and $w_{N_j}$ are either of the following two types (with a slight abuse of notation, we still call them type (C) and (D)). 
\begin{itemize}
  \item Type (C), where
  \[ 
    (w_{N_j})_{k_j} = \sum_{ k_j' } h_{k_j
    k_j'}^{N_j, L_j} \frac{g_{k_j'} (\omega)}{\jbb{k_j'}^{\alpha / 2}}, 
  \]
  with $h_{k_j k_j'}^{N_j, L_j} (\omega)$ supported in the set $ \{
  \frac{N_j}{2} < \jb{k_j'} \le  N_j\}$,
  $\mathcal{B}_{\le  L_j}$-measurable for some $L_j \le  N_j^{1 -
  \delta}$, and satisfying the bounds
  \begin{equation}
    \label{Eqn:eee1} 
    \begin{split}
      &\| h^{N_j, L_j}_{k_j k_j'} \|_{\ell_{k_j}^2 \to  \ell^2_{k_j'}}
      \lesssim L_j^{- \delta_0},\\
      &\| h^{N_j, L_j}_{k_j k_j'} \|_{\ell_{k_j k_j'}^2} \lesssim
      N^{\frac{1}{2} + \gamma_0}_j L_j^{- \frac{1}{2}},
    \end{split}
  \end{equation}
  for $ \dl \ll \gamma_0 \ll  \gamma \ll \delta_0 \ll \al -1$. Moreover using
  \eqref{Eqn:2} we may assume that $h^{N_j, L_j}_{k_j k_j'}$ is supported in $\{| k_j
  - k_j' | \lesssim N^{\eps} L_j\}$.
  
  \item Type (D), where $(w_{N_j})_{k_j}$ is supported in $\{ \jb{k_j} \lesssim
  N_j \}$, and satisfies
  \begin{equation}
    \label{Eqn:eee2} 
    \| (w_{N_j})_{k_j} \|_{\ell_{k_j}^2} \lesssim N_j^{-     \frac{1}{2} - \gamma} .
  \end{equation}
\end{itemize}

\medskip

We will now analyze the terms (I)-(VII) in the expression of $\mathscr X_k$ given by \eqref{Eqn:Xk}. We will consider eight possible scenarios, 
based on the possible types of $(w_1, w_2, w_3)$ in \eqref{Eqn:Xk}, i.e.
(a) (C, C, C), (b) (C, C, D),
(c) (C, D, C), (d) (D, C, C), (e) (D, D, C), (f) (C, D, D), (g) (D, C, D), and
(h) (D, D, D).

\subsection{High-high interaction}\label{Sub:highhigh}\label{Sub:hh}

In this subsection, we focus on the scenarios where $N_{\med} \gtrsim N_{\max}^{1 - \delta} \gtrsim M^{1 - \delta}$ for a small positive constant $0 < \delta \ll 1$. In particular, these scenarios cover terms (I) and (III) from the previous section.  
As we have seen before, the base tensor ${\rm T}^{{\rm b},m}$ is defined by \eqref{baseT}, which is essentially the indicator function of the set \eqref{Eqn:S} with some constraints and $N_{\med} \gtrsim N_{\max}^{1 - \delta}$.

\subsubsection{Case (a) : (C, C, C)}
\label{SUB:HHCCC}
First, let us consider the non-pairing cases, where $k_1' \neq k_2'$ and $ k_2' \neq k_3'$. In this case, we have
\begin{equation}
  \label{Eqn:CCCn} 
  \mathscr{X}_k = \sum_{k_1, k_2, k_3} \mathrm{T}^{\text{b},
  m}_{k k_1 k_2 k_3} \cdot \sum_{\substack{
    k_1', k'_2, k_3'\\
    k_1' \neq k_2', k_2' \neq k_3'
  }} h_{k_1 k'_1}^{N_1, L_1} \overline{h_{k_2 k'_2}^{N_2, L_2}}
  h_{k_3 k'_3}^{N_3, L_3} \frac{g_{k'_1} \overline{g_{k'_2}}g_{k'_3}}{\jbb{k'_1}^{\frac{\alpha}{2}} \jbb{k'_2}^{\frac{\alpha}{2}}
  \jbb{k'_3}^{\frac{\alpha}{2}}}
\end{equation}
where $h^{N_j, L_j}_{k_j k_j'} = h^{N_j, L_j}_{k_j k_j'} (\omega)$ satisfies
\eqref{Eqn:eee1} and ${\rm T}^{{\rm b},m}_{k k_1 k_2 k_3}$ is defined in
\eqref{baseT}. 

Applying Proposition \ref{Prop:te}, Remark \ref{RMK:abs}, and Proposition \ref{PROP:contr} yields\footnote{Here we only consider the case $N_{\min} = N_{\max}$ for simplicity. When $N_{\min} < N_{\max}$, we need to apply Proposition \ref{Prop:te} and Proposition \ref{PROP:contr} repeatedly as in Subsection \ref{Subsub:cc}, as where $h^{N_j, L_j}_{k_jk_j'}$ may depend on $g_{k_i'}$ for $i\neq j$. }
\begin{align}
\label{CCC2}
\begin{split}
\| \mathscr{X}_k \|_k & \lesssim (N_1 N_2 N_3)^{- \alpha / 2} \bigg\| \sum_{k_1, k_2, k_3} \mathrm{T}^{\text{b},
m}_{k k_1 k_2 k_3} \cdot h_{k_1 k'_1}^{N_1, L_1} \overline{h_{k_2 k'_2}^{N_2, L_2}}
h_{k_3 k'_3}^{N_3, L_3} 
\bigg\|_{kk_1'k_2'k_3'} \\ 
& \les (N_1 N_2 N_3)^{- \alpha / 2} \big\| \mathrm{T}^{\text{b},
  m}_{k k_1 k_2 k_3} 
 \big\|_{kk_1k_2 k_3} \prod_{j=1}^3 \big\| 
  h_{k_j k'_j}^{N_j, L_j} \big\|_{k_j \to k_j'}.        
\end{split}
\end{align}

\noi 
Then from \eqref{Eqn:eee1} and Lemma \ref{LEM:tensor}, we can continue with
\begin{align}
\label{CCC3}     \begin{split}
\| \mathscr{X}_k \|_k & \lesssim (N_1 N_2 N_3)^{- \alpha / 2}  \min  \big(N_3^{2 - \alpha} \log (N_1 \vee
N_2) (N_1 \wedge N_2) + N_1 N_2, \\ 
& \hspace{1.5cm}  N_1^{2 -
\alpha} \log (N_2 \vee N_3) (N_2 \wedge N_3) + N_2 N_3 \big)^{\frac12} \\ 
& \les (N_1 N_2 N_3)^{- \alpha / 2} N_{\max}^{\frac12} (\log N_{\max} )^{\frac12} N_{\min}^{\frac12} \\ 
& \les N_{\max}^{\frac12 - \frac\al2 + \eps} N_{\med}^{-\frac\al2} ,
\end{split}
\end{align}

\noi 
which is sufficient for \eqref{Eqn:ex} since $N_{\med} \ges N_{\max}^{1-\dl}$, provided $\eps, \dl, \g \ll \al - 1$.

Secondly, we will consider the case when $k_1' = k_2'$ (the case when $k_2' =
k_3'$ is similar, and we omit its proof), where we have $N_1 = N_2$.  
For this case, we have
\begin{align}
    \label{Eqn:CCCp}
    \begin{split}
    \mathscr{X}_k & = \sum_{k_1, k_2, k_3} \mathrm{T}^{\text{b},
  m}_{k k_1 k_2 k_3} \cdot \sum_{\substack{ 
    k_1', k_3'
  }} h_{k_1 k'_1}^{N_1, L_1} \overline{h_{k_2 k'_1}^{N_2, L_2}}
  h_{k_3 k'_3}^{N_3, L_3} \frac{|g_{k'_1} |^2 g_{k'_3}}{\jbb{k'_1}^{{\alpha}}  
  \jbb{k'_3}^{\frac{\alpha}{2}}} \\
  & = \sum_{k_1, k_2, k_3} \mathrm{T}^{\text{b},
  m}_{k k_1 k_2 k_3} \cdot \sum_{\substack{ 
    k_1', k_3'
  }} h_{k_1 k'_1}^{N_1, L_1} \overline{h_{k_2 k'_1}^{N_2, L_2}}
  h_{k_3 k'_3}^{N_3, L_3} \frac{(|g_{k'_1} |^2 -1)g_{k'_3}}{\jbb{k'_1}^{{\alpha}}  
  \jbb{k'_3}^{\frac{\alpha}{2}}} \\
  & \hspace{2cm} + \sum_{k_1, k_2, k_3} \mathrm{T}^{\text{b},
  m}_{k k_1 k_2 k_3} \cdot \sum_{\substack{ 
    k_1', k_3'
  }} h_{k_1 k'_1}^{N_1, L_1} \overline{h_{k_2 k'_1}^{N_2, L_2}}
  h_{k_3 k'_3}^{N_3, L_3} \frac{ g_{k'_3}}{\jbb{k'_1}^{{\alpha}}  
  \jbb{k'_3}^{\frac{\alpha}{2}}} \\
  & = \mathscr{X}_k^{(1)} + \mathscr{X}_k^{(2)}.
    \end{split}
\end{align}

\noi 
For the term $\mathscr{X}_k^{(1)} $, we can apply Proposition \ref{Prop:te} with $\eta_{k_1'} = (|g_{k'_1} |^2 -1)$, $\eta_{k_3'} = g_{k'_3}$, or $\eta_{k_1'} = |g_{k'_1}|^2 g_{k'_1}$ for over paired cases, and then similar argument as above, to get
\begin{align}
    \label{CCCp1}
    \begin{split}
   \| \mathscr{X}_k^{(1)}  \|_{k} & \les N_1^{-\al} N_3^{-\frac\al2} \bigg\| \sum_{k_1, k_2, k_3} \mathrm{T}^{\text{b},
  m}_{k k_1 k_2 k_3} \cdot  h_{k_1 k'_1}^{N_1, L_1} \overline{h_{k_2 k'_1}^{N_2, L_2}}
  h_{k_3 k'_3}^{N_3, L_3} 
 \bigg\|_{kk_1'k_3'} \\
 & \les  N_1^{-\al} N_3^{-\frac\al2} \big\| \mathrm{T}^{\text{b},
  m}_{k k_1 k_2 k_3} 
 \big\|_{kk_1k_2 k_3}  \big\| 
  h_{k_1 k'_1}^{N_1, L_1} h_{k_2 k'_1}^{N_2, L_2}  \big\|_{k_1k_2 \to k_1'}  \big\| 
  h_{k_3 k'_3}^{N_3, L_3} \big\|_{k_3 \to k_3'} \\
 & \les  N_1^{-\al} N_3^{-\frac\al2} \big\| \mathrm{T}^{\text{b},
  m}_{k k_1 k_2 k_3} 
 \big\|_{kk_1k_2 k_3}  \prod_{j=1}^3 \big\| 
  h_{k_j k'_j}^{N_j, L_j} \big\|_{k_j \to k_j'} , 
  \end{split}
\end{align}

\noi 
where we used Lemma \ref{LEM:tech} in the last step. By using the same estimate as in \eqref{CCC3}, we have
\begin{align}
    \label{CCCp2}
     \| \mathscr{X}_k^{(1)}  \|_{k} \les N_{\max}^{\frac12 - \frac\al2 + \eps} N_{\med}^{-\frac\al2} ,
\end{align}

\noi 
which is again sufficient for \eqref{Eqn:ex}.
Now we turn to the second term in \eqref{Eqn:CCCp}.
By using the unitary property of $\wt H_{k k'}^{N, L_{N}}$ in Lemma \ref{LEM:uni} (also see Corollary \ref{COR:cancel} and Remark \ref{RMK:cancel}), we may redefine $\mathscr{X}_k^{(2)} $ as follows, still denoted by $\mathscr{X}_k^{(2)}$,
\begin{align}
  \label{Eqn:CCCp2} 
    \begin{split}
    \mathscr{X}_k^{(2)} & = \sum_{k_1, k_2, k_3} \mathrm{T}^{\text{b},
  m}_{k k_1 k_2 k_3} \cdot \sum_{\substack{ 
    k_1', k_3'
  }} \bigg( \frac1{\jbb{k'_1}^{{\alpha}}} - \frac1{\jbb{k_1}^{{\alpha}}}\bigg) h_{k_1 k'_1}^{N_1, L_1} \overline{h_{k_2 k'_1}^{N_2, L_2}}
  h_{k_3 k'_3}^{N_3, L_3} \frac{ g_{k'_3}}{  
  \jbb{k'_3}^{\frac{\alpha}{2}}} \\
  & \hspace{1cm} - \sum_{k_1, k_2, k_3} \mathrm{T}^{\text{b},
  m}_{k k_1 k_2 k_3} \cdot \sum_{\substack{ 
    k_1' 
  }} h_{k_1 k'_1}^{N_1, L_1} \overline{h_{k_2 k'_1}^{N_2, L_2}}
  h_{k_3 k'_1}^{N_3, L_3} \frac{ g_{k'_1}}{  
  \jbb{k'_1}^{\frac{3\alpha}{2}}} \\
  & = : \mathscr{X}_k^{(21)} +\mathscr{X}_k^{(22)} .
  \end{split}
\end{align}

\noi 
Recall that the support of $h_{k_1 k'_1}^{N_1, L_1}$ is in $\{| k_1 - k_1' | \lesssim N_1^{\eps}
L_1\}$. Therefore,
\begin{equation}
  \label{Eqn:gain2} 
  \left| \frac{1}{\jbb{ k_1'}^{\alpha}} -
  \frac{1}{\jbb{k_1}^{\alpha}} \right| \lesssim N^{- \alpha - 1 + \eps}_1 L_1 .
\end{equation}
Let us consider $\mathscr{X}_k^{(21)}$ and $\mathscr{X}_k^{(22)}$
one-by-one.
Let us first consider $\mathscr{X}_k^{(21)}$ first. From \eqref{Eqn:CCCp2} and \eqref{Eqn:gain2}, we apply  Proposition \ref{Prop:te} and
Proposition \ref{PROP:contr} to get
\begin{equation}
  \label{Eqn:pa1} 
  \begin{split}
\| \mathscr{X}_k^{(21)} \|_k & \lesssim N^{- \alpha - 1 + \eps}_1 L_1 N_3^{- \frac{\alpha}{2}} \bigg\| \sum_{k_1, k_2, k_3} {\rm T}^{{\rm b},m}_{k k_1 k_2 k_3} 
    \sum_{\substack{
      k_1' 
    }} h_{k_1 k'_1}^{N_1, L_1} \overline{h_{k_2 k'_1}^{N_1, L_2}}
    h_{k_3 k'_3}^{N_3, L_3}   \bigg\|_{kk_3'} \\
    & \lesssim N^{- \alpha - 1 + \eps}_1 L_1
    N_3^{- \frac{\alpha}{2}} \big\| {\rm T}^{{\rm b},m}_{k k_1 k_2 k_3}  \big\|_{kk_1k_2k_3}  \big\| h_{k_1 k'_1}^{N_1, L_1}  
 \big\|_{k_1  k_1'} \prod_{j=2}^3 \big\| h_{k_j k'_j}^{N_j, L_j}  
 \big\|_{k_j \to k_j'} ,
  \end{split}
\end{equation}

\noi 
which together with Lemma \ref{LEM:tensor} and \eqref{Eqn:eee1} implies that 
\begin{align}
\label{PA11}
\| \mathscr{X}_k^{(21)} \|_k \lesssim N^{- \alpha - 1 + \eps}_1 L_1 N_3^{- \frac{\alpha}{2}} N_{\max}^{\frac12 + \eps} N_{\min}^{\frac12} N_1^{\frac12 + \g_0} L_1^{-\frac12} \les N_1^{-\al + \eps + \g_0} N_3^{-\frac\al2} N_{\max}^{\frac12 + \eps} N_{\min}^{\frac12},
\end{align}

\noi 
which is sufficient for \eqref{Eqn:ex} again since $N_{1}, N_3 \ges N_{\max}^{1-\dl}$.
For the second term of \eqref{Eqn:CCCp}, by applying Proposition \ref{Prop:te} and Proposition \ref{PROP:contr} similarly as above, we obtain
\begin{align}
\label{PA2}
\begin{split}
\|\mathscr{X}_k^{(22)}\|_{k} & \les \bigg\| \sum_{k_1, k_2, k_3} \mathrm{T}^{\text{b},
  m}_{k k_1 k_2 k_3} \cdot \sum_{\substack{ 
    k_1' 
  }} h_{k_1 k'_1}^{N_1, L_1} \overline{h_{k_2 k'_1}^{N_2, L_2}}
  h_{k_3 k'_1}^{N_3, L_3} \frac{ g_{k'_1}}{ \jbb{k'_1}^{\frac{3\alpha}{2}}}  \bigg\|_k \\ 
  & \les N_{\max}^{-\frac{3\alpha}{2}} \bigg\| \sum_{k_1, k_2, k_3} \mathrm{T}^{\text{b},
  m}_{k k_1 k_2 k_3} \cdot  h_{k_1 k'_1}^{N_1, L_1} \overline{h_{k_2 k'_1}^{N_2, L_2}}
  h_{k_3 k'_1}^{N_3, L_3}  \bigg\|_{kk_1'}\\ 
  & \les N_{\max}^{-\frac{3\alpha}{2}} \big\| {\rm T}^{{\rm b},m}_{k k_1 k_2 k_3}  \big\|_{kk_1k_2k_3}  \prod_{j=1}^3 \big\| h_{k_j k'_j}^{N_j, L_j}  
 \big\|_{k_j \to k_j'} ,
    \end{split}
\end{align}

\noi 
where we used Lemma \ref{LEM:tech} twice in the last step.
By using Lemma \ref{LEM:tensor} and \eqref{Eqn:eee1}, from \eqref{PA2} we get
\begin{align}
\label{PA3}    \|\mathscr{X}_k^{(22)}\|_{k} \les N_{\max}^{-\frac{3\alpha}{2}} N_{\max}^{1+\eps},
\end{align}

\noi
where we used $N_1 = N_2 = N_3 = N_{\max}$, which is again sufficient for \eqref{Eqn:ex}.

Finally,
by collecting \eqref{Eqn:CCCp}, \eqref{CCCp2}, \eqref{Eqn:CCCp2}, \eqref{PA11}, and \eqref{PA3}, we conclude that 
\[
\|\mathscr{X}_k\|_{k} \les N_{\max}^{-\frac1{2} - 4 \g } \les M^{-\frac1{2} - 4\g } .
\]

\noi 
Thus we finish the proof of \eqref{Eqn:ex}.

\subsubsection{Case (b) : (C, C, D)}
\label{CCD}

First, consider the non-pairing case when $k_1' \neq k_2'$. In this case, we have
\[ 
  \mathscr{X}_k = \sum_{k_1, k_2, k_3} {\rm T}^{{\rm b},m}_{k k_1 k_2
  k_3}  \sum_{\substack{
    k_1' \neq k_2'
  }} h_{k_1 k'_1}^{N_1, L_1} \overline{h_{k_2 k'_2}^{N_2, L_2}}
  \frac{g_{k'_1} \overline{g_{k'_2}}}{\jbb{k'_1}^{\frac{\alpha}{2}}
  \jbb{k'_2}^{\frac{\alpha}{2}}} (w_{N_3})_{k_3}, 
\]
where $h^{N_j, L_j}_{k_j k_j'} = h^{N_j, L_j}_{k_j k_j'} (\omega)$ satisfies
\eqref{Eqn:eee1}, $(w_{N_3})_{k_3}$ satisfies \eqref{Eqn:eee2} and
${\rm T}^{{\rm b},m}_{k k_1 k_2 k_3}$ is defined in \eqref{baseT}. 
From Proposition \ref{PROP:contr}, we have
\begin{align}
\label{CCD0}
\|\mathscr{X}_k\|_k & \les  \bigg\| \sum_{k_1, k_2} {\rm T}^{{\rm b},m}_{k k_1 k_2   k_3}  \sum_{\substack{
    k_1' \neq k_2'
  }} h_{k_1 k'_1}^{N_1, L_1} \overline{h_{k_2 k'_2}^{N_2, L_2}}
  \frac{g_{k'_1} \overline{g_{k'_2}}}{\jbb{k'_1}^{\frac{\alpha}{2}}
  \jbb{k'_2}^{\frac{\alpha}{2}}} \bigg\|_{k\to k_3} \|  (w_{N_3})_{k_3} 
 \|_{k_3}.
\end{align}

\noi 
Then we apply Proposition \ref{Prop:te} and Proposition \ref{PROP:contr} repeatedly as in Subsection \ref{Subsub:cc} to the right-hand side of \eqref{CCD0} to get
\begin{align}
    \label{CCD1}
    \begin{split}
    \|\mathscr{X}_k\|_k & \les (N_1N_2)^{-\frac{\al}2} \Big( \big\| {\rm T}^{{\rm b},m}_{k k_1 k_2
  k_3} 
 \big\|_{kk_1k_2 \to k_3} + \big\| {\rm T}^{{\rm b},m}_{k k_1 k_2
  k_3} 
 \big\|_{k \to k_1k_2k_3}  \\
 & \hphantom{XXXXXXXX} + \big\| {\rm T}^{{\rm b},m}_{k k_1 k_2
  k_3} 
 \big\|_{kk_1 \to k_2k_3} + \big\| {\rm T}^{{\rm b},m}_{k k_1 k_2
  k_3} 
 \big\|_{kk_2 \to k_1k_3} \Big) \\
 & \hphantom{XXX} \times \prod_{j=1}^2 \big\|h_{k_j k'_j}^{N_j, L_j}  \big\|_{k_j \to k_j'} \|  (w_{N_3})_{k_3} 
 \|_{k_3}.
    \end{split}
\end{align}

\noi 
Then by using Lemma \ref{LEM:tensor3} and \eqref{Eqn:eee1}, from \eqref{CCD1} we obtain
\begin{align*} 
    \|\mathscr{X}_k\|_k & \les (N_1N_2)^{-\frac{\al}2} N_3^{-\frac12 - \g} \big(N_1^{1-\frac\al2} N_2^\eps + N_2^{\frac12 } + (N_1\wedge N_2)^{1-\frac\al2} N_3^\eps + N_3^{\frac12} \\ 
    & \hphantom{XX} + (N_2\wedge N_3)^{1-\frac\al2} N_1^{1-\frac\al2} + (N_1\wedge N_3)^{\frac12-\frac\al4} N_2^{\frac12-\frac\al4} \big) \\ 
 & \les (N_1N_2)^{-\frac{\al}2} N_3^{-\frac12 - \g} N_{\min}^{1-\frac\al2} N_{\max}^{\frac12},
\end{align*}

\noi 
which is sufficient for our purpose since $N_{\med} \ges N_{\max}^{1-\dl}$ and $\eps, \dl, \g \ll \al - 1$.

Let us turn to the pairing case, i.e. $k_1' = k'_2$ (which implies $N_1 = N_2$).
By using Lemma \ref{LEM:uni}, we may consider
\[ 
  \begin{split}
\mathscr{X}_k 
    & = \sum_{k_1, k_2, k_3}
    {\rm T}^{{\rm b},m}_{k k_1 k_2 k_3} \cdot \sum_{k_1'} \left( \frac{1}{\jbb{k'_1}^{\alpha}} -
    \frac{1}{\jbb{k_1}^{\alpha}} \right) h_{k_1 k'_1}^{N_1, L_{1}}
    \overline{h_{k_2 k'_1}^{N_1, L_{2}}} (w_{N_3})_{k_3} \\
    & \quad + \sum_{k_1, k_2, k_3}
    {\rm T}^{{\rm b},m}_{k k_1 k_2 k_3} \cdot \sum_{k_1'} h_{k_1 k'_1}^{N_1, L_{1}} \overline{h_{k_2 k'_1}^{N_1,
    L_{2}}} \frac{(|g_{k'_1} |^2 - 1)}{\jbb{k_1'}^\al} (w_{N_3})_{k_3}\\
    & = : \mathscr{X}_k^{(3)}  + \mathscr{X}_k^{(4)} .
  \end{split} 
\]

\noi 
Similar argument as in \eqref{Eqn:pa1} gives
\begin{align*} 
    \begin{split}
\| \mathscr{X}_k^{(3)} \|_{k} & \les N^{- \alpha - 1 + \eps}_1 L_1 \bigg\| \sum_{k_1, k_2, k_3}
{\rm T}^{{\rm b},m}_{k k_1 k_2 k_3} \cdot \sum_{k_1'}  h_{k_1 k'_1}^{N_1, L_{1}}
\overline{h_{k_2 k'_1}^{N_1, L_{2}}} (w_{N_3})_{k_3} 
\bigg\|_{k} \\
 & \les  N^{- \alpha - 1 + \eps}_1 L_1 \big\| {\rm T}^{{\rm b},m}_{k k_1 k_2 k_3} 
 \big\|_{kk_1k_2 \to k_3}  \big\| h_{k_1 k'_1}^{N_1, L_1}  \big\|_{k_1  k_1'}   \big\| h_{k_2k'_2}^{N_2, L_2}  
 \big\|_{k_2 \to k_2'} \|(w_{N_3})_{k_3} \|_{k_3} \\
 & \les  N^{- \alpha - 1 + \eps}_1 L_1  N_1^{\frac12 } N_1^{\frac12 + \g_0} L_1^{-\frac12} N_3^{-\frac12 - \g} \\
 & \les N^{- \alpha +\frac12 + \eps + \g_0}_1  N_3^{-\frac12 - \g},
    \end{split}
\end{align*}

\noi 
where we used $N_1 = N_2$, which is sufficient for our purpose since $N_{1} , N_3 \ges N_{\max}^{1-\dl}$.
For the term $\mathscr{X}_k^{(4)} $, by Proposition \ref{Prop:te} with $\eta_{k_1'} = |g_{k'_1} |^2 - 1$, we have
\[ 
  \begin{split}
\|\mathscr{X}_k^{(4)}\|_k  & =   \bigg\|\sum_{k_1, k_2, k_3}
    {\rm T}^{{\rm b},m}_{k k_1 k_2 k_3} \cdot \sum_{k_1'} h_{k_1 k'_1}^{N_1, L_{1}} \overline{h_{k_2 k'_1}^{N_1,
    L_{2}}} \frac{(|g_{k'_1} |^2 - 1)}{\jbb{k_1'}^\al} (w_{N_3})_{k_3} \bigg\|_{k} \\
    & \les N_1^{-\al} 
    \bigg(
    \bigg\|\sum_{k_1, k_2}
    {\rm T}^{{\rm b},m}_{k k_1 k_2 k_3} \cdot  h_{k_1 k'_1}^{N_1, L_{1}} \overline{h_{k_2 k'_1}^{N_1,
    L_{2}}}   \bigg\|_{kk_1' \rightarrow k_3} \\
    & \hphantom{XXXX}  +
    \bigg\|\sum_{k_1, k_2}
    {\rm T}^{{\rm b},m}_{k k_1 k_2 k_3} \cdot  h_{k_1 k'_1}^{N_1, L_{1}} \overline{h_{k_2 k'_1}^{N_1,
    L_{2}}}   \bigg\|_{k \rightarrow k_1' k_3} 
    \bigg)
    \| (w_{N_3})_{k_3} \|_{k_3} .
  \end{split} 
\]

\noi 
Then similar argument as in \eqref{CCD1} together with Lemma \ref{LEM:tensor3} and Lemma \ref{LEM:tech}  yields 
\[ 
  \begin{split}
\|\mathscr{X}_k^{(4)}\|_k  
     \les & N_1^{-\al} \big ( \big\| 
    {\rm T}^{{\rm b},m}_{k k_1 k_2 k_3}   \big\|_{kk_1k_2 \to k_3} + \big\| 
    {\rm T}^{{\rm b},m}_{k k_1 k_2 k_3}   \big\|_{k\to k_1k_2k_3} \big)\\
    & \times \big\| h_{k_1 k'_1}^{N_1, L_{1}} \overline{h_{k_2 k'_1}^{N_1,
    L_{2}}} \big\|_{k_1'\to k_1k_2} \|(w_{N_3})_{k_3} \|_{k_3}  \\ 
    \les & N_1^{-\al + \frac12} N_3^{-\frac12 -\g},
  \end{split} 
\]

\noi 
which is again sufficient for our purpose.

The proof for Case (d): (D, C, C) is similar to that of Case (b) (C, C, D); we omit the proof.

\subsubsection{Case (c) : (C, D, C)}

In this case, there is no pairing. Therefore, we have
\[ 
  \mathscr{X}_k = \sum_{k_1, k_2, k_3} {\rm T}^{{\rm b},m}_{k k_1 k_2
  k_3} \cj{(w_{N_2})_{k_2}} \sum_{\substack{
    k_1', k_3'
  }} h_{k_1 k'_1}^{N_1, L_1} {h_{k_3 k'_3}^{N_3, L_3}}
  \frac{g_{k'_1} {g_{k'_3}}}{\jbb{k'_1}^{\frac{\alpha}{2}}
  \jbb{k'_3}^{\frac{\alpha}{2}}} , 
\]
where $h^{N_j, L_j}_{k_j k_j'} = h^{N_j, L_j}_{k_j k_j'} (\omega)$ satisfies
\eqref{Eqn:eee1}, $(w_{N_2})_{k_2}$ satisfies \eqref{Eqn:eee2} and
${\rm T}^{{\rm b},m}_{k k_1 k_2 k_3}$ is defined in \eqref{baseT}. 
Applying Proposition \ref{Prop:te} and Propostion \ref{PROP:contr} similarly as in the previous subsection, we have
\begin{align*} 
    \begin{split}
    \|\mathscr{X}_k\|_k & \les (N_1N_3)^{-\frac{\al}2} \Big( \big\| {\rm T}^{{\rm b},m}_{k k_1 k_2 k_3} \big\|_{kk_1k_3 \to k_2} + \big\| {\rm T}^{{\rm b},m}_{k k_1 k_2 k_3} \big\|_{kk_1 \to k_2k_3}  \\ 
    &\hphantom{XXXXXXX} + \big\| {\rm T}^{{\rm b},m}_{k k_1 k_2 k_3} \big\|_{kk_3 \to k_1 k_2}  + \big\| {\rm T}^{{\rm b},m}_{k k_1 k_2 k_3} \big\|_{k \to k_1k_2k_3 } \Big) \\ 
    &\hphantom{XXX} \times \big\|h_{k_1 k'_1}^{N_1, L_1} \big\|_{k_1' \to k_1} \big\|{h_{k_3 k'_3}^{N_3, L_3}} \big\|_{k_3' \to k_3} \|  (w_{N_2})_{k_2} 
 \|_{k_2}.
    \end{split}
\end{align*}

\noi 
Then by using Lemma \ref{LEM:tensor3}, \eqref{Eqn:eee1}, and \eqref{Eqn:eee2}, we get
\begin{align*} 
    \|\mathscr{X}_k\|_k \les (N_1N_3)^{-\frac{\al}2} N_2^{-\frac12 - \g} N_{\min}^{1-\frac\al2} N_{\max}^{\frac12},
\end{align*}

\noi 
which is sufficient for our purpose since $N_{\med} \ges N_{\max}^{1-\dl}$ and $\eps, \g, \dl \ll \al -1$.

\subsubsection{Case (e) : (D, D, C)}
\label{DDC}
In this case, we have
\begin{equation*}
  \mathscr{X}_k = \sum_{k_1, k_2, k_3} \mathrm{T}^{\text{b},
  m}_{k k_1 k_2 k_3} \cdot (w_{N_1})_{k_1}  \overline{(w_{N_2}
  )_{k_2}} \sum_{ k_3' } h_{k_3 k'_3}^{N_3, L_3}
  \frac{g_{k'_3}}{\jbb{k_3'}^{\frac{\alpha}{2}}},
\end{equation*}
where $h^{N_3, L_3}_{k_3 k_3'} $ satisfies
\eqref{Eqn:eee1}, $(w_{N_1})_{k_1}$ and $\overline{(w_{N_2} )_{k_2}}$ satisfy \eqref{Eqn:eee2}, and the base tensor
${\rm T}^{{\rm b},m}_{k k_1 k_2 k_3}$ is defined in \eqref{baseT}.
From Proposition \ref{Prop:te} and Propostion \ref{PROP:contr}, we have
\begin{align*} 
    \begin{split}
    \|\mathscr{X}_k\|_k & \les N_3^{-\frac{\al}2} \big( \big\| {\rm T}^{{\rm b},m}_{k k_1 k_2 k_3} \big\|_{kk_3 \to k_1 k_2} + \big\| {\rm T}^{{\rm b},m}_{k k_1 k_2 k_3} \big\|_{k\to k_1 k_2k_3 }\big) \\
    & \hphantom{XXX} \times \big\| {h_{k_3 k'_3}^{N_3, L_3}}  \big\|_{k_3' \to k_3}  \|  (w_{N_1})_{k_1} 
 \|_{k_1} \|  (w_{N_2})_{k_2} \|_{k_2}.
    \end{split}
\end{align*}

\noi 
Then by using Lemma \ref{LEM:tensor2} and \eqref{Eqn:eee1}-\eqref{Eqn:eee2} to get
\begin{align*} 
\|\mathscr{X}_k\|_k \les  N_3^{-\frac{\al}2} (N_1N_2)^{-\frac12 - \g} \big( (N_{\min} N_{\max})^{1 -\frac\al2} + (N_1 \wedge N_2)^{1-\frac\al 2} (\log N_3)^{\frac12} + N_3^{\frac12}\big) ,
\end{align*}

\noi 
which is again sufficient for our purpose since { $N_{\med} \ges N_{\max}^{1-\dl}$}.

The proof for Case (f): (C, D, D) is similar to that of Case (e): (D, D, C); we omit the proof.

\subsubsection{Case (g) : (D, C, D)}

In this case, we have
\begin{equation*}
  \mathscr{X}_k = \sum_{k_1, k_2, k_3} \mathrm{T}^{\text{b},
  m}_{k k_1 k_2 k_3} \cdot (w_{N_1})_{k_1} \sum_{k_2'} \overline{h_{k_2 k'_2}^{N_2, L_2}}
  \frac{ \overline{g_{k'_2}}}{
  \jbb{k'_2}^{\frac{\alpha}{2}}} (w_{N_3})_{k_3},
\end{equation*}
where $h^{N_2, L_2}_{k_2 k_2'} = h^{N_2, L_2}_{k_2 k_2'} (\omega)$ satisfies
\eqref{Eqn:eee1}, $(w_{N_1})_{k_1}$ and $\left( {w_{N_3}}
\right)_{k_3}$ satisfy \eqref{Eqn:eee2}, and the base tensor
${\rm T}^{{\rm b},m}_{k k_1 k_2 k_3}$ is defined in \eqref{baseT}.
From Proposition \ref{Prop:te} and Proposition \ref{PROP:contr}, we have
\begin{align} 
\label{DCD2}
    \begin{split}
    \|\mathscr{X}_k\|_k   
  & \les N_2^{-\frac{\al}2} \big( \big\| {\rm T}^{{\rm b},m}_{k k_1 k_3 k_2} \big\|_{kk_1k_2 \to  k_3} + \big\| {\rm T}^{{\rm b},m}_{k k_1 k_3 k_2} \big\|_{kk_1 \to k_2 k_3} \big) \\ 
  & \hphantom{XXX} \times \big\| h_{k_2 k'_2}^{N_2, L_2} \big\|_{k_2' \to k_2}  \|  (w_{N_1})_{k_1} 
 \|_{k_1} \|  (w_{N_3})_{k_3} \|_{k_3}.
    \end{split}
\end{align}

\noi 
Then by using Lemma \ref{LEM:tensor2}, Lemma \ref{LEM:tensor3}, \eqref{Eqn:eee1}, and \eqref{Eqn:eee2} to get
\begin{align*} 
\|\mathscr{X}_k\|_k  \les  N_2^{-\frac{\al}2} (N_1N_3)^{-\frac12 - \g} \big(  N_1^{1-\frac\al 2} (\log N_2)^{\frac12} + N_2^{\frac12} + (N_{\min} N_{\max})^{1 -\frac\al2} \big) ,
\end{align*}

\noi 
which is again sufficient for our purpose since { $N_{\med} \ges N_{\max}^{1-\dl}$}.

\subsubsection{Case (h) : (D, D, D)}

In this case, we have
\begin{equation*} 
  \mathscr{X}_k = \sum_{k_1, k_2, k_3} \mathrm{T}^{\text{b},
  m}_{k k_1 k_2 k_3} \cdot (w_{N_1})_{k_1} \overline{(w_{N_2}
  )_{k_2}} (w_{N_3})_{k_3},
\end{equation*}
where $(w_{N_j})_{k_j}$ for $j \in \{ 1, 2, 3 \}$ satisfy \eqref{Eqn:eee2},
and the base tensor ${\rm T}^{{\rm b},m}_{k k_1 k_2 k_3}$ is defined in \eqref{baseT}. 
By applying Proposition \ref{PROP:contr}, we obtain
\[ 
  \begin{split}
    & \| \mathscr{X}_k \|_k \lesssim 
    \min \big( \| {\rm T}^{{\rm b},m}_{k k_1 k_2 k_3} \|_{kk_3 
    \to  k_1k_2}, \| {\rm T}^{{\rm b},m}_{k k_1 k_2 k_3} \|_{k k_1
    \to k_2 k_3}\big) \prod_{j=1}^3 \|  (w_{N_j})_{k_j} \|_{k_j}.
  \end{split} 
\]

\noi 
By using Lemma \ref{LEM:tensor2} and \eqref{Eqn:eee2}, we get
\[
\| \mathscr{X}_k \|_k \lesssim (N_1 N_2 N_3)^{- \frac{1}{2} - \gamma} (N_{\min} N_{\med})^{1-\frac\al2},
\]

\noi 
which is sufficient for \eqref{Eqn:eee2}.

\subsection{random averaging operator}\label{Sub:rao}

In this section, we estimate terms (II) and (IV), which correspond to
$\mathcal{P}^+$ and $\mathcal{P}^-$ of \eqref{Eqn:5}, respectively. We only consider the term (II) since the argument for the term  (IV) is similar. 
To show \eqref{Eqn:Awww1} for  the term (II), it
suffices to show
\begin{equation}
  \label{Eqn:ppn} 
  \| \mathcal{P}^{+} [z_M] \|_{X^b} \lesssim M^{-
  \frac{1}{2} - \gamma} .
\end{equation}
We see from the definition of terms (II) that $N_{\med} = \max (N_1, N_2)$ since both $N_1$ and $N_2$ are smaller than $N_3$ and $N_3$ is approximately equal to $M$. The case where $N_{\med} \gtrsim M^{1 - \delta}$ corresponds to the high-high interaction, and we have already established \eqref{Eqn:ppn} for this case in the previous subsection. Therefore, we will only concentrate on the case where $N_{\med} \ll M^{1 - \delta}$ in the following discussion. Applying \eqref{r_OP} and \eqref{Eqn:eee2}, we obtain
\[
\| \mathcal{P}^{\pm} [z_M] \|_{X^b} \lesssim  \| \mathcal{P}^{\pm}   \|_{Y^{b,b}} \| z_M\|_{X^b} \lesssim  M^{-\frac12 - \g}.
\]

\noi 
This completes the proof of \eqref{Eqn:Awww1} for this case.

\subsection{The Term (V)}

This subsection shows that the term (V) satisfies the bound \eqref{Eqn:Awww}. It suffices to show \eqref{Eqn:ex}. We
may assume that $N_1, N_3 \ll N_2^{1 - \delta}$; otherwise, it has been dealt with in Subsection \ref{Sub:highhigh}. 
As $w_{N_2}$ is of type (C), we only need to consider the cases where the types of $(w_{N_1}, w_{N_2}, w_{N_3})$ are
(a) (C, C, C), (b) (C, C, D), (d) (D, C, C), (g) (D, C, D). We set $N_2 = M$
in this subsection.

Before proceeding, let us recall that
\begin{align}
    \label{termV}
   \mathscr{X}_k = \sum_{k_1, k_2, k_3} \mathrm{T}_{k k_1 k_2
  k_3}^{\text{b}, m} \cdot (w_{N_1})_{k_1} \cdot \sum_{ k_2' } \cj{h_{k_2
 k_2'}^{N_2, L_2}} \frac{\cj{g_{k_2'} (\omega)}}{\jbb{k_2'}^{\alpha / 2}} \cdot (w_{N_3})_{k_3}    
\end{align}

\noi 
where we may take that $N_1, N_3 \ll N_2^{1 -\delta}$ with $h_{k_2 k_2'}^{N_2, L_2} (\omega)$ supported in the set $ \{ (k_2,k_2');
\frac{N_2}{2} < \jb{k_2'} \le  N_2, \, | k_2 - k_2' | \lesssim N_2^{\eps} L_2 \}$,
$\mathcal{B}_{\le  L_2}$-measurable for some $L_2 \le  N_2^{1 -
\delta}$, and satisfying the bounds \eqref{Eqn:eee1}.
We note that $| k_2 - k_2' | \lesssim N_2^{\eps} L_2 \les N_2^{\delta / 2} N^{1 - \delta}_2$, which together with $\jb{k_2'} \ge N_2/2$ implies that  $ | k_2 | \ge {N_2}/{4}$, provided $N_2$ is sufficiently large. Furthermore, from $N_1, N_3 \ll N_2^{1 -\delta}$, we have\begin{equation}
  \label{Eqn:vc} 
  |k_1 + k_3| \ll  |k_2| .
\end{equation}
From \eqref{Eqn:lowb+} and \eqref{Eqn:vc}, we have that
\[ 
|2k - (k_1 + k_3)| = |2k_2 - (k_1 + k_3)| \ges |k_1+ k_3|, 
\]
which, together with \eqref{Eqn:bad1}, implies that the summand of \eqref{termV} is non-zero only for those $(k,k_1,k_2,k_3)$ such that 
\begin{equation}
  \label{Eqn:importantfact} 
   (k,k_2) \in S_{k_1 k_3}^{\rm good} ,
\end{equation}

\noi  
where $S_{k_1 k_3}^{\rm good}$ is given in \eqref{Eqn:bad1}.
The above observation will be crucial in our later analysis. In particular, instead of \eqref{termV}, we will consider the following 
\begin{align}
    \label{termV1}
    \mathscr{X}_k = \sum_{\substack{k_1, k_2, k_3\\ |k_1 + k_3| \ll  |k_2|}} \mathrm{T}_{k k_1 k_2
  k_3}^{\text{b}, m} \cdot (w_{N_1})_{k_1} \cdot \sum_{ k_2' } \cj{h_{k_2
 k_2'}^{N_2, L_2}} \frac{\cj{g_{k_2'} (\omega)}}{\jbb{k_2'}^{\alpha / 2}} \cdot (w_{N_3})_{k_3} .
\end{align} 

Now, let us consider \eqref{termV1} with the restriction \eqref{Eqn:vc} (or \eqref{Eqn:importantfact}) case-by-case.

\subsubsection{Case (a) : (C, C, C)}

There is no pairing since $N_1, N_3 \ll N_2 = M$. So we have
\begin{equation*} 
  \mathscr{X}_k = \sum_{\substack{k_1, k_2, k_3\\ |k_1 + k_3| \ll  |k_2|}} \mathrm{T}^{\text{b},
  m}_{k k_1 k_2 k_3} \cdot \sum_{\substack{
    k_1', k'_2, k_3' 
  }} h_{k_1 k'_1}^{N_1, L_1} \overline{h_{k_2 k'_2}^{N_2, L_2}}
  h_{k_3 k'_3}^{N_3, L_3} \frac{g_{k'_1} \overline{g_{k'_2}}g_{k'_3}}{\jbb{k'_1}^{\frac{\alpha}{2}} \jbb{k'_2}^{\frac{\alpha}{2}}
  \jbb{k'_3}^{\frac{\alpha}{2}}}.
\end{equation*}

\noi 
Same argument as in Subsection \ref{SUB:HHCCC}, followed by Proposition \ref{PROP:contr} and Lemma \ref{LEM:tensor4}, yields
\begin{align*}    
    \begin{split}
    \| \mathscr{X}_k \|_k  & \les  (N_1 N_2 N_3)^{- \frac\alpha2}  \bigg\| \sum_{\substack{k_1, k_2, k_3\\ |k_1 + k_3| \ll  |k_2|}} 
    \mathrm{T}^{\text{b},
  m}_{k k_1 k_2 k_3} \cdot h_{k_1 k'_1}^{N_1, L_1} \overline{h_{k_2 k'_2}^{N_2, L_2}}
  h_{k_3 k'_3}^{N_3, L_3} 
 \bigg\|_{kk_1'k_2'k_3'}   \\ 
 & \les  (N_1 N_2 N_3)^{- \frac\alpha2} \big\| \ind_{|k_1 + k_3| \ll  |k_2|}  \mathrm{T}^{\text{b},
  m}_{k k_1 k_2 k_3} 
 \big\|_{kk_1k_2 k_3} \prod_{j=1}^3 \big\| 
  h_{k_j k'_j}^{N_j, L_j} \big\|_{k_j \to k_j'} \\
  & \les (N_1 N_2 N_3)^{- \frac\alpha2}  (N_1N_3)^{\frac12}  \les N_{2}^{-\frac\al2},
\end{split}
\end{align*}

\noi 
which is sufficient for our purpose. 

\subsubsection{Case (b) : (C, C, D)}

Similar to the above, there is no pairing since $N_1 \ll N_2 = M$. From \eqref{termV1}, we have
\[ 
  \mathscr{X}_k = \sum_{\substack{k_1, k_2, k_3\\ |k_1 + k_3| \ll  |k_2|}} {\rm T}^{{\rm b},m}_{k k_1 k_2
  k_3} \cdot \sum_{\substack{
    k_1', k_2'
  }} h_{k_1 k'_1}^{N_1, L_1} \overline{h_{k_2 k'_2}^{N_2, L_2}}
  \frac{g_{k'_1} \overline{g_{k'_2}}}{\jbb{k'_1}^{\frac{\alpha}{2}}
  \jbb{k'_2}^{\frac{\alpha}{2}}} (w_{N_3})_{k_3}, 
\]
where $h^{N_j, L_j}_{k_j k_j'}$ for $j = 1,2$ satisfy
\eqref{Eqn:eee1}, $(w_{N_3})_{k_3}$ satisfies \eqref{Eqn:eee2}, and
${\rm T}^{{\rm b},m}_{k k_1 k_2 k_3}$ is defined in \eqref{baseT}. 
By a similar argument as in \eqref{CCD1} together with \eqref{CSinq}, 
we have
\[ 
  \begin{split}
    \| \mathscr{X}_k \|_k & \les  \bigg\|\sum_{\substack{k_1, k_2\\ |k_1 + k_3| \ll  |k_2|}} {\rm T}^{{\rm b},m}_{k k_1 k_2
  k_3} \cdot \sum_{\substack{
    k_1', k_2'
  }} h_{k_1 k'_1}^{N_1, L_1} \overline{h_{k_2 k'_2}^{N_2, L_2}}
  \frac{g_{k'_1} \overline{g_{k'_2}}}{\jbb{k'_1}^{\frac{\alpha}{2}}
  \jbb{k'_2}^{\frac{\alpha}{2}}} 
 \bigg\|_{k\to k_3 } \| (w_{N_3})_{k_3} \|_{k_3} \\
 & \les  (N_1 N_2)^{- \frac{\alpha}{2}}  \big \|\ind_{|k_1 + k_3| < |k_2|}
    {\rm T}^{{\rm b},m}_{k k_1 k_2 k_3} \big \|_{k k_1 k_2 k_3} \prod_{j=1}^2 \big \| h_{k_j k'_j}^{N_j, L_j} \big \|_{k_j \to  k_j'}  \| (w_{N_3})_{k_3} \|_{k_3} .
  \end{split} 
\]

\noi 
Then we apply Lemma \ref{LEM:tensor4}, together with \eqref{Eqn:eee1} and \eqref{Eqn:eee2}, to get
\[ 
  \begin{split}
    \| \mathscr{X}_k \|_k 
    &  \lesssim (N_1 N_2)^{- \frac{\alpha}{2} } N_3^{- \frac{1}{2} -
    \gamma} (N_1N_3)^{\frac{1}{2}} \les N_2^{-\frac\al2},
  \end{split} 
\]

\noi 
which is enough for our purpose.

The proof for Case (d): (D, C, C) is similar to that of Case (b): (C, C, D); we omit the proof.

\subsubsection{Case (g) : (D, C, D)}

In this case, we consider
\[ 
  \mathscr{X}_k = \sum_{\substack{k_1, k_2, k_3\\ |k_1 + k_3| \ll  |k_2|}} {\rm T}^{{\rm b},m}_{k k_1 k_2
  k_3} \cdot \sum_{k_2'
  } \overline{h_{k_2 k'_2}^{N_2, L_2}}
  \frac{\overline{g_{k'_2}}}{\jbb{k'_2}^{\frac{\alpha}{2}}} (w_{N_1})_{k_1}
  (w_{N_3})_{k_3} . 
\]

\noi 
By a similar argument as in \eqref{DCD2}, we have
\begin{align*}
    \begin{split}
        \|\mathscr{X}_k\|_k & \les N_2^{-\frac{\al}2} \big\| \ind_{|k_1 + k_3| < |k_2|} {\rm T}^{{\rm b},m}_{k k_1 k_3 k_2} \big\|_{kk_1k_2 k_3} \big\| h_{k_2 k'_2}^{N_2, L_2} \big\|_{k_2' \to k_2}  \|  (w_{N_1})_{k_1} 
 \|_{k_1} \|  (w_{N_3})_{k_3} \|_{k_3}.
    \end{split}
\end{align*}

\noi 
Then from Lemma \ref{LEM:tensor4}, \eqref{Eqn:eee1}, and \eqref{Eqn:eee2}, we have
\[ 
  \begin{split}
   \| \mathscr{X}_k \|_k  & \lesssim N_2^{- \frac{\alpha}{2}} N^{- \frac{1}{2}
    - \gamma}_1 N_3^{- \frac{1}{2} - \gamma}\big\| \ind_{|k_1 + k_3| < |k_2|} {\rm T}^{{\rm b},m}_{k k_1 k_3 k_2} \big\|_{kk_1 k_2  k_3}  \\
    & \lesssim N^{- \frac{1}{2} - \gamma}_1 N_2^{- \frac{\alpha}{2}} N_3^{-
    \frac{1}{2} - \gamma} N_1^{\frac{1}{2}} N_3^{\frac{1}{2}}\lesssim N_2^{-
    \frac{\alpha}{2}},
  \end{split} 
\]
which is again sufficient for \eqref{Eqn:ex}.

\subsection{The \texorpdfstring{$\Gamma$}{Lg}-condition case}

In this subsection, we consider \eqref{Eqn:Bwww1}, which shows that the term (VI), defined at the beginning of Section \ref{Sec:rems}, satisfies the bound
\begin{equation}
  \label{Eqn:termB} 
  \| \mathscr{B} (w_{N_1}, w_{N_2}, w_{N_3}) \|_{X^b} \lesssim
  M^{- \frac{1}{2} - \gamma} ,
\end{equation}

\noi 
where $\mathscr B$ is given in \eqref{termB}.
Same argument as in Subsection \ref{Sub:pre} reduces \eqref{Eqn:termB} to the
the following estimate
\begin{equation}
  \label{Eqn:gag} 
  \| \mathscr{X}_k \|_k \lesssim M^{- \frac{1}{2} - 3\gamma},
\end{equation}
where $\mathscr{X}_k$ is given in \eqref{Eqn:Xk}.

To prove \eqref{Eqn:gag}, we make several assumptions. To make the term (VI) non-trivial, we should have $N_{\max} \gtrsim M$. If $N_2 \gtrsim M$, then
term (VI) can be handled in the same way as the terms (IV) and (V). Therefore, we only need to consider the cases that $N_1$ or  $N_3 \ges M$. 
Due to the symmetry between $w_{N_1}$ and $w_{N_3}$, we may assume { {$N_3
\gtrsim M$}} in what follows. 
To prove \eqref{Eqn:gag}, it only suffices to show
\begin{equation}
  \label{Eqn:gag1} 
  \| \mathscr{X}_k \|_k \lesssim N_3^{- \frac{1}{2} - 3\gamma },
\end{equation}

\noi 
where $\mathscr X_k $ is given in \eqref{Eqn:Xk}.
We may further assume that ${ {N_1,
N_2 \ll N_3^{1 - \delta}}}$ as otherwise it has been dealt with in Subsection
\ref{Sub:hh}. Furthermore, we may assume that ${ {w_{N_3} =
\psi_{N_3, L_{N_3}}}}$; otherwise, it can be handled by using the random averaging operator as in Subsection \ref{Sub:rao}. 

Due to the projections $\Delta_M$
($\frac{M}{2} < \jb{k} \le  M$), the requirement $N_{\max} \le 
\frac{M}{2}$ in the definition of the term (VI), we observe that
\[ \begin{split} 
  \jb{k} > \frac{M}{2} \ge  \jb{k_3} & \Longrightarrow | k | \ge   
  \Big( \Big( \frac{M}{2} \Big)^{2} - 1 \Big)^{1 /2}
  \ge  | k_3 | = | k - k_1 + k_2 | \ge  | k |
  - 2 N_{\med} \\
& \Longrightarrow | k_3 | + 2
  N_{\med} \ge  | k | \ge  \Gamma
  \ge  | k_3 | ,
\end{split}
\] 
where $\Gamma := ( ( {M}/{2} )^{2} - 1 )^{1 /2}$. 
Therefore, to make $\mathscr X_k$ is non-trivial, we need  $(k,k_1,k_2, k_3) \in B_{\Gamma}$, where $B_\Gamma$ is given in \eqref{gammaB}. 

With the above argument, we focus on \eqref{Eqn:gag1} with 
\begin{align}
\label{Eqn:gag2}
    \mathscr{X}_k = \sum_{ {k_1, k_2, k_3 }}  \ind_{B_{\Gamma}} (k,k_1,k_2, k_3)  \mathrm{T}_{k k_1 k_2
  k_3}^{\text{b}, m} \cdot (w_{N_1})_{k_1} \cdot \overline{(w_{N_2}
  )_{k_2}} \cdot \sum_{\substack{ k_3' }}  h_{k_3 k'_3}^{N_3, L_3} \frac{ g_{k'_3}}{ \jbb{k'_3}^{\frac{\alpha}{2}}} ,
\end{align} 

\noi 
where $N_1, N_2 \ll N_3$.
In the following, we distinguish several cases for the possible types of $( w_{N_1}, w_{N_2})$: Case (a) (C, C), Case (c)
(C, D), Case (d) (D, C), Case (e) (D, D).

\subsubsection{Case (a) : (C, C, C) }
We start with the non-pairing cases.
Same argument as in Subsection \ref{SUB:HHCCC}, followed by Corollary \ref{COR:gammaT}, yields
\[ 
  \begin{split}
    \| \mathscr{X}_k \|_k   \lesssim (N_1 N_2 N_3)^{- \frac{\alpha}{2}} \| \ind_{B_\Gamma}
    \mathrm{T}_{k k_1 k_2 k_3}^{\text{b}, m} \|_{k k_1 k_2 k_3} \lesssim (N_1 N_2 M)^{- \frac{\alpha}{2}} N^{\frac{1}{2}}_{\min}
    N^{\frac{1}{2}}_{\med} \les  N_3^{-
    \frac{\alpha}{2}},
  \end{split} 
\]

\noi 
which is sufficient for \eqref{Eqn:gag1}.

We turn to the pairing cases. 
Similar as in Subsection \ref{SUB:HHCCC}, we can write \eqref{Eqn:gag2} (note that $w_{N_1}$ and $w_{N_2}$ are both of type (C)) as
\begin{align*} 
    \begin{split}
    \mathscr{X}_k & = \sum_{k_1, k_2, k_3} \ind_{B_\Gamma} \mathrm{T}^{\text{b},
  m}_{k k_1 k_2 k_3} \cdot \sum_{\substack{ 
    k_1' \neq k_3'
  }} h_{k_1 k'_1}^{N_1, L_1} \overline{h_{k_2 k'_1}^{N_2, L_2}}
  h_{k_3 k'_3}^{N_3, L_3} \frac{|g_{k'_1} |^2 g_{k'_3}}{\jbb{k'_1}^{{\alpha}}  
  \jbb{k'_3}^{\frac{\alpha}{2}}} \\
  & = \sum_{k_1, k_2, k_3} \ind_{B_\Gamma} \mathrm{T}^{\text{b},
  m}_{k k_1 k_2 k_3} \cdot \sum_{\substack{ 
    k_1' \neq k_3'
  }} h_{k_1 k'_1}^{N_1, L_1} \overline{h_{k_2 k'_1}^{N_2, L_2}}
  h_{k_3 k'_3}^{N_3, L_3} \frac{(|g_{k'_1} |^2 -1)g_{k'_3}}{\jbb{k'_1}^{{\alpha}}  
  \jbb{k'_3}^{\frac{\alpha}{2}}} \\
  & \hspace{2cm} + \sum_{k_1, k_2, k_3} \ind_{B_\Gamma} \mathrm{T}^{\text{b},
  m}_{k k_1 k_2 k_3} \cdot \sum_{\substack{ 
    k_1' \neq k_3'
  }} h_{k_1 k'_1}^{N_1, L_1} \overline{h_{k_2 k'_1}^{N_2, L_2}}
  h_{k_3 k'_3}^{N_3, L_3} \frac{ g_{k'_3}}{\jbb{k'_1}^{{\alpha}}  
  \jbb{k'_3}^{\frac{\alpha}{2}}} \\
  & = \mathscr{X}_k^{(1)} + \mathscr{X}_k^{(2)}.
    \end{split}
\end{align*}

\noi 
For the first term $\mathscr{X}_k^{(1)}$, we follow the computation in \eqref{CCCp1} and use Corollary \ref{COR:gammaT} to get 
\begin{align*}
    \begin{split}
   \| \mathscr{X}_k^{(1)}  \|_{k}  
 & \les  N_1^{-\al} N_3^{-\frac\al2} \big\| \ind_{B_\Gamma}\mathrm{T}^{\text{b},
  m}_{k k_1 k_2 k_3} 
 \big\|_{kk_1k_2 k_3}  \prod_{j=1}^3 \big\| 
  h_{k_j k'_j}^{N_j, L_j} \big\|_{k_j \to k_j'} \les N_1^{-\al + 1} N_3^{-\frac\al2} , 
  \end{split}
\end{align*}

\noi 
which is enough for our purpose. As to the second term $\mathscr{X}_k^{(2)}$, we follow the computation in \eqref{Eqn:CCCp2}, \eqref{Eqn:pa1}, and \eqref{PA2} to get 
\begin{align*}
    \begin{split}
   \| \mathscr{X}_k^{(2)}  \|_{k}  
 & \les  N^{- \alpha - 1 + \eps}_1 L_1
    N_3^{- \frac{\alpha}{2}} \big\|\ind_{B_\Gamma} {\rm T}^{{\rm b},m}_{k k_1 k_2 k_3}  \big\|_{kk_1k_2k_3}  \big\| h_{k_1 k'_1}^{N_1, L_1}  
 \big\|_{k_1  k_1'} \prod_{j=2}^3 \big\| h_{k_j k'_j}^{N_j, L_j}  
 \big\|_{k_j \to k_j'} \\ 
 & \les  N_1^{-\al + \eps + \g_0} N_3^{-\frac\al2} N_{\med}^{\frac12} N_{\min}^{\frac12}, 
  \end{split}
\end{align*}

\noi 
which is again sufficient since $N_{\min} = N_{\med} = N_1$.

\subsubsection{Case (b) : (C, D, C) and Case (c) : (D, C, C) }

We consider Case (b) only as the argument for Case (c) is
similar. In this case, we can write \eqref{Eqn:gag2} as
\[
  \mathscr{X}_k = \sum_{k_1, k_2, k_3} \ind_{B_\Gamma} {\rm T}^{{\rm b},m}_{k k_1 k_2
  k_3} \cdot \overline{(w_{N_2}
  )_{k_2}} \sum_{\substack{
    k_1', k_3' }} h_{k_1 k'_1}^{N_1, L_1} h_{k_3 k'_3}^{N_3, L_3} \frac{g_{k'_1}g_{k'_3}}{\jbb{k'_1}^{\frac{\alpha}{2}} \jbb{k'_3}^{\frac{\alpha}{2}}}.
\]

\noi 
By a similar argument as in Subsection \ref{CCD},
we obtain
\begin{align*} 
    \begin{split}
    \|\mathscr{X}_k\|_k  
  & \les (N_1N_3)^{-\frac{\al}2} \big\| \ind_{B_\Gamma}  {\rm T}^{{\rm b},m}_{k k_1 k_3 k_2} \big\|_{kk_1k_3 k_2} \big\|h_{k_1 k'_1}^{N_1, L_1} \big\|_{k_1' \to k_1} \big\|{h_{k_3 k'_3}^{N_3, L_3}} \big\|_{k_3' \to k_3} \|  (w_{N_2})_{k_2} 
 \|_{k_2}.
    \end{split}
\end{align*}

\noi 
Then by using Corollary \ref{COR:gammaT}, \eqref{Eqn:eee1}, and \eqref{Eqn:eee2}, we get
\begin{align*} 
    \|\mathscr{X}_k\|_k \les  (N_1N_3)^{-\frac{\al}2} N_2^{-\frac12 - \g} (N_{\min} N_{\med})^{\frac12 } \les N_3^{-\frac\al2},
\end{align*}

\noi 
which is sufficient for our purpose.

\subsubsection{Case (d) : (D, D, C)}

Similar argument as in Subsection \ref{DDC}, from \eqref{Eqn:gag2} we have
\[ 
  \begin{split}
  \| \mathscr{X}_k \|_k 
  &  \les  N_3^{-\frac{\al}2} \big\| \ind_{B_\Gamma} {\rm T}^{{\rm b},m}_{k k_1 k_3 k_2} \big\|_{k k_1 k_2k_3 } \big\| {h_{k_3 k'_3}^{N_3, L_3}}  \big\|_{k_3' \to k_3}  \|  (w_{N_1})_{k_1} 
 \|_{k_1} \|  (w_{N_2})_{k_2} \|_{k_2} \\  
 & \lesssim (N_1 N_2)^{- \frac{1}{2} - \gamma} N_3^{- \frac{\alpha}{2}} \big\| \ind_{B_\Gamma}  {\rm T}^{{\rm b},m}_{k k_1 k_3 k_2} \big\|_{k k_1 k_2k_3}  \\
 & \lesssim (N_1 N_2)^{- \frac{1}{2} - \gamma} N_3^{- \frac{\alpha}{2}}
   (N_{\min} N_{\med})^{ \frac{1}{2}} \le  N_3^{-
    \frac{\alpha}{2}},
  \end{split} 
\]

\noi 
where we applied Proposition \ref{PROP:contr} in the second step,
which is again sufficient for \eqref{Eqn:gag}.

\subsection{Resonant case}

Finally, we estimate term (VII), the resonant case. 
In this case, we may assume { {$M = N_1 = N_2 = N_3$}}. If $(w_{N_1}, w_{N_2}, w_{N_3})$ is of type (D, D, D),
then we have
\[ 
  \begin{split}
    \| \chi (t) (\ensuremath{\operatorname{VII}}) \|_{X^b} & \lesssim \|
    \mathcal{R} (w_{N_1}, w_{N_2}, w_{N_3}) (t) \|_{L^2_{t, x}} \\
    & = \bigg( \int_{\mathbb{R}} \sum_k | (w_{N_1})_k \cj{(w_{N_2})_k} (w_{N_3})_k
    |^2 d t \bigg)^{\frac{1}{2}} \\
    &  \lesssim \prod_{i = 1}^3 \| w_{N_i} \|_{X^{\frac{1}{3}}} \lesssim \prod_{i = 1}^3 \| w_{N_i} \|_{X^{b}} \les   M^{- \frac{3}{2} - 3 \gamma},
  \end{split} 
\]
where we used \eqref{Eqn:e1}, 
which is sufficient for \eqref{Eqn:3}.

If $(w_{N_1}, w_{N_2}, w_{N_3})$ is of type (C, C, C), i.e. Case (a), we have
\[ 
\| \chi (t)(\ensuremath{\operatorname{VII}}) \|_{X^b} \lesssim \|
\mathcal{R} (w_{N_1}, w_{N_2}, w_{N_3}) (t) \|_{L^2_{t, x}},
\] 

\noi 
where $w_N$ is given by \eqref{Eqn:C1}. 
We only consider the non-pairing cases, i.e.
\[
\begin{split}
\big( & \mathcal{R} (w_{N_1}, w_{N_2}, w_{N_3}) (t) \big )_k \\
& =  \chi(t) \sum_{k_1,k_2,k_3} {\rm T}_{kk_1k_2k_3}^{{\rm b},m} 
\sum_{\substack{ k_1', k_2', k_3'\\  k_2' \notin \{k_1', k_3'\} }} 
{h^{N_1, L_1}_{k k_1'}} (t)  
\cj{{h^{N_2, L_2}_{k k_2'}}} (t) 
{h^{N_3, L_3}_{k k_3'}} (t)  
\frac{g_{k_1'} \cj{g_{k_2'}} g_{k_3'}}{\jbb{k_1'}^{\frac{\alpha}{2}}
\jbb{k_2'}^{\frac{\alpha}{2}} \jbb{ k_3'}^{\frac{\alpha}{2}}}  .
\end{split}
\] 

\noi 
Then, from Proposition \ref{Prop:te} and then Proposition \ref{PROP:contr}, we have
\[
\begin{split}
 \| & \mathcal{R} (w_{N_1}, w_{N_2}, w_{N_3}) (t) \|_{L^2_{t, x}} \\
& \lesssim  M^{-\frac{3\al}2}  \big \| \chi(t) {h^{N_1, L_1}_{k k_1'}} \cj{{h^{N_2, L_2}_{k k_2'}}} {h^{N_3,
    L_3}_{k k_3'}} \big \|_{L^2_T( kk_1'k_2'k_3')} \\
& \lesssim T^{\frac12}  M^{-\frac{3\al}2} \|
     {h^{N_1, L_1}_{k k_1'}} \|_{L^{\infty}_T (k_1' \to  k)} \|
    {h^{N_2, L_2}_{k k_2'}} \|_{L^{\infty}_T (k_2' \to  k)} \|
    {h^{N_3, L_3}_{k k_3'}} \|_{L^{\infty}_T (kk_3')}\\
    & \les T^{\frac12} M^{-\frac{3\al}2}  \prod_{j=1}^2 \big\| \jb{\tau}^b \ft{h^{N_j, L_j}_{k_j k_j'}} (\tau) \big\|_{L_{\tau}^2 (\l^2_{k_j} \to \l^2_{k_j'})}  
    \big\| \jb{\tau}^b \ft{h^{N_3, L_3}_{k_3 k_3'}} (\tau) \big\|_{L_{\tau}^2
      (\l^2_{k_3}  \l^2_{k_3'})}  \\ 
    & \lesssim T^{\frac12} M^{-\frac{3\al}2}
    N_3^{\frac{1}{2} + \gamma_0} \lesssim T^{\frac12} M^{- \alpha},
  \end{split} 
\]
where we used the fact $b > \frac12$ and \eqref{Eqn:e1}, which is sufficient for \eqref{Eqn:3}.

All the intermediate cases, from Case (b) to Case (g), can be controlled
similarly.

\section{Proof of Theorem \ref{THM:div}}
\label{Sec:alphagtr1}

This section is devoted to the proof of Theorem \ref{THM:div}, which roughly means that the second Picard
iterate given in \eqref{Picard} converges in $H^{\frac{1}{2} +}$
when $\alpha > 1$, while fails to converge in $L^2$ when $\alpha = 1$.

We start with Theorem \ref{THM:div} (i), i.e. the case when $\alpha > 1$. 
The proof of this part actually follows from a similar argument as in Subsection \ref{SUB:HHCCC}.
We give a proof here for completeness. 
We rewrite
\[ 
(Z_N^{(2)} (t))_k = \sum_{\substack{
     k_1 - k_2 + k_3 = k\\
     k_2 \not\in \{k_1, k_3 \}
   }} {\rm T}^\dl_{kk_1k_2k_3} \frac{g_{k_1} \overline{g_{k_2}} g_{k_3}}{\jbb{k_1}^{\frac{\alpha}{2}} \jbb{k_2}^{\frac{\alpha}{2}} 
   \jbb{k_3}^{\frac{\alpha}{2}}} \cdot
   \Theta (t, \Phi), \]
where
\[ 
{\rm T}^\dl_{kk_1k_2k_3} = \mathbf{1}_{\langle k \rangle < N} \prod_{j = 1}^3 \mathbf{1}_{N^{1 - \delta} < k_j < N} , 
\]
and
\[
\Theta (t, \Phi) = \int^t_0 e^{it' \Phi (\bar{k})} dt'.
\]
Here $\Phi (\bar k) = |k_1 |^{\alpha} - |k_2 |^{\alpha} + |k_3 |^{\alpha} - |k|^{\alpha}$ is given in \eqref{Eqn:Phi}. 
We note that $\Theta (t, \Phi) $ is bounded and also $\Theta (t, \Phi) = \frac{e^{it \Phi} - 1}{i \Phi}$ when $\Phi \neq 0$.
Therefore,
\begin{equation}
  \label{Eqn:shaalpha}
  \begin{split}
    \mathbb{E} \big [\|Z_N^{(2)} (t)\|_{L^2 (\mathbb{T})}^2 \big] 
    & \quad = \sum_{k \in \mathbb{Z}} \sum_{\substack{
      k_1 - k_2 + k_3 = k\\
      k_2 \not\in \{k_1, k_3 \}
    }} {\rm T}^\dl_{kk_1k_2k_3}  \frac{| \Theta (t, \Phi) |^2}{\jbb{k_1}^{\alpha} \jbb{k_2}^{\alpha}
    \jbb{k_3}^{\alpha}} \\
    & \quad \les \sum_{k \in \mathbb{Z}}
    \sum_{\substack{
      k_1 - k_2 + k_3 = k\\
      k_2 \not\in \{k_1, k_3 \}
     }} {\rm T}^\dl_{kk_1k_2k_3}  \frac{\jb{ \Phi (\bar{k}) }^{- 2}}{\jbb{k_1}^{\alpha} \jbb{k_2}^{\alpha}
    \jbb{k_3}^{\alpha}},
    \end{split}
\end{equation}

\noi 
provided $|t| \sim 1$.
By a similar argument as in \eqref{Eqn:pkkp}, we obtain
\[
\begin{split}
 \mathbb{E} \big[\|Z_N^{(2)} (t)\|_{L^2 (\mathbb{T})}^2 \big]
 & \quad \les \sum_{m \in \mathbb{Z}} \frac{1}{\jb{m}^2} \sum_{k \in \mathbb{Z}}
    \sum_{\substack{
      k_1 - k_2 + k_3 = k\\
      k_2 \not\in \{k_1, k_3 \}
     }} \frac{ {\rm T}^\dl_{kk_1k_2k_3}  \mathbf{1}_{\Phi = m + O
    (1)}}{\jbb{k_1}^{\alpha} \jbb{k_2}^{\alpha} \jbb{k_3}^{\alpha}} \\
    & \quad \les \sup_{m \in \mathbb{Z}} \sum_{k \in \mathbb{Z}}
    \sum_{\substack{
      k_1 - k_2 + k_3 = k\\
      k_2 \not\in \{k_1, k_3 \}
     }} \frac{ {\rm T}^\dl_{kk_1k_2k_3}  \mathbf{1}_{\Phi = m + O
    (1)} }{\jbb{k_1}^{\alpha} \jbb{k_2}^{\alpha} \jbb{k_3}^{\alpha}} .
    \end{split}
\]

\noi 
Then we dyadically decompose $k_1$, $k_2$, and $k_3$ to get
\[ 
   \begin{split}
     \mathbb{E} \big[\|Z_N^{(2)} (t)\|^2_{L^2 (\mathbb{T})} \big] \lesssim \sup_{m \in \mathbb{Z}} \sum_{N_1,
     N_2, N_3 } \frac{1}{N_1^{\alpha} N_2^{\alpha} N_3^{\alpha}} \sum_{k \in \mathbb{Z}} \sum_{\substack{
       k_1 - k_2 + k_3 = k\\
       k_2 \not\in \{k_1, k_3 \}
      }} {\rm T}^{\dl,m}_{kk_1k_2k_3},
   \end{split} \]
where 
\[
{\rm T}^{\dl,m}_{kk_1k_2k_3} = {\rm T}^{\dl}_{kk_1k_2k_3} 
       \mathbf{1}_{\Phi = m + O
    (1)}  \prod_{j = 1}^3  \mathbf{1}_{ \frac{N_j}{2} < \langle k_j \rangle \leq N_j} .
\]

\noi 
From Lemma \ref{LEM:S}, we have 
\[
 \sum_{k \in \mathbb{Z}} \sum_{\substack{
       k_1 - k_2 + k_3 = k\\
       k_2 \not\in \{k_1, k_3 \}
      }} {\rm T}^{\dl,m}_{kk_1k_2k_3} \le |S| \les N_{\max} N_{\min}.
\]

\noi 
Recall that $N_1$, $N_2$, and $N_3$ are dyadic numbers bounded from below by $N^{1 - \delta}$.
Then, we may continue with
\[ \begin{split}
     \mathbb{E} \big[\|Z_N^{(2)} (t)\|_{L^2 (\mathbb{T})}^2 \big]  \lesssim \sum_{N_1, N_2, N_3} \frac{N_{\max} N_{\min}}{N_1^{\alpha}
     N_2^{\alpha} N_3^{\alpha}} = \sum_{N^{1 - \delta} <
     N_{\med}}
     N_{\med}^{- \alpha} \lesssim N^{- \alpha (1 -
     \delta)} \lesssim N^{- 1},
   \end{split} \]
provided $\al (1- \dl) > 1$,
which finishes the proof of Theorem \ref{THM:div} (i).

Now we turn to Theorem \ref{THM:div} (ii), where $\alpha = 1$. For this case,
we observe that if $k \ge 0$ and $k_i \ge 0$, then $\Phi (\bar{k}) = 0$ on
the hyperplane $k = k_1 - k_2 + k_3$. We also note that $| \Theta (t, 0) | =
|\int^t_0 dt'| = |t| \sim 1$. Then starting from the first line of
\eqref{Eqn:shaalpha}, the proof will be changed to the following
\begin{equation}
  \label{Eqn:sha}
  \begin{split}
    \mathbb{E} & \big[\|Z_N^{(2)} (t)\|^2_{L^2 (\mathbb{T})}\big] \\
&  = \sum_{k \in \mathbb{Z}}
    \sum_{\substack{
      k_1 - k_2 + k_3 = k\\
      k_2 \not\in \{k_1, k_3 \}
     }} {\rm T}^\dl_{kk_1k_2k_3} \frac{| \Theta (t, \Phi) |^2}{\jbb{k_1} \jbb{k_2} \jbb{k_3}}  \\
    & \ge \sum_k \sum_{\substack{
      k_1 - k_2 + k_3 = k\\
      k_2 \not\in \{k_1, k_3 \},
     }} {\rm T}^\dl_{kk_1k_2k_3}  \frac{| \Theta (t, 0) |^2}{\jbb{k_1}
     \jbb{k_2} \jbb{k_3}} \\
    & \gtrsim | \Theta (t, 0) |^2  \sum_{N^{1 - \delta} < N_1, N_2, N_3
    \le N / 4} (N_1 N_2 N_3)^{- 1}  \bigg( \sum_k
    \sum_{\substack{
      k_1 - k_2 + k_3 = k\\
      k_2 \not\in \{k_1, k_3 \},
     }} {\rm T}^\dl_{kk_1k_2k_3} \bigg),
  \end{split}
\end{equation}
where $N_1, N_2, N_3$ are dyadic numbers. Let $S_N$ be the set given by
\[ 
S_N = \{ (k, k_1, k_2, k_3) \in \mathbb{Z}^4 ; k_1 - k_2 + k_3 = k, k_2
\not\in \{k_1, k_3 \}, 0 \le k < N, k_j \in (N_j / 2, N_j) \},
\]
{\noindent} such that $N^{1 - \delta} < N_1, N_2, N_3 \le N / 4$. Then it is
easy to see that
\[ |S_N | = | \{ (k_1, k_2, k_3) \in \mathbb{Z}^3 ; k_2 \not\in \{k_1, k_3 \},
   k_j \in (N_j / 2, N_j) \} | \gtrsim N_1 N_2 N_3, \]
{\noindent}which together with \eqref{Eqn:sha} implies
\[ \mathbb{E} \big[\|Z_N^{(2)} (t)\|^2_{L^2 (\mathbb{T})} \big] \gtrsim \sum_{N^{1 -
   \delta} < N_1, N_2, N_3 \leq N / 4} 1 \gtrsim \delta^3 (\log N)^3 . \]
{\noindent}This finishes the proof of Theorem \ref{THM:div} (ii).

\ackno{\rm 
The authors are grateful to Chenmin Sun for many helpful discussions, which resulted in Remarks \ref{RMK:phaset} and \ref{RMK:nosmooth}, and also for pointing out an error in a previous version of this text.
We also would like to thank Nikolay Tzvetkov for the useful comments. Additionally, we thank the anonymous reviewers for their comments, which greatly improved the presentation of the paper. 
Y.W. has been supported by the EPSRC New Investigator Award (grant no. EP/V003178/1).} 

\medskip

\noi 
{\bf Declarations}

\smallskip

\noi 
{\bf Conflict of interest}. 
{\rm The content of this paper is original and has not been published or submitted for publication elsewhere. The authors also declare that we have no conflict of interest.}

\smallskip 

\noi 
{\bf Data Availability}.
{\rm Data sharing is not applicable to this article as no data sets were generated or analyzed.}


\begin{thebibliography}{10}
  \bibitem{B93a}
  J.~Bourgain, {\it Fourier transform
  restriction phenomena for certain lattice subsets and application to
  nonlinear evolution equations I. Schr{\"o}dinger equations},
   Geom. Funct. Anal. 3 (1993), no. 2, 107--156.
  
  \bibitem{BO94}
  J.~Bourgain, {\it Periodic nonlinear
  Schr{\"o}dinger equation and invariant measures},
  {{Comm. Math. Phys.}} 166 (1994), no. 1, 1--26.
  
  \bibitem{BO96}
  J.~Bourgain, {\it Invariant measures
  for the 2D-defocusing nonlinear Schr{\"o}dinger equation},
  {{Comm. Math. Phys.}} 176 (1996), no. 2, 421--445.
  
  \bibitem{B97}
  J.~Bourgain, {\it Invariant measures
  for the Gross-Piatevskii equation}, {{J. Math. Pures
  Appl.}} 76 (1997), no. 8, 649--702.
  
  \bibitem{BOB1}
  J.~Bourgain, A.~Bulut,
  {\it Almost sure global well posedness for the radial nonlinear Schr{\"o}dinger equation on the unit ball I: the 2D case},
  {{Ann. Inst. H. Poincar{\'e} Anal. Non
  Lin{\'e}aire}} 31 (2014), no. 6, 1267--1288.
  
  \bibitem{BOB2}J.~Bourgain, A.~Bulut,
  {\it Almost sure global well-posedness for the radial
  nonlinear Schr{\"o}dinger equation on the unit ball II: the 3d case},
  {J. Eur. Math. Soc. (JEMS)} 16 (2014), no. 6, 1289--1325.
  
  \bibitem{BOB3}J.~Bourgain, A.~Bulut,
  {\it Invariant Gibbs measure evolution for the radial
  nonlinear wave equation on the 3d ball}, {{J.
  Funct. Anal.}} 266 (2014), no. 4, 2319--2340.

  \bibitem{Br21}
  B.~Bringmann, 
  {\it Almost sure local well-posedness for a derivative nonlinear wave equation}, Int. Math. Res. Not., (2021), no. 11, 8657--8697.
  
  \bibitem{BDNY}
  B.~Bringmann, Y.~Deng, A.~R.~Nahmod, H.~Yue,
  {\it Invariant Gibbs measures for the three
  dimensional cubic nonlinear wave equation}, Invent. Math. 236 (2024), no. 3, 1133--1411.

\bibitem{BT2008a}
N.~Burq, N.~Tzvetkov. 
{\it Random data Cauchy theory for supercritical wave equations I: local
theory}, Invent. Math. 173 (2008), no. 3, 449--475.

\bibitem{BT2008b}
N.~Burq, N.~Tzvetkov. 
{\it Random data Cauchy theory for supercritical wave equations II: a
global existence result}, Invent. Math. 173 (2008), no. 3, 477--496.

  \bibitem{CGI22}
  N.~Camps, L.~Gassot, S.~Ibrahim,
  {\it Refined probabilistic local well-posedness for a cubic Schr\"odinger half-wave equation}, 
J. Differential Equations, 380 (2024), 443--490.
  
  \bibitem{CHKL15}
  Y.~Cho, G.~Hwant, S.~Kwon, and S.~Lee, 
  {\it Well-posedness and ill-posedness for the cubic fractional Schr\"odinger equations}, Discrete Contin. Dyn. Syst., 35 (2015), no. 7, 2863--2880.

  \bibitem{Co65}
  G.~G.~Comisar, 
  {\it Brownian-motion model of nonrelativistic quantum mechanics}, 
  Phys. Rev. (2) 138 (1965), no. 5B, B1332--B1337.

  \bibitem{CMMT01}
  D.~Cai, A.~Majda, D.~McLaughlin, E.~Tabak, {\it Dispersive wave turbulence in one dimension}, Phys. D 152/153 (2001), 551--572.
  
  \bibitem{De15}S.~Demirbas, {\it Almost sure
  global well-posedness for the fractional cubic Schr{\"o}dinger equation on
  the torus}, { {Canad. Math. Bull.}}
  58 (2015), no. 3, 471--485.

\bibitem{DNY1}
  Y.~Deng, A.~R.~Nahmod,  H.~Yue,
  {\it Optimal Local Well-Posedness for the Periodic Derivative Nonlinear Schrödinger Equation}, Comm. Math. Phys. 384 (2021), no. 2, 1061--1107.

  \bibitem{DNY2}
  Y.~Deng, A.~R.~Nahmod,  H.~Yue,
  {\it Invariant Gibbs measures and global strong
  solutions for nonlinear Schr{\"o}dinger equations in dimension two},
  to appear in Ann. of Math.
  
  \bibitem{DNY3}Y.~Deng, A.~R.~Nahmod,  H.~Yue,
  {\it Random tensors, propagation of randomness, and
  nonlinear dispersive equations}, { {Invent.
  Math.}} 228 (2022), no. 2, 539--686.
    
  \bibitem{DNY4}Y.~Deng, A.~R.~Nahmod, H.~Yue,
  {\it Invariant Gibbs measure and global strong
  solutions for the Hartree NLS equation in dimension three},
  { {J. Math. Phys.}} 62 (2021), no. 3, Paper No. 031514, 39 pp.

  \bibitem{ES07}
  A.~Elgart, B.~Schlein, 
  {\it Mean field dynamics of boson stars}, Comm. Pure Appl. Math. 60 (2007), no. 4, 500--545.

\bibitem{Friedlander85}
   L.~Friedlander, {\it An invariant measure for the equation $u_{tt}-u_{xx} + u^3 = 0$}, Comm. Math. Phys. 98 (1985), no. 1, 1--16.

\bibitem{GS98}
C.~Grosche, F.~Steiner, 
{\it Handbook of Feynman Path Integrals},
{Springer Tracts in Modern Physics},
{145},
{Springer-Verlag, Berlin},
{(1998)},
{x+449}.

\bibitem{GIP}
M.~Gubinelli, P.~Imkeller, N.~Perkowski, 
{\it Paracontrolled distributions and singular PDEs}, Forum Math. Pi 3 (2015),
e6, 75 pp.

\bibitem{IP14}A.~Ionescu,  F.~Pusateri,
  {\it Nonlinear fractional Schr{\"o}dinger equations in one dimension}, { {J. Func. Anal.}}
  266 (2014), no. 1, 139--176.
  
  \bibitem{KLS13}K.~Kirkpatrick, E.~Lenzmann, G.~Staffilani,
  {\it On the continuum limit for discrete NLS with
  long-range lattice interactions}, { {Comm. Math.
  Phys.}} 317 (2013), no. 3, 563--591.
  
  \bibitem{Laskin00}
  N.~Laskin, {\it Fractional
  quantum mechanics and L{\'e}vy path integrals},
  { {Phy. Lett. A}} 268 (2000), no. 4-6, 298--305.

\bibitem{LRS88}
J.~Lebowitz, H.~Rose, E.~Speer, 
{\it Statistical mechanics of the nonlinear Schr\"odinger equation}, J. Statist. Phys.
50 (1988), no. 3-4, 657--687.
  
  \bibitem{LW22}R.~Liang, Y.~Wang, {\it Gibbs
  measure for the focusing fractional NLS on the torus},
  { {SIAM J. Math. Anal.}} 54 (2022), no. 6, 6096--6118.

\bibitem{MMT1997}
A. J. Majda, D. W. McLaughlin, E. G. Tabak. {\it A one-dimensional model for dispersive wave turbulence}, J. Nonlinear Sci. 7 (1997), no. 1, 9--44.
  
  \bibitem{O1}
  T.~Oh,
  {\it Invariance of the Gibbs measure for the Schrödinger-Benjamin-Ono system}, SIAM J. Math. Anal. 41 (2009/10), no. 6, 2207--2225.
  
  \bibitem{O2}
  T.~Oh, {\it Invariant Gibbs measures and a.s. global well-posedness for coupled KdV systems}, Differential Integral Equations 22 (2009), no. 7-8, 637--668.
  
  \bibitem{Oh10}
  T.~Oh, 
  {\it White noise for KdV and mKdV on the circle}, Harmonic analysis and nonlinear partial differential equations, 99--124, RIMS Kôkyûroku Bessatsu, B18, Res. Inst. Math. Sci. (RIMS), Kyoto, 2010.

  \bibitem{Oh11}
  T.~Oh, {\it Remarks on nonlinear smoothing under randomization for the periodic KdV and the cubic Szeg\"o equation}, Funkcialaj Ekvacioj, 54 (2011), no. 3, 335--365.
  
  \bibitem{OOT1}
  T.~Oh, M.~Okamoto, L.~Tolomeo, {\it Focusing $\Phi_3^4$-model with a Hartree-type nonlinearity}, to appear in Mem. Amer. Math. Soc.
  
  \bibitem{ORSW21}
  T.~Oh, T.~Robert, P.~Sosoe, Y.~Wang, {\it Invariant Gibbs dynamics for the dynamical sine-Gordon model}, Proc. Roy. Soc. Edinburgh Sect. A 151 (2021), no. 5, 1450--1466.
  
  \bibitem{OST22}T.~Oh, P.~Sosoe, L.~Tolomeo,
  {\it Optimal integrability threshold for Gibbs
  measures associated with focusing NLS on the torus},
  {{Invent. Math.}} 227 (2022), no. 3, 1323--1429.
  
  \bibitem{RS22}A.~Rout, V.~Sohinger, {\it A
  microscopic derivation of Gibbs measures for the 1d focusing cubic nonlinear
  Schr{\"o}dinger equation}, 
  Comm. Partial Differential Equations 48 (2023), no. 7-8, 1008--1055.

  \bibitem{Simon}B.~Simon, {\it The $P (\varphi)_2$
  Euclidean (quantum) field theory}, {  {Princeton Series in Physics.
  Princeton University Press, Princeton, N.J.}}, (1974), xx+392pp.

\bibitem{SS99}
C.~Sulem, P.~L.~Sulem, 
{\it The nonlinear Schr\"odinger equations: Self-focusing and wave collapse}, Applied
Mathematical Sciences, 139. Springer-Verlag, New York (1999) 350 pp.
  
  \bibitem{ST20}C.~Sun, N.~Tzvetkov, {\it Gibbs
  measure dynamics for the fractional NLS}, { {SIAM
  J. Math. Anal.}} 52 (2020), no. 5, 4638--4704.
  
  \bibitem{ST21}C.~Sun, N.~Tzvetkov,
  {\it Refined probabilistic global well-posedness for
  the weakly dispersive NLS}, { {Nonlinear Anal.}}
  213 (2021), Paper No. 112530, 91 pp.

  \bibitem{Th17}
  J.~Thirouin, 
  {\it On the growth of Sobolev norms of solutions of the fractional defocusing NLS equation on the circle}, Ann. Inst. H. Poincar\'e Anal. Non Lin\'eaire, 34 (2017), no. 2, 509--531.

  \bibitem{Tz06}
  N.~Tzvetkov,
  {\it Invariant measures for the nonlinear Schr\"odinger equation on the disc}, 
  Dyn. Partial Differ. Equ. 3 (2006), no. 2, 111--160.

  \bibitem{Tz08}
  N.~Tzvetkov,
  {\it Invariant measures for the defocusing nonlinear Schr\"odinger equation}, 
  Ann. Inst. Fourier (Grenoble) 58 (2008), no. 7, 2543--2604.

 \bibitem{Tz10}
 N.~Tzvetkov,
 {\it Construction of a Gibbs measure associated to the periodic Benjamin–Ono equation},
Probab. Theory Relat. Fields 146 (2010), no. 3–4, 481--514. 
\end{thebibliography}
\end{document}